\documentclass[11pt,a4paper,fleqn,final]{article}

\usepackage{preprint-modern}

\crefname{appsec}{Appendix}{Appendices}
\Crefname{appsec}{Appendix}{Appendices}

\usepackage[
backend=biber,
bibencoding=utf8,
giveninits=true,
style=alphabetic,
maxnames=4,
isbn=false,
doi=true,
eprint=false,
url=false,
date=year
]{biblatex}
\AtEveryBibitem{%
  \ifentrytype{online}{}{%
    \clearfield{url}%
    \clearfield{urldate}%
  }%
}

\AtEveryBibitem{\clearfield{pubstate}}
\DeclareFieldFormat{doi}{%
  \href{https://doi.org/#1}{doi.org/#1}}

\DeclareFieldFormat{eprint:arxiv}{%
   \href{https://arxiv.org/abs/#1}{arXiv:#1}}

\DeclareFieldFormat{eprint:hal}{%
    \href{https://hal.science/#1}{#1}}

\renewbibmacro*{doi+eprint+url}{%
  \ifentrytype{online}{%
    \iffieldundef{eprinttype}{%
      \usebibmacro{url+urldate}%
    }{%
      \usebibmacro{eprint}%
    }%
  }{%
    \iftoggle{bbx:eprint}{%
      \iffieldundef{eprint}{%
        \iftoggle{bbx:doi}{\printfield{doi}}{}%
      }{%
        \usebibmacro{eprint}%
      }%
    }{%
      \iftoggle{bbx:doi}{\printfield{doi}}{}%
    }%
  }%
}

\newcommand{\MM}{M\!M}

\newcommand{\LinSD}{\Lfrak}
\newcommand{\LinGL}{\Lcal_{\mathrm{GL}}}

\begin{document}

	\title{Slow-moving pattern interfaces in general directions for a two-dimensional Swift--Hohenberg-type equation}
	
	\author[0000-0002-0329-9402]{Bastian Hilder}{Department of Mathematics, Technische Universität München, Boltzmannstraße 3, 85748 Garching b.\ München, Germany}{bastian.hilder@tum.de}
	\author[0009-0008-0258-5459]{Jonas Jansen}{Institute of Applied Mathematics and Statistics, University of Hohenheim, Schloss Hohenheim 1, 70599 Stuttgart, Germany}{jonas.jansen@uni-hohenheim.de}
	
	\msc{35B36; 35B32; 37L10; 34C37; 35Q56; 35B10; 34E15; 35K58}
	\keywords{pattern formation, Swift--Hohenberg-type equation, spatial dynamics, center manifold theory}
    % msc classification
    % 35B36: Pattern formations in context of PDEs
    % 35B32: Bifurcations in context of PDEs
    % 37L10: Normal forms, center manifold theory, bifurcation theory for infinite-dimensional dissipative dynamical systems
    % 34C37: Homoclinic and heteroclinic solutions to ordinary differential equations
    % 35Q56: Ginzburg-Landau equations
    % 35B10: Periodic solutions to PDEs
    % 34E15: Singular perturbations for ordinary differential equations
    % 35K58: Semilinear parabolic equations
	
	\maketitle

\begin{abstract}
    We rigorously prove the bifurcation of slow-moving pattern interfaces with general direction in a two-dimensional Swift–Hohenberg-type model close to a Turing instability for a large class of nonlinearities. These interfaces describe the invasion of stripe and hexagonal patterns into the spatially homogeneous state and model a possible mechanism for pattern formation, as observed in a wide range of real-world applications. For this, we develop a rigorous framework to establish the existence of such solutions using spatial dynamics and non-standard centre manifold theory. Our approach exploits geometric and algebraic structures generic to $\mathrm{O}(2)$-symmetric pattern-forming systems near a Turing instability, and addresses fundamental technical challenges due to a non-uniform spectral gap around the imaginary axis, quadratic resonances induced by the hexagonal structure, and the high-dimensional phase space of the reduced equations. 
\end{abstract}

\setcounter{tocdepth}{2}
\tableofcontents
    
\section{Introduction}

The formation of spatially regular structures is ubiquitous in natural systems and can be observed in hydrodynamic convection problems \cite{pearson1958-09JournalofFluidMechanics,swift1977-01PhysRevA}, vegetation systems \cite{klausmeier1999-06Science,meron2018-03AnnualReviewofCondensedMatterPhysics}, biological and chemical systems \cite{gierer1972-12Kybernetik,ouyang1991-08Nature}, and engineering applications \cite{cuerno1995-06PhysRevLett,bangsund2019-07NatMater}. These spatial patterns typically arise when a spatially homogeneous state destabilises in response to a change in a system parameter. One of the most famous examples is the Turing instability \cite{turing1952PhilosophicalTransactionsoftheRoyalSocietyofLondon.SeriesBBiologicalSciences}, where the spatially homogeneous state becomes unstable in a neighbourhood of a finite Fourier wave number, which leads to the formation of steady, spatially periodic structures such as hexagonal and stripe patterns. In applications, these spatial patterns typically arise in the wake of an invasion front, which facilitates the spatial transition from the pattern to the unstable homogeneous state, see \Cref{fig:planar-front}. These fronts can appear as radial invasion phenomena, in which the pattern spreads into the unstable state from an initial point defect \cite{bodenschatz1991-11PhysRevLett}, or as planar invasion fronts, in which the pattern and the unstable state are separated by a moving planar interface \cite{nitschke1995-12PhysRevE}.

\begin{figure}[h]
    \centering
    \begin{subfigure}[b]{0.99\textwidth}
        \includegraphics[width=\linewidth]{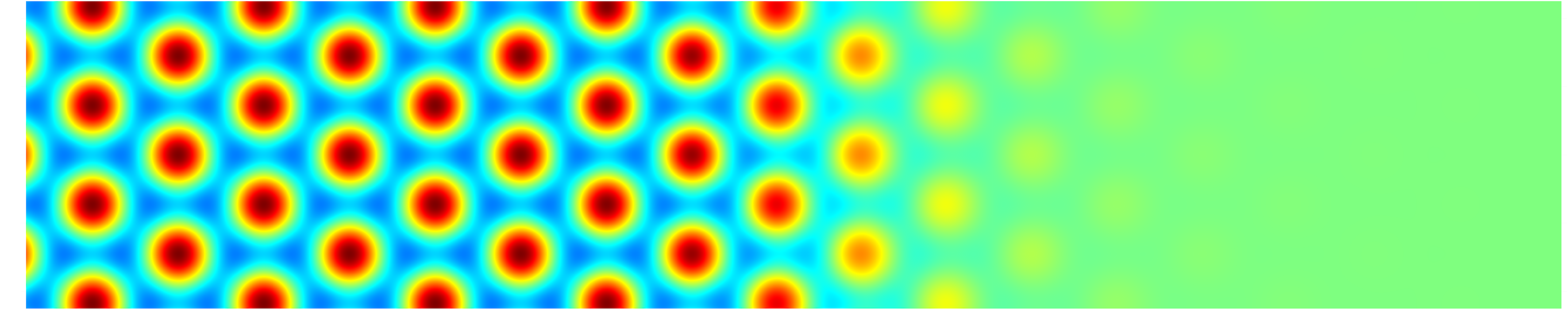}
        \subcaption{Pattern interface for angle $\theta = 0$.}\label{subfig:planar-front-a}
    \end{subfigure}

    \vspace{0.2cm}
    
    \begin{subfigure}[b]{0.99\textwidth}
        \includegraphics[width=\linewidth]{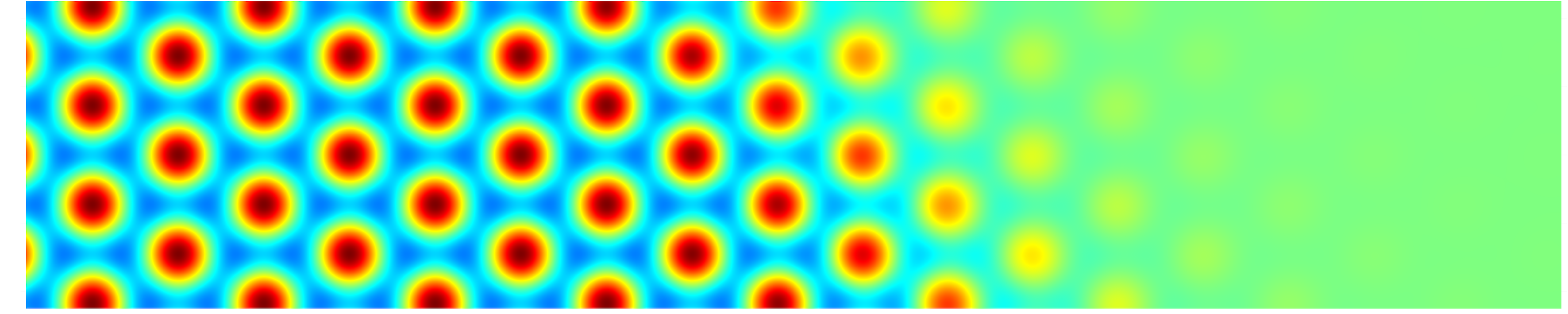}
        \subcaption{Pattern interface for angle $\theta = \tfrac{\pi}{6}$.}\label{subfig:planar-front-b}
    \end{subfigure}
    \caption{Example of two-dimensional planar pattern interfaces showing the invasion of a hexagonal pattern into the spatially homogeneous steady state for different directions.}
    \label{fig:planar-front}
\end{figure}

In this paper, we study the existence of two-dimensional, planar pattern interfaces close to a Turing instability. It is well known that the dynamics close to these instabilities are universal, and the emerging structures are independent of the specific application, cf.~\Cref{sec:amplitude-equations-intro}. Therefore, we analyse the existence of pattern interfaces in a semilinear Swift–Hohenberg-type equation
\begin{equation}\label{eq:Swift-Hohenberg}
    \partial_t u = L(\nabla;\mu) u + N(u,Du,D^2u,D^3u;\vartheta), \qquad L(\nabla;\mu) u = -(1+\Delta)^2 u + \mu u
\end{equation}
with $u(t,\x) \in \R$, a set of parameters $\vartheta \in \Theta$, time $t \geq 0$ and space $\x \in \R^2$. The classical Swift–Hohenberg equation with $N(u) = -u^3$ was first proposed as a model for the formation of thermally-driven convection cells in fluid applications \cite{swift1977-01PhysRevA}. The equation \eqref{eq:Swift-Hohenberg} undergoes a Turing instability at $\mu = 0$, that is, there exists a critical absolute wave number $k_c = 1$, such that, for $\mu > 0$, solutions to the linearised equation of the form $e^{\lambda t - i \k \cdot \x}$ grow exponentially for $|\k|$ close to $k_c$ and decay otherwise, cf.~\Cref{app:general-spectral-analysis}. Therefore, this is a prototypical model to study the dynamics close to such an instability. Planar pattern interfaces for a similar model with $N(u) = -\beta|\nabla u|^2 - u^3$ were first constructed in \cite{doelman2003-02EuropeanJournalofAppliedMathematics}, where the authors obtained solutions travelling in $x_1$-direction under the assumption that $|\beta| \ll 1$.

Here, we develop a fully rigorous framework for two-dimensional pattern interfaces connecting periodic solutions on a hexagonal Fourier lattice and travelling in general directions $\d = (\cos(\theta), \sin(\theta))$ satisfying $\cot(\theta) \in \sqrt{3} \Q$ for a Swift–Hohenberg-type equation with a generic class of nonlinearities. The framework relies on a spatial-dynamics approach with a non-standard normal-form transformation and centre manifold reduction with a non-uniform spectral gap. Specifically, we address two fundamentally new mathematical obstacles: quadratic resonances inherent to the hexagonal lattice structure induce singular scalings in the normal-form analysis, and the insufficient gain of regularity in the front direction, which requires a functional-analytic framework using mixed-regularity Sobolev spaces. This approach reduces the construction of pattern interfaces to proving the existence of travelling fronts in a system of degenerate Ginzburg–Landau equations. Here, we prove the existence of selected heteroclinic orbits, which completes the fully rigorous construction of specific planar fronts. We refer to \Cref{sec:main-results} for a detailed list of the main results.

While the specific analysis is restricted to \eqref{eq:Swift-Hohenberg}, our framework applies much more broadly to generic rotational- and reflectional-invariant, semilinear pattern-forming systems close to a Turing instability. Indeed, wherever possible, we exploit generic geometric and algebraic properties of such systems in our analysis rather than relying on explicit computations.

\subsection{Formal amplitude system and nonlinear waves}\label{sec:amplitude-equations-intro}

As mentioned above, the dynamics of small-amplitude solutions close to a Turing instability are universal, that is, the dynamics are governed by a system of simpler PDEs whose form is independent of the specific application. To make this precise, we introduce the Fourier lattice $\Gamma$ given by
\begin{equation}\label{eq:fourier-lattice}
    \Gamma := \Bigl\{\sum_{j=1}^{3} n_j \k_j : n_j\in \Z\Bigr\},
\end{equation}
which is spanned by $\k_j$, $j=1,2,3$, defined by
\begin{equation}\label{eq:hex-lattice-generators}
    \k_1 = \begin{pmatrix}
        1 \\ 0
    \end{pmatrix}, \quad \k_2 = \frac{1}{2}\begin{pmatrix}
        -1 \\ \sqrt{3}
    \end{pmatrix}, \quad \k_3 = -\frac{1}{2} \begin{pmatrix}
        1 \\ \sqrt{3}
    \end{pmatrix}.
\end{equation}
Functions, which have a Fourier series with modes in $\Gamma$, are periodic with respect to a hexagonal lattice and, in particular, contain stripe and hexagonal patterns. Close to the onset of instability, i.e.~for $\mu = \eps^2 \mu_0$ with $0 < \eps \ll 1$ and $\mu_0 > 0$, solutions to \eqref{eq:Swift-Hohenberg} are, to leading order, concentrated at critical Fourier modes $\Gamma_0 := \Gamma \cap \{|\gammab| = 1\} = \{\pm \k_j \,:\, j=1,2,3\}$, where the solutions to the linear equation grow exponentially. Therefore, they are of the form
\begin{equation}\label{eq:amplitude-ansatz}
    u(t,\x) = \eps \sum_{j = 1}^3 A_j(\eps^2 t, \eps \x) e^{i \k_j \cdot \x} + c.c. + h.o.t.,
\end{equation}
where $c.c.$ and $h.o.t.$ denote complex-conjugated and higher-order terms, respectively. The amplitude modulations $A_j(T,\X) \in \C$ depend on the slow temporal variable $T = \eps^2 t$ and the slow spatial variable $\X = \eps \x$. To formulate the evolution equations for the amplitude modulations, we make additional assumptions on the nonlinearity.

\begin{assumption}\label{ass:rotation-invariant}
    Assume that the nonlinearity is rotation-invariant and reflection-invariant, in the sense that, if $u$ is a solution to \eqref{eq:Swift-Hohenberg}, then also $u \circ R$ is a solution for any $R\in \mathrm{O}(2)$.
\end{assumption}

Furthermore, we split $N(u,Du,D^2u,D^3u;\vartheta) = N_2(u,u;\vartheta) + N_3(u,u,u;\vartheta) + R(u;\vartheta)$ with a bilinear form $N_2$, a trilinear form $N_3$ and a polynomial remainder $R$, which contains all terms of degree at least four. Since the hexagonal lattice is resonant, i.e. $\k_1 + \k_2 + \k_3=0$, quadratic terms of leading-order modes can generate large contributions at critical modes. To control these interactions, we need to assume that they gain an additional order of $\eps$ through the nonlinearity.

\begin{assumption}\label{ass:smallness}
    Assume that there exists a submanifold $\Theta_0$ of the parameter space $\Theta$ such that $N_2(e^{i\k_j \cdot \x}, e^{i \k_\ell \cdot \x};\vartheta) = 0$ for all $|\k_j + \k_\ell| = 1$ and $\vartheta \in \Theta_0$. Furthermore, we assume that there are curves $\eps \mapsto \vartheta = \vartheta(\eps)$ in a neighbourhood of $\Theta_0$ such that $N_2(e^{i\k_j \cdot \x}, e^{i \k_\ell \cdot \x};\vartheta) = \eps \beta_2 + \Ocal(\eps^2)$ for all $|\k_j + \k_\ell| = 1$.
\end{assumption}

\begin{remark}
    We point out that this is a weaker assumption compared to the construction of \cite{doelman2003-02EuropeanJournalofAppliedMathematics}, where $N_2(u) = -\beta|\nabla u|^2$ and $\beta = \Ocal(\eps)$ is used.
\end{remark}

\begin{remark}
    We note that both \Cref{ass:rotation-invariant,ass:smallness} hold true in relevant physical examples such as asymptotic thin-film models of the Bénard–Marangoni problem, cf.~\cite{hilder2025-08JNonlinearSci}.
\end{remark}

Under \Cref{ass:rotation-invariant,ass:smallness}, the amplitude modulations $A_j$ satisfy the Ginzburg–Landau system
\begin{equation}\label{eq:amplitude-system-hex}
    \begin{split}
        \partial_T A_1 &= 4 (\k_1 \cdot \nabla_{\X})^2 A_1 + \mu_0 A_1 + \beta_2 \bar{A}_2 \bar{A}_3 + (K_0 |A_1|^2 + K_2 (|A_2|^2 + |A_3|^2))A_1, \\
        \partial_T A_2 &= 4 (\k_2 \cdot \nabla_{\X})^2 A_2 + \mu_0 A_2 + \beta_2 \bar{A}_1 \bar{A}_3 + (K_0 |A_2|^2 + K_2 (|A_1|^2 + |A_3|^2))A_2, \\
        \partial_T A_3 &= 4 (\k_3 \cdot \nabla_{\X})^2 A_3 + \mu_0 A_3 + \beta_2 \bar{A}_1 \bar{A}_2 + (K_0 |A_3|^2 + K_2 (|A_1|^2 + |A_2|^2))A_3 \\
    \end{split}
\end{equation}
with coefficients $\beta_2 \in \R$ and $K_0, K_2 \in \R$. We point out that the coefficients are the same in all three equations due to the rotational invariance and they are real-valued due to the reflectional invariance, cf.~\cite[Sec.~5.4]{hoyle2007book}.

The system \eqref{eq:amplitude-system-hex} can formally be derived from \eqref{eq:Swift-Hohenberg}, or, in fact, a general $\mathrm{O}(n)$-symmetric pattern-forming system close to a Turing instability, through multiscale expansion. In spatially one-dimensional systems, or more generally, systems on an infinite cylinder, there is a well-developed justification theory, which shows that the resulting amplitude equation, the classical real Ginzburg–Landau equation, makes good predictions for the dynamics over a long time interval. Moreover, there are approximation results stating that any small solution will, to leading order, eventually be localised in Fourier space at the critical wave numbers, and the corresponding amplitudes are again determined by the one-dimensional amplitude equation. It is expected that these results directly extend to the two-dimensional case if the solution is sufficiently localised in Fourier space at the lattice points in $\Gamma$ and the nonlinearity satisfies the smallness condition in \Cref{ass:smallness}. We refer to \cite[Ch.~10]{schneider2017book} for a more detailed discussion of this topic.

As discussed above, the main goal of this paper is to show the existence of pattern interfaces moving in direction $\d \in S^1$. These solutions correspond to travelling-wave solutions in the Ginzburg–Landau system \eqref{eq:amplitude-system-hex} of the form $A_j(T,X) = A_j(\Xi)$ with $\Xi = \d \cdot \X - c_0 T$ and $c_0 > 0$. These waves are solutions to the system
\begin{equation}\label{eq:travelling-wave-equation}
    \begin{split}
        0 &= 4 (\k_1 \cdot \d)^2 \partial_{\Xi}^2 A_1 + c_0 \partial_{\Xi} A_1 + \mu_0 A_1 + \beta_2 \bar{A}_2 \bar{A}_3 + (K_0 |A_1|^2 + K_2 (|A_2|^2 + |A_3|^2))A_1, \\
        0 &= 4 (\k_2 \cdot \d)^2 \partial_{\Xi}^2 A_2 + c_0 \partial_{\Xi} A_2 + \mu_0 A_2 + \beta_2 \bar{A}_1 \bar{A}_3 + (K_0 |A_2|^2 + K_2 (|A_1|^2 + |A_3|^2))A_2, \\
        0 &= 4 (\k_3 \cdot \d)^2 \partial_{\Xi}^2 A_3 + c_0 \partial_{\Xi} A_3 + \mu_0 A_3 + \beta_2 \bar{A}_1 \bar{A}_2 + (K_0 |A_3|^2 + K_2 (|A_1|^2 + |A_2|^2))A_3.
    \end{split}
\end{equation}
We highlight that the dynamics of the system \eqref{eq:travelling-wave-equation} depend on the chosen direction since the diffusion coefficients explicitly depend on it. Moreover, this shows that there is a set of special directions $\d$ which satisfy $\d \cdot \k_j = 0$ for some $j = 1,2,3$. In this case, the travelling-wave system \eqref{eq:travelling-wave-equation} has a degenerate-elliptic structure.

Such travelling waves with finite speed $c_0$ correspond to \emph{slow-moving} pattern interfaces in the Swift–Hohenberg equation \eqref{eq:Swift-Hohenberg} moving with speed $\eps c_0$ in direction $\d$. Indeed, we find that $\Xi = \eps(\d\cdot \x - \eps c_0 t)$. These fronts are the relevant class of solutions for describing invasion processes in real-world systems, in the sense that, heuristically, front speeds of order $\eps$ are selected, cf.~\Cref{sec:front-speed}. Moreover, the direction-dependence is a distinct feature of this class of solutions. In particular, the construction of \emph{fast-moving} fronts, considered for example in \cite{collet1986-03CommunMathPhys,hilder2025-08JNonlinearSci}, is independent of the direction. This can also be seen by considering the formal limit $c_0 \to \infty$ in \eqref{eq:amplitude-system-hex}, which can be made rigorous using fast-slow methods, cf.~\Cref{sec:fast-slow-results}.

\subsection{Selected speeds and marginal stability analysis}\label{sec:front-speed}

As is typical for invasion phenomena into unstable states, our analysis yields a continuous family of fronts with different speeds for each direction. This raises the natural question: which front is selected, i.e., which front forms in the time-evolution problem starting from steep or step-like initial data? This question has attracted much attention recently, and for invading fronts in spatially one-dimensional reaction-diffusion equations where the spatially homogeneous invading state is exponentially stable, a robust framework for stability and front selection is available, see e.g.~\cite{avery2022-07CommAmerMathSoc,avery2025-12preprint}. In addition, there are recent advances in generalising this framework to steady pattern-forming fronts where the front is stationary in an appropriate co-moving frame, see \cite{avery2026-03preprint}. In contrast, the pattern interfaces constructed in this paper are non-steady as they describe a connection of two stationary patterns via a moving front. To the best of our knowledge, a rigorous framework is not available for such non-steady pattern-forming fronts, even in one spatial dimension, or spatially two-dimensional reaction-diffusion systems.

Although a rigorous framework is lacking, there are formal conjectures about front selection in situations where the front dynamics are determined by the linear behaviour about the unstable state, which are referred to as pulled fronts. Here, the conjectured selected front speed can be determined through the \emph{marginal stability conjecture}, see e.g.~\cite{vansaarloos2003-11PhysicsReports} and \cite[Def.~6.8 and Conj.~6.9]{avery2025-12preprint}. For this, one observes that the unstable state, when viewed in a moving reference frame, transitions from absolutely unstable to convectively unstable as the reference frame speed increases. That is, at low speeds, a perturbation of the unstable state will grow exponentially at each fixed point, while at high speeds it will decay exponentially at each fixed point. The marginal stability conjecture states that the selected speed is the one at which the system transitions from absolute to convective instability.

In practice, the conjectured selected speed is determined from the dispersion relation of the linearisation about the unstable state in a co-moving frame $\xib = \x - \d c t$, which is obtained by making an ansatz of the form $u(t,\xib) = e^{\lambda t + \nub \cdot \xib}$ with $\lambda \in \C$ and $\nub \in \C^2$. In our case, the resulting dispersion relation reads as
\begin{equation}\label{eq:dispersion-relation}
    d(\lambda, \nub, c) := L(\nub;\mu) + c \d \cdot \nub - \lambda = - (1 + \nub \cdot \nub)^2 + \mu + c \d \cdot \nub - \lambda.
\end{equation}
Following \cite{holzer2014-08JNonlinearSci}, we now assume periodicity in the transverse direction of the front, i.e., in $\d^\perp$-direction. This is reflected by writing $\nub = \nu \d + i k_\perp \d^\perp$ with $\nu \in \C$, $k_{\perp}\in \R$. In addition, since $\lambda$ is the growth rate of the linear problem in a moving frame with speed $c$, to determine the selected speed, we set $\lambda = i\omega$ with $\omega \in \R$. Then, we obtain the spatial wave number $\nu \in \C$, the speed $c > 0$ and the temporal oscillation $\omega \in \R$ by assuming that $\nu \mapsto d(i \omega, \nu \d + i k_{\perp} \d^\perp, c)$ has a pinched double root, cf.~\cite[Def.~2.3]{avery2025-12preprint}. A necessary condition for this is
\begin{equation*}
    d(i \omega, \nu \d + i k_{\perp} \d^\perp, c) = 0, \quad \text{and} \quad \partial_\nu d(i \omega, \nu \d + i k_{\perp} \d^\perp, c) = 0
\end{equation*}
for fixed $k_\perp \in \R$. While this is not a sufficient condition for a pinched double root, it is often used in practice to determine the critical speed.

Applied to our case, where the dispersion relation is given by \eqref{eq:dispersion-relation}, we first determine $\Re\nu$ and $\Im\nu$ by solving
\begin{equation*}
    \begin{split}
        \Im(\partial_{\nu} L(\nu\d + i k_{\perp}\d^{\perp},\mu)) & = 0, \\
        \Re(L(\nu\d + i k_{\perp}\d^{\perp},\mu)) & = c  \Re \nu = \Re(\partial_{\nu} L(\nu\d + ik_{\perp}\d^{\perp},\mu)) \Re \nu,
    \end{split}
\end{equation*}
cf.~\cite[Eq.~(2.14)]{avery2025-12preprint}. Since we are looking for fronts which invade the unstable state and travel from left to right, we look for solutions to the linear equation which decay as $\d \cdot \xib \to \infty$. Therefore, assuming $\Re(\nu) < 0$, we find
\begin{equation*}
\begin{split}
    \nu & = -\frac{1}{2 \sqrt{3}}\sqrt{\sqrt{\left(1-k^2_{\perp}\right)^2+6 \mu }-(1-k_{\perp}^2)} \pm i \frac{1}{2} \sqrt{\sqrt{\left(1-k^2_{\perp}\right)^2+6 \mu }+3(1-k^2_{\perp})},\\
    c &= \frac{4}{3 \sqrt{3}}\left(\sqrt{\left(1-k^2_{\perp}\right)^2+6 \mu } +2(1-k^2_{\perp})\right) \sqrt{\sqrt{\left(1-k^2_{\perp}\right)^2+6 \mu }- (1-k^2_{\perp})},\\
    \omega &= \mp\frac{1}{6 \sqrt{3}}\left(5 \sqrt{\left(1-k^2_{\perp}\right)^2+6 \mu }+ 7(1-k^2_{\perp})\right) \sqrt{\sqrt{\left(1-k^2_{\perp}\right)^2+6 \mu }+3(1- k^2_{\perp})} \sqrt{\sqrt{\left(1-k^2_{\perp}\right)^2+6 \mu }-(1-k^2_{\perp})}.
\end{split}
\end{equation*}
Assuming that $\mu = \eps^2 \mu_0$, i.e., that the problem is close to the onset of instability, we find, by expanding with respect to $\eps$, that
\begin{equation}\label{eq:results-marginal-stability-analysis}
    \begin{split}
        \nu & = \pm i \sqrt{1-k_{\perp}^2} + \eps \frac{1}{2}\sqrt{\frac{\mu_0}{1-k_{\perp}^2}} + \Ocal(\eps^2), \\
        c & = \eps 4 \sqrt{(1-k_\perp^2) \mu_0} + \Ocal(\eps^2), \\
        \omega & =  \mp \eps 4 (1-k_\perp^2) \sqrt{\mu_0} + \Ocal(\eps^2).
    \end{split}
\end{equation}
From these calculations, we obtain two crucial observations. First, we note that the selected speed is of order $\eps$ and therefore, slow-moving fronts are selected close to the onset of instability, cf.~\cite{collet2002-09JournalofStatisticalPhysics,avery2025-03ProcAmerMathSoc}. Second, as already pointed out in \cite{holzer2014-08JNonlinearSci}, we find that the maximal speed is obtained at $k_\perp = 0$. Therefore, it is conjectured in \cite{holzer2014-08JNonlinearSci} that at the leading edge one finds stripe patterns orthogonal to the direction of the planar front, which seems to be confirmed by numerical experiments, see e.g.~\cite{pismen1994-08EurophysLett,csahok1999-08EurophysLett}. For a more detailed discussion, we refer to \cite{holzer2014-08JNonlinearSci,avery2025-12preprint} and \Cref{sec:discussion}.

\subsection{General strategies and main challenges}\label{sec:general-strategy}

To construct the pattern interfaces, we use a spatial-dynamics approach, \cite{kirchgassner1982-07JDiffEq}, where an elliptic system is written as a dynamical system by interpreting a spatial variable as 'time'. This approach has been used successfully to construct, primarily one-dimensional, pattern interfaces \cite{eckmann1991-02CommunMathPhys,haragus-courcelle1999-01ZangewMathPhys,doelman2003-02EuropeanJournalofAppliedMathematics,faye2015-04JournalofDifferentialEquations,hilder2020-08JournalofDifferentialEquations,hilder2022-09JournalofMathematicalAnalysisandApplications,hilder2025-08JNonlinearSci}. For this, we make the ansatz
\begin{equation}\label{eq:modfront-ansatz}
    u(t,\x) = U(\d \cdot \x - \eps c_0 t , \x)
\end{equation}
and assume that $U(\xi,\p)$ is periodic in $\p$ with respect to a hexagonal lattice, and that $U$ connects two different planar pattern solutions $u_1$ and $u_2$ (including the trivial one) to \eqref{eq:Swift-Hohenberg}, that is,
\begin{equation*}
    \lim_{\xi \to -\infty} U(\xi,\p) = u_1(\p) \quad \text{and} \quad \lim_{\xi \to \infty} U(\xi,\p) = u_2(\p).
\end{equation*}
Inserting the ansatz \eqref{eq:modfront-ansatz} into \eqref{eq:Swift-Hohenberg} yields
\begin{equation*}
    - c \partial_{\xi} U = \Fcal(\d \partial_\xi + \nabla_\p;\mu,\vartheta)(U)
\end{equation*}
for some smooth function $\Fcal(\cdot;\mu,\vartheta)$, where we used that $\nabla u = (\d \partial_\xi + \nabla_\p) U = \d (\partial_\xi + \d \cdot \nabla_\p) U + \d^\perp (\d^\perp \cdot \nabla_\p) U$. Since the nonlinearity $N$ in \eqref{eq:Swift-Hohenberg} contains at most third-order derivatives, the highest-order derivative in $\xi$ comes from the linear part and we can write
\begin{equation}\label{eq:spat-dyn-step1}
    (\partial_\xi + \d \cdot \nabla_\p)^4 U = c \partial_\xi U + (\Fcal(\d \partial_\xi + \nabla_\p;\mu,\vartheta)(U) + (\partial_\xi + \d \cdot \nabla_\p)^4 U)
\end{equation}
to isolate the $\partial_\xi^4 U$-term. Due to the periodicity in $\p$, we can write $U$ as a Fourier series in $\p$, that is
\begin{equation*}
    U(\xi,\p) = \sum_{\gammab \in \Gamma} \hat{U}(\xi,\gammab) e^{i\gammab \cdot \p}.
\end{equation*}
Inserting this, we can write the resulting systems as a first-order system in $\xi$, which reads as
\begin{equation}\label{eq:spat-dyn-Fourier}
    \partial_\xi \hat{W}(\cdot, \gammab) = \hat{\Lcal}(\gammab;\mu,c) \hat{W}(\cdot,\gammab) - i \d \cdot \gammab \hat{W}(\cdot,\gammab) + \hat{\Ncal}(\hat{W};\gammab;\vartheta)
\end{equation}
for all $\gammab \in \Gamma$, where $\hat{W}(\xi,\gammab) = (\hat{U}(\xi,\gammab), (\partial_{\xi} + i\d \cdot \gammab) \hat{U}(\xi,\gammab), (\partial_{\xi} + i \d \cdot \gammab)^2 \hat{U}(\xi,\gammab), (\partial_{\xi} + i \d \cdot \gammab)^3 \hat{U}(\xi,\gammab))$ and $\hat{\Lcal}(\mu,c) = \bigoplus_{\gammab\in \Gamma} \hat{\Lcal}(\gammab;\mu,c) $ explicitly given in \eqref{eq:spat-dyn-linear}.
The nonlinearity $\Ncal$ is given by
\begin{equation*}
    \hat{\Ncal}(\hat{W};\gammab;\vartheta) = (0,0,0,\hat{N}(\hat{W};\gammab;\vartheta))^T,
\end{equation*}
where $\hat{N}(\hat{W};\cdot;\vartheta)$ is the Fourier transform of $N(U,(\d \partial_\xi + \nabla_p)U,(\d \partial_\xi + \nabla_p)^2U,(\d \partial_\xi + \nabla_p)^3U)$ with respect to $\p$. We specifically note that $\hat{\Lcal}$ and $\hat{\Ncal}$ in \eqref{eq:spat-dyn-Fourier} only contain Fourier multipliers of the form $(\d^\perp \cdot \gammab)$ which correspond to transverse derivatives. This will be crucial in our analysis as discussed in \Cref{sec:function-spaces}.

As in \Cref{sec:amplitude-equations-intro}, we introduce a small scaling parameter $\eps > 0$ by $\mu = \mu_0 \eps^2$ and $c = c_0 \eps$ with $\mu_0,c_0 = \Ocal(1)$. As discussed in \Cref{sec:front-speed}, this scaling mimics the expected spreading speed of a front evolving from compactly supported initial data close to the onset of instability, see also \cite{collet2002-09JournalofStatisticalPhysics,avery2025-07JEurMathSoc} and \Cref{app:front-speed-numerics}. Using this rescaling, we introduce the following notation for the linear operators
\begin{equation*}
    L^\eps(\nabla) := L(\nabla;\eps^2 \mu_0)
    \quad\text{and}\quad
    \hat{\Lcal}^\eps := \hat{\Lcal}(\eps^2 \mu_0, \eps c_0).
\end{equation*}

The next step is to reduce the infinite-dimensional dynamical system \eqref{eq:spat-dyn-Fourier} to a finite-dimensional system using centre manifold theory. For this, we need to analyse the spectrum of the full linear operator 
\begin{equation*}
    \tilde{\Lcal}^{\eps} := \bigoplus_{\gammab\in \Gamma} \tilde{\Lcal}^\eps(\gammab) \quad \text{with} \quad \tilde{\Lcal}^\eps(\gammab) := \hat{\Lcal}^{\eps}(\gammab) - i\d\cdot \gammab I.
\end{equation*}
The main challenge in the centre-manifold analysis is that $\tilde{\Lcal}^0$ has infinitely many, purely imaginary eigenvalues, which move away from the imaginary axis at different speeds for $\eps > 0$, creating an $\eps$-dependent splitting between finitely many \emph{more-central} and infinitely many \emph{less-central} eigenvalues. Although a similar problem appeared in previous papers, see e.g.~\cite{haragus-courcelle1999-01ZangewMathPhys}, in two-dimensional systems with a general class of quadratic nonlinearities, additional obstructions arise due to the resonance of the hexagonal lattice. In addition, the spectrum depends crucially on the direction $\d$. Writing $\d = (\cos(\theta),\sin(\theta))$, it turns out that the hyperbolic eigenvalues satisfy a spectral gap condition only under the condition that $\cot(\theta)\in \sqrt{3}\Q$ and, to avoid further technicalities related to small-devisor problems, see \Cref{sec:discussion} for a discussion, we adopt this assumption throughout the paper. Note that this condition is satisfied for $\theta = 0$ and $\theta =\tfrac{\pi}{6}$.

The final step of the analysis is the derivation and analysis of the system of reduced equations on the centre manifold. As expected, this system is indeed given by the travelling-wave system \eqref{eq:travelling-wave-equation} obtained from the amplitude system \eqref{eq:amplitude-system-hex}. Due to the high dimensionality of the reduced system, constructing heteroclinic orbits between equilibria is another main challenge, which we at least partially address in this paper.

\subsection{Main results of the paper}\label{sec:main-results}

We now summarise the main results and techniques of this paper. We show that
\begin{itemize}
    \item for $\eps > 0$ sufficiently small, $\cot(\theta) \in \sqrt{3}\Q$, $\mu_0\in \R\setminus\{0\}$, and $c_0 > 0$, the system \eqref{eq:spat-dyn-Fourier} has a finite-dimensional, locally invariant centre manifold, which contains all small bounded solutions to \eqref{eq:spat-dyn-Fourier} of size $\Ocal(\eps^{3/4 + \delta})$ with $0 < \delta < \tfrac{1}{4}$ and the reduced equations on the centre manifold are, to leading order, given by \eqref{eq:travelling-wave-equation}, see \Cref{thm:centre-manifold};
    \item for $\mu_0 > 0$, $K_0 < 0 $, $K_2 < -\tfrac{K_0}{2}$, and $c_0 > 0$, the reduced equations on the centre manifold have persistent heteroclinic orbits connecting a non-trivial equilibrium to the trivial equilibrium, which correspond to an invasion of the spatially homogeneous state by a planar pattern in \eqref{eq:Swift-Hohenberg}, see \Cref{fig:pattern-interfaces} and \Cref{thm:main-theorem}.
\end{itemize}

\begin{figure}[h]
    \centering
    \begin{subfigure}[b]{0.99\textwidth}
        \includegraphics[width=\linewidth]{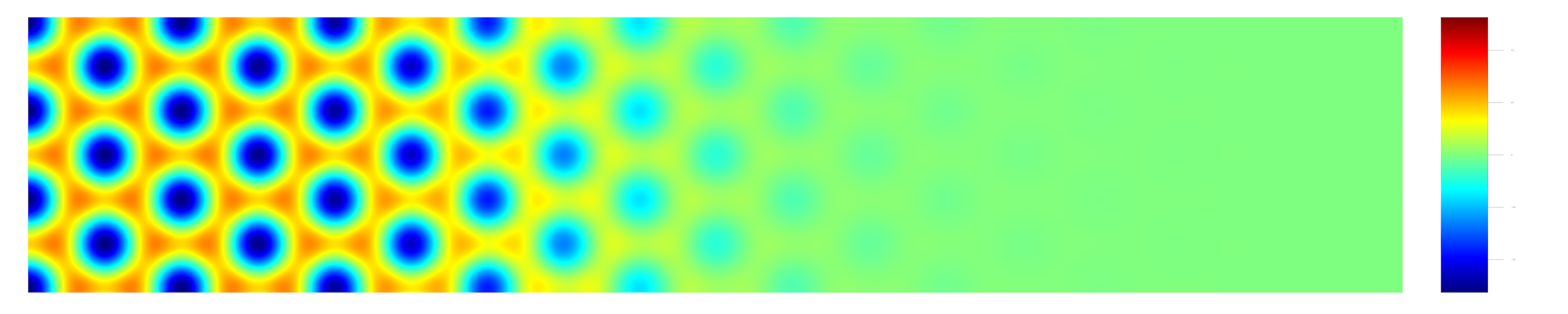}
        \includegraphics[width=\linewidth]{img/fronts/rw-rigorous_theta_0.png}
        \subcaption{Pattern interfaces for angle $\theta = 0$.}\label{subfig:pattern-interfaces-a}
    \end{subfigure}

    \vspace{0.2cm}
    
    \begin{subfigure}[b]{0.99\textwidth}
        \includegraphics[width=\linewidth]{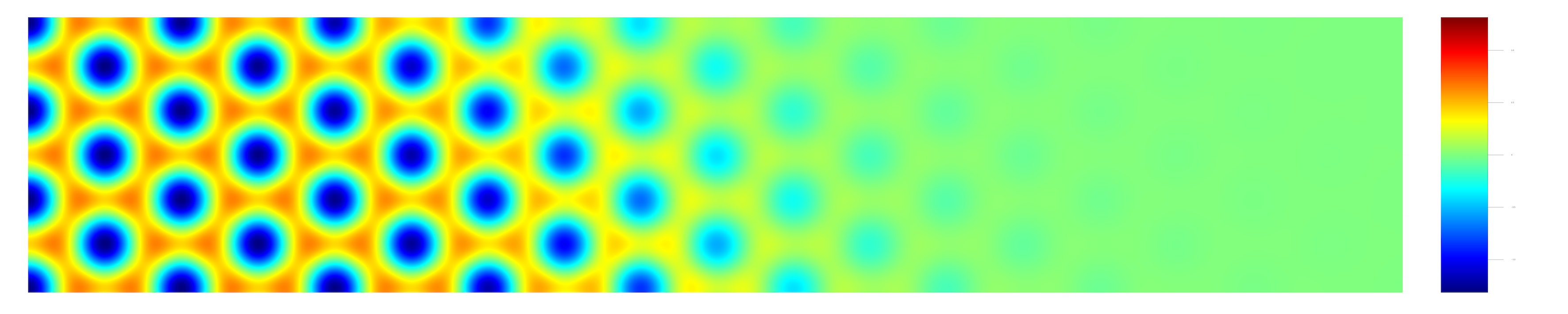}
        \includegraphics[width=\linewidth]{img/fronts/rw-rigorous_theta_30.png}
        \subcaption{Pattern interfaces for angle $\theta = \tfrac{\pi}{6}$.}\label{subfig:pattern-interfaces-b}
    \end{subfigure}
    \caption{The pattern interfaces in panels \sref{subfig:pattern-interfaces-a} and \sref{subfig:pattern-interfaces-b} show two examples of rigorously constructed solutions to \eqref{eq:Swift-Hohenberg} in \Cref{thm:main-theorem}. They describe the invasion of the spatially homogeneous state by down-hexagons and roll waves, respectively.}
    \label{fig:pattern-interfaces}
\end{figure}

\begin{remark}
    We focus on the analysis for patterns, which are periodic with respect to a hexagonal lattice. However, the same analysis can also be done for square patterns, see \Cref{sec:square}. In fact, this case is less challenging since the corresponding Fourier lattice is non-resonant.
\end{remark}

\begin{remark}
    We note that due to rotation and reflection symmetry, it is sufficient to consider directions with angle $\theta \in [0,\tfrac{\pi}{6}]$.
\end{remark}

\paragraph{Spectral Analysis}

The key insight for the spectral analysis is that the eigenvalues of $\tilde{\Lcal}$ can be characterised through the linearisation of the Swift–Hohenberg equation \eqref{eq:Swift-Hohenberg}, specifically, through roots of $L^\eps(\d \lambda + i \gammab) + \eps c_0 \lambda$ and their multiplicity, cf.~\Cref{lem:eigenvalue-correspondence,lem:multiplicity}. This is not a new observation and has been pointed out, for example, in \cite{afendikov1995-06ArchRationalMechAnal}. For two-dimensional fronts, this allows for a direct geometric interpretation: if the line $\ell_{\gammab} := \{\gammab + \d \lambda \,:\, \lambda \in \R\}$ intersects the unit circle, the corresponding $\tilde{\Lcal}^0(\gammab)$ has only purely imaginary eigenvalues with imaginary parts given by the distance between the intersection points and $\gammab$, see \Cref{fig:geometric-argument}. In particular, the critical Fourier modes yield eigenvalues at zero. Moreover, if $\ell_{\gammab}$ has no intersection points, the eigenvalues of $\tilde{\Lcal}^0(\gammab)$ have real part with absolute value given by the minimal distance of $\ell_{\gammab}$ to the circle. In particular, this shows that there is a spectral gap around the imaginary axis for $\eps = 0$ if and only if the lattice is periodic in direction $\d$, which is equivalent to the condition $\cot(\theta)\in \sqrt{3}\Q$. In addition to the value of the eigenvalues, the geometric picture also characterises their algebraic multiplicity, which is given by the multiplicity of the root of $\lambda \mapsto L^0(\d \lambda + i \gammab)$. So, the eigenvalues belong to two Jordan blocks of size two if the line $\ell_{\gammab}$ has two or zero intersections with the circle or a Jordan block of size four if $\ell_{\gammab}$ is tangent to the circle, using the generic structure of the Turing instability. 

For $\eps > 0$, the eigenvalues on the imaginary axis depart at different speeds in $\eps$. Indeed, any eigenvalues emerging from zero have real part $\Ocal(\eps)$ and any eigenvalues emerging from a non-zero point on the imaginary axis have real part $\Ocal(\sqrt{\eps})$. The only exception is the case $\theta = \tfrac{\pi}{6}$, where there are Jordan blocks of size four at zero corresponding to $\gammab = \pm \k_2$ and which split into one eigenvalue with real part $\Ocal(\eps)$ and three eigenvalues with real part $\Ocal(\eps^{1/3})$. Again, this is a generic effect, since the system is close to a (rotation-symmetric) Turing instability, cf.~\Cref{app:general-spectral-analysis}.

The spectral picture for $\eps > 0$ is thus as follows: there are finitely many more-central eigenvalues with real part $\Ocal(\eps)$, and the remaining eigenvalues split into less-central eigenvalues with real part $\Ocal(\sqrt{\eps})$ and hyperbolic eigenvalues.

\paragraph{Centre manifold reduction}

The next step is to prove that there is a locally invariant centre manifold associated with the finitely many more-central eigenvalues. As discussed above, the main obstruction is the non-uniform spectral gap. Therefore, one has to carefully track the dependence on $\eps$ throughout the proof of the centre manifold theorem. This has been done before, mainly for systems with one unbounded spatial direction, see e.g.~\cite{haragus-courcelle1999-01ZangewMathPhys}, and the main steps are as follows: first, we split the unknown variable into a more-central, less-central, and hyperbolic part. Second, we introduce rescaled variables, where the more-central, less-central, and hyperbolic parts are rescaled with suitable fractional powers of $\eps$, and which allow us to determine the size of the centre manifold. 

The third step is then to show that the resulting rescaled system has a centre manifold of size $\Ocal(1)$. For this, we first apply a normal-form transformation to remove problematic quadratic nonlinear terms with a singular scaling in $\eps$. Here, new effects emerge in the two-dimensional case since the typical non-resonance condition in the normal-form analysis, cf.~\cite[Sec.~3.3]{guckenheimer1983book}, is not satisfied uniformly in $\eps$. Nevertheless, it is possible to obtain a near-identity normal-form transformation, cf.~\eqref{eq:normal-form-lc} and \Cref{lem:normal-form-analysis} by exploiting the $\sqrt{\eps}$-behaviour of the less-central eigenvalues. The second step is to obtain sufficient semigroup estimates, cf.~\Cref{lem:semigroup-bounds-full}, and to construct a centre manifold using the usual fixed-point construction. Here, the main obstruction in two dimensions is that the semigroups are analytic in $\d^\perp$-direction, but do not gain enough regularity in $\d$-direction to compensate for more than one derivative in $\d$-direction. Therefore, the fact that the nonlinearities of the spatial-dynamics system \eqref{eq:spat-dyn-Fourier} only contain $\d^\perp$-derivatives becomes crucial. To exploit this structure, we perform the fixed-point argument in periodic Sobolev spaces with mixed dominating smoothness, see \eqref{eq:function-space-mixed-smoothness}. Carefully tracking the dependence on $\eps$ then yields a centre manifold of size $\Ocal(\eps^{3/4+\delta})$ for $\delta \in (0,\tfrac{1}{4})$, cf.~\Cref{thm:centre-manifold}. This specifically is large enough to contain solutions to \eqref{eq:Swift-Hohenberg} of size $\eps$, which covers the pattern-dynamics near onset, cf.~\eqref{eq:amplitude-ansatz}. Notably, the resulting centre manifold is smaller than in the one-dimensional case, cf.~\cite{haragus-courcelle1999-01ZangewMathPhys}, where the size is of order $\eps^{2/3 + \delta}$. This difference originates directly from the degenerate normal-form transformation, which yields an additional condition $2 \beta - \tfrac{1}{2} > 1$ from the Lipschitz constant of the less-central nonlinearity, cf.~\Cref{lem:Lipschitz}, which is not present in the one-dimensional case.

On the centre manifold, we then derive the reduced equation
\begin{equation}\label{eq:reduced-equations-introduction}
\begin{split}
    4(\d\cdot\k_1)^2 \partial_{\Xi}^2 A_1 + c_0 \partial_{\Xi} A_1 + \mu_0 A_1 + \beta_2\bar{A}_2\bar{A}_3 + (K_0|A_1|^2 + K_2(|A_2|^2 + |A_3|^2)) A_1 + \Ocal(\eps) & = 0, \\
    4(\d\cdot\k_2)^2 \partial_{\Xi}^2 A_2 + c_0 \partial_{\Xi} A_2 + \mu_0 A_2 + \beta_2\bar{A}_1\bar{A}_3 + (K_0|A_2|^2 + K_2(|A_1|^2 + |A_3|^2)) A_2 + \Ocal(\eps) & = 0, \\
    4(\d\cdot\k_3)^2 \partial_{\Xi}^2 A_3 + c_0 \partial_{\Xi} A_3 + \mu_0 A_3 + \beta_2\bar{A}_1\bar{A}_2 + (K_0|A_3|^2 + K_2(|A_1|^2 + |A_2|^2)) A_3 + \Ocal(\eps) & = 0,
\end{split}
\end{equation}
which in particular agrees to leading order with the travelling-wave amplitude equations \eqref{eq:travelling-wave-equation}. Although the derivation is mostly standard, we introduce a systematic change of variables exploiting the Jordan-block structure at $\eps = 0$, see \eqref{eq:clever-basis}. This agrees with the ad-hoc change of variables used, for example, in \cite{eckmann1991-02CommunMathPhys}, but makes the connection to the algebraic structure of the eigenvalues more transparent.

Note that there are two special cases, which require separate treatment. The first case is $\theta = \tfrac{\pi}{6}$. As discussed above, in this case, $\tilde{\Lcal}^0(\pm \k_2)$ has an eigenvalue at zero corresponding to a Jordan block of size four, which splits into one more-central and three less-central eigenvalues. Therefore, in contrast to the generic case $\theta \neq \tfrac{\pi}{6}$, we cannot define uniformly bounded projections separating the more-central and less-central eigenvalues. However, it is still possible to construct a centre manifold of the same size, cf.~\Cref{sec:special-case}, by defining non-normalised 'projection-like' maps, see \eqref{eq:not-projections}, which still map into the relevant eigenspace. With this, the rest of the proof works the same as in the generic-angle case and the reduced equations on the centre manifold are again given by \eqref{eq:reduced-equations-introduction} with $\d\cdot\k_2 = 0$.

The second case is $c_0 = c_{\crit}(\k_j)$, where
\begin{equation}\label{eq:c-crit}
    c_{\crit}(\k_j) := 4|\d\cdot \k_j|\sqrt{\mu_0}
\end{equation}
is the selected speed predicted by marginal stability analysis, cf.~\Cref{sec:front-speed}. Notably, it can be obtained from \eqref{eq:results-marginal-stability-analysis} where the transverse wave number $k_{\perp}$ is given by $\d^{\perp}\cdot \k_j$. In the spectral analysis, this speed yields a different $\eps$-expansion of the more-central eigenvalues. Specifically, for $c_0 \neq c_{\crit}(\k_j)$ for all $j = 1,2,3$, the more-central eigenvalues coming from $\tilde{\Lcal}^{\eps}(\k_j)$ split at leading order $\eps$ for all $j=1,2,3$. In contrast, for $c_0 = c_{\crit}(\k_j)$ for fixed $j = 1,2,3$ the leading-order behaviour of the more-central eigenvalues from $\tilde{\Lcal}^\eps(\k_j)$ is the same and they only split at order $\eps^{3/2}$. While this does not affect the construction of the centre manifold, since the more-central eigenvalues still are of order $\eps$, it changes the derivation of the reduced equations, see \Cref{sec:reduced-equations-critical}. However, the resulting leading-order system on the centre manifold is still \eqref{eq:travelling-wave-equation}. Indeed, this is expected since the derivation of the amplitude system \eqref{eq:amplitude-system-hex} is done in the stationary frame. Nevertheless, as pointed out in \Cref{sec:stability-of-equilibria}, the dynamics of the reduced equation still change when $c_0$ passes through $c_{\crit}(\k_j)$ for some $j = 1,2,3$.

\paragraph{Dynamics of reduced equations and pattern interfaces}

Finally, we analyse the dynamics of the reduced system \eqref{eq:reduced-equations-introduction} on the centre manifold. For this, we restrict the corresponding twelve-dimensional (note the complex conjugate equations) first-order system to the invariant, six-dimensional subspace of real-valued solutions. For the special direction $\theta = 0$, there is an additional four-dimensional invariant subspace, see \cite{doelman2003-02EuropeanJournalofAppliedMathematics}. However, to deal with general directions, we analyse the full six-dimensional system. We first collect non-trivial equilibria, which correspond to planar patterns in \eqref{eq:Swift-Hohenberg} and which can be obtained using standard results from equivariant bifurcation theory. To analyse their stability, we establish a rigorous relation between the temporal stability of an equilibrium as a stationary state in the Ginzburg–Landau system \eqref{eq:amplitude-system-hex} without diffusion, and the stability in the spatial dynamics system on the centre manifold, see \Cref{lem:characterisation-spat-dyn-stable-vs-pde-stable}. Specifically, temporally fully unstable equilibria are stable in the reduced equation on the centre manifold, and temporally stable equilibria are saddles with an even splitting between stable and unstable directions. In particular, this implies that for $\mu_0 > 0$, the trivial equilibrium is stable. While this characterisation was previously noted, see e.g.~\cite{doelman2003-02EuropeanJournalofAppliedMathematics}, to the best of our knowledge, a rigorous proof is not available in the literature. Moreover, we discuss that the result also applies to reaction-diffusion systems when the linearisation about an equilibrium is symmetric; otherwise, it might fail, see \Cref{rem:spectral-characterisation-for-RD}.

To obtain heteroclinic orbits, we use that the leading-order first-order system has a strictly decreasing Lyapunov function, see \eqref{eq:hamiltonian}. Restricting to the case $\mu_0 > 0$, $c_0 > 0$, $K_0 < 0$ and $K_2 < -\tfrac{K_0}{2}$, we use this to prove the existence of heteroclinic orbits between the non-trivial equilibrium with lowest energy and the (stable) trivial equilibrium, see \Cref{thm:heteroclinic} for the case $\theta \neq \tfrac{\pi}{6}$. The proof relies on constructing an appropriate bounded trapping region around the trivial equilibrium using the Lyapunov function. Their persistence then follows using the transverse intersection of stable and unstable manifolds, see \Cref{thm:persistence}. Finally, using fast-slow analysis, we show that these orbits converge to a heteroclinic orbit connecting the same patterns along a sequence of angles $\theta$ satisfying $\cot(\theta) \in \sqrt{3}\Q$ and converging to $\tfrac{\pi}{6}$. Similarly, the fast-moving pattern interfaces obtained in \cite{hilder2025-08JNonlinearSci} can be recovered in the limit $c_0 \to \infty$ as expected.

In addition to the rigorous analysis, we find heteroclinic orbits using a numerical shooting algorithm, which yields orbits connecting the other non-trivial equilibrium to the trivial state, see \Cref{fig:planar-front,fig:planar-fronts-numerics} for the corresponding planar interfaces. In particular, we find orbits from up-hexagons to the trivial state, which pass close to the roll waves and thus correspond to a two-stage invasion described in \Cref{sec:front-speed} where the primary and secondary invasion fronts have the same speed, see \Cref{subfig:two-stage}.

\subsection{Related results}\label{sec:related-results}

We now discuss related results. As mentioned above, the existence of pattern interfaces using spatial dynamics and centre manifold theory is well-known for systems in an infinite cylinder. Specifically, they have been constructed for the one-dimensional Swift–Hohenberg equation \cite{eckmann1991-02CommunMathPhys}, the Taylor–Couette problem \cite{haragus-courcelle1999-01ZangewMathPhys}, and a non-local reaction-diffusion system \cite{faye2015-04JournalofDifferentialEquations}. In the case of a Turing(–Hopf) instability with additional conservation law, which for example appears in the Bénard–Marangoni problem, one-dimensional pattern interfaces have been found in phenomenological models \cite{hilder2020-08JournalofDifferentialEquations,hilder2022-09JournalofMathematicalAnalysisandApplications}. Recently, the existence, stability, and selection of related (steady) front solutions have also been studied in the FitzHugh–Nagumo system \cite{avery2025-07JEurMathSoc,avery2026-03preprint}.

In systems with two unbounded spatial directions, results are sparser. Besides the existence result for planar pattern interfaces in \cite{doelman2003-02EuropeanJournalofAppliedMathematics}, much attention has been focused on stationary interfaces between differently oriented roll waves, which are also known as grain boundary or domain walls. Their existence has been obtained for a two-dimensional Swift–Hohenberg equation \cite{haragus2007-01InternationalJournalofDynamicalSystemsandDifferentialEquations,haragus2012-12EuropeanJournalofAppliedMathematics} and, more recently, for the Rayleigh–Bénard problem \cite{haragus2021-02ArchRationalMechAnal,haragus2022-12JDynDiffEquat,buffoni2023-05JournalofDifferentialEquations,iooss2025JMathFluidMech,iooss2026-03JDynDiffEquat}. While these results are restricted to semilinear systems, fast-moving pattern interfaces with $c = \Ocal(1)$ have recently been obtained in a quasilinear asymptotic thin-film model for the Bénard–Marangoni problem \cite{hilder2025-08JNonlinearSci}. We expect that the generic approach of this paper can also be transferred to construct slow-moving pattern interfaces in these models. However, there are substantial additional difficulties which we discuss in more detail in \Cref{sec:discussion}.

All of the previously mentioned results used a spatial-dynamics approach to obtain the existence of pattern interfaces. This approach does not directly translate to systems that exhibit multiple, simultaneous pattern-forming instabilities which occur at resonant wave numbers. Here, difficulties arise from additional interactions of the critical modes. For these systems, the existence of solutions in one spatial dimension that are close to pattern interfaces at least on a long time interval has been established using modulation theory \cite{gauss2021-05ChaosAnInterdisciplinaryJournalofNonlinearScience,hilder2025StudiesinAppliedMathematics}.

The study of steady localised solutions to pattern-forming systems has received much interest in recent years, and we refer to \cite{bramburger2025-03preprint} for a recent overview. Many of these results rely on numerical continuation methods, for example, obtained using pde2path \cite{uecker2021-01book}, see \cite{uecker2014-01SIAMJApplDynSyst} for an application. While these references mainly consider stationary solutions and the existence of radial front solutions remains a largely open question, we highlight the recent derivation of radial amplitude equations \cite{hill2024-12SIAMJApplMath}.

Numerical continuation methods in combination with far-field-core decompositions have also been used recently to study the behaviour of invading one-dimensional pattern interfaces beyond onset \cite{lloyd2025-02preprint}.

\subsection{Outline of the paper}

The paper is structured as follows: In \Cref{sec:spectral-analysis}, we perform a spectral analysis of the linearity in the spatial-dynamics system \eqref{eq:spat-dyn-Fourier} by first analysing the spectrum for $\eps = 0$, and then characterising the behaviour of the eigenvalues for $\eps > 0$. 

In \Cref{sec:centre-manifold-theory}, we prove a centre manifold theorem for the spatial-dynamics system \eqref{eq:spat-dyn-Fourier} and derive the reduced equations. For this, we first introduce an appropriate functional-analytic setup based on mixed-regularity Sobolev spaces and then perform a normal-form analysis in a rescaled system. Then, we prove the necessary bounds on the semigroups to obtain a sufficiently large centre manifold with a standard fixed-point argument. In the analysis, we first consider the generic case $\theta \neq \tfrac{\pi}{6}$ and $c_0 \neq c_{\crit}(\k_j)$. Then, we provide a proof of the centre manifold theorem in the special case $\theta = \tfrac{\pi}{6}$, and a derivation of the reduced equation in the case $c_0 = c_{\crit}(\k_j)$ separately.

\Cref{sec:interfaces} then contains the analysis of the dynamics of the reduced equations on the centre manifold. Here, we first discuss the equilibria which correspond to planar patterns in \eqref{eq:Swift-Hohenberg}, and characterise their stability. Afterwards, we prove the existence of persistent heteroclinic orbits and use this to establish the existence of slow-moving pattern interfaces in \eqref{eq:Swift-Hohenberg}, see \Cref{thm:main-theorem}.

In \Cref{sec:square}, we discuss how our results transfer from the hexagonal lattice case to the square lattice case. We conclude the main body of the paper by discussing related questions in \Cref{sec:discussion}. In \Cref{app:general-spectral-analysis}, we then provide a detailed discussion of how the spectral results apply more generally to generic, rotation-invariant pattern-forming systems close to a Turing instability. Finally, \Cref{app:front-speed-numerics} contains numerical results on the selected front speed.

\section{Spectral analysis}\label{sec:spectral-analysis}

We now analyse the spectrum of the linear part of \eqref{eq:spat-dyn-Fourier}. Although the calculations are done for the linear operator given by the Swift–Hohenberg-type equation \eqref{eq:Swift-Hohenberg}, the results are fully generic for rotationally symmetric pattern-forming systems close to a Turing instability, albeit with more involved notation. We outline this in detail in \Cref{app:general-spectral-analysis}. Using the specific form of $\hat{\Lcal}(\gammab)$ and that $i \d \cdot \gammab \hat{W}$ only introduces a purely imaginary shift, we can relate the spectrum of $\tilde{\Lcal}^\eps$ to the spectrum of $L^{\eps}$, cf.~\cite{eckmann1991-02CommunMathPhys}.

\begin{lemma}\label{lem:eigenvalue-correspondence}
    Let $\gammab \in \Gamma$. Then, $\lambda \in \C$ is an eigenvalue of $\tilde{\Lcal}^{\eps}(\gammab)$ if and only if $L^\eps(\d \lambda + i \gammab) + \eps c_0 \lambda = 0$. The corresponding eigenvector is then given by $(1, \lambda + i \d \cdot \gammab, (\lambda + i \d \cdot \gammab)^2,(\lambda + i \d \cdot \gammab)^3)^T$.
\end{lemma}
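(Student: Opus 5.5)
The proof is a direct computation with the companion-matrix structure of $\hat{\Lcal}^\eps(\gammab)$. For fixed $\gammab$, the block $\tilde{\Lcal}^\eps(\gammab)$ acts on $\C^4$. The vector $\hat W = (\hat U, \hat U_1, \hat U_2, \hat U_3)$ has the meaning $\hat U_k = (\partial_\xi + i\d\cdot\gammab)^k \hat U$, so the first three rows of $\hat{\Lcal}^\eps(\gammab)$ have the shift form $\partial_\xi \hat U_k = \hat U_{k+1} - i\d\cdot\gammab\, \hat U_k$. After subtracting $i\d\cdot\gammab\, I$, these rows read $(\tilde{\Lcal}^\eps(\gammab)\hat W)_k = \hat U_{k+1} - i\d\cdot\gammab\,\hat U_k$ for $k=0,1,2$.

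The fourth row comes from the linear part of \eqref{eq:spat-dyn-step1}. Its defining property, which I will read off from \eqref{eq:spat-dyn-linear}, is the following. If $\hat U(\xi) = e^{\lambda \xi}$, so that $\hat U_k = (\lambda + i\d\cdot\gammab)^k e^{\lambda\xi}$, then the fourth row reproduces $c\lambda + L^\eps(\d\lambda + i\gammab) + (\lambda+i\d\cdot\gammab)^4$, multiplied by $e^{\lambda \xi}$. This holds because in Fourier variables $\d\partial_\xi + \nabla_\p$ acts on $e^{\lambda\xi + i\gammab\cdot\p}$ as multiplication by $\d\lambda + i\gammab$.

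\textbf{Step 1 (eigenvector form).} Suppose $\tilde{\Lcal}^\eps(\gammab) v = \lambda v$ with $v = (v_0,\dots,v_3) \neq 0$. The first three rows give $v_{k+1} = (\lambda + i\d\cdot\gammab) v_k$. Hence $v_0 \neq 0$, since otherwise $v = 0$, and after normalising $v_0=1$ the vector $v$ is exactly the claimed eigenvector. This fixes the eigenvector up to scaling.

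\textbf{Step 2 (eigenvalue condition).} I insert $v = (1, \mu_\lambda, \mu_\lambda^2, \mu_\lambda^3)$ with $\mu_\lambda = \lambda + i\d\cdot\gammab$ into the fourth row. The fourth-row equation is then $\mu_\lambda^4 - c\lambda - L^\eps(\d\lambda+i\gammab) - \mu_\lambda^4 = 0$. Here the term $-\mu_\lambda^4$ on the left comes from $\lambda v_3 - i\d\cdot\gammab\, v_3$, and the remaining terms come from the matrix entries. With $c = \eps c_0$, this is equivalent to $L^\eps(\d\lambda+i\gammab) + \eps c_0\lambda = 0$.

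Conversely, if this scalar equation holds, then the vector from Step 1 satisfies all four rows and is nonzero, so $\lambda$ is an eigenvalue. Equivalently, the characteristic polynomial of the companion-type matrix equals, up to sign, $L^\eps(\d\lambda + i\gammab) + \eps c_0\lambda$. Here one uses that $L^\eps(\d\lambda+i\gammab) = -(1 + (\d\lambda+i\gammab)\cdot(\d\lambda+i\gammab))^2 + \eps^2\mu_0$ is a quartic in $\lambda$ with leading coefficient $-1$.

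\textbf{Main obstacle.} The only delicate point is the bookkeeping in the fourth row. Once written in the $W$-variables, the identity $(\d\lambda+i\gammab)\cdot(\d\lambda+i\gammab) = (\lambda + i\d\cdot\gammab)^2 - (\d^\perp\cdot\gammab)^2$ expresses the fourth row as a polynomial in $\mu_\lambda$ and $\d^\perp\cdot\gammab$. I will verify that substituting $\hat U_k = \mu_\lambda^k$ recovers the symbol $L^\eps(\d\lambda+i\gammab)$, with the sign of the $c\partial_\xi$ term matching the convention that the travelling frame contributes $+\eps c_0\lambda$.
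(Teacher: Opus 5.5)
Your proposal is correct and follows essentially the paper's argument. The shift structure of the first three rows forces the eigenvector to be $(1,\mu_\lambda,\mu_\lambda^2,\mu_\lambda^3)$, and evaluating the fourth row on this vector yields $L^\eps(\d\lambda+i\gammab)+\eps c_0\lambda=0$; the paper phrases this step equivalently as inserting $\hat U = e^{\lambda\xi}$ into the scalar equation \eqref{eq:spat-dyn-step1}.

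The only blemish is the sign bookkeeping in Step~2. The eigenvalue side contributes $(\lambda + i\d\cdot\gammab)v_3 = \mu_\lambda^4$, not $(\lambda - i\d\cdot\gammab)v_3$. The fourth row of $\hat{\Lcal}^\eps(\gammab)$ evaluates, via the coefficients $a_0,\dots,a_3$ in \eqref{eq:spat-dyn-linear}, to $\eps c_0\lambda + L^\eps(\d\lambda+i\gammab) + \mu_\lambda^4$. So the two $\mu_\lambda^4$ terms cancel exactly as you intend.
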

\begin{proof}
    We first note that $\lambda \in \C$ is an eigenvalue of $\tilde{\Lcal}^\eps(\gammab)$ if and only if $\lambda + i\d \cdot \gammab$ is an eigenvalue of $\hat{\Lcal}^\eps(\gammab)$. By standard construction, see e.g.~\cite{eckmann1991-02CommunMathPhys}, the corresponding eigenvector for the latter eigenvalue problem is $(1, \lambda + i \d \cdot \gammab, (\lambda + i \d \cdot \gammab)^2,(\lambda + i \d \cdot \gammab)^3)^T$. Then, since the problems only differ by a multiple of the identity, the eigenvectors are identical.
    
    Since \eqref{eq:spat-dyn-Fourier} and \eqref{eq:spat-dyn-step1} are equivalent by construction, we also find that the linearised equations
    \begin{equation}\label{eq:spat-dyn-Fourier-linearised}
        \partial_\xi \hat{W}(\cdot, \gammab) = \hat{\Lcal}^{\eps}(\gammab) \hat{W}(\cdot,\gammab) - i \d \cdot \gammab \hat{W}(\cdot,\gammab)
    \end{equation}
    and
    \begin{equation}\label{eq:spat-dyn-step1-linearised}
        (\partial_\xi + i \d \cdot \gammab)^4 \hat{U}(\cdot, \gammab) = \eps c_0 \partial_\xi \hat{U}(\cdot, \gammab) + L^{\eps}(\d \partial_\xi + i \gammab) \hat{U}(\cdot, \gammab) + (\partial_\xi + i \d \cdot \gammab) \hat{U}(\cdot, \gammab)
    \end{equation}
    are equivalent. Therefore, $\lambda \in \C$ is an eigenvalue of $\tilde{\Lcal}^\eps(\gammab) = \hat{\Lcal}^{\eps}(\gammab) - i \d\cdot\gammab I$ if and only if $\hat{W} = e^{\lambda \xi} (1, \lambda + i \d \cdot \gammab, (\lambda + i \d \cdot \gammab)^2,(\lambda + i \d \cdot \gammab)^3)^T$ is a solution to \eqref{eq:spat-dyn-Fourier-linearised} if and only if $\hat{U} = e^{\lambda \xi}$ is a solution to \eqref{eq:spat-dyn-step1-linearised} if and only if
    \begin{equation*}
        0 = \eps c_0 \partial_\xi \hat{U}(\cdot, \gammab) + L^{\eps}(\d \partial_\xi + i \gammab) \hat{U}(\cdot, \gammab) = \eps c_0 \lambda + L^{\eps}(\d \partial_\xi + i \gammab).
    \end{equation*}
    This completes the proof.
\end{proof}

\subsection{Spectral analysis for $\eps = 0$}\label{sec:spectral-analysis-eps-zero}

We start the spectral analysis with the case $\eps = 0$ and then analyse the behaviour for $\eps > 0$. Since we aim to apply centre manifold theory, we first characterise the purely imaginary eigenvalues of $\tilde{\Lcal}^0$. Applying \Cref{lem:eigenvalue-correspondence}, we find that $\lambda = i \lambda_i$ with $\lambda_i \in \R$ is in the spectrum of $\tilde{\Lcal}^{0}(\gammab)$ for some $\gammab \in \Gamma$ if and only if $L^0(i(\d\lambda_i + \gammab)) = \hat{L}^0(\d \lambda_i + \gammab) = 0$. Here $\hat{L}^0(\k) = -(1-|\k|^2)^2$, $\k\in\R^2$, is the Fourier symbol of $L^0$. In particular, $i \lambda_i$ is in the spectrum of $\tilde{\Lcal}^0(\gamma)$ if and only if $|\d \lambda_i + \gammab| = 1$. Therefore, the purely imaginary eigenvalues of $\tilde{\Lcal}^0(\gammab)$ can be determined geometrically, see \Cref{fig:spec-real-part-geometric-interp,fig:spectral-eps=0}.

This geometric interpretation in particular yields that there are infinitely many purely imaginary eigenvalues of $\tilde{\Lcal}^0$, which is given as the direct sum of the $\tilde{\Lcal}^0(\gammab)$ over the Fourier modes $\gammab\in \Gamma$. Due to the discreteness of the Fourier lattice $\Gamma$, we also obtain that these imaginary eigenvalues are discrete, that is, there is a uniform neighbourhood around every imaginary eigenvalue not containing any other eigenvalue on the imaginary axis.

Finally, we discuss the multiplicity of the imaginary eigenvalues. For this, we use the following result, cf.~\cite[Lem.~5.4]{hilder2025-08JNonlinearSci}.

\begin{lemma}\label{lem:multiplicity}
    Let $\gammab \in \Gamma$ and let $i\lambda_i \in i \R$ be an eigenvalue of $\tilde{\Lcal}^{\eps}(\gammab)$. Then, the geometric multiplicity of $i\lambda_i$ is one. Additionally, its algebraic multiplicity is $m_0$ if $\hat{L}^0(\d (\lambda_i + \tilde{\lambda}) + \gammab) = \kappa \tilde{\lambda}^{m_0} + \Ocal(|\tilde{\lambda}|^{m_0+1})$ with $\kappa \neq 0$.
\end{lemma}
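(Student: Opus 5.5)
The plan is to use the companion-matrix structure of $\tilde{\Lcal}^\eps(\gammab)$. By construction, $\hat{\Lcal}^\eps(\gammab)$ is the first-order rewriting of the scalar fourth-order ODE \eqref{eq:spat-dyn-step1-linearised}. In the variables $\hat W=(\hat U,(\partial_\xi+i\d\cdot\gammab)\hat U,\dots)$ it therefore has the form of a companion matrix. Its first three rows are the shifts $\hat W_{k+1}=\mu\hat W_k$ (with $\mu=\lambda+i\d\cdot\gammab$), and the last row encodes the symbol.

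\textbf{Geometric multiplicity.} If $(\tilde{\Lcal}^\eps(\gammab)-\lambda)v=0$, the first three rows force $v_2=(\lambda+i\d\cdot\gammab)v_1$, $v_3=(\lambda+i\d\cdot\gammab)v_2$ and $v_4=(\lambda+i\d\cdot\gammab)v_3$. So $v$ is a multiple of the eigenvector in \Cref{lem:eigenvalue-correspondence}, and $v_1\neq0$. The kernel is thus one-dimensional. This holds for every eigenvalue, in particular for $i\lambda_i$.

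\textbf{Algebraic multiplicity.} For a companion matrix, the characteristic polynomial equals, up to a nonzero constant, the scalar symbol whose roots give the eigenvalues. Concretely, I will show that
\[
\det\bigl(\lambda I-\tilde{\Lcal}^\eps(\gammab)\bigr)=-\bigl(L^\eps(\d\lambda+i\gammab)+\eps c_0\lambda\bigr).
\]
To do this, expand $\det(\mu I-\hat{\Lcal}^\eps(\gammab))$ along the last row, or equivalently use that the characteristic polynomial of a companion matrix is the monic polynomial encoded in its last row. Substituting $\mu=\lambda+i\d\cdot\gammab$ and comparing with \Cref{lem:eigenvalue-correspondence} identifies the two sides. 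Both sides are degree-four polynomials in $\lambda$ with the same roots, and the leading coefficients match: $L^\eps(\d\lambda+\dots)=-\lambda^4+\dots$, since $(\d\cdot\d)^2=1$. Hence they coincide including multiplicities.

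It follows that the algebraic multiplicity of $i\lambda_i$ is the order of vanishing of $\tilde\lambda\mapsto L^\eps(\d(i\lambda_i+\tilde\lambda)+i\gammab)+\eps c_0(i\lambda_i+\tilde\lambda)$ at $\tilde\lambda=0$. For $\eps=0$ this function is $L^0(\d(i\lambda_i+\tilde\lambda)+i\gammab)$. On the imaginary direction it equals $\hat L^0(\d(\lambda_i+\tilde\lambda)+\gammab)$ after the rotation $\tilde\lambda\mapsto i\tilde\lambda$. Since this is a polynomial identity, the order of vanishing is the same in the complex variable, and the expansion $\kappa\tilde\lambda^{m_0}+\Ocal(|\tilde\lambda|^{m_0+1})$ gives multiplicity $m_0$. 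For general $\eps$ the statement should be read with $\hat L^0$ replaced by the full symbol including $\eps c_0$, and the same argument applies.

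\textbf{Main obstacle.} The main risk is bookkeeping in the determinant identity: checking that the last row of $\hat{\Lcal}^\eps(\gammab)$, written in the variables $(\partial_\xi+i\d\cdot\gammab)^k\hat U$, really yields exactly the symbol $L^\eps(\d\lambda+i\gammab)+\eps c_0\lambda$ after the shift. I would avoid explicit expansion and instead argue that $\det(\lambda-\tilde{\Lcal}^\eps(\gammab))$ is a monic quartic vanishing exactly at the roots of the symbol. The multiplicities would be matched by the standard fact that a nonderogatory matrix with one Jordan block per eigenvalue has characteristic polynomial equal to its minimal polynomial. That minimal polynomial annihilates the scalar ODE, which has fundamental solutions $\xi^k e^{\lambda\xi}$ for $k$ less than the root order.
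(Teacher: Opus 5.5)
Your overall strategy is sound and self-contained. The paper gives no proof of its own here; it only imports the statement from \cite[Lem.~5.4]{hilder2025-08JNonlinearSci}. Your geometric-multiplicity argument is correct: the first three rows of $(\tilde{\Lcal}^\eps(\gammab)-\lambda)v=0$ force $v=v_1(1,\mu,\mu^2,\mu^3)^T$ with $\mu=\lambda+i\d\cdot\gammab$. The final rotation step is also correct. Set $q(z):=L^0(\d(i\lambda_i+z)+i\gammab)$ and extend $r(\tilde\lambda):=\hat{L}^0(\d(\lambda_i+\tilde\lambda)+\gammab)$ polynomially. Then $q(z)=r(-iz)$, so $q$ vanishes at $z=0$ to order exactly $m_0$, with leading coefficient $(-i)^{m_0}\kappa\neq 0$.

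One step, however, is justified invalidly, and it is exactly the step that carries the multiplicity information. You identify $\det(\lambda I-\tilde{\Lcal}^\eps(\gammab))$ with $-(L^\eps(\d\lambda+i\gammab)+\eps c_0\lambda)$ by comparing with \Cref{lem:eigenvalue-correspondence}. You then argue that both are monic quartics with the same roots, so they ``coincide including multiplicities''. This does not follow: $(\lambda-a)^3(\lambda-b)$ and $(\lambda-a)^2(\lambda-b)^2$ have the same roots and the same leading coefficient. \Cref{lem:eigenvalue-correspondence} only identifies root sets, so it cannot supply multiplicities.

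Your fallback via minimal polynomials and fundamental solutions $\xi^k e^{\lambda\xi}$ can be made rigorous with Cayley--Hamilton plus a dimension count on solution spaces, but as written it is not yet an argument. The simplest fix is the bookkeeping you wanted to avoid, which takes two lines. From the companion form \eqref{eq:spat-dyn-linear},
\begin{equation*}
    \det(\mu I-\hat{\Lcal}^\eps(\gammab))=\mu^4-a_3\mu^3-a_2\mu^2-a_1\mu-a_0 .
\end{equation*}
Inserting the explicit coefficients and writing $s:=\d^\perp\cdot\gammab$ gives
\begin{equation*}
    \det(\mu I-\hat{\Lcal}^\eps(\gammab))=(\mu^2+1-s^2)^2-\eps^2\mu_0-\eps c_0(\mu-i\d\cdot\gammab).
\end{equation*}
For $\mu=\lambda+i\d\cdot\gammab$ we have $\mu-i\d\cdot\gammab=\lambda$. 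Using $|\gammab|^2=(\d\cdot\gammab)^2+s^2$, we also get $\mu^2+1-s^2=1+(\d\lambda+i\gammab)\cdot(\d\lambda+i\gammab)$. Hence $\det(\lambda I-\tilde{\Lcal}^\eps(\gammab))=-(L^\eps(\d\lambda+i\gammab)+\eps c_0\lambda)$ as an identity of polynomials, and the rest of your argument goes through unchanged.

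A minor remark on your reading for general $\eps$: for $c_0>0$, $\mu_0\neq 0$ and small $\eps>0$ the statement is vacuous. The imaginary part of $p(i\lambda_i;\gammab,\eps)=\hat{L}^0(\d\lambda_i+\gammab)+\eps^2\mu_0+i\eps c_0\lambda_i$ forces $\lambda_i=0$. The real part then requires $(1-|\gammab|^2)^2=\eps^2\mu_0$, which is impossible since $|\gammab|^2\in\Z$ on $\Gamma$. So the lemma effectively concerns $\eps=0$.
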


For eigenvalues on the imaginary axis, \Cref{lem:eigenvalue-correspondence} allows for a convenient geometric interpretation of the eigenvalue problem for $\tilde{\Lcal}^0$. That is, $\tilde{\Lcal}^0(\gammab)$ has an imaginary eigenvalue $i\lambda_i$ for some $\gammab \in \Gamma$ if the line $\ell_{\gammab} := \{\gammab + \d \lambda \,:\, \lambda \in \R\}$ intersects the unit circle $S^1$. Since $|\d| = 1$, $\lambda_i$ is the distance between $\gammab$ and the intersection point. Note that, for fixed $\gammab\in \Gamma$, three different scenarios can occur: the line $\ell_{\gammab}$ can have zero, one or two intersection points with the unit circle. In the first case, $\tilde{\Lcal}^0(\gammab)$ has no purely imaginary eigenvalue. In the second case, $\ell_\gamma$ intersects the unit circle tangentially and thus, $\tilde{\Lcal}^0(\gammab)$ has one purely imaginary eigenvalue. Finally, in the third case, $\ell_\gamma$ has two intersections with the unit circle and $\tilde{\Lcal}^0(\gammab)$ has two distinct purely imaginary eigenvalues.

Together with \Cref{lem:multiplicity}, this geometric point of view also allows us to determine the algebraic multiplicity of the purely imaginary eigenvalues. For this, we calculate using $\tilde{\gammab} = \gammab + \d \lambda \in S^1$
\begin{equation*}
    \hat{L}^0(\tilde{\gammab} + \d \tilde{\lambda}) = -(1-|\tilde{\gammab}+\d \tilde{\lambda}|^2)^2 =  -(\tilde{\lambda}^2 + 2 \tilde{\lambda} \d \cdot \tilde{\gammab})^2
\end{equation*}
% When doing the same computation for thin films, note that the radial derivative of the line is 0, when the line is tangential, but the second radial derivative is non-zero. Hence, the result follows from the expansion of the eigenvalue combined with the chain rule.
Therefore, if $\ell_{\gammab}$ has two intersection points with the unit circle, then $\tilde{\gammab} \cdot \d \neq 0$ and $\hat{L}^0(\tilde{\gammab} + \d \tilde{\lambda})$ is quadratic to lowest order in $\tilde{\lambda}$. Hence, in this case, $i\lambda_i$ is an eigenvalue with algebraic multiplicity two. In particular, the purely imaginary spectrum of $\tilde{\Lcal}^0(\gammab)$ consists of two Jordan blocks of size two. If the intersection of $\ell_{\gammab}$ with the unit circle is tangential, then $\tilde{\gammab}$ is orthogonal to $\d$. Thus, $i\lambda_i$ is an eigenvalue with algebraic multiplicity four of $\tilde{\Lcal}^0(\gammab)$ and belongs to a single Jordan block of size four. By applying these observations to the Fourier lattice $\Gamma$, we obtain the following result.

\begin{proposition}\label{prop:imaginary-spectrum-discrete}
    The imaginary spectrum of $\tilde{\Lcal}^\eps$ is discrete, and all imaginary eigenvalues belong to a Jordan block of size at least two. The origin $\lambda = 0$ is an eigenvalue of $\tilde{\Lcal}^0(\gammab)$ for $\gammab\in \Gamma$ if and only if $\gammab \in \Gamma_0$. In particular, $\lambda = 0$ is an eigenvalue of $\tilde{\Lcal}^\eps$ with finite multiplicity. Furthermore, $\tilde{\Lcal}^{0}(\gammab)$ has a Jordan block of size four at $\lambda = 0$ if and only if $\gammab = \pm \k_2$ and $\theta = \tfrac{\pi}{6}$. Otherwise, $\tilde{\Lcal}^{0}(\pm\k_j)$ has two Jordan blocks of size two at $\lambda = 0$ and $0 \neq \lambda \in i\R$. Up to rotational symmetry, there is no other angle $\theta$ for which a Jordan block of size four occurs for $\lambda = 0$.
\end{proposition}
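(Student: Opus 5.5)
We combine \Cref{lem:eigenvalue-correspondence}, \Cref{lem:multiplicity} and the geometric picture developed above. By \Cref{lem:eigenvalue-correspondence}, $i\lambda_i$ is an eigenvalue of $\tilde{\Lcal}^0(\gammab)$ if and only if the point $\tilde{\gammab} = \gammab + \d\lambda_i$ lies on $S^1$. For fixed $\gammab$ there are at most two such points, so each block $\tilde{\Lcal}^0(\gammab)$ contributes finitely many imaginary eigenvalues.

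For discreteness of the imaginary spectrum of the full operator, we first note that only $\gammab$ with $\operatorname{dist}(\gammab,\ell)\le 1$ for some line through the origin in direction $\d$ can contribute. Equivalently, $|\d^\perp\cdot\gammab| \le 1$. Under the standing assumption $\cot(\theta)\in\sqrt3\Q$, the lattice is periodic in direction $\d$. Hence the values $\d^\perp\cdot\gammab$ form a discrete set and the values $\d\cdot\gammab$ on each such line form a discrete set. The imaginary parts $\lambda_i = -\d\cdot\gammab \pm \sqrt{1-(\d^\perp\cdot\gammab)^2}$ therefore lie in a locally finite set, and each accumulation-free point is hit by finitely many $\gammab$. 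This gives discreteness. For the multiplicity statement, the expansion $\hat L^0(\tilde\gammab+\d\tilde\lambda) = -(\tilde\lambda^2+2\tilde\lambda\,\d\cdot\tilde\gammab)^2$ together with \Cref{lem:multiplicity} gives algebraic multiplicity two when $\d\cdot\tilde\gammab\neq0$ and four when $\d\cdot\tilde\gammab=0$. The geometric multiplicity is one in both cases, so every imaginary eigenvalue sits in a Jordan block of size at least two.

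For the statement about the origin, $\lambda=0$ is an eigenvalue of $\tilde{\Lcal}^0(\gammab)$ if and only if $|\gammab|=1$, that is, $\gammab\in\Gamma_0$. This is a finite set, so the multiplicity at zero is finite, namely a sum of at most six blocks. A Jordan block of size four at zero occurs precisely when $\tilde\gammab=\gammab$ satisfies $\d\cdot\gammab=0$, i.e.\ when $\d\cdot\k_j=0$ for some $j$. We restrict to $\theta\in[0,\tfrac{\pi}{6}]$, which is admissible by the rotation and reflection symmetry. Then $\d\cdot\k_1=\cos\theta>0$, $\d\cdot\k_3=-\tfrac12\cos\theta-\tfrac{\sqrt3}{2}\sin\theta<0$, and $\d\cdot\k_2=\tfrac12(-\cos\theta+\sqrt3\sin\theta)$ vanishes exactly at $\theta=\tfrac{\pi}{6}$. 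This yields the claim for $\pm\k_2$ and shows that, up to symmetry, $\tfrac{\pi}{6}$ is the only such angle. In all other cases $\ell_{\pm\k_j}$ meets the circle transversally at $\pm\k_j$ and at a second point $\pm\k_j-2(\d\cdot(\pm\k_j))\d$. This produces two size-two blocks, at $0$ and at $\mp 2i\,\d\cdot\k_j\neq0$.

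The main obstacle is the discreteness claim for the infinite direct sum. One must ensure that eigenvalues from infinitely many $\gammab$ do not accumulate on the imaginary axis. The natural route is the periodicity of $\Gamma$ along $\d$: projecting $\Gamma$ onto $\d^\perp$ gives a lattice $a\Z$, and translating $\gammab$ by the period vector in direction $\d$ only shifts $\lambda_i$ by a fixed multiple of the period. Everything else is the direct computation sketched above.
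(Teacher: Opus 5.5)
Your proof is correct and follows the paper's route: the geometric characterisation from \Cref{lem:eigenvalue-correspondence}, then the expansion $\hat{L}^0(\tilde{\gammab}+\d\tilde{\lambda}) = -(\tilde{\lambda}^2+2\tilde{\lambda}\,\d\cdot\tilde{\gammab})^2$ combined with \Cref{lem:multiplicity}, then the check that on $[0,\tfrac{\pi}{6}]$ one has $\d\cdot\k_2=0 \iff \theta=\tfrac{\pi}{6}$.

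The only difference is in the discreteness step, which you call the main obstacle. The paper just uses that $\Gamma$ is discrete: imaginary eigenvalues with $|\lambda_i|\le R$ can only come from the finitely many lattice points in the compact set $\{|\d^\perp\cdot\gammab|\le 1,\ |\d\cdot\gammab|\le R+1\}$. That argument works for every $\theta$. Your periodicity argument needs $\cot(\theta)\in\sqrt{3}\Q$, but in return it gives a uniform separation of the imaginary eigenvalues, which is what is used later in \Cref{lem:normal-form-analysis}.
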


Since the spectrum on the imaginary axis is now fully characterised, it remains to understand the spectrum outside the imaginary axis. Specifically, we are interested in the presence of a spectral gap. It turns out that this is highly dependent on the angle $\theta$ as the following result states.

\begin{proposition}\label{prop:real-spectral-gap}
    The operator $\tilde{\Lcal}^\eps$ has a spectral gap around the imaginary axis if and only if $\cot(\theta) \in \sqrt{3}\Q$. Furthermore, if $\cot(\theta) \notin \sqrt{3}\Q$, any sequence of eigenvalues $\lambda_n \in \C \setminus i\R$ with $\Re(\lambda_n) \rightarrow 0$ satisfies $|\Im(\lambda_n)| \rightarrow \infty$.
\end{proposition}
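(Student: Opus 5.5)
The plan is to compute the spectrum of each block $\tilde{\Lcal}^0(\gammab)$ explicitly via \Cref{lem:eigenvalue-correspondence}, and then reduce the gap question to a number-theoretic property of the projection of $\Gamma$ onto the transverse direction $\d^\perp$. Throughout I read the statement at $\eps = 0$, i.e.\ for $\tilde{\Lcal}^0$, the operator relevant for the centre manifold splitting; small $\eps>0$ can then be treated perturbatively block by block.

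\textbf{Step 1: explicit eigenvalues.} By \Cref{lem:eigenvalue-correspondence}, $\lambda$ is an eigenvalue of $\tilde{\Lcal}^0(\gammab)$ if and only if $(1+(\d\lambda+i\gammab)\cdot(\d\lambda+i\gammab))^2=0$. Using $|\d|=1$, this is
\begin{equation*}
\lambda^2 + 2i\lambda\,\d\cdot\gammab + 1 - |\gammab|^2 = 0 .
\end{equation*}
Write $|\gammab|^2 = (\d\cdot\gammab)^2 + (\d^\perp\cdot\gammab)^2$. Then
\begin{equation*}
\lambda = -i\,\d\cdot\gammab \pm \sqrt{(\d^\perp\cdot\gammab)^2 - 1}.
\end{equation*}
Hence $\Re\lambda \neq 0$ exactly when $|\d^\perp\cdot\gammab|>1$, and in that case $|\Re\lambda| = \sqrt{(\d^\perp\cdot\gammab)^2-1}$ and $\Im\lambda = -\d\cdot\gammab$. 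This is the geometric picture from the text: $|\d^\perp\cdot\gammab|$ is the distance of the line $\ell_{\gammab}$ to the origin.

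Consequently, a spectral gap exists if and only if the set
\begin{equation*}
P := \{\d^\perp\cdot\gammab : \gammab\in\Gamma\}
\end{equation*}
does not accumulate at $\pm1$ from outside $[-1,1]$.

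\textbf{Step 2: structure of $P$.} The set $P$ is the additive subgroup of $\R$ generated by $a=\d^\perp\cdot\k_1 = -\sin\theta$ and $b=\d^\perp\cdot\k_2 = \tfrac12(\sin\theta+\sqrt3\cos\theta)$; the generator $\k_3 = -\k_1-\k_2$ is redundant.

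For $\theta=0$ one has $a=0$, so $P=\tfrac{\sqrt3}{2}\Z$ is discrete. This case is consistent with the criterion, read as $\cot\theta=\infty$, or handled directly.

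For $\sin\theta\neq0$, the ratio is $b/a = -\tfrac12(1+\sqrt3\cot\theta)$. It is rational if and only if $\sqrt3\cot\theta\in\Q$, which is equivalent to $\cot\theta\in\tfrac{1}{\sqrt3}\Q=\sqrt3\Q$. The classical dichotomy for subgroups $a\Z+b\Z$ of $\R$ then applies: the group is discrete (cyclic) if $b/a\in\Q$, and dense otherwise.

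\textbf{Step 3: both directions.} Suppose first that $\cot\theta\in\sqrt3\Q$. Then $P=q\Z$ for some $q>0$, so $\inf\{p\in P: p>1\}=:p_*>1$. This gives $|\Re\lambda|\ge\sqrt{p_*^2-1}>0$ uniformly over all $\gammab$, which is the spectral gap.

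Conversely, suppose $\cot\theta\notin\sqrt3\Q$. Since $P$ is dense, there are $\gammab_n\in\Gamma$ with $\d^\perp\cdot\gammab_n\downarrow1$, strictly greater than $1$. The corresponding eigenvalues $\lambda_n\notin i\R$ satisfy $\Re\lambda_n\to0$, so there is no gap.

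For the second claim, take any sequence of eigenvalues $\lambda_n\notin i\R$ with $\Re\lambda_n\to0$, coming from modes $\gammab_n$. Then $|\d^\perp\cdot\gammab_n|\to1$, so these values are not eventually constant. Otherwise $\Re\lambda_n$ would be a fixed positive number. Hence infinitely many distinct $\gammab_n$ occur. Because $\Gamma$ is discrete and $\d^\perp\cdot\gammab_n$ stays bounded, we get $|\gammab_n|\to\infty$, which forces $|\d\cdot\gammab_n|\to\infty$. Therefore $|\Im\lambda_n| = |\d\cdot\gammab_n|\to\infty$.

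\textbf{Main obstacle.} The computations are elementary, so the delicate points are bookkeeping. First, the degenerate angle $\theta=0$ must be handled within the stated criterion. Second, if one insists on $\eps>0$, the gap must be shown to persist uniformly in $\gammab$. For that I would use that for large $|\gammab|$ the $\eps$-terms are relatively bounded perturbations of the explicit roots, plus continuity on the finitely many remaining blocks. I expect this uniformity in $\gammab$ to be the step needing the most care.
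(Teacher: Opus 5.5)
Your proposal is correct and is essentially the paper's proof (explicit roots $\lambda_\pm=-i\,\d\cdot\gammab\pm\sqrt{(\d^\perp\cdot\gammab)^2-1}$, reduction to discreteness of $\{\d^\perp\cdot\gammab:\gammab\in\Gamma\}$, and the $\Z+a\Z$ dichotomy, all at $\eps=0$ as the paper does). Your argument for the second claim, which the paper leaves implicit, works once you say that each $\gammab$ occurs only finitely often along the sequence (not merely that infinitely many distinct $\gammab_n$ occur), so that $|\gammab_n|\to\infty$.

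One caution on your aside: only the `if' direction carries over to $\eps>0$ (that is \Cref{lem:polynom-roots-2}). The accumulating modes have $|\d\cdot\gammab_n|\to\infty$, so $-i\eps c_0\,\d\cdot\gammab_n$ is not a small perturbation there; for each fixed $\eps>0$ it moves those eigenvalues a distance of order $(\eps|\d\cdot\gammab_n|)^{1/4}$ off the imaginary axis.
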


\begin{proof}
    We recall from \Cref{lem:eigenvalue-correspondence} that for any eigenvalue $\lambda \in \C$ of $\tilde{\Lcal}^\eps$, there exists a $\gammab \in \Gamma$ such that
    \begin{equation*}
        L^0(\d \lambda + i \gammab) = - (1 + \lambda^2 - |\gammab|^2 + 2i \lambda \d \cdot \gammab)^2 = 0.
    \end{equation*}
    Solutions to this can be explicitly given by
    \begin{equation}\label{eq:formula-lambda-pm}
        \lambda_\pm = - i \d \cdot \gammab \pm \sqrt{-|\d\cdot \gammab|^2 + |\gammab|^2 - 1}.
    \end{equation}
    Therefore, $\lambda_\pm$ has a non-trivial real part if and only if $-|\d\cdot \gammab|^2 + |\gammab|^2 - 1 > 0$. In this case, 
    \begin{equation*}
        \Re(\lambda_\pm) = \pm \sqrt{-|\d\cdot \gammab|^2 + |\gammab|^2 - 1} = \pm \sqrt{|\d^{\perp}\cdot \gammab|^2 - 1},
    \end{equation*}
    where $\d \perp \d^{\perp}\in S^1$ and we used $|\gammab|^2 = |\d\cdot \gammab|^2 + |\d^{\perp}\cdot\gammab|^2$. This allows for the following geometric interpretation: $|\d^\perp \cdot \gammab|$ gives the distance of $\gammab\in \Gamma$ to the line $\{\d s \,:\, s \in \R\}$. Therefore, if $|\d^\perp \cdot \gammab| > 1$ the shifted line $\{\d s + \gammab \,:\, s \in \R\}$ has no intersections with the unit circle, see \Cref{fig:spec-real-part-geometric-interp}. Hence, it suffices to understand the set $\{|\d^{\perp}\cdot \gammab| \, : \, \gammab\in \Gamma\}$. Since $\d^{\perp} = (-\sin(\theta),\cos(\theta))$, for  $\gammab = n_1\k_1 + n_2\k_2\in \Gamma$ we obtain
    \begin{equation*}
        \d^{\perp} \cdot \gammab = - n_1 \sin(\theta) + n_2(-\sin(\theta) \cos(\tfrac{2\pi}{3}) + \cos(\theta)\sin(\tfrac{2\pi}{3})) = -n_1 \sin(\theta) - n_2\sin(\theta - \tfrac{2\pi}{3})
    \end{equation*}
    using the addition theorems. We now make the following claim: the set $\{n_1 \sin(\theta) + n_2\sin(\theta - \tfrac{2\pi}{3}) \, : \, k_1,k_2\in \Z\}$ is discrete in $\R$ if and only if $\theta\in \sqrt{3}\Q$. Otherwise, this set is dense in $\R$. Observe that this implies the first part of the proposition.

    To prove the claim, we note that we can equivalently prove the claim for the set $\{n_1  + n_2\tfrac{\sin(\theta - 2\pi/3)}{\sin(\theta)} \, : \, k_1,k_2\in \Z\} = \Z + a \Z$ with $a=\tfrac{\sin(\theta - 2\pi/3)}{\sin(\theta)}$, since in the case $\theta = 0$ there is nothing to prove. Recall that $\Z + a\Z$ is discrete in $\R$ if and only if $a\in \Q$, and otherwise it is dense. Finally, we observe that $\tfrac{\sin(\theta - 2\pi/3)}{\sin(\theta)} = -\frac{1}{2} - \sqrt{3}\cot(\theta) \in \Q$ if and only if $\cot(\theta) \in \sqrt{3}\Q$.
\end{proof}

\begin{figure}[H]
    \begin{subfigure}[b]{0.45\textwidth}
        \includegraphics[width = \linewidth]{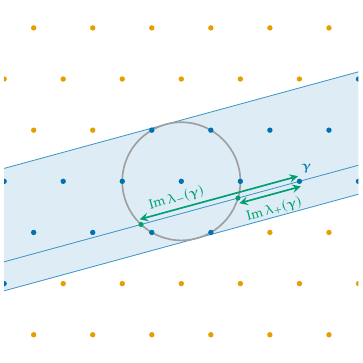}
        \subcaption{}
        \label{subfig:geo-characterisation-imaginary-eigvals}
    \end{subfigure}
    \hfill
    \begin{subfigure}[b]{0.45\textwidth}
        \includegraphics[width = \linewidth]{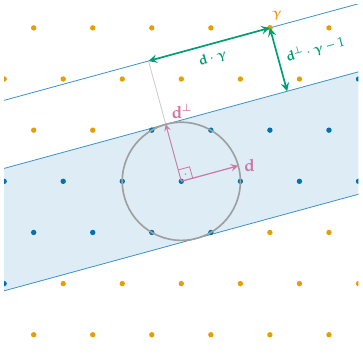}
        \subcaption{}
        \label{subfig:geo-characterisation-hyperbolic-eigvals}
    \end{subfigure}
    \caption{Geometric characterisation of the spectrum of $\tilde{\Lcal}^0$: lattice points in the blue shaded strip generate eigenvalues on the imaginary axis \sref{subfig:geo-characterisation-imaginary-eigvals}, while lattice points outside generate hyperbolic eigenvalues \sref{subfig:geo-characterisation-hyperbolic-eigvals}.}
    \label{fig:spec-real-part-geometric-interp}
\end{figure}

\begin{figure}[H]
    \centering
    \begin{subfigure}[b]{0.45\textwidth}
        \includegraphics[width = \linewidth]{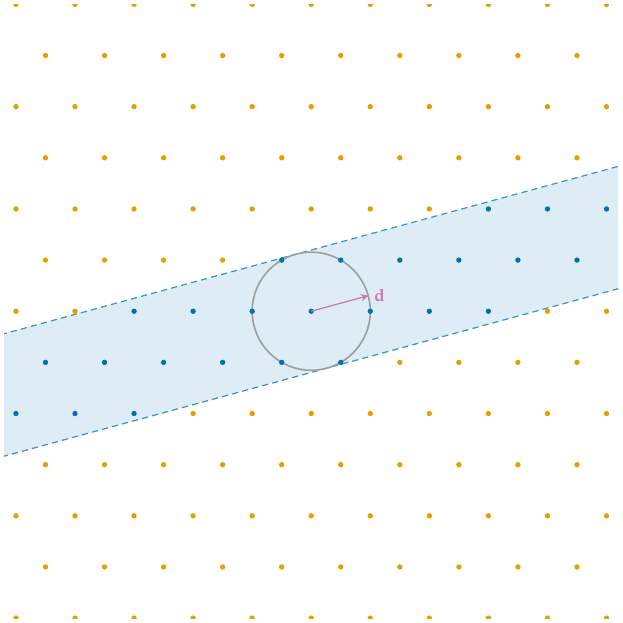}
        \subcaption{}
        \label{subfig:spec-gap-1}
    \end{subfigure}
    \hfill
    \begin{subfigure}[b]{0.45\textwidth}
        \includegraphics[width = \linewidth]{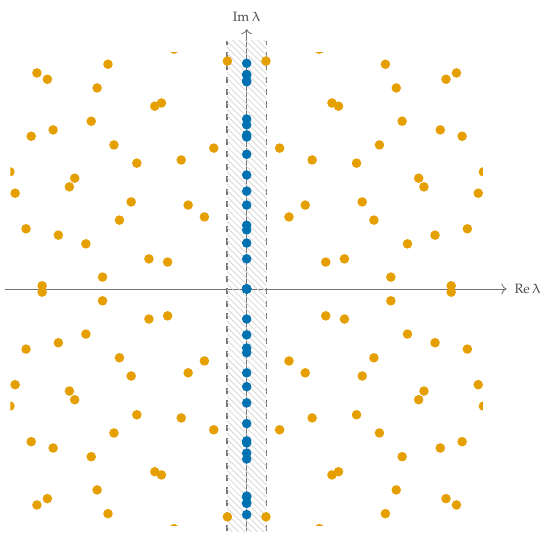}
        \subcaption{}
        \label{subfig:spec-gap-2}
    \end{subfigure}    
    \caption{Spectrum of $\tilde{\Lcal}^0$ for $\theta$ satisfying $\cot(\theta) \in \sqrt{3}\Q$. Here, all hyperbolic lattice points have a positive distance to the blue shaded strip \sref{subfig:spec-gap-1}, which yields a spectral gap around the imaginary axis \sref{subfig:spec-gap-2}.}
    \label{fig:spectral-eps=0}
\end{figure}

\subsection{Spectral analysis for $\eps > 0$}\label{sec:spectral-analysis-eps-positive}

We now want to understand the spectrum of the operator $\tilde{\Lcal}^\eps$ for $\eps >0$. In particular, we want to show that there is a strip with a size of order $\sqrt{\eps}$ around the imaginary axis that contains only finitely many eigenvalues, which emerge precisely from the eigenvalue curves for $\gammab \in \Gamma_0$ and have a real part of order $\eps$. It turns out that the analysis presented here strongly depends on the presence of a spectral gap around the imaginary axis at $\eps = 0$. Therefore, we restrict to angles $\theta$ satisfying $\cot(\theta) \in \sqrt{3}\Q$, see \Cref{prop:real-spectral-gap}.

The analysis is split into three cases: first, we handle the eigenvalues of $\hat{\Lcal}^{\eps}(\pm \k_j) \mp i \d\cdot\k_j I$ for $j \in 1,2,3$, second, we consider the eigenvalues which are on the imaginary axis at $\eps = 0$, and finally, we deal with the eigenvalues with a non-trivial real part at $\eps = 0$. While the first case can be handled with an implicit function theorem argument similar to \cite{hilder2020-08JournalofDifferentialEquations}, the latter two cases require a more subtle argument to guarantee that there is a uniform gap of order $\sqrt{\eps}$.

For this, we recall that eigenvalues of $\tilde{\Lcal}^{\eps}(\gammab)$ with $\gammab \in \Gamma$ are given by the roots of the fourth-order polynomial
\begin{equation}\label{eq:polynomial-spectrum}
    p(\lambda;\gammab,\eps) := L^{\eps}(\d \lambda + i\gammab) + \eps c_0 \lambda = L^0(\d\lambda + i\gammab) + \eps^2 \mu_0 + \eps c_0 \lambda,
\end{equation}
with $L^0(\vb) = -(1+\vb \cdot \vb)^2$. We note that $p(\lambda;\gammab,0)$ has two double roots or one fourth root. The latter always lies on the imaginary axis. Note that for $\gammab\notin\Gamma_0$ all roots are of the form $\lambda_\pm = - i \d\cdot \gamma \pm \Delta(\d^\perp \cdot \gammab)$, see \eqref{eq:formula-lambda-pm}, where $\Delta$ is real if $|\d^\perp \cdot \gammab| > 1$ and purely imaginary otherwise. Under the assumption that $\cot(\theta)\in \sqrt{3}\Q$, it follows that there is $\delta>0$ such that $|\Delta(\d^\perp \cdot \gammab)| > \delta$ for all $\gammab\in \Gamma\setminus \Gamma_0$ with $\gammab\cdot \d^{\perp} \neq 1$.

The analysis of the spectrum of $\tilde{\Lcal}^\eps$ relies on a careful expansion using the implicit function theorem, which is only valid locally. This is combined with two results about the location of the roots of the polynomial $p$ that guarantee that eventually sufficient uniform spectral gaps hold true. The following lemma guarantees that the eigenvalue curves starting at purely imaginary eigenvalues will eventually be sufficiently far away from the imaginary axis. This includes the fourth-order Jordan blocks.

\begin{lemma}\label{lem:polynom-roots-1}
    For every $c_1>0$ sufficiently small there are $\eps_0 >0$ and $r_1>0$ such that for every $\eps \in (0,\eps_0)$ and every $\gammab \in \Gamma$ with $|\d^\perp \cdot \gammab| \leq  1$ with $\eps|\d \cdot \gammab|> c_1$ it holds that $p(\lambda;\gammab,\eps) = 0$ implies that $|\Re(\lambda)| > r_1$.
\end{lemma}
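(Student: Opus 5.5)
The plan is to reduce $p(\lambda;\gammab,\eps)=0$ to a scalar equation by shifting and taking a square root, and then to argue by contradiction, assuming $|\Re\lambda|\le r_1$ for a small $r_1$ to be fixed. Write $a:=\d\cdot\gammab$ and $b:=\d^\perp\cdot\gammab$, so $|b|\le 1$ and $\eps|a|>c_1$, and set $\lambda=-ia+z$, which is the shift already visible in \eqref{eq:formula-lambda-pm}. A direct computation gives $(\d\lambda+i\gammab)\cdot(\d\lambda+i\gammab)=\lambda^2+2ia\lambda-|\gammab|^2=z^2-b^2$. With $\omega^2:=1-b^2\in[0,1]$, the root condition $p=0$ from \eqref{eq:polynomial-spectrum} therefore becomes
\begin{equation*}
    w^2 = R, \qquad w:=z^2+\omega^2, \qquad R:=\eps^2\mu_0+\eps c_0 z - i\eps c_0 a .
\end{equation*}
Writing $z=x+iy$ gives $\Re\lambda=x$, $\Re w=x^2-y^2+\omega^2$, $\Im w=2xy$, $\Re R=\eps^2\mu_0+\eps c_0 x$ and $\Im R=\eps c_0(y-a)$. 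The point is that the transport term $-i\eps c_0 a$ has size at least $c_0c_1$, which is $\Ocal(1)$ rather than small. It is the only large forcing, so it must be balanced by $w^2$, and this balance forces $x$ away from zero. I will use $|x|\le r_1\le 1$ and $\omega\le1$ throughout, and split into two cases according to whether $y$ can cancel $a$ in $\Im R$.

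\emph{Case $|y|\ge |a|/2$.} Then $|y|\ge c_1/(2\eps)$, which is large for $\eps$ small. On the left, $|w|\ge |\Re w|\ge y^2-2$, so $|w|^2\ge (y^2-2)^2\gtrsim y^4$. On the right, $|R|\le \eps^2\mu_0+\eps c_0+\eps c_0(|y|+|a|)\le C+3\eps c_0|y|$. Since $y^4\gg \eps|y|$ for $|y|\gtrsim \eps^{-1}$, we get $|w|^2>|R|$ once $\eps<\eps_0(c_1)$. This contradicts $w^2=R$, so this case cannot occur.

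\emph{Case $|y|<|a|/2$.} Here $|\Im R|\ge \eps c_0|a|/2\ge c_0c_1/2$, while $|\Re R|\le \eps^2\mu_0+\eps c_0$. Hence $|\Re R|/|\Im R|\le C\eps/c_1$, and in particular $|\Re R|\le |\Im R|$ for $\eps$ small. So $R$ lies in a thin sector around the imaginary axis and $\arg(w)$ is within $\Ocal(\eps/c_1)$ of $\pm\pi/4$ modulo $\pi$. This yields $|\Re w|\le 2|\Im w|$ and $|\Im w|\ge |w|/2\ge \tfrac12\sqrt{c_0c_1/2}$, that is,
\begin{equation*}
    |x^2-y^2+\omega^2|\le 4|x||y|, \qquad 2|x||y|\ge \tfrac12\sqrt{c_0c_1/2}.
\end{equation*}
If $|x|\le r_1$, the second inequality forces $|y|\ge \sqrt{c_0c_1/2}/(4r_1)$, which is large when $r_1$ is small. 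Then $|x^2-y^2+\omega^2|\ge y^2-2$, and this exceeds $4r_1|y|\ge 4|x||y|$ for large $|y|$, contradicting the first inequality. Choosing $r_1=r_1(c_0,c_1)$ small enough therefore gives $|\Re\lambda|=|x|>r_1$.

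I expect the main obstacle to be bookkeeping rather than ideas. One has to make sure that $r_1$ and $\eps_0$ depend only on $c_1$ and $c_0,\mu_0$, and not on $\gammab$, since $a$ is unbounded over the lattice; the estimates above were arranged so that $|a|$ enters only through lower bounds. One also has to handle cleanly the sector estimate for $\arg w$ in the second case, including when $\omega=0$, which is the tangential, fourth-order Jordan-block case. The restriction \enquote{$c_1$ sufficiently small} is not used in this argument, so the statement should hold uniformly in it.
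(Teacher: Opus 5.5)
Your proof is correct, but it follows a different route from the paper's. Both arguments start from the shift $\lambda=-i\,\d\cdot\gammab+z$, but then they diverge.

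The paper splits the strip $|\Re z|<r$ at a fixed, $\eps$-independent height $|\Im z|=C_i$:
\begin{itemize}
\item for $|\Im z|>C_i$ it uses that $\Re p$ does not see the transport term and behaves like $-(\Im z)^4$;
\item for $|\Im z|\le C_i$ it uses that the $\eps=0$ quartic is real on the imaginary axis, so $|\Im p|\ge c_0c_1-\Ocal(\eps C_i)$ there, and then extends off the axis by a compactness argument.
\end{itemize}

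You instead use that $L^0$ is minus a perfect square to rewrite $p=0$ as $w^2=R$. You split at the $\gammab$-dependent height $|\Im z|=|\d\cdot\gammab|/2$. You replace the compactness step by a modulus comparison in the far region and a sector estimate on $\arg R$ in the near region.

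What your route buys:
\begin{itemize}
\item Explicit constants, e.g.\ $r_1$ of order $\sqrt{c_0c_1}$. This is the natural order: it is consistent with the $\sqrt{\eps\lambda_0}$ splitting in \eqref{eq:expansion-imaginary-less-central} at $\eps|\d\cdot\gammab|\approx c_1$.
\item Transparent uniformity in $\gammab$. Your estimates use $|\d\cdot\gammab|$ only through lower bounds. The paper's compactness argument runs over a family parametrised by the non-compact quantity $\eps\,\d\cdot\gammab$, and so implicitly needs the (easy) observation that larger $|\eps\,\d\cdot\gammab|$ only increases $|\Im p|$.
\end{itemize}

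What the paper's route buys is robustness. It uses only the growth of the real part along vertical lines and the realness of the $\eps=0$ symbol on the imaginary axis. It therefore carries over essentially verbatim to the generic Turing setting of \Cref{app:general-spectral-analysis}, and it parallels \Cref{lem:polynom-roots-2}. Your identity $w^2=R$, and hence the sector argument for $\arg w$, is specific to the Swift--Hohenberg symbol.

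A minor point: write $\eps^2|\mu_0|$ in your bounds on $|R|$ and $|\Re R|$, since $\mu_0$ may be negative in the centre-manifold setting.
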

\begin{proof}
    Using that $L^0$ has roots at $\lambda_\pm = -i \d\cdot \gammab \pm \Delta$ for some $\Delta = i \Delta_i \in i\R$ with $|\Delta_i| \leq 1$, we can write
    \begin{equation*}
        p(\lambda;\gammab,\eps) = -(\lambda + i(\d \cdot \gammab) + i\Delta_i)^2 (\lambda + i(\d \cdot \gammab) - i\Delta_i)^2 + \eps^2 \mu_0 + \eps c_0 \lambda.
    \end{equation*}
    We now replace $\lambda = \tilde{\lambda} - i(\d \cdot \gammab)$ to obtain
    \begin{equation*}
        g(\tilde{\lambda}) := -(\tilde{\lambda} + i\Delta_i)^2(\tilde{\lambda} - i\Delta_i)^2 + \eps^2 \mu_0 + \eps c_0 \tilde{\lambda} - i \eps c_0 (\d \cdot \gammab).
    \end{equation*}

    Now, fix any $c_1 > 0$. Then, we show the claim in two steps. First, we show that for any $C_i > 0$ sufficiently large, there exist $\eps_0 > 0$ and $\tilde{r}_1$ such that for any $\eps \in (0,\eps_0)$ it holds that $g(\tilde{\lambda}) \neq 0$ if $\tilde{\lambda} \in \{|\Re(\tilde{\lambda})| < \tilde{r}_1\} \cap \{|\Im(\tilde{\lambda})| > C_i\}$. Second, we show that for any $C_i > 0$ sufficiently large, there exist $\eps_0>0$ and $\tilde{r}_2 > 0$ such that for any $\eps \in (0,\eps_0)$ it holds that $g(\tilde{\lambda}) \neq 0$ if $\tilde{\lambda} \in \{|\Re(\tilde{\lambda})| < \tilde{r}_2\} \cap \{|\Im(\tilde{\lambda})| < C_i\}$ provided $\eps|\d \cdot \gammab|> c_1$. This proves the claim by choosing $r_1 = \min(\tilde{r}_1, \tilde{r}_2)$.

    For the first step, we fix $C_i > 0$ sufficiently large. Then, we note that $\Re(g(\tilde{\lambda}))$ is independent of $\d\cdot \gammab$ and $|\Re(-(\tilde{\lambda} + i\Delta_i)^2(\tilde{\lambda} - i\Delta_i)^2)| \sim \Im(\tilde{\lambda})^4$ for $|\Im(\tilde{\lambda})| \rightarrow \infty$. In particular, since $|\Delta_i| \leq 1$ for all $\gammab\in \Gamma$ with $|\d^\perp \cdot \gammab|\leq 1$, the coefficients of $\Re(g)$ are also uniformly bounded for all such $\gammab\in \Gamma$. Therefore, by choosing $\tilde{r}_1$ and $\eps_0$ sufficiently small, we can guarantee that $\Re(g(\tilde{\lambda})) \neq 0$ and thus $g(\tilde{\lambda}) \neq 0$ if $|\Im(\tilde{\lambda})| > C_i$ and $|\Re(\tilde{\lambda})| < \tilde{r}_1$.

    For the second step, we notice that for $\tilde{\lambda} = i \tilde{\lambda}_i$ it holds
    \begin{equation*}
        g(i\tilde{\lambda}_i) = -(\tilde{\lambda}_i - \Delta_i)^2(\tilde{\lambda}_i + \Delta_i)^2 + \eps c_0 i\tilde{\lambda}_i - i c_0 \eps \d\cdot \gammab + \eps^2\mu_0.
    \end{equation*}
    Hence, we obtain $|g(i\tilde{\lambda}_i)| > |\Im(g(i\tilde{\lambda}_i)| > c_0 c_1 - \eps_0 C_i > \tfrac{c_0 c_1}{2}$ for all $|\tilde{\lambda}_i| < C_i$ by choosing $\eps_0$ sufficiently small and provided $\eps |\d\cdot \gammab| > c_1$. Now, a compactness argument guarantees that there is $\tilde{r}_2 > 0$ such that $g(\tilde{\lambda}) \neq 0$ for all $\tilde{\lambda}$ with $|\Re(\tilde{\lambda})| < \tilde{r}_2$ and $|\Im(\tilde{\lambda})| < C_i$. This concludes the proof.
\end{proof}

In order to guarantee that the real spectral gap persists, we need another lemma in a similar spirit:

\begin{lemma}\label{lem:polynom-roots-2}
    Assume $\cot(\theta) \in \sqrt{3}\Q$. There exist $\eps_0  >0 $ and $r_2 > 0$ such that for all $\gammab \in \Gamma$ with $|\d^\perp \cdot \gammab| > 1$ it holds that $p(\lambda;\gammab,\eps) = 0$ implies that $|\Re(\lambda)| > r_2$ for all $\eps \in (0,\eps_0)$. 
\end{lemma}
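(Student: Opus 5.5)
We follow the strategy of \Cref{lem:polynom-roots-1}: shift the imaginary part away and treat the resulting quartic as a perturbation of one whose roots lie uniformly far from the imaginary axis. Two things make this work. The spectral gap at $\eps = 0$ from \Cref{prop:real-spectral-gap} is uniform. And a root that moves by $\eps$-perturbations either stays far from the imaginary axis, or has large imaginary part, in which case the quartic term dominates.

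For $|\d^\perp\cdot\gammab| > 1$ we have $\lambda_\pm = -i\d\cdot\gammab \pm \Delta$ with $\Delta = \sqrt{|\d^\perp\cdot\gammab|^2-1}$ real. Since $\cot(\theta)\in\sqrt{3}\Q$, the set $\{|\d^\perp\cdot\gammab|\,:\,\gammab\in\Gamma\}$ is discrete (proof of \Cref{prop:real-spectral-gap}). Hence there is $\delta>0$ with $\Delta \geq \delta$ for all such $\gammab$. Substituting $\lambda = \tilde{\lambda} - i\d\cdot\gammab$, the equation becomes
\begin{equation*}
    g(\tilde{\lambda}) = -(\tilde{\lambda}-\Delta)^2(\tilde{\lambda}+\Delta)^2 + \eps^2\mu_0 + \eps c_0\tilde{\lambda} - i\eps c_0 \d\cdot\gammab = 0 .
\end{equation*}
The main obstacle is the term $-i\eps c_0\d\cdot\gammab$, which is not uniformly small because $\d\cdot\gammab$ is unbounded over the lattice. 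Unlike in \Cref{lem:polynom-roots-1}, it cannot be exploited to get a lower bound. So we work with a root $\tilde{\lambda}$ satisfying $|\Re\tilde{\lambda}| \le r_2$ and aim for a contradiction using only the real part of $g$.

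Write $\tilde{\lambda} = a + ib$ with $|a|\le r_2$. The term $-i\eps c_0\d\cdot\gammab$ is purely imaginary, so it drops out of $\Re g$. Set $q(z) = -(z^2-\Delta^2)^2$. Then
\begin{equation*}
    \Re q(ib) = -(b^2+\Delta^2)^2 \le -\Delta^4 \le -\delta^4 .
\end{equation*}
Moreover $|q(a+ib) - q(ib)| \le C|a|(1+|b|+\Delta)^3$. Hence
\begin{equation*}
    \Re g(\tilde{\lambda}) \le -(b^2+\Delta^2)^2 + C r_2 (1+|b|+\Delta)^3 + \eps^2\mu_0 + \eps c_0 r_2 .
\end{equation*}
The quartic $(b^2+\Delta^2)^2$ is bounded below by $c(1+|b|+\Delta)^4\ge c(1+|b|+\Delta)^3$ when $|b|+\Delta$ is large, and by $\delta^4$ otherwise. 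Therefore
\begin{equation*}
    (b^2+\Delta^2)^2 \ge c_\delta (1+|b|+\Delta)^3
\end{equation*}
for some $c_\delta>0$, uniformly in $b\in\R$ and $\Delta\ge\delta$. Choosing $r_2 < c_\delta/(2C)$ and then $\eps_0$ small enough that $\eps_0^2\mu_0 + \eps_0 c_0 r_2 < c_\delta/2$ gives $\Re g(\tilde{\lambda}) < 0$, which is a contradiction. Since $\Re\lambda = \Re\tilde{\lambda}$, the claim follows.

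The only delicate point is making the perturbation bound in $a$ uniform in both $b$ and $\Delta$. To handle it, expand $q(a+ib)$ binomially: every term containing $a$ carries at most cubic powers of $|b|$ and $\Delta$. This gives the stated estimate with a constant independent of $\gammab$.
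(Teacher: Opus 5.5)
Your proposal is correct and follows essentially the same route as the paper. You shift by $-i\d\cdot\gammab$ and pass to the real part, so the unbounded term $-i\eps c_0\d\cdot\gammab$ drops out, and you use $\Re q(ib) = -(b^2+\Delta^2)^2 \le -\delta^4$ with $\delta$ coming from the discreteness of $\{|\d^\perp\cdot\gammab|\}$. The one difference is in how uniformity is obtained. Your explicit bounds $|q(a+ib)-q(ib)|\le C|a|(1+|b|+\Delta)^3$ and $(b^2+\Delta^2)^2\ge c_\delta(1+|b|+\Delta)^3$ make the estimate uniform in both $\Im\tilde{\lambda}$ and the unbounded parameter $\Delta$. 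The paper instead uses a compactness argument on a bounded strip plus the large-$|\Im\tilde{\lambda}|$ argument of \Cref{lem:polynom-roots-1}, which leaves the uniformity in $\Delta$ implicit; your version fills that in.
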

\begin{proof}
    Since $|\d^\perp \cdot \gammab| > 1$, we find that $p(\lambda;\gammab,0)$ has two double roots at $\lambda_0 = -i \d \cdot\gammab \pm \Delta$ for $\Delta\in \R$. Furthermore, we note that due to $\cot(\theta) \in \sqrt{3}\Q$, there exists a $\delta > 0$ such that $|\Delta| > \delta$. Similar to the proof of \Cref{lem:polynom-roots-1}, we write $\lambda = \tilde{\lambda} - i (\d \cdot \gamma)$ and define
    \begin{equation*}
        f(\tilde{\lambda}) := - (\tilde{\lambda} + \Delta)^2 (\tilde{\lambda} - \Delta)^2 + \eps^2 \mu_0 + \eps c_0 \tilde{\lambda} - i \eps c_0 (\d \cdot \gammab).
    \end{equation*}
    We now show that there exist $\eps_0, r_2$ such that $\Re(f(\tilde{\lambda})) \neq 0$ for $\eps \in (0,\eps_0)$ and $|\Re(\tilde{\lambda})| < r_2$. For this, we note that
    \begin{equation}
        \Re(f(\tilde{\lambda})) = \Re(- (\tilde{\lambda} + \Delta)^2 (\tilde{\lambda} - \Delta)^2) + \eps^2 \mu_0 + \eps c_0 \Re(\tilde{\lambda}).
    \end{equation}
    By choosing $r_2$ and $\eps_0$ sufficiently small, we can thus treat $\eps^2 \mu_0$ and $\eps c_0 \Re(\tilde{\lambda})$ as small perturbations. Therefore, it is sufficient to guarantee that $\Re((\tilde{\lambda} + \Delta)^2 (\tilde{\lambda} - \Delta)^2) \neq 0$ for all $\tilde{\lambda}$ with $|\Re(\tilde{\lambda})| < r_2$. As in the proof of \Cref{lem:polynom-roots-1}, this is true for $\Im(\tilde{\lambda})$ sufficiently large. Hence, it remains to prove that the real part is non-zero in a compact strip around the imaginary axis, which follows from $|\Re(f(i\tilde{\lambda}_i))| = (\tilde{\lambda}_i^2 + \Delta^2)^2 > \delta^4$. Using a compactness argument concludes the proof.
\end{proof}

We are now in a position to prove the main result on the behaviour of the spectrum of $\tilde{\Lcal}^\eps$ for $\eps > 0 $ sufficiently small.

\begin{proposition}\label{prop:spectrum-eps-positive}
    Let $\cot(\theta) \in \sqrt{3}\Q$. There is $\eps_0 > 0$ and a constant $C_0>0$ such that for $\eps \in (0,\eps_0)$ the spectrum of $\tilde{\Lcal}^\eps$ looks like this:
    \begin{enumerate}[label=(\alph*), ref=\theproposition(\alph*)]
        \item if $\theta \in [0,\tfrac{\pi}{6})$, there are twelve eigenvalues $\lambda_{\pm}^{\eps}(\k_j) = \eps \nu_{\pm}(\k_j)  + \Ocal(\eps^2)$, $j = \pm 1, \pm 2, \pm 3$ of $\tilde{\Lcal}^{\eps}(\k_j)$ with
        \begin{equation}\label{eq:spectrum-eps-pos-perturbation-block-size-two}
            \nu_{\pm}(\k_j) = \begin{dcases}
                \frac{-c_0 \pm \sqrt{c_0^2 - 16 (\d \cdot \k_j)^2\mu_0}}{8(\d \cdot \k_j)^2} & \text{if } c_0 \neq c_{\crit}(\k_j), \\[6pt]
                -\frac{c_0}{8(\d \cdot \k_j)^2} \pm \sqrt{\eps} \frac{i^{3/2}\mu_0^{3/4}}{2\sqrt{2}(\d\cdot\k_j)^2} & \text{if } c_0 = c_{\crit}(\k_j),
            \end{dcases}
        \end{equation}
        where $c_{\crit}(\k_j)$ is given by \eqref{eq:c-crit};
        \item\label[proposition]{it:spectrum-pi-over-six} if $\theta = \tfrac{\pi}{6}$, there are ten eigenvalues $\lambda_{\pm}^{\eps}(\k_j) = \eps \nu_{\pm}(\k_j)  + \Ocal(\eps^2)$ for $j = \pm 1, \pm 3$ and $\lambda_\perp^{\eps}(\pm\k_2) = \eps \nu_\perp(\pm \k_2) + \Ocal(\eps^2)$ of $\tilde{\Lcal}^{\eps}(\k_j)$ with
        \begin{equation}\label{eq:spectrum-eps-pos-perturbation-block-size-four}
            \nu_\perp(\pm \k_2) = - \dfrac{\mu_0}{c_0};
        \end{equation}
        \item all other eigenvalues $\lambda_h(\eps)$ of the operator $\tilde{\Lcal}^\eps$ satisfy
        \begin{equation*}
            |\Re(\lambda_h(\eps))| > C_0\sqrt{\eps}.
        \end{equation*}
    \end{enumerate}
\end{proposition}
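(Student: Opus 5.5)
The plan is to sort the lattice into three classes: $\gammab \in \Gamma_0$; $\gammab \notin \Gamma_0$ with $|\d^\perp\cdot\gammab|\le 1$; and $|\d^\perp\cdot\gammab|>1$. Throughout we work with the polynomial $p(\lambda;\gammab,\eps)$ from \eqref{eq:polynomial-spectrum}. The third class is handled directly by \Cref{lem:polynom-roots-2}: it gives a uniform $r_2>0$ with $|\Re\lambda|>r_2\ge C_0\sqrt{\eps}$ for $\eps$ small.

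For the second class, \Cref{lem:polynom-roots-1} controls all $\gammab$ with $\eps|\d\cdot\gammab|>c_1$. Since $\cot(\theta)\in\sqrt3\Q$, the set $\{\d^\perp\cdot\gammab\}$ is discrete. The strip $|\d^\perp\cdot\gammab|\le1$ therefore consists of finitely many lines parallel to $\d$. On each such line $\Delta_i$ is fixed, and only the shift $\d\cdot\gammab$ varies.

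The remaining $\gammab$ satisfy $|\d\cdot\gammab|\le c_1/\eps$. For these, the substitution $\lambda=\tilde\lambda-i\d\cdot\gammab$ gives
\begin{equation*}
    -(\tilde\lambda^2+\Delta_i^2)^2+\eps^2\mu_0+\eps c_0\tilde\lambda-i\eps c_0\,\d\cdot\gammab=0.
\end{equation*}
If $\Delta_i\neq0$, there are double roots at $\tilde\lambda=\pm i\Delta_i$, with $|\Delta_i|\ge\delta$ uniformly except in the tangent case. Near such a root, write $\tilde\lambda=\pm i\Delta_i+\eta$. The leading balance is $4\Delta_i^2\eta^2\approx \eps c_0(\pm i\Delta_i-i\,\d\cdot\gammab)+\Ocal(\eps^2)$.

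The key observation is that $|\pm\Delta_i-\d\cdot\gammab|$ is bounded below uniformly by some $\rho>0$ when $\gammab\notin\Gamma_0$. The reason is that $\tilde\lambda$ corresponding to $\lambda=0$ requires $|\gammab|=1$; the values are discrete and nonzero by periodicity of the lattice along $\d$. Hence $\eta^2$ equals a nonzero purely imaginary number of size at least $\eps c_0\rho$, up to lower-order terms. Its square roots then have real part of order $\sqrt{\eps}$, giving $|\Re\eta|\ge C\sqrt{\eps}$.

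To make this uniform in $\gammab$ with $|\d\cdot\gammab|\le c_1/\eps$, rescale $\eta=\sqrt{\eps}\,\zeta$ and $s=\eps\,\d\cdot\gammab\in[-c_1,c_1]$. Then
\begin{equation*}
    4\Delta_i^2\zeta^2 = c_0 i(\pm\Delta_i\eps^{0}-\ldots)
\end{equation*}
with remainder $\Ocal(\sqrt{\eps}|\zeta|^3)$. The argument is a compactness argument in $s$ together with the bound above, and choosing $c_1$ small keeps the problem local.

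The tangent case $|\d^\perp\cdot\gammab|=1$, with $\gammab\notin\Gamma_0$, has a fourfold root. Here $\eta^4\approx -i\eps c_0(\d\cdot\gammab)\ne0$, so $|\Re\eta|\sim\eps^{1/4}\gg\sqrt\eps$, provided $\d\cdot\gammab$ is bounded away from $0$. This holds because $\d\cdot\gammab=0$ together with $|\d^\perp\cdot\gammab|=1$ forces $\gammab\in\Gamma_0$. Roots far from $\pm i\Delta_i$ are excluded by the argument of \Cref{lem:polynom-roots-1}.

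For $\gammab=\pm\k_j$, write $\lambda=\eps\nu$. For $\theta\neq\pi/6$, expanding $L^0(\d\lambda+i\k_j)=-(\lambda^2+2i\lambda\,\d\cdot\k_j)^2$ gives, at order $\eps^2$,
\begin{equation*}
    4(\d\cdot\k_j)^2\nu^2+c_0\nu+\mu_0=0.
\end{equation*}
This yields $\nu_\pm$, and the implicit function theorem applies whenever the discriminant is nonzero. At $c_0=c_{\crit}$ the root is double, so I would use a Puiseux-type expansion $\nu=\nu_0+\sqrt\eps\,\nu_1$. The next order of the equation determines $\nu_1^2$, and that gives the stated $\eps^{3/2}$ splitting.

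The other two roots of the $\k_j$ block sit near $\lambda=-2i\,\d\cdot\k_j\ne0$. They are handled like the second class, with real part of order $\sqrt\eps$.

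For $\theta=\pi/6$ and $\gammab=\pm\k_2$, the polynomial is $p=-\lambda^4+\eps c_0\lambda+\eps^2\mu_0$. One root is $\lambda\approx-\eps\mu_0/c_0$. The other three satisfy $\lambda^3\approx\eps c_0$, so they have size $\eps^{1/3}$ and nonzero real parts, because the cube roots of a positive real number are not purely imaginary.

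The main obstacle I expect is uniformity in the middle regime: infinitely many $\gammab$ on finitely many lines, with $|\d\cdot\gammab|$ up to $c_1/\eps$. This is handled through the rescaled compactness argument and the lower bound $\rho$.
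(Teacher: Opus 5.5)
Your route is the paper's. You use the same split of $\Gamma$ into $\Gamma_0$, the strip $|\d^\perp\cdot\gammab|\le 1$ and its complement, and \Cref{lem:polynom-roots-1,lem:polynom-roots-2} for the far regimes. The local expansions also match: $\lambda=\eps\nu$ with an $\eps^{3/2}$ Puiseux correction at the critical speed, $\sqrt{\eps}$-splitting of the double roots off the origin, $\eps^{1/4}$-splitting in the tangent case, and the $\eps$ versus $\eps^{1/3}$ roots at $\theta=\tfrac{\pi}{6}$. Your explicit argument that $|\lambda_0|\ge\rho>0$ for $\lambda_0=\pm\Delta_i-\d\cdot\gammab$ and $\gammab\notin\Gamma_0$ is exactly what the paper uses implicitly.

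The one step that fails as written is the uniformity rescaling $\eta=\sqrt{\eps}\,\zeta$, which is the step you single out as the main obstacle.

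\begin{itemize}
\item Since $|\d\cdot\gammab|$ ranges up to $c_1/\eps$, the constant term of the $\zeta$-equation is $ic_0\lambda_0$, of size up to about $c_0c_1/\eps$.
\item Then $\zeta$ is not $\Ocal(1)$, and the remainder $\Ocal(\sqrt{\eps}|\zeta|^3)$ is not small.
\item Compactness in $s=\eps\,\d\cdot\gammab\in[-c_1,c_1]$ does not rescue this. At $s=0$, $\eps=0$ the limiting problem is the unsplit double root $4\Delta_i^2\eta^2+\Ocal(\eta^3)=0$, so it yields no uniform $\sqrt{\eps}$ lower bound near $s=0$.
\end{itemize}

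Use instead the scale-invariant rescaling $\eta=\sqrt{\eps|\lambda_0|}\,\zeta$, as the paper does. Dividing by $\eps|\lambda_0|$ gives, for bounded $\zeta$,
\begin{equation*}
    4\Delta_i^2\zeta^2+ic_0\,\mathrm{sgn}(\lambda_0)+\Ocal\bigl(\sqrt{\eps|\lambda_0|}+\sqrt{\eps/|\lambda_0|}\bigr)=0.
\end{equation*}
Since $\eps|\lambda_0|\le c_1+\eps$ and $\eps/|\lambda_0|\le\eps/\rho$, the implicit function theorem applies uniformly in $\gammab$. This is precisely where the smallness of $c_1$ and your bound $\rho$ enter. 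It gives $|\Re\eta|\gtrsim\sqrt{\eps|\lambda_0|}\ge\sqrt{\eps\rho}$.

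The tangent case needs the analogous rescaling by $(\eps|\d\cdot\gammab|)^{1/4}$ to make your $\eps^{1/4}$ bound uniform.
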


\begin{remark}
    Note that we need to distinguish the case that $c_0 = c_\mathrm{crit}(\k_j)$ is the conjectured selected front speed, cf.~\Cref{sec:front-speed}, in the expansion \eqref{eq:spectrum-eps-pos-perturbation-block-size-two}. In this case, the Jordan block splits only at higher order, which creates additional difficulties in the centre manifold analysis and the derivation of the corresponding reduced equations. We refer to \Cref{sec:reduced-equations-critical} for additional details.
\end{remark}

\begin{proof}
    We first recall from \Cref{lem:eigenvalue-correspondence} that $\lambda \in \C$ is an eigenvalue of $\tilde{\Lcal}^\eps$ if there exists a $\gammab \in \Gamma$ such that $L^{\eps}(\d\lambda + i \gammab) + \eps c_0 \lambda = 0$. To prove the first statement, we assume that $\theta \neq \tfrac{\pi}{6}$ and thus $\d \cdot \k_j \neq 0$ for all $j = \pm 1, \pm 2, \pm 3$. Then, we look for solutions to
    \begin{equation*}
        L^{\eps}(\d\lambda + i\k_j) + \eps c_0 \lambda = 0
    \end{equation*}
    for $\lambda$ close to zero. Using that $|\k_j| = 1$ we find that 
    \begin{equation*}
        L^{\eps}(\d\lambda + i\k_j) = - (\lambda^2 + 2 i\lambda \d \cdot \k_j)^2 + \eps^2 \mu_0.
    \end{equation*}
    We note that for $\eps = 0$, eigenvalues are roots of the fourth-order polynomial $L^0(\d \lambda + i \k_j)$, which has a double root at $\lambda = 0$ and a double root at $-2i\d \cdot \k_j$. Hence, for $\eps > 0$ there are exactly two solutions close to zero. To find these, we may consider the lowest-order powers of $\lambda$ and make the ansatz $\lambda^{\eps}(\k_j) = \eps \nu$, which yields
    \begin{equation}\label{eq:equation-for-correction-more-central-eigenvalues}
        4 (\d \cdot \k_j)^2 \nu^2 + c_0 \nu + \mu_0 + \Ocal(\eps |\nu|^3) = 0.
    \end{equation}
    By solving the leading-order equation, we thus find that if $c_0 \neq c_{\crit}(\k_j)$, the solutions are given by
    \begin{equation*}
        \nu_\pm(\k_j) = \frac{-c_0 \pm \sqrt{c_0^2 - 16 (\d \cdot \k_j)^2\mu_0}}{8(\d\cdot \k_j)^2}.
    \end{equation*}
    Applying the implicit function theorem as in \cite{hilder2020-08JournalofDifferentialEquations}, these solutions persist, and we find two eigenvalues of the form $\lambda_\pm^{\eps}(\k_j) = \eps \nu_\pm(\k_j) + \Ocal(\eps^2)$.

    If $c_0 = c_{\crit}(\k_j)$, we make the extended ansatz $\lambda^\eps(\k_j) = \eps \nu_1 + \eps^{3/2} \nu_2$. The leading-order equation is then solved by $\nu_1 = \nu_1(\k_j) = -\tfrac{c_0}{8(\d\cdot\k_j)}$. Additionally, noting that the $\eps^{5/2}$-terms vanish due to $c_0 = c_{\crit}(\k_j)$ and equating terms of order $\eps^3$ to zero, we find that
    \begin{equation*}
        \nu_{2,\pm}(\k_j) = \pm \dfrac{i^{3/2}\mu_0^{3/4}}{2\sqrt{2} (\d \cdot \k_j)^2}.
    \end{equation*}
    Again, the implicit function theorem yields that these solutions persist, and we obtain two eigenvalues of the form $\lambda_\pm^\eps(\k_j) = \eps \nu_1(\k_j) + \eps^{3/2} \nu_{2,\pm}(\k_j) =: \eps \nu_\pm(\k_j)$.

    To prepare the proof of the third statement, we now also show that the eigenvalues of $\hat{\Lcal}^{\eps}(\k_j) - i\d\cdot \k_j I$ close to $-2i\d\cdot \k_j$ have real part of order $\sqrt{\eps}$. Again, there are exactly two solutions close to $-2i\d\cdot \k_j$ and we therefore make the ansatz $\lambda_h(\k_j) = -2i\d\cdot \k_j + \sqrt{\eps} \tilde{\nu}(\k_j)$. Inserting this yields
    \begin{equation*}
        0 = - (2i \d\cdot \k_j + \sqrt{\eps} \tilde{\nu})^2 \eps \tilde{\nu}^2 + \eps c_0(-2i\d\cdot \k_j + \sqrt{\eps} \tilde{\nu})+ \eps^2 \mu_0 = \eps (4 (d\cdot \k_j)^2 \tilde{\nu}^2 - 2i\d\cdot\k_j c_0) + \Ocal(\eps^{\tfrac{3}{2}}).
    \end{equation*}
    Therefore, we find to leading order that $\tilde{\nu}_\pm(\k_j) = \pm \sqrt{\tfrac{i c_0}{2 \d\cdot \k_j}}$. Again, an application of the implicit function theorem shows that these persist under higher-order perturbations, which shows the desired behaviour of the eigenvalue curves since $|\Re(\tilde{\nu}_\pm(\k_j))| > 0$.

    We now assume that $\theta = \tfrac{\pi}{6}$ and thus $\d \cdot (\pm \k_2) = 0$. The only difference from the case $\theta \neq \tfrac{\pi}{6}$ is the eigenvalues of $\hat{\Lcal}^{\eps}(\pm \k_2) \mp i\d\cdot \k_2 I$, which satisfy
    \begin{equation*}
        -\lambda^4 + \eps c_0 \lambda + \eps^2 \mu_0 = 0.
    \end{equation*}
    Therefore, as already shown in \Cref{lem:multiplicity}, $\lambda = 0$ is a fourth-order root at $\eps = 0$. We now show that this splits into one eigenvalue $\lambda_\perp^{\eps}(\pm \k_2)$ with real part of order $\eps$ and three eigenvalues with real part of order $\eps^{1/3}$. Indeed, making the ansatz $\lambda = \eps \nu_\perp(\pm\k_2)$ we find, to leading order,
    \begin{equation*}
        \nu_\perp(\pm\k_2) = - \dfrac{\mu_0}{c_0},
    \end{equation*}
    while we obtain a third-order polynomial, to leading order, when making the ansatz $\lambda = \eps^{1/3}\tilde{\nu}$, which therefore yields three solutions $\tilde{\nu}_{1,2,3}$. Again, following a similar strategy as in \cite{hilder2020-08JournalofDifferentialEquations}, the use of the implicit function theorem shows the persistence of all solution branches and therefore the claim.

    It thus remains to show the third statement. We first consider the case $|\d^{\perp}\cdot \gammab| \leq 1$. We first restrict to the case $|\d^\perp \cdot \gammab| < 1$ which yields that $\Delta_i > 0$. We start by showing that for $\eps |\d \cdot \gammab| < c_1$ for $c_1$ sufficiently small the expansions for the four eigenvalues of $\hat{\Lcal}^{\eps}(\gammab) - i\d\cdot \gammab I$
    \begin{equation}\label{eq:expansion-imaginary-less-central}
        \lambda_{h,+}(\gammab) = - i \d \cdot \gammab + i\Delta_i \pm\sqrt{\eps (\d \cdot \gammab - \Delta_i)} \dfrac{\sqrt{ic_0}}{2\Delta_i} + \Ocal(\eps |\d \cdot \gammab + 1|)
    \end{equation}
    hold. Similarly for $\lambda_{h,-}$. For this, we replace $\lambda_h(\gammab) = -i \d \cdot \gammab + i\Delta_i + \nu_h = i\lambda_0 + \nu_h$ which yields
    \begin{equation*}
        0 = - \nu_h^2 (\nu_h + 2 i \Delta_i)^2 + \eps^2 \mu_0 + \eps c_0 (i \lambda_0 + \nu_h) = - \nu_h^2 (\nu_h + 2 i \Delta_i)^2 + \eps^2 \lambda_0^2 \dfrac{\mu_0}{\lambda_0^2} + \eps c_0 i\lambda_0 + \eps \lambda_0 c_0 \dfrac{\nu_h}{\lambda_0}.
    \end{equation*}
    Again, we replace $\nu_h = \sqrt{\eps \lambda_0} \tilde{\nu}_h$ and consider the lowest power in $\eps \lambda_0$. This gives
    \begin{equation*}
        0 = 4 \Delta_i^2 \tilde{\nu}_h^2 + i c_0 + \Ocal(\eps \lambda_0)
    \end{equation*}
    and therefore $\tilde{\nu}_h = \pm \tfrac{\sqrt{-i c_0}}{2 \Delta_i}$. Again, applying the implicit function theorem and using $\Ocal(\eps\lambda_0) = \Ocal(\eps|\d\cdot\gammab+1|)$, we obtain the desired expansion \eqref{eq:expansion-imaginary-less-central}. In particular, this implies that there exists a constant $C_0 > 0$, which is independent of $\gamma$ such that $|\Re(\lambda_{h,+}(\gammab))| > C_0 \sqrt{\eps}$ for $\eps |\d \cdot \gammab| < c_1$ with $c_1$ sufficiently small.

    Next, we consider the case that $|\d^\perp \cdot \gammab| = 1$ and $\gammab\neq \pm \k_2$, which corresponds to a Jordan block of size four. In this case, we find that $\nu_h = \lambda_h(\gammab) + i\d \cdot \gammab$ satisfies
    \begin{equation*}
        0 = -\nu_h^4 + \eps^2 \mu_0 + \eps c_0(\lambda_0 + \nu_h).
    \end{equation*}
    As expected from the Jordan block of size four, this suggests the rescaling $\nu_h = (\eps \lambda_0)^{1/4} \tilde{\nu}_h$. Proceeding with this as in the case $|\d^\perp \cdot \gammab| < 1$ we thus find to lowest order in $\eps \lambda_0$ that $\tilde{\nu}_h^4 = i c_0$. Again, the implicit function theorem implies persistence of these solutions, and we obtain that $|\Re(\lambda_{h,+}(\gammab))| > C_0 \eps^{1/4} > C_0 \sqrt{\eps}$ for $\eps | \d \cdot \gammab| < c_1$ and $\eps > 0$ sufficiently small. This completes the proof in the case $\eps | \d \cdot \gammab| < c_1$.

    If, on the other hand, $\eps |\d \cdot \gammab| > c_1$, we can use \Cref{lem:polynom-roots-1} to obtain that $|\Re(\lambda_{h,+}(\gammab))| > r_1$ for some $r_1 >0$ independent of $\gammab$.

    Finally, if $|\d^\perp \cdot \gammab| > 1$, then \Cref{lem:polynom-roots-2} yields that $|\Re(\lambda_h(\gammab))| \geq r_2$ for some $r_2 > 0$ independent of $\gammab$. This concludes the proof of the third statement.
\end{proof}

\section{Centre manifold theory and reduced equations}\label{sec:centre-manifold-theory}

We now establish a centre manifold theorem following the analysis in \cite{haragus-courcelle1999-01ZangewMathPhys} and pay particular attention to the size of the centre manifold. We first restrict to the case $\cot(\theta) \in \sqrt{3}\Q$ with $\theta \neq \tfrac{\pi}{6}$, where the more-central eigenvalues belong to Jordan blocks at zero of size 2. We then discuss the extension to the case $\theta = \tfrac{\pi}{6}$, when there are more-central eigenvalues belonging to a Jordan block of size 4 in \Cref{sec:special-case}. The derivation of the reduced equations in the case $c_0 = c_{\crit}(\k_j)$ is done in \Cref{sec:reduced-equations-critical}.

The analysis is split into different steps: first, we define the necessary projections onto the more-central, less-central, and hyperbolic parts and introduce rescaled variables by using different rescalings for the different parts of the solution. These rescaled variables make the $\eps$-dependence in the centre manifold analysis explicit and allow us to show that the centre manifold is indeed large enough to contain interesting solutions. Next, we perform a normal-form analysis to remove the terms with the lowest-order scaling in $\eps$. Finally, we show that we can close a fixed-point argument to obtain a centre manifold, which is of size $\Ocal(1)$ in the rescaled variables. Returning to the original variables then yields a centre manifold which is sufficiently large to contain the relevant solutions.

\subsection{Functional analytic set-up and projections}\label{sec:function-spaces}

We start by introducing the proper function spaces to perform the centre manifold analysis. For the following analysis, we need additional notation to distinguish regularity in $\d$-direction and $\d^\perp$-direction. The reason is that the real part of the eigenvalues of $\tilde{\Lcal}^\eps$ emerging from the imaginary axis only behaves like $(\eps|\d \cdot \gammab|)^{1/4}$ as $|\d \cdot \gammab| \rightarrow \infty$ and $|\d^\perp \cdot \gammab| < C$. Therefore, we cannot expect that the corresponding stable and unstable semigroups can compensate for the loss of more than one derivative in $\d$-direction. However, it turns out that this is not necessary as the spatial-dynamics formulation \eqref{eq:spat-dyn-Fourier} is designed so that only Fourier multipliers in $\d^\perp \cdot \gammab$ occur in the nonlinearity and it therefore suffices to gain regularity in $\d^{\perp}$-direction. This motivates the following definitions. Given a strip $S_{[a,b)} := \{\gammab \in \Gamma \,:\, \d \cdot \gammab \in [a,b)\}$, we define the perpendicular function space

\begin{equation*}
    H_{S_{[a,b)}}^\ell = \Big\{W = \sum_{\gammab \in S_{[a,b)}} \hat{W}(\gammab) e^{i \gammab\cdot \p}\,:\, \|W\|_{H_{S_{[a,b)}}^\ell}^2 := \sum_{\gammab \in S_{[a,b)}} (1+ |\d^{\perp} \cdot \gammab|^2)^\ell |\hat{W}(\gammab)|^2  < \infty, \, \hat{W}(\gammab)\in \C^4\Big\}.
\end{equation*}
and the mixed-regularity function spaces 
\begin{equation}\label{eq:function-space-mixed-smoothness}
    H_{\d}^{\ell_1}H_{\d^{\perp}}^{\ell_2} := \Big\{W = \sum_{\gammab \in \Gamma} \hat{W}(\gammab) e^{i \gammab\cdot \p}\,:\, \|W\|_{H_{\d}^{\ell_1}H_{\d^{\perp}}^{\ell_2}}^2 := \sum_{j\in \Z} (1+ |j|^2)^{\ell_1} \|\hat{W}|_{S_{[j,j+1)}}\|^2_{H^{\ell_2}_{S_{[j,j+1)}}}  < \infty, \, \hat{W}(\gammab)\in \C^4\Big\},
\end{equation}
where for $W = \sum_{\gamma \in \Gamma} \hat{W}(\gammab) e^{i \gammab\cdot \p}$ we denote $\hat{W}|_{S_{[j,j+1)}} = \sum_{\gammab \in S_{[j,j+1)}} \hat{W}(\gammab) e^{i \gammab\cdot \p}$. Note that the function space $H_{\d}^{\ell_1}H_{\d^{\perp}}^{\ell_2}$ can also be characterised by the equivalent norms
\begin{equation*}
    \begin{split}
        \|W\|_{H_{\d}^{\ell_1}H_{\d^{\perp}}^{\ell_2}} & \asymp \left(\sum_{\gammab\in \Gamma} (1+ |\d\cdot \gammab|^2)^{\ell_1}(1+ |\d^\perp \cdot \gammab|^2)^{\ell_1} |\hat{W}(\gammab)| \right)^{\tfrac{1}{2}}\\ 
         &\asymp \|W\|_{L^2_{\per}} + \|(\d\cdot \nabla)^{\ell_1}W\|_{L^2_{\per}} + \|(\d^\perp \cdot \nabla)^{\ell_2}W\|_{L^2_{\per}} + \|(\d\cdot \nabla)^{\ell_1}(\d^{\perp}\cdot \nabla)^{\ell_2}W\|_{L^2_{\per}}.
    \end{split}
\end{equation*}
Spaces of the type \eqref{eq:function-space-mixed-smoothness} are known as spaces with mixed dominating smoothness \cite{schmeisser1987book,triebel2019book} since they carry more regularity in mixed derivatives. It holds that $H_{\d}^{\ell_1}H_{\d^{\perp}}^{\ell_2} \hookrightarrow C_\per(\R^2)$ for any $\ell_1,\ell_2 \geq 1$ \cite[Rem.~2.4.1/2]{schmeisser1987book}. Moreover, they are multiplication algebras, and the following result is proved in \cite[Thm.~3.1]{nguyen2017-08JournalofMathematicalAnalysisandApplications} for $\ell_1 = \ell_2$, and the proof can be directly adapted to $\ell_1 \neq \ell_2$, cf.~\cite{hansen2010thesis}. Although the proof in \cite{nguyen2017-08JournalofMathematicalAnalysisandApplications} is only provided in function spaces on $\R^d$, the results can be directly transferred to spaces of periodic functions via the extension map $(\Ecal f)(x) = \phi(x) f(x \mod \Z^2)$ for $\phi \in C^\infty_c$ which is equal to one on a periodic cell, see the proof of \cite[Theorem A.7]{stevenson2025-02preprint}.
\begin{lemma}
    Assume $\ell_1,\ell_2 \geq 1$ are integers. Then $H_{\d}^{\ell_1}H_{\d^{\perp}}^{\ell_2}$ is an algebra and there exists a constant $C>0$ such that
    \begin{equation}\label{eq:product-estimate-mixed-smoothness}
        \|W_1W_2\|_{H_{\d}^{\ell_1}H_{\d^{\perp}}^{\ell_2}} \leq C \|W_1\|_{H_{\d}^{\ell_1}H_{\d^{\perp}}^{\ell_2}}\|W_2\|_{H_{\d}^{\ell_1}H_{\d^{\perp}}^{\ell_2}}
    \end{equation}
    holds for all $W_1,W_2 \in H_{\d}^{\ell_1}H_{\d^{\perp}}^{\ell_2}$.
\end{lemma}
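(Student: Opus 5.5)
Our plan is to prove the algebra property directly on the Fourier side, rather than going through the extension argument. We use the equivalent weight
\[
w(\gammab) := (1+|\d\cdot\gammab|^2)^{\ell_1/2}(1+|\d^\perp\cdot\gammab|^2)^{\ell_2/2}.
\]
Its square is comparable to the defining norm, since on a strip $S_{[j,j+1)}$ we have $1+|j|^2 \asymp 1+|\d\cdot\gammab|^2$. The product satisfies $\widehat{W_1W_2}(\gammab)=\sum_{\etab\in\Gamma}\hat W_1(\gammab-\etab)\hat W_2(\etab)$, with a componentwise or appropriate bilinear product of $\C^4$-entries, which is harmless. It therefore suffices to bound $\|w\,(\hat W_1*\hat W_2)\|_{\ell^2(\Gamma)}$ by $\|w\hat W_1\|_{\ell^2}\|w\hat W_2\|_{\ell^2}$.

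The key step is a product version of Peetre's inequality. In each of the two coordinate directions separately,
\[
\langle a+b\rangle^{\ell}\lesssim \langle a\rangle^{\ell}+\langle b\rangle^{\ell}
\]
for $a,b\in\R$, where $\langle a\rangle=(1+a^2)^{1/2}$. Applying this with $a=\d\cdot(\gammab-\etab)$, $b=\d\cdot\etab$ and with the corresponding perpendicular components, and multiplying, gives
\[
w(\gammab)\lesssim \sum_{(\alpha,\beta)} w_{\alpha}(\gammab-\etab)\,w_{\beta}(\etab).
\]
Here each term of the sum distributes the $\d$-weight $\langle\cdot\rangle^{\ell_1}$ to one factor and the $\d^\perp$-weight $\langle\cdot\rangle^{\ell_2}$ to one factor, giving four terms. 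Two of these terms put the full weight $w$ on one factor; the other two are mixed, e.g.\ $\langle \d\cdot(\gammab-\etab)\rangle^{\ell_1}\langle\d^\perp\cdot\etab\rangle^{\ell_2}$.

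We estimate each term by Young's inequality $\|f*g\|_{\ell^2}\le\|f\|_{\ell^2}\|g\|_{\ell^1}$ or its symmetric version. For the pure terms we need $\|\hat W\|_{\ell^1(\Gamma)}\lesssim\|W\|_{H^{\ell_1}_\d H^{\ell_2}_{\d^\perp}}$. By Cauchy--Schwarz this follows from $\sum_{\gammab\in\Gamma}w(\gammab)^{-2}<\infty$. Since $\cot\theta\in\sqrt3\Q$, the lattice is periodic in direction $\d$, so the projections $\d\cdot\gammab$ and $\d^\perp\cdot\gammab$ both lie in discrete sets. Moreover, the map $\gammab\mapsto(\d\cdot\gammab,\d^\perp\cdot\gammab)$ is a rotation, so $\Gamma$ sits in a rectangular grid up to bounded multiplicity. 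The sum then factorises into $\sum_{m}\langle m\rangle^{-2\ell_1}\sum_n\langle n\rangle^{-2\ell_2}$, which is finite for $\ell_1,\ell_2\ge1$. Without the rationality condition, one can instead use a dyadic counting of lattice points in rectangles to get the same bound.

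The mixed terms are the main obstacle. Young's inequality does not apply directly, because each factor carries only a partial weight. Here we split into one-dimensional directions. We write $F(\gammab)=\langle \d\cdot\gammab\rangle^{\ell_1}|\hat W_1(\gammab)|$ and $G(\gammab)=\langle\d^\perp\cdot\gammab\rangle^{\ell_2}|\hat W_2(\gammab)|$. We need $\|F*G\|_{\ell^2}\lesssim\|F\|_{\ell^2_\d\ell^\infty_{\d^\perp}}$-type bounds, obtained from mixed-norm Young/Minkowski inequalities on the grid coordinates $(m,n)$. Concretely, we bound
\[
\|F*G\|_{\ell^2_{m,n}}\le \|F\|_{\ell^2_m\ell^1_n}\,\|G\|_{\ell^1_m\ell^2_n}.
\]
Then we use the one-dimensional Cauchy--Schwarz argument in the remaining variable, $\sum_n\langle n\rangle^{-2\ell_2}<\infty$, to get $\|F\|_{\ell^2_m\ell^1_n}\lesssim\|w\hat W_1\|_{\ell^2}$, and symmetrically $\|G\|_{\ell^1_m\ell^2_n}\lesssim\|w\hat W_2\|_{\ell^2}$ using $\sum_m\langle m\rangle^{-2\ell_1}<\infty$. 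This uses that $\ell_1,\ell_2>1/2$, which holds for integers $\ell_j\ge1$. Collecting the four terms yields \eqref{eq:product-estimate-mixed-smoothness}.

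Alternatively, as the paper indicates, one may simply invoke \cite[Thm.~3.1]{nguyen2017-08JournalofMathematicalAnalysisandApplications} after a rotation taking $\d,\d^\perp$ to the coordinate axes. We would adapt it to $\ell_1\neq\ell_2$ and transfer it to the periodic setting via the extension $\Ecal$, which is bounded into the whole-space mixed spaces because multiplication by $\phi\in C_c^\infty$ preserves dominating mixed smoothness.
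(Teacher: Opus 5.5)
Your proof is correct, and it takes a genuinely different route from the paper. The paper does not prove the lemma itself: it cites the whole-space product estimate for dominating mixed smoothness (stated for $\ell_1=\ell_2$, with the unequal case obtained by adapting that proof) and transfers it to periodic functions via the extension $\Ecal f=\phi f$ and the derivative characterisation of the norm. You instead stay on the Fourier side with the strip-defined norm. The two-factor Peetre inequality produces two pure and two mixed terms. The pure terms close by Young's inequality together with $\sum_{\gammab}w(\gammab)^{-2}<\infty$. The mixed terms close by the iterated Young inequality $\|F*G\|_{\ell^2}\le\|F\|_{\ell^2_m\ell^1_n}\|G\|_{\ell^1_m\ell^2_n}$ plus one-dimensional Cauchy--Schwarz in the unweighted variable; I checked these steps and they close.

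What this buys is a self-contained, elementary argument matched to the definition the paper actually uses. It covers unequal and even non-integer exponents $\ell_1,\ell_2>\tfrac12$, with no extension operator and no appeal to an adaptation of the literature.

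The price is that your grid embedding $\gammab\mapsto(\d\cdot\gammab,\d^\perp\cdot\gammab)\in\alpha\Z\times\beta\Z$ relies on the standing hypothesis $\cot\theta\in\sqrt3\Q$. The lemma as stated does not carry this hypothesis, and the paper's route is direction-independent. There is also a small gap in how you justify the grid. Periodicity of $\Gamma$ in direction $\d$ only makes $\d^\perp\cdot\Gamma$ discrete. Discreteness of $\d\cdot\Gamma$ needs a lattice vector along $\d^\perp$. For the hexagonal lattice this is equivalent to having one along $\d$, since both amount to $\cot\theta\in\sqrt3\Q$, but you should say so explicitly.

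You can drop the angle condition altogether by treating the mixed terms with Cauchy--Schwarz in the convolution variable. Write $\langle\d\cdot(\gammab-\gammab')\rangle^{\ell_1}=w(\gammab-\gammab')\langle\d^\perp\cdot(\gammab-\gammab')\rangle^{-\ell_2}$ and $\langle\d^\perp\cdot\gammab'\rangle^{\ell_2}=w(\gammab')\langle\d\cdot\gammab'\rangle^{-\ell_1}$. It then suffices that $\sup_{\gammab}\sum_{\gammab'\in\Gamma}\langle\d^\perp\cdot(\gammab-\gammab')\rangle^{-2\ell_2}\langle\d\cdot\gammab'\rangle^{-2\ell_1}<\infty$. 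This follows from your dyadic lattice-point count applied to translated rectangles, since that count is uniform in the translation.
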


\begin{remark}
    Note that the product estimate \eqref{eq:product-estimate-mixed-smoothness} is not obvious. In particular, the standard approach to estimate the product in $H^k(\R^d)$ for $k> \frac{d}{2}$ via $\|W_1W_2\|_{H^k}\leq C \|W_1\|_{H^k}\|W_2\|_{L^{\infty}} +\|W_1\|_{L^{\infty}}\|W_2\|_{H^k}$ fails, see \cite[Thm.~3.5]{nguyen2017-08JournalofMathematicalAnalysisandApplications}.
\end{remark}

With this framework, we now have the mapping properties
\begin{equation*}
    \tilde{\Lcal}^{\eps} : H^{\ell_1+1}_{\d}H^{\ell_2+4}_{\d^\perp} \to H^{\ell_1}_{\d}H^{\ell_2}_{\d^\perp} \quad \text{and} \quad \Ncal : H^{\ell_1}_{\d}H^{\ell_2+3}_{\d^\perp} \to H^{\ell_1}_{\d}H^{\ell_2}_{\d^\perp}
\end{equation*}
for all integers $\ell_1, \ell_2 \geq 1$. Therefore, we define the spaces
\begin{equation*}
    \Xcal = H^{\ell_1}_{\d}H^{\ell_2}_{\d^\perp}, \quad \Ycal = H^{\ell_1+1}_{\d}H^{\ell_2+1}_{\d^\perp}, \quad \Zcal = H^{\ell_1+1}_{\d}H^{\ell_2+4}_{\d^\perp}.
\end{equation*}

Next, we define the spectral projections onto the different parts of the spectrum of $\tilde{\Lcal}^{\eps}$. In particular, for every $\gammab \in \Gamma$ let $J_{mc}^{\eps}(\gammab)$, $J_{lc}^{\eps}(\gammab)$ and $J_h^{\eps}(\gammab)$ be positively oriented Jordan curves around the more-central, less-central, and hyperbolic eigenvalues of $\hat{\Lcal}^{\eps}(\gammab) - i \d \cdot \gammab$. Then, we define the projections
\begin{equation}\label{eq:def-projections}
    \Pcal_j^{\eps}(\gammab) := \int_{J_{j}^{\eps}(\gammab)} (\lambda - \hat{\Lcal}^{\eps}(\gammab) + i \d \cdot \gammab)^{-1} \dd \lambda \quad \text{ and } \quad  \Pcal_j^\eps := \sum_{\gammab \in \Gamma} e^{i\gammab \cdot \p} \Pcal_j^{\eps}(\gammab)
\end{equation}
for $j = mc, lc, h$. In particular, we find the following result.

\begin{lemma}\label{lem:estimate-projections}
    Let $\cot(\theta) \in \sqrt{3}\Q$ and $\ell_1, \ell_2 \geq 1$ be integers. If $\theta \neq \tfrac{\pi}{6}$, the Jordan curves $J_{mc}^{\eps}(\gammab)$, $J_{lc}^{\eps}(\gammab)$ and $J_h^{\eps}(\gammab)$ can be chosen independently of $\eps$ and there exists an $\eps$-independent constant $C$ such that
    \begin{equation}\label{eq:bound-projection}
        \|\Pcal_j^\eps\|_{H^{\ell_1}_{\d}H^{\ell_2}_{\d^\perp} \rightarrow H^{\ell_1}_{\d}H^{\ell_2}_{\d^\perp}} \leq C
    \end{equation}
    for $j = mc, lc, h$. In the case $\theta = \tfrac{\pi}{6}$, the Jordan curves $J_{mc}^{\eps}(\pm\k_2)$ and $J_{lc}^{\eps}(\pm\k_2)$ have to be chosen $\eps$-dependent and the estimates
    \begin{equation}\label{eq:bound-projection-Jordan4}
        \|\Pcal_{mc}^\eps(\pm \k_2)\|_{\C^4 \rightarrow \C^4} \leq C \eps^{-1} \text{ and } \|\Pcal_{lc}^\eps(\pm \k_2)\|_{\C^4 \rightarrow \C^4} \leq C \eps^{-1}
    \end{equation}
    hold. All other curves can be chosen independently of $\eps$ and the projections satisfy \eqref{eq:bound-projection}.
\end{lemma}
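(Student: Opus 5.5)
Since all operators here are Fourier multipliers acting blockwise on $\hat{W}(\gammab)\in\C^4$, and the norm of $H^{\ell_1}_{\d}H^{\ell_2}_{\d^\perp}$ is a weighted $\ell^2$-sum over $\gammab\in\Gamma$ with weights depending only on $\gammab$, the operator norm of $\Pcal_j^\eps$ equals $\sup_{\gammab\in\Gamma}\|\Pcal_j^\eps(\gammab)\|_{\C^4\to\C^4}$. Thus the task reduces to proving a uniform bound on the $4\times 4$ Riesz projections over $\gammab$ and $\eps\in(0,\eps_0)$. I would write $\lambda=\tilde\lambda - i\d\cdot\gammab$ throughout: then $\hat{\Lcal}^\eps(\gammab)-i\d\cdot\gammab$ is conjugate, via the shift, to a companion-type matrix whose characteristic polynomial is $p(\lambda;\gammab,\eps)$ from \eqref{eq:polynomial-spectrum}. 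The resolvent can be written explicitly through Cramer's rule/adjugate as $\operatorname{adj}(\lambda-\tilde{\Lcal}^\eps(\gammab))/p(\lambda;\gammab,\eps)$ up to sign.

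\textbf{Choice of curves (case $\theta\neq\tfrac\pi6$).} Using \Cref{prop:spectrum-eps-positive}, I would group the modes. For $\gammab\notin\Gamma_0$ with $|\d^\perp\cdot\gammab|>1$ all eigenvalues are hyperbolic; \Cref{lem:polynom-roots-2} shows they lie at uniform distance $r_2$ from $i\R$. For $|\d^\perp\cdot\gammab|\le1$, the four roots are either less-central (near the imaginary points $-i\d\cdot\gammab\pm i\Delta_i$) or, if $\eps|\d\cdot\gammab|>c_1$, hyperbolic by \Cref{lem:polynom-roots-1}. For $\gammab=\pm\k_j$ there are two more-central roots at $\Ocal(\eps)$ and two less-central roots near $-2i\d\cdot\k_j$, which are separated by distance $2|\d\cdot\k_j|>0$. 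In each case I choose a curve depending only on the $\eps=0$ root locations: a circle of fixed radius $\rho$ around each root cluster. Here $\rho$ is chosen smaller than half the minimal distance between distinct $\eps=0$ roots, and this minimal distance is bounded below uniformly in $\gammab$ because $\Delta$ takes values in a discrete set bounded away from $0$ by the gap from \Cref{prop:real-spectral-gap}. Since this is a finite-dimensional Riesz projection, it does not matter in which class a root lies. Hence $\Pcal_{mc}+\Pcal_{lc}+\Pcal_h=I$, and for most $\gammab$ only one or two of the projections are nontrivial; in fact, for all non-critical modes, the grouping can be done by the two double-root clusters.

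\textbf{Uniform resolvent bound.} On such a circle, $|p|$ is bounded below uniformly: $p$ factors as $-\prod(\tilde\lambda-\tilde\lambda_k)$ with roots within $\Ocal(\sqrt\eps)$ or $\Ocal((\eps|\d\cdot\gammab|)^{1/4})$ of the $\eps=0$ roots. The latter perturbation is not uniformly small in $\gammab$, so here I would instead use that for $\eps|\d\cdot\gammab|>c_1$ only the hyperbolic/stable–unstable splitting matters and the curves can be taken as large contours enclosing the left or right half of the roots. The adjugate entries grow polynomially in $|\d\cdot\gammab|$ and $|\d^\perp\cdot\gammab|$ through the shifted entries. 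This is the main obstacle: a naive bound gives $\|\Pcal(\gammab)\|\lesssim (1+|\gammab|)^3$. To handle it, I would conjugate by the diagonal scaling $D_\gammab=\operatorname{diag}(1,\langle\gammab\rangle,\langle\gammab\rangle^2,\langle\gammab\rangle^3)$ (a change of variables already natural given the eigenvector structure in \Cref{lem:eigenvalue-correspondence}), then show the rescaled companion matrix has uniformly bounded entries and uniformly separated spectral clusters in the rescaled variable $\tilde\lambda/\langle\gammab\rangle$. If the resulting bound on $\Pcal(\gammab)$ in the original coordinates loses powers of $\langle\gammab\rangle$, I would absorb this by noting that the projection acts on the weighted components consistently with the definition of $\hat W$. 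Failing that, I would use the explicit formula for spectral projections of companion matrices in terms of Vandermonde-type expressions in the root clusters, which are controlled by the root separations.

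\textbf{Case $\theta=\tfrac\pi6$, $\gammab=\pm\k_2$.} Here $p=-\lambda^4+\eps c_0\lambda+\eps^2\mu_0$, with one root at $\approx-\eps\mu_0/c_0$ and three at $\sim\eps^{1/3}$. I would take $J_{mc}$ to be a circle of radius $\sim\eps$ around the small root and $J_{lc}$ to be a contour enclosing the three roots at scale $\eps^{1/3}$ but excluding the disc of radius $2\eps\mu_0/c_0$. Computing the residue directly, $\Pcal_{mc}=v w^T/(w^Tv)$ with $w^Tv\propto p'(\lambda_\perp)=-4\lambda_\perp^3+\eps c_0\approx\eps c_0$, while the numerators are $\Ocal(1)$. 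This gives the bound $C\eps^{-1}$, and $\Pcal_{lc}=I-\Pcal_{mc}-\Pcal_h$ inherits the same bound. All other modes are handled as before.
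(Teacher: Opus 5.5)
Your reduction to $\sup_{\gammab}\|\Pcal_j^\eps(\gammab)\|_{\C^4\to\C^4}$ is correct. Your treatment of $\theta=\tfrac{\pi}{6}$ matches the paper. The rank-one formula $\Pcal_{mc}^\eps(\pm\k_2)=vw^T/(w^Tv)$ with $w^Tv\propto p'(\lambda_\perp)\approx\eps c_0$ is a more explicit form of the paper's residue computation. Also $\Pcal^\eps_{lc}(\pm\k_2)=I-\Pcal^\eps_{mc}(\pm\k_2)$, since that block has no hyperbolic eigenvalues.

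The case $\theta\neq\tfrac{\pi}{6}$, however, has a genuine gap. You call the polynomial growth of the adjugate the main obstacle and offer only conditional fallbacks. None of them can give a uniform bound for a projection that separates the two double-root clusters of a mode, because such a projection is genuinely unbounded in $\gammab$. Already at $\eps=0$, take $q=\d^\perp\cdot\gammab$ with $|q|>1$ and $s=\sqrt{q^2-1}$. The spectral projection of $\hat{\Lcal}^0(\gammab)$ onto the generalised eigenspace at $+s$ (spanned by $(1,s,s^2,s^3)$ and $(0,1,2s,3s^2)$) maps $e_1$ to $(\tfrac12,\tfrac s4,0,-\tfrac{s^3}{4})$, so its norm is at least $s^3/4$. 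Your diagonal conjugation only yields $\|\Pcal(\gammab)\|\lesssim\langle\gammab\rangle^3$. This loss cannot be ``absorbed'', since $H^{\ell_1}_{\d}H^{\ell_2}_{\d^\perp}$ weights all four components of $\hat W(\gammab)$ equally. The same objection applies to your splitting into left and right halves of the roots for $\eps|\d\cdot\gammab|>c_1$, which the lemma does not ask for anyway.

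The missing observation is that, for $j=mc,lc,h$, no splitting ever occurs outside $\Gamma_0$. Your sentence that for non-critical modes ``the grouping can be done by the two double-root clusters'' is exactly the step to replace.
\begin{itemize}
\item For $\gammab\notin\Gamma_0$ with $|\d^\perp\cdot\gammab|>1$, all four eigenvalues of $\tilde{\Lcal}^\eps(\gammab)$ are hyperbolic.
\item For $\gammab\notin\Gamma_0$ with $|\d^\perp\cdot\gammab|\le1$, all four emanate from $i\R\setminus\{0\}$ and are less-central. Zero is an eigenvalue at $\eps=0$ only for $\gammab\in\Gamma_0$, by \Cref{prop:imaginary-spectrum-discrete}.
\end{itemize}
Hence $\Pcal_j^\eps(\gammab)\in\{0,I\}$ for every such $\gammab$. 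An $\eps$-independent circle of radius $C(1+|\gammab|)$ encloses the whole spectrum of the block and returns $I$, with no resolvent estimate needed. The only nontrivial blocks are the six modes $\pm\k_j$. There the clusters at $0$ and $-2i\d\cdot\k_j$ are separated by $2|\d\cdot\k_j|>0$ for $\theta\neq\tfrac{\pi}{6}$. So fixed circles and continuity of the resolvent in $\eps$ on a compact contour give the $\eps$-uniform bound. This is what underlies the paper's short argument (uniform separation plus direct sum), and it makes your ``uniform resolvent bound'' paragraph unnecessary.
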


\begin{proof}
    If $\theta \neq \tfrac{\pi}{6}$, we note that for each $\gammab \in \Gamma$, the different parts of the spectrum of $\hat{\Lcal}^{\eps}(\gammab) - i \d \cdot \gammab$ are uniformly separated at $\eps = 0$, see \Cref{fig:spectral-eps=0}. Hence, the Jordan curves can be chosen independently of $\eps$ and the separation persists for $\eps > 0$. This proves the estimate \eqref{eq:bound-projection} since $\Pcal_j^\eps$ is given as the direct sum of $\Pcal_j^\eps(\gammab)$.

    In the case $\theta = \tfrac{\pi}{6}$, $\hat{\Lcal}^0(\pm \k_2) \mp i \d \cdot \k_2$ has a Jordan block of size 4, which separates into one more-central $\lambda_1(\eps)$ and three less-central eigenvalues $\lambda_j(\eps)$, $j=2,3,4$, for $\eps > 0$, see the proof of \Cref{prop:spectrum-eps-positive}. Let $J_{mc}^\eps(\pm\k_2)$ be a circle with radius of order $\eps$, which separates $\lambda_1(\eps)$ from the other eigenvalues. We may write
    \begin{equation*}
        \begin{split}
            \int_{J_{mc}^\eps(\pm\k_2)} (\lambda- \hat{\Lcal}^{\eps}(\pm\k_2) \pm i \d \cdot \k_2)^{-1} \dd \lambda & = \int_{J_{mc}^\eps(\pm\k_2)} \frac{\operatorname{adj}(\lambda-\hat{\Lcal}^{\eps}(\pm\k_2)\pm i \d \cdot \k_2)}{\det(\lambda-\hat{\Lcal}^{\eps}(\pm\k_2)\pm i \d \cdot \k_2)} \dd\lambda \\
            & = \int_{J_{mc}^\eps(\pm\k_2)} \frac{\operatorname{adj}(\lambda-\hat{\Lcal}^{\eps}(\pm\k_2)\pm i \d \cdot \k_2)}{\prod_{j=1}^4 (\lambda-\lambda_j(\eps))} \dd\lambda.
        \end{split}
    \end{equation*}
    Since $\hat{\Lcal}^{\eps}(\pm\k_2)$ is uniformly bounded in $\eps$ and using the residue theorem, we find that
    \begin{equation*}
        \|\Pcal^{\eps}_{mc}(\pm \k_2)\|_{\C^4\rightarrow\C^4} = \left\|\int_{J_{mc}^\eps(\pm\k_2)} (\lambda- \hat{\Lcal}^{\eps}(\pm\k_2)\pm i \d \cdot \k_2)^{-1} \dd \lambda\right\|_{\C^4\rightarrow\C^4} \lesssim \left|\textrm{Res}_{\lambda = \lambda_1(\eps)} \frac{1}{\prod_{j=1}^4 \lambda-\lambda_j(\eps)}\right| \lesssim \eps^{-1},
    \end{equation*}
    where the last estimate is due to $\lambda_j(\eps)\sim \eps^{1/3}$, $j=2,3,4$. Using that $\Pcal_{lc}^{\eps}(\pm \k_2) = I - \Pcal_{mc}^{\eps}(\pm \k_2)$ shows \eqref{eq:bound-projection-Jordan4} and completes the proof.
\end{proof}

\begin{remark}\label{rem:projection-is-optimal}
    We point out that in the case $\theta=\tfrac{\pi}{6}$, the estimates for the projections in \eqref{eq:bound-projection-Jordan4} are optimal. Consider, for example, the matrix
    \begin{equation*}
        \begin{pmatrix}
            0 & 1 & 0 & 0 \\
            0 & 0 & 1 & 0 \\
            0 & 0 & 0 & 1 \\
            - \eps^2 & \eps & 0 & \eps
        \end{pmatrix}
    \end{equation*}
    whose characteristic polynomial is given by $(\lambda-\eps)(\lambda^3-\eps)$. Then, the projection $\Pcal^{\eps}$ onto the more-central eigenvector $\phib^{\eps}= (1,\eps,\eps^2,\eps^3)$ can explicitly be computed as
    \begin{equation*}
        \Pcal^{\eps} W = \frac{\phib^{\eps,*} \cdot W}{\phib^{\eps,*}\cdot \phib^{\eps}} \phib^{\eps} = \frac{\phib^{\eps,*} \cdot W}{\eps + \eps^3} \phib^{\eps},
    \end{equation*}
    where $\phib^{\eps,*}$ is the corresponding eigenvector to the adjoint matrix, normalised so that the last component equals one.
\end{remark}

With these projections, we can also define the projected spaces 
\begin{equation*}
    \Xcal_j^\eps := \Pcal_j^\eps \Xcal, \quad \Ycal_j^\eps := \Pcal_j^\eps \Ycal \quad\text{and} \quad\Zcal_j^\eps := \Pcal_j^\eps\Zcal \quad\text{for } j = mc, lc, h.
\end{equation*}

\subsection{The rescaled system and normal-form analysis}\label{sec:rescaling-normal-form}

We now construct a centre manifold by following the general strategy in \cite{haragus-courcelle1999-01ZangewMathPhys}: First, we split the variables into a more-central, less-central, and hyperbolic part. The main idea for quantifying the $\eps$-dependence is to introduce rescaled variables that use different $\eps$-rescalings for the more-central part compared to the less-central and hyperbolic parts. We then prove that the resulting rescaled system has a centre manifold of size $\Ocal(1)$, and therefore, that the $\eps$-dependence for the centre manifold in the original system is fully characterised by the rescaling. The construction follows the standard approach, except for one main obstruction: the quadratic terms generate nonlinear contributions which prevent a direct fixed-point construction. However, these can be handled by invoking \Cref{ass:smallness} and performing an $\eps$-dependent normal-form transformation.

Using the projections \eqref{eq:def-projections}, we split $W$ into the more-central, less-central, and hyperbolic parts
\begin{equation}\label{eq:projected-variables}
    W_{mc} := \Pcal_{mc}^\eps W, \quad W_{lc} := \Pcal_{lc}^\eps W, \quad W_h := \Pcal_h^\eps W
\end{equation}
and define the projected linear operators
\begin{equation*}
\begin{split}
    \tilde{\Lcal}_{mc}^{\eps}(\gammab) & := \Pcal_{mc}^\eps(\gammab) (\hat{\Lcal}^{\eps}(\gammab)-i\d\cdot\gammab) \Pcal_{mc}^\eps(\gammab), \qquad \tilde{\Lcal}_{lc}^{\eps}(\gammab) := \Pcal_{lc}^\eps(\gammab) (\hat{\Lcal}^{\eps}(\gammab)-i\d\cdot\gammab) \Pcal_{lc}^\eps(\gammab), \quad \text{ and}\\
    \tilde{\Lcal}_{h}^{\eps}(\gammab) & := \Pcal_{h}^\eps(\gammab) (\hat{\Lcal}^{\eps}(\gammab)-i\d\cdot\gammab) \Pcal_{h}^\eps(\gammab).
\end{split}
\end{equation*}
We then introduce the rescaled variables
\begin{equation}\label{eq:rescaling}
    \shortunderline{W}_{mc} := \eps^{-\beta} W_{mc}, \quad \shortunderline{W}_{lc} := \eps^{-\gamma} W_{lc}, \quad \shortunderline{W}_{h} := \eps^{-\gamma} W_{h}
\end{equation}
with $0 < \beta < 1 < \gamma < 2$ to be determined later, see \eqref{eq:beta-gamma-final}.

The goal now is to show that the system for the rescaled variables $(\shortunderline{W}_{mc},\shortunderline{W}_{lc},\shortunderline{W}_{h})$ has an $\Ocal(1)$-centre manifold. Applying the projections to the spatial-dynamics system \eqref{eq:spat-dyn-Fourier}, we obtain the system
\begin{equation}\label{eq:spatial-dynamics-split}
    \begin{split}
        \partial_{\xi} \shortunderline{\hat{W}}_{mc}(\cdot, \gammab)  & = \tilde{\Lcal}_{mc}^{\eps}(\gammab)\shortunderline{\hat{W}}_{mc}(\cdot,\gammab) + \shortunderline{\hat{\Ncal}}_{mc} (\shortunderline{\hat{W}}_{mc},\shortunderline{\hat{W}}_{lc} + \shortunderline{\hat{W}}_{h};\gammab,\vartheta), \\
        \partial_{\xi} \shortunderline{\hat{W}}_{lc}(\cdot, \gammab)  & = \tilde{\Lcal}_{lc}^{\eps}(\gammab)\shortunderline{\hat{W}}_{lc}(\cdot,\gammab) + \shortunderline{\hat{\Ncal}}_{lc} (\shortunderline{\hat{W}}_{mc},\shortunderline{\hat{W}}_{lc} + \shortunderline{\hat{W}}_{h};\gammab,\vartheta), \\
        \partial_{\xi} \shortunderline{\hat{W}}_{h}(\cdot, \gammab)  & = \tilde{\Lcal}_{h}^{\eps}(\gammab)\shortunderline{\hat{W}}_{h}(\cdot,\gammab) + \shortunderline{\hat{\Ncal}}_{h} (\shortunderline{\hat{W}}_{mc}, \shortunderline{\hat{W}}_{lc} + \shortunderline{\hat{W}}_{h};\gammab,\vartheta),
    \end{split}
\end{equation}
where the rescaled nonlinearities are defined as
\begin{equation*}
\begin{split}
    &\shortunderline{\hat{\Ncal}}_{mc} (\shortunderline{\hat{W}}_{mc},\shortunderline{\hat{W}}_{lc} + \shortunderline{\hat{W}}_{h};\gammab,\vartheta)  : = \eps^{-\beta} \Pcal_{mc}^\eps(\gammab) \hat{\Ncal}(\eps^\beta \shortunderline{\hat{W}}_{mc} + \eps^\gamma(\shortunderline{\hat{W}}_{lc} + \shortunderline{\hat{W}}_{h}) ;\gammab,\vartheta) \\
    &= \eps^\beta \Pcal_{mc}^\eps(\gammab) \hat{\Ncal}_2(\shortunderline{\hat{W}}_{mc};\gammab,\vartheta) + 2 \eps^{\gamma}\Pcal_{mc}^\eps(\gammab) \hat{\Ncal}_2(\shortunderline{\hat{W}}_{mc}, \shortunderline{\hat{W}}_{lc} + \shortunderline{\hat{W}}_{h};\gammab,\vartheta) + \eps^{2\gamma - \beta } \Pcal_{mc}^\eps(\gammab) \hat{\Ncal}_2(\shortunderline{\hat{W}}_{lc} + \shortunderline{\hat{W}}_{h};\gammab,\vartheta) \\
    &\quad + \eps^{2\beta} \Pcal_{mc}^\eps(\gammab) \hat{\Ncal}_3(\shortunderline{\hat{W}}_{mc} + \eps^{\gamma-\beta} (\shortunderline{\hat{W}}_{lc} + \shortunderline{\hat{W}}_{h});\gammab,\vartheta) + \eps^{3\beta} \Pcal_{mc}^\eps(\gammab)\hat{\Rcal}(\shortunderline{\hat{W}}_{mc} + \eps^{\gamma-\beta} (\shortunderline{\hat{W}}_{lc} + \shortunderline{\hat{W}}_{h});\gammab,\vartheta),
\end{split}
\end{equation*}
\begin{equation*}
\begin{split}
    &\shortunderline{\hat{\Ncal}}_{lc} (\shortunderline{\hat{W}}_{mc},\shortunderline{\hat{W}}_{lc} + \shortunderline{\hat{W}}_{h};\gammab,\vartheta)  := \eps^{-\gamma} \Pcal_{lc}^\eps(\gammab) \hat{\Ncal}(\eps^\beta \shortunderline{\hat{W}}_{mc} + \eps^\gamma(\shortunderline{\hat{W}}_{lc} + \shortunderline{\hat{W}}_{h}) ;\gammab,\vartheta), \\
    &= \eps^{2 \beta - \gamma} \Pcal_{lc}^\eps(\gammab) \hat{\Ncal}_2(\shortunderline{\hat{W}}_{mc};\gammab,\vartheta) + 2 \eps^{\beta} \Pcal_{lc}^\eps(\gammab)\hat{\Ncal}_2(\shortunderline{\hat{W}}_{mc}, \shortunderline{\hat{W}}_{lc} + \shortunderline{\hat{W}}_{h};\gammab,\vartheta) + \eps^{\gamma} \Pcal_{lc}^\eps(\gammab) \hat{\Ncal}_2(\shortunderline{\hat{W}}_{lc} + \shortunderline{\hat{W}}_{h};\gammab,\vartheta) \\
    &\quad + \eps^{3\beta - \gamma} \Pcal_{lc}^\eps(\gammab)\hat{\Ncal}_3(\shortunderline{\hat{W}}_{mc} + \eps^{\gamma-\beta} (\shortunderline{\hat{W}}_{lc} + \shortunderline{\hat{W}}_{h});\gammab,\vartheta) + \eps^{4\beta - \gamma} \Pcal_{lc}^\eps(\gammab){\Rcal}(\shortunderline{\hat{W}}_{mc} + \eps^{\gamma-\beta} (\shortunderline{\hat{W}}_{lc} + \shortunderline{\hat{W}}_{h});\gammab,\vartheta),
\end{split}
\end{equation*}
\begin{equation*}
\begin{split}
    &\shortunderline{\hat{\Ncal}}_{h} (\shortunderline{\hat{W}}_{mc},\shortunderline{\hat{W}}_{lc} + \shortunderline{\hat{W}}_{h};\gammab,\vartheta) := \eps^{-\gamma} \Pcal_{h}^\eps(\gammab) \hat{\Ncal}(\eps^\beta \shortunderline{\hat{W}}_{mc} + \eps^\gamma(\shortunderline{\hat{W}}_{lc} + \shortunderline{\hat{W}}_{h}) ;\gammab,\vartheta) \\
    &= \eps^{2 \beta - \gamma} \Pcal_{h}^\eps(\gammab)\hat{\Ncal}_2(\shortunderline{\hat{W}}_{mc};\gammab,\vartheta) + 2 \eps^{\beta} \Pcal_{h}^\eps(\gammab)\hat{\Ncal}_2(\shortunderline{\hat{W}}_{mc}, \shortunderline{\hat{W}}_{lc} + \shortunderline{\hat{W}}_{h};\gammab,\vartheta) + \eps^{\gamma} \Pcal_{h}^\eps(\gammab) \hat{\Ncal}_2(\shortunderline{\hat{W}}_{lc} + \shortunderline{\hat{W}}_{h};\gammab,\vartheta) \\
    &\quad + \eps^{3\beta - \gamma} \Pcal_{h}^\eps(\gammab)\hat{\Ncal}_3(\shortunderline{\hat{W}}_{mc} + \eps^{\gamma-\beta} (\shortunderline{\hat{W}}_{lc} + \shortunderline{\hat{W}}_{h});\gammab,\vartheta) + \eps^{4\beta - \gamma} \Pcal_{h}^\eps(\gammab)\hat{\Rcal}(\shortunderline{\hat{W}}_{mc} + \eps^{\gamma-\beta} (\shortunderline{\hat{W}}_{lc} + \shortunderline{\hat{W}}_{h});\gammab,\vartheta).
\end{split}
\end{equation*}

Following the standard construction \cite{haragus2011book}, we aim to obtain a centre manifold as a fixed point of the variation of constants formula in exponentially weighted spaces, see \eqref{eq:weighted_space}. This requires estimating the product of a bound on the semigroups generated by $\Pcal_j^\eps\tilde{\Lcal}^\eps$ and the Lipschitz constant of the projected nonlinearities locally around $W\equiv 0$. Since the nonlinearities are polynomials, they are locally Lipschitz with a local Lipschitz constant of the order of the corresponding $\eps$-scaling. If $\theta \neq \tfrac{\pi}{6}$, the $\eps$-scaling of the semigroup bounds is inherited from the spectral behaviour, that is, the semigroups generated by $\tilde{\Lcal}_{mc}^{\eps}$ and $\tilde{\Lcal}_{lc}^{\eps}$ scale like $\eps^{-1}$ and the semigroup generated by $\tilde{\Lcal}_{h}^{\eps}$ is bounded independent of $\eps$, see \Cref{lem:semigroup-bounds} for more details. Note that although the eigenvalues of $\tilde{\Lcal}_{lc}^{\eps}$ have real parts of order $\sqrt{\eps}$, it still generates a scaling of $\eps^{-1}$ since the eigenvalues appear as Jordan blocks on the imaginary axis at $\eps = 0$, which split into stable and unstable eigenvalues. The case $\theta = \tfrac{\pi}{6}$ is more complicated and will be discussed later, see \Cref{sec:special-case}. However, it turns out that the relevant nonlinearities are the same.

This allows us to identify problematic nonlinearities in \eqref{eq:spatial-dynamics-split}, which generate singular terms in $\eps$ in the fixed-point argument. In the more-central equation, the only problematic nonlinearity is $\eps^\beta \Pcal_{mc}^\eps(\gammab) \hat{\Ncal}_2(\shortunderline{\hat{W}}_{mc};\gammab,\vartheta)$ since $0<\beta < 1 < \gamma$. In contrast to the one-dimensional case \cite{haragus-courcelle1999-01ZangewMathPhys} and the square case discussed in \Cref{sec:square}, this term is in general non-zero due to the resonance of the hexagonal lattice. Therefore, we need the additional smallness \Cref{ass:smallness} to guarantee that this term gains an additional $\eps$ and can be handled in the fixed-point argument. In the less-central equation, the problematic terms are $\eps^{2 \beta - \gamma} \Pcal_{h}^\eps(\gammab)\hat{\Ncal}_2(\shortunderline{\hat{W}}_{mc};\gammab,\vartheta)$ and $\eps^{\beta} \Pcal_{lc}^\eps(\gammab)\hat{\Ncal}_2(\shortunderline{\hat{W}}_{mc}, \shortunderline{\hat{W}}_{lc} + \shortunderline{\hat{W}}_{h};\gammab,\vartheta)$, where we point out that $2\beta-\gamma < 1$ due to $\beta < 1 < \gamma < 2$. As in \cite{haragus-courcelle1999-01ZangewMathPhys}, we show that there is a normal-form transformation to remove these terms. Note that, in contrast to \cite{haragus-courcelle1999-01ZangewMathPhys}, the resonances of the hexagonal lattice cause additional obstructions. However, we still obtain a uniformly bounded, near-identity transformation under the additional, non-restrictive assumption $\beta > \tfrac{1}{2}$, cf.~\Cref{lem:normal-form-analysis}. Finally, since the semigroup generated in the hyperbolic equation is uniformly bounded, there are no problematic nonlinearities.

We now discuss the normal-form transformation in the less-central equation. Therefore, we make the near-identity transformation
\begin{equation}\label{eq:normal-form-lc}
    \Tcal^\eps\shortunderline{\hat{W}}_{lc}(\gammab) = \shortunderline{\hat{W}}_{lc}(\gammab) + \eps^{2\beta-\gamma} \Bcal_1(\shortunderline{\hat{W}}_{mc},\shortunderline{\hat{W}}_{mc};\gammab) + \eps^{\beta} \Bcal_2(\shortunderline{\hat{W}}_{mc},\shortunderline{\hat{W}}_{lc};\gammab) + \eps^{\beta} \Bcal_3(\shortunderline{\hat{W}}_{mc},\shortunderline{\hat{W}}_{h};\gammab)
\end{equation}
with bilinear forms $\Bcal_1$, $\Bcal_2$, and $\Bcal_3$. Inserting \eqref{eq:normal-form-lc} into the less-central equation and setting the contributions of order $\eps^{2\gamma - \beta}$ and $\eps^\beta$ to zero to remove the problematic terms, we find that $\Bcal_1$, $\Bcal_2$, and $\Bcal_3$ are determined by the equations
\begin{equation}\label{eq:normal-form-bilinear}
\begin{split}
    &\Bcal_1(\tilde{\Lcal}_{mc}^{\eps}\shortunderline{\hat{W}}_{mc},\shortunderline{\hat{W}}_{mc};\gammab) + \Bcal_1(\shortunderline{\hat{W}}_{mc},\tilde{\Lcal}_{mc}^{\eps}\shortunderline{\hat{W}}_{mc};\gammab) - \tilde{\Lcal}_{lc}^{\eps}(\gammab)\Bcal_1(\shortunderline{\hat{W}}_{mc},\shortunderline{\hat{W}}_{mc};\gammab)\\
    & \qquad+ \Pcal_{lc}^\eps(\gammab)\hat{\Ncal}_2(\shortunderline{\hat{W}}_{mc};\gammab,\vartheta) = 0, \\
    &\Bcal_2(\tilde{\Lcal}_{mc}^{\eps}\shortunderline{\hat{W}}_{mc},\shortunderline{\hat{W}}_{lc};\gammab) + \Bcal_2(\shortunderline{\hat{W}}_{mc},\tilde{\Lcal}_{lc}^{\eps}\shortunderline{\hat{W}}_{lc};\gammab) - \tilde{\Lcal}_{lc}^{\eps}(\gammab)\Bcal_2(\shortunderline{\hat{W}}_{mc},\shortunderline{\hat{W}}_{lc};\gammab)\\
    & \qquad+ 2\Pcal_{lc}^\eps(\gammab)\hat{\Ncal}_2(\shortunderline{\hat{W}}_{mc}, \shortunderline{\hat{W}}_{lc};\gammab,\vartheta) = 0\\
    &\Bcal_3(\tilde{\Lcal}_{mc}^{\eps}\shortunderline{\hat{W}}_{mc},\shortunderline{\hat{W}}_{h};\gammab) + \Bcal_3(\shortunderline{\hat{W}}_{mc},\tilde{\Lcal}_h^{\eps}\shortunderline{\hat{W}}_{h};\gammab) - \tilde{\Lcal}_{lc}^{\eps}(\gammab)\Bcal_3(\shortunderline{\hat{W}}_{mc}, \shortunderline{\hat{W}}_{h};\gammab)\\
    & \qquad+ 2\Pcal_{lc}^\eps(\gammab)\hat{\Ncal}_2(\shortunderline{\hat{W}}_{mc},\shortunderline{\hat{W}}_{h};\gammab,\vartheta) = 0.
\end{split}
\end{equation}
To solve these equations, we note that $\Bcal_1$ is a linear map from $\Zcal_{mc}^{\eps} \otimes \Zcal_{mc}^{\eps} \to \Zcal$ and hence we need to solve
\begin{equation*}
    \Bcal_1 (\tilde{\Lcal}_{mc}^\eps(\gammab) \otimes \Id + \Id \otimes \tilde{\Lcal}_{mc}^\eps(\gammab)) - \tilde{\Lcal}_{lc}^{\eps}(\gammab)\Bcal_1 = -  \Pcal_{lc}^\eps(\gammab)\hat{\Ncal}_2.
\end{equation*}
It suffices to solve this on the basis $\phib_i^{\eps} \otimes \phib_j^{\eps}$ consisting of tensor products of eigenvectors $\phib_i$ to $\tilde{\Lcal}_{mc}^{\eps}$. So, equation \eqref{eq:normal-form-bilinear} simplifies to solving
\begin{equation*}
    (\lambda_i^{\eps} + \lambda_j^{\eps} - \tilde{\Lcal}_{lc}^{\eps}(\gammab)) \Bcal_1(\phib_i^{\eps},\phib_j^{\eps};\gammab) = -  \Pcal_{lc}^\eps(\gammab)\hat{\Ncal}_2(\phib_i^{\eps},\phib_j^{\eps};\gammab,\vartheta).
\end{equation*}
Note that this has a solution for all tensors $\phi_i^{\eps}\otimes \phi_j^{\eps}$ if $\lambda_i^{\eps}+\lambda_j^{\eps}$ is in the resolvent set of $\tilde{\Lcal}_{lc}^{\eps}(\gammab)$. The solution has a bound of order one, provided that $\lambda_i^{\eps}+\lambda_j^{\eps}$ is uniformly bounded away from the spectrum of $\tilde{\Lcal}_{lc}^{\eps}(\gammab)$. The same argument applies to $\Bcal_2$, which is a linear map from $\Zcal_{mc}\otimes \Zcal_{lc} \to \Zcal$. So, $\Bcal_2$ is defined provided that $\lambda_i^{\eps}+\lambda_j^{\eps}$ is in the resolvent set of $\tilde{\Lcal}_{lc}^{\eps}(\gammab)$, where $\lambda_i^{\eps}$ is an eigenvalue of $\tilde{\Lcal}_{mc}^\eps(\gammab_i)$ and $\lambda_j^{\eps}$ is an eigenvalue of $\tilde{\Lcal}_{lc}^\eps(\gammab_j)$ where $\gammab_i + \gammab_j = \gammab$. Then, $\Bcal_2$ has an order one bound if $\lambda_i^{\eps}+\lambda_j^{\eps}$ is uniformly bounded away from the spectrum of $\tilde{\Lcal}_{lc}^{\eps}(\gammab)$. The same applies to $\Bcal_3$. However, while such a uniform bound exists for $\Bcal_1$ and $\Bcal_3$, it turns out that we can only obtain an $\eps$-dependent lower bound for $\Bcal_2$. This is in contrast to the one-dimensional case discussed in \cite{haragus-courcelle1999-01ZangewMathPhys}, where a uniform bound is obtained. Nevertheless, and due to the specific geometry of the eigenvalues, cf.~\Cref{fig:spectral-eps=0}, we still obtain the following result.

\begin{lemma}\label{lem:normal-form-analysis}
    There exist maps $\Bcal_1$ and $\Bcal_3$, which satisfy \eqref{eq:normal-form-bilinear} and are bounded independently of $\eps$. Additionally, there are maps $\Bcal_{2,1}$ and $\Bcal_{2,2}$, which are bounded independently of $\eps$ such that 
    \begin{equation*}
        \Bcal_2 = \Bcal_{2,1} + \eps^{-\tfrac{1}{2}} \Bcal_{2,2}
    \end{equation*}
    is a solution to \eqref{eq:normal-form-bilinear}.
\end{lemma}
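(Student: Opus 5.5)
Each of the three homological equations in \eqref{eq:normal-form-bilinear} will be solved mode by mode. Since $\Pcal_{mc}^\eps$ has range only in the finitely many Fourier modes $\pm\k_j\in\Gamma_0$, a quadratic interaction of $\shortunderline{\hat W}_{mc}$ with a field at mode $\gammab'$ lands in mode $\gammab=\gammab'+\gammab_i$ with $\gammab_i\in\Gamma_0$. On each pair $(\gammab_i,\gammab')$ I diagonalise $\tilde\Lcal_{mc}^\eps(\gammab_i)$ in its eigenbasis $\phib^\eps_i$. This is possible for $c_0\neq c_{\crit}(\k_j)$ because the two more-central eigenvalues are distinct by \Cref{prop:spectrum-eps-positive}; the eigenvectors are uniformly bounded and uniformly nondegenerate by \Cref{lem:eigenvalue-correspondence}. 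On $\tilde\Lcal_{lc}^\eps(\gammab')$, $\tilde\Lcal_h^\eps(\gammab')$ I use the resolvent directly. The equations then reduce to
\[
\Bcal(\phib_i^\eps,V)=-\bigl(\lambda_i^\eps+\tilde\Lcal_{\ast}^\eps(\gammab')-\tilde\Lcal_{lc}^\eps(\gammab)\bigr)^{-1}\bigl(2\Pcal_{lc}^\eps(\gammab)\hat\Ncal_2(\phib_i^\eps,V;\gammab,\vartheta)\bigr),
\]
where $\ast\in\{mc,lc,h\}$. The left-hand operator is a Sylvester operator whose spectrum is $\{\lambda_i^\eps+\lambda'-\lambda''\}$, with $\lambda'$ ranging over the spectrum of $\tilde\Lcal_\ast^\eps(\gammab')$ and $\lambda''$ over that of $\tilde\Lcal_{lc}^\eps(\gammab)$. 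Every block here is $4\times4$ or smaller, and the nonlinearity only involves $\d^\perp$ multipliers. Hence a bound on the inverse Sylvester operator that is uniform in $\gammab$ and polynomially controlled in $|\d^\perp\cdot\gammab|$, combined with the mapping properties of $\Ncal$ and the algebra property \eqref{eq:product-estimate-mixed-smoothness}, gives boundedness on $\Zcal$. So the task reduces to lower bounds on $|\lambda_i^\eps+\lambda'-\lambda''|$.

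For $\Bcal_1$ we have $\lambda_i^\eps+\lambda_j^\eps=\Ocal(\eps)$ with $\gammab=\gammab_i+\gammab_j\in\Gamma_0+\Gamma_0$. The less-central spectrum of $\tilde\Lcal_{lc}^\eps(\gammab)$ consists of $\Ocal(\sqrt\eps)$-perturbations of nonzero imaginary points, such as $-2i\d\cdot\k_j$ or $-i\d\cdot\gammab\pm i\Delta_i$. For the finitely many $\gammab\in\Gamma_0+\Gamma_0$, these points are a fixed positive distance from $0$ at $\eps=0$. Zero eigenvalues at such $\gammab$ are assigned to the mc part, which $\Pcal_{lc}^\eps$ removes. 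The bound is therefore uniform. For $\Bcal_3$ the separation is also uniform: $\lambda''$ is within $\Ocal(\sqrt\eps)$ of $i\R$, while $\lambda'$ is hyperbolic with $|\Re\lambda'|\ge r_2$ or $\ge r_1$ by \Cref{lem:polynom-roots-1,lem:polynom-roots-2}, or at least $C_0\sqrt\eps$ in the remaining cases. The small-$\eps|\d\cdot\gammab|$ hyperbolic eigenvalues need care. I will check them against the explicit expansions in \Cref{prop:spectrum-eps-positive}, using $\gammab-\gammab'\in\Gamma_0$, which shifts $\d\cdot\gammab$ by a bounded amount.

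The main obstacle is $\Bcal_2$, the interaction of mc with lc. At $\eps=0$ both $\lambda'$ and $\lambda''$ are purely imaginary, and the exact resonance $\lambda_i^0+\lambda'_0=\lambda''_0$ can occur. By the geometric picture, the line $\ell_{\gammab'}$ shifted by $\gammab_i\in\Gamma_0$ is $\ell_\gammab$, and intersection points with $S^1$ may be shared. The plan is to split $V$ according to whether the unperturbed imaginary parts coincide. In the non-resonant case, discreteness of the imaginary spectrum (\Cref{prop:imaginary-spectrum-discrete}) together with $\cot\theta\in\sqrt3\Q$ gives a uniform gap, and this defines $\Bcal_{2,1}$. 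In the resonant case I expand with \eqref{eq:expansion-imaginary-less-central}: $\lambda'-\lambda''=\sqrt{\eps}\,\bigl(a(\gammab')-b(\gammab)\bigr)+\Ocal(\eps)$, with coefficients $\pm\sqrt{ic_0(\cdot)}/(2\Delta_i)$. The leading coefficients differ in general, because the weights $\d\cdot\gammab-\Delta_i$ change between $\gammab'$ and $\gammab$, or because the signs are opposite. I expect a lower bound $\gtrsim\sqrt\eps$, which yields the $\eps^{-1/2}\Bcal_{2,2}$ part. To make this uniform in $\gammab$ I must treat large $\eps|\d\cdot\gammab|$ separately, where \Cref{lem:polynom-roots-1} gives $O(1)$ real parts and the sign pattern (stable vs.\ unstable) must differ, as well as the tangential Jordan-4 points with $\eps^{1/4}$ scaling. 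I anticipate that the precise check that the $\sqrt\eps$ coefficients never cancel is the delicate point. If exact cancellation did occur, for example when the same eigenvalue branch is shifted consistently, the equation would be solved on the lc-$\to$-lc diagonal using the block structure: the Jordan-pair splitting into one stable and one unstable eigenvalue ensures at least one sign mismatch.
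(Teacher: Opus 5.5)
Your route matches the paper's. You reduce to mode-wise Sylvester equations, use an $\eps$-uniform distance to the less-central spectrum for $\Bcal_1$, $\Bcal_3$ and the non-resonant part $\Bcal_{2,1}$, and treat the resonant pairs of \Cref{rem:same-imaginary-part} via $\eps^{-1/2}\Bcal_{2,2}$. However, the step that carries the lemma is only announced, and your fallback for it fails.

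The paper's argument runs as follows. For a resonant pair, $\gammab'+\lambda\d$ and $\gammab+\lambda\d$ are the endpoints of a unit chord parallel to $\k_j$, so they lie in $\{\pm\k_\ell,\pm\k_n\}$ with $\{j,\ell,n\}=\{1,2,3\}$. The $\sqrt{\eps}$-coefficients in \eqref{eq:expansion-imaginary-less-central} carry a factor $1/\Delta_i$, and the two values of $\Delta_i$ differ by $\pm\d\cdot\k_j\neq 0$. But each double eigenvalue splits with both signs of the square root, so the condition actually needed is $|\d\cdot\k_\ell|\neq|\d\cdot\k_n|$. This holds for $\theta\in(0,\tfrac{\pi}{6})$ but fails at $\theta=0$. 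There, $\gammab'=-\k_2$ and $\gammab=\k_1-\k_2$ share the eigenvalue $-i$. Their less-central eigenvalues are $-i\pm\sqrt{ic_0\eps}+\tfrac{c_0}{2}\eps$ and $-i\pm\sqrt{ic_0\eps}-\tfrac{3c_0}{2}\eps$, up to $\Ocal(\eps^{3/2})$. The same-sign divisor is therefore $\lambda^\eps_\pm(\k_1)+\lambda'-\lambda''=\eps\bigl(2c_0+\nu_\pm(\k_1)\bigr)+\Ocal(\eps^{3/2})$. A sign mismatch in the other pairings does not rescue this, because the Sylvester operator must be inverted on every pair $(\lambda',\lambda'')$.

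Second, even where all divisors are $\gtrsim\sqrt{\eps}$, deducing an inverse bound from a lower bound on $|\lambda_i^\eps+\lambda'-\lambda''|$ loses a factor $\eps^{-1/2}$ in the resonant case. The less-central blocks at $\gammab'$ and $\gammab$ are Jordan blocks split only at order $\sqrt{\eps}$, so their eigenvectors make an angle of order $\sqrt{\eps}$. Take $\mu$ at distance of order $\sqrt{\eps}$ from both eigenvalues of the pair. Then $(\mu-\tilde{\Lcal}^\eps(\gammab))^{-1}(0,0,0,1)^T$ equals $(1,z,z^2,z^3)^T/(-p(\mu;\gammab,\eps))$ with $z=\mu+i\d\cdot\gammab$, and $|p|\sim\eps$, so this resolvent is of size $\eps^{-1}$, not $\eps^{-1/2}$.

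To see what this does to $\Bcal_2$, use the $2\times 2$ companion model. Let $A$, $B$ have characteristic polynomials $\lambda^2-\delta_A^2$ and $\lambda^2-\delta_B^2$, drop the $\Ocal(\eps)$ shift $\lambda_i^\eps$, and solve $XA-BX=e_2(f_0,f_1)$ for $X=(x_{mn})$. One finds $(\delta_A^2-\delta_B^2)x_{12}=f_1$. Here $f_1$ is the forcing along the generalised eigenvector, namely a first derivative of the quadratic symbol at the resonant triad, and \Cref{ass:smallness} does not make it small. So generically $\|\Bcal_2\|\gtrsim\eps^{-1}$. Evaluating only on the two nearly parallel eigenvectors gives $\eps^{-1/2}$, and even that needs \Cref{ass:smallness} to control $f_0$. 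A $\sqrt{\eps}$ gap therefore does not yield a bounded $\Bcal_{2,2}$. You would need additional structure of the forcing along the Jordan direction, or a separate scaling of those components.

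The paper's proof rests on the same step: it treats $(\lambda_i^\eps+\lambda_j^\eps-\tilde{\Lcal}_{lc}^\eps(\gammab))^{-1}$ as $\Ocal(\eps^{-1/2})$ on resonant pairs. Appealing to it will not close this gap.
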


Before we prove this result, we observe that this is sufficient to choose $\beta > \tfrac{1}{2}$ to guarantee that the normal-form transformation is indeed a near-identity transformation. This does not pose additional challenges to the subsequent centre manifold reduction.

\begin{proof}
    For $\Bcal_1$ we note that $\lambda_i^\eps$ and $\lambda_j^\eps$ as eigenvalues of $\tilde{\Lcal}_{mc}^\eps(\gammab)$ lie in a neighbourhood of size $\Ocal(\eps)$ of the origin. In contrast, the spectrum of $\tilde{\Lcal}_{lc}^{\eps}(\gammab)$ has an imaginary part which is strictly bounded away from zero. This allows us to define $\Bcal_1$ with an $\Ocal(1)$-bound. 

    Similarly, for $\Bcal_3$ we find that the spectrum of $\tilde{\Lcal}_{h}^\eps(\gammab)$ is strictly bounded away from the imaginary axis uniformly in $\eps$. Therefore, we find a map $\Bcal_3$ with an $\Ocal(1)$-bound.

    It remains to construct $\Bcal_2$. The main issue is that, under the assumption that $\cot(\theta)=\sqrt{3}\Q$, there exist wave-number pairs $\gammab_1$ and $\gammab_2$ with $|\gammab_1 - \gammab_2| = 1$ such that $\tilde{\Lcal}_{lc}^\eps(\gammab_1)$ and $\tilde{\Lcal}_{lc}^\eps(\gammab_2)$ have eigenvalues with the same imaginary part for $\eps = 0$, see \Cref{rem:same-imaginary-part}. Therefore, we cannot expect to obtain a transformation $\Bcal_2$, which is bounded as $\eps \to 0$. 
    
    Instead, we split $\Bcal_2$ into two parts $\Bcal_{2,1}$ and $\tilde{\Bcal}_{2,2}$. The first part $\Bcal_{2,1}$ is constructed to remove all quadratic combinations, where the eigenvalues of $\tilde{\Lcal}_{lc}^\eps(\gammab_1)$ and $\tilde{\Lcal}_{lc}^\eps(\gammab_2)$ have different imaginary parts at $\eps = 0$. In this case, the existence of a bounded map $\Bcal_{2,1}$ follows as for $\Bcal_1$ and $\Bcal_3$. In particular $\Bcal_{2,1}$ is uniformly bounded in $\eps > 0$ and $(\gammab_1,\gammab_2)$. The latter follows from the fact that the eigenvalues of $\tilde{\Lcal}^0$, which lie on the imaginary axis, are discrete and have a positive minimal distance, see \Cref{prop:imaginary-spectrum-discrete,prop:spectrum-eps-positive}. The second part deals with the resonant case. The key observation is that, while $\tilde{\Lcal}_{lc}^\eps(\gammab_1)$ and $\tilde{\Lcal}_{lc}^\eps(\gammab_2)$ share one eigenvalue for $\eps = 0$, for $\eps > 0$ they have different real parts. In particular, the difference is of order $\sqrt{\eps}$, which follows from \eqref{eq:expansion-imaginary-less-central}. Moreover, the difference is also bounded away from zero uniformly in $(\gammab_1,\gammab_2)$. Indeed, \eqref{eq:expansion-imaginary-less-central} implies that for the eigenvalues $\lambda(\gammab_1)^{\eps}$ and $\lambda(\gammab_2)^{\eps}$ of $\tilde{\Lcal}_{lc}^\eps(\gammab_1)$ and $\tilde{\Lcal}_{lc}^\eps(\gammab_2)$, respectively, the expansions
    \begin{equation*}
        \lambda(\gammab_1)^{\eps} = \lambda(\gammab_1)^{0} + \sqrt{\eps} \sqrt{-i\lambda(\gammab_1)^{0}} \frac{ic_0}{2\Delta_i(\gammab_1)} + \Ocal(\eps)
    \end{equation*}
    with $i\Delta_i(\gammab_1) = i \d\cdot \gammab_1 - \lambda(\gammab_1)^{0}$ hold. We observe that $\Delta_i(\gammab_1) - \Delta_i(\gammab_2) = \d \cdot (\gammab_1 - \gammab_2) = \pm \d\cdot \k_j$ for some $j=1,2,3$, which is non-zero if $\theta\neq \frac{\pi}{6}$. Furthermore, this also shows that the difference $\lambda(\gammab_1)^{\eps} - \lambda(\gammab_2)^{\eps} \sim \sqrt{\eps}$ with a constant uniform in $\gammab\in \Gamma$ with $|\d^{\perp}\cdot \gammab| \leq 1$. Therefore, we make the ansatz $\tilde{\Bcal}_{2,2} = \eps^{-\tfrac{1}{2}} \Bcal_{2,2}$ and it remains to show that $\Bcal_{2,2}$ is bounded. This follows similarly to the construction of $\Bcal_{2,1}$ by noting that $\eps^{-\tfrac{1}{2}}(\lambda_i^\eps+ \lambda_j^{\eps} - \tilde{\Lcal}_{lc}^{\eps}(\gammab))^{-1}$ is a uniformly bounded operator for $\eps > 0$ and $\gammab\in \Gamma$ with $|\d^{\perp}\cdot \gammab| \leq 1$. This concludes the proof.
\end{proof}

\begin{remark}\label{rem:same-imaginary-part}
    If $\cot(\theta) \in \sqrt{3}\Q$, there exist infinitely many wave-number pairs $\gammab_1$ and $\gammab_2$ with $|\gammab_1 - \gammab_2| = 1$ such that $\tilde{\Lcal}_{lc}^\eps(\gammab_1)$ and $\tilde{\Lcal}_{lc}^\eps(\gammab_2)$ have eigenvalues with the same imaginary part $\lambda$ for $\eps = 0$. Indeed, $\|\gammab_1 + \lambda \d\| = 1 = \|\gammab_2 + \lambda \d\|$ and $\gammab_1-\gammab_2 = \pm \k_j$ infers that $\gammab_1 + \lambda \d = \pm \k_{\ell}$ since $(\k_n,-\k_{\ell})$ and $(-\k_n,\k_{\ell})$ are the only two pairs of points with distance one and lying on a line intersecting the circle with direction $\k_j$ where $\{j,\ell,n\}=\{1,2,3\}$, see \Cref{fig:geometric-argument}. The fact that there are infinitely many follows from the same intuition that lies behind the proof of \Cref{prop:real-spectral-gap}.
\end{remark}

\begin{figure}[h]
    \centering
    \includegraphics[width=0.6\textwidth]{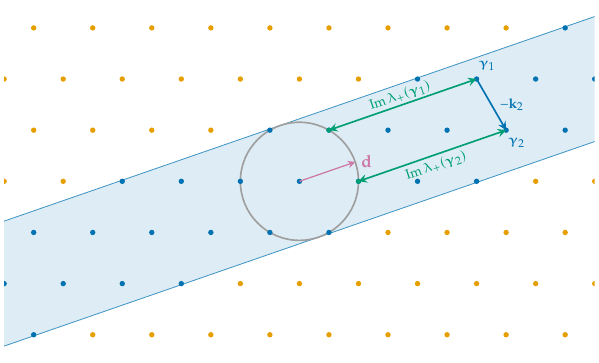}
    \caption{Geometric picture of two Fourier modes $\gammab_1$ and $\gammab_2$ with $\gammab_2 - \gammab_1 = -\k_2$ which generate two less-central eigenvalues of $\tilde{\Lcal}^0$ with the same imaginary part.}
    \label{fig:geometric-argument}
\end{figure}

We call the new variables under the normal-form transform \eqref{eq:normal-form-lc} 
\begin{equation}\label{eq:normal-form-W-to-V}
    \hat{V} = (\hat{V}_{mc},\hat{V}_{lc},\hat{V}_{h}) = (\shortunderline{\hat{W}}_{mc},\Tcal^{\eps}\shortunderline{\hat{W}}_{lc},\shortunderline{\hat{W}}_h).
\end{equation}
Then equation \eqref{eq:spatial-dynamics-split} in the new variables is given by
\begin{equation}\label{eq:spatial-dynamics-after-normal-form}
    \begin{split}
        \partial_{\xi} \hat{V}_{mc}(\cdot, \gammab)  & = \tilde{\Lcal}_{mc}^\eps(\gammab) \hat{V}_{mc}(\cdot, \gammab) + \hat{\Ncal}_{mc} (\hat{V}_{mc},\hat{V}_{lc} + \hat{V}_{h};\gammab,\vartheta), \\
        \partial_{\xi} \hat{V}_{lc}(\cdot, \gammab)  & = \tilde{\Lcal}_{lc}^{\eps}(\gammab) \hat{V}_{lc}(\cdot,\gammab) + \hat{\Ncal}_{lc} (\hat{V}_{mc},\hat{V}_{lc} + \hat{V}_{h};\gammab,\vartheta), \\
        \partial_{\xi} \hat{V}_{h}(\cdot, \gammab)  & = \tilde{\Lcal}_{h}^\eps(\gammab) \hat{V}_{h}(\cdot,\gammab) + \hat{\Ncal}_{h} (\hat{V}_{mc}, \hat{V}_{lc} + \hat{V}_{h};\gammab,\vartheta),
    \end{split}
\end{equation}
where, by a slight abuse of notation, we have reused the notation $\Ncal$ for the nonlinearity of the rescaled and normal-form-transformed variables to avoid introducing more letters. 

\begin{lemma}\label{lem:Lipschitz}
    For every $\ell >0$ and for $\eps >0$ sufficiently small, the map $\Ncal = (\Ncal_{mc},\Ncal_{lc},\Ncal_h)$ is locally Lipschitz continuous as a map from $\Zcal \to \Ycal^3$. That is, for every $r>0$ and $V_1,V_2 \in B_r(0)\subset \Zcal$ there is a constant $C_r>0$ independent of $\eps$ such that the following bounds hold true
    \begin{equation}\label{eq:Lipschitz-bounds}
    \begin{split}
        \|\Ncal_{mc}(V_1) - \Ncal_{mc}(V_2)\|_{\Ycal} & \leq C_r \eps^{\kappa_{mc}} \|V_1-V_2\|_{\Zcal}, \\
        \|\Ncal_{lc}(V_1) - \Ncal_{lc}(V_2)\|_{\Ycal} & \leq C_r \eps^{\kappa_{lc}} \|V_1-V_2\|_{\Zcal}, \\
        \|\Ncal_{h}(V_1) - \Ncal_{h}(V_2)\|_{\Ycal} & \leq C_r \eps^{\kappa_{h}} \|V_1-V_2\|_{\Zcal},
    \end{split}
    \end{equation}
    where $\kappa_{mc} = \min(\beta + 1, \gamma, 2 \beta)$, $\kappa_{lc} = \min(3\beta - \gamma, 2\beta - \tfrac{1}{2},\gamma)$, and $\kappa_h = \min(2\beta - \gamma, \beta, \gamma)$.
\end{lemma}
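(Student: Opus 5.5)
The nonlinearities in \eqref{eq:spatial-dynamics-after-normal-form} are polynomial compositions of $\hat{\Ncal}_2$, $\hat{\Ncal}_3$, $\hat{\Rcal}$ with the bounded projections and the normal-form transformation. The plan is to read off the $\eps$-power of every term and take the minimum. Two tools give the Lipschitz property at the level of function spaces. First, $\Ncal$ involves at most three $\d^\perp$-derivatives and no $\d$-derivatives beyond those absorbed in $\hat{W}$. Second, $H^{\ell_1}_{\d}H^{\ell_2}_{\d^\perp}$ is an algebra by \eqref{eq:product-estimate-mixed-smoothness}. Together they show that every multilinear piece maps $\Zcal\times\cdots\times\Zcal \to \Ycal$ boundedly, with one derivative of room to spare: $\Zcal$ carries $\ell_2+4$ perpendicular derivatives, $\Ycal$ needs $\ell_2+1$, and $\Ncal$ costs three. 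The projections are uniformly bounded by \Cref{lem:estimate-projections} for $\theta\neq\tfrac{\pi}{6}$. Hence, on $B_r(0)$, each multilinear term contributes a Lipschitz constant $C_r$ times its prefactor in $\eps$, and it remains only to identify the prefactors.

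For the more-central equation, the transformation does not touch $V_{mc}$ and $V_h$. However, $\shortunderline{W}_{lc}$ must be expressed through $V_{lc}$ by inverting $\Tcal^\eps$. \Cref{lem:normal-form-analysis} shows that the corrections are of size $\eps^{2\beta-\gamma}$, $\eps^{\beta-1/2}$ and $\eps^\beta$, all positive powers once $\beta>\tfrac12$ and $2\beta>\gamma$. Hence a Neumann-series or fixed-point argument inverts $\Tcal^\eps$ on $B_r$, uniformly Lipschitz with $\Ocal(1)$ constant. For the mc terms:
\begin{itemize}
\item By \Cref{ass:smallness}, $\eps^\beta\Pcal_{mc}^\eps\hat{\Ncal}_2(\shortunderline{\hat W}_{mc})$ gains one factor of $\eps$. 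This works because $\Pcal_{mc}^\eps$ projects onto modes $\Gamma_0$, and quadratic interactions of $\Gamma_0$ modes land on $\Gamma_0$ exactly when $|\k_j+\k_\ell|=1$. The term therefore contributes $\eps^{\beta+1}$.
\item The cross term contributes $\eps^\gamma$.
\item The cubic term contributes $\eps^{2\beta}$.
\item The remaining terms carry higher powers.
\end{itemize}
This gives $\kappa_{mc}$.

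For the less-central equation, differentiate $V_{lc}=\Tcal^\eps\shortunderline{W}_{lc}$ along the flow. By construction \eqref{eq:normal-form-bilinear}, the terms of orders $\eps^{2\beta-\gamma}$ and $\eps^\beta$ cancel. What is left falls into three groups:
\begin{itemize}
\item The original terms $\eps^{\gamma}\hat{\Ncal}_2(\shortunderline{W}_{lc}+\shortunderline{W}_h)$ and $\eps^{3\beta-\gamma}\hat{\Ncal}_3$, plus higher order.
\item New terms from inserting the nonlinear parts of the equations into $\Bcal_j$. For instance, $\eps^{2\beta-\gamma}\Bcal_1(\shortunderline{W}_{mc},\Ncal_{mc})$ is of order $\eps^{2\beta-\gamma}\eps^{\beta}=\eps^{3\beta-\gamma}$.
\item The dangerous term $\eps^{\beta-1/2}\Bcal_{2,2}(\shortunderline{W}_{mc},\Ncal_{lc})$, where $\Ncal_{lc}$ still contains $\eps^{\beta}\hat{\Ncal}_2(\shortunderline W_{mc},\cdot)$. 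This yields $\eps^{2\beta-1/2}$.
\end{itemize}
Substituting the inverse of $\Tcal^\eps$ into the remaining terms produces only higher powers. The minimum over all contributions is $\min(3\beta-\gamma,2\beta-\tfrac12,\gamma)$.

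For the hyperbolic equation nothing is removed, so the leading terms are $\eps^{2\beta-\gamma}$, $\eps^\beta$ and $\eps^\gamma$, which gives $\kappa_h$.

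The main obstacle is the bookkeeping in the lc equation. Several points need care:
\begin{itemize}
\item One must check that every term generated by $\partial_\xi$ of the singular piece $\eps^{\beta-1/2}\Bcal_{2,2}$ is at least of order $\eps^{2\beta-1/2}$, including the terms in which $\partial_\xi V_{mc}$ or $\partial_\xi V_{lc}$ is replaced by its nonlinearity.
\item The bilinear maps $\Bcal_j$, defined mode by mode, must be bounded $\Zcal\times\Zcal\to\Ycal$, or at least map into the spaces needed. This holds because they are resolvent-type multipliers whose uniform symbol bounds were established in \Cref{lem:normal-form-analysis}, and because $\Pcal_{mc}^\eps$ has finite rank, so any number of derivatives can be placed on the mc factor.
\item The constants must be independent of $\eps$ for $\eps$ small.
\end{itemize}
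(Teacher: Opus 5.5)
Your overall strategy is the paper's: polynomial structure plus the algebra property of the mixed-smoothness spaces, then reading off the $\eps$-prefactor of each term. However, the less-central bookkeeping has a genuine gap, exactly at the check you flag.

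In \eqref{eq:normal-form-lc} the map $\Bcal_2$ acts on the \emph{old} variable $\shortunderline{\hat{W}}_{lc}$. Differentiating $\eps^{\beta}\Bcal_2(\shortunderline{\hat{W}}_{mc},\shortunderline{\hat{W}}_{lc})$ along the flow therefore inserts the full original less-central nonlinearity. Its leading part is $\eps^{2\beta-\gamma}\Pcal_{lc}^\eps\hat{\Ncal}_2(\shortunderline{\hat{W}}_{mc})$, not just $2\eps^{\beta}\Pcal_{lc}^\eps\hat{\Ncal}_2(\shortunderline{\hat{W}}_{mc},\cdot)$. Through $\eps^{-1/2}\Bcal_{2,2}$ this produces the purely more-central cubic term
\[
\eps^{3\beta-\gamma-\frac12}\,\Bcal_{2,2}\bigl(\shortunderline{\hat{W}}_{mc},\Pcal_{lc}^\eps\hat{\Ncal}_2(\shortunderline{\hat{W}}_{mc})\bigr),
\]
which no homological equation in your scheme removes.

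Its exponent is strictly below $3\beta-\gamma$, and, since $\beta<\gamma$, also below $2\beta-\tfrac12$. For $\beta=\tfrac34+\delta$, $\gamma=1+\delta$ it equals $\tfrac34+2\delta<\kappa_{lc}=1+\delta$, and it is $\le 1$ for $\delta\le\tfrac18$. So it is not harmless: with your $\Tcal^\eps$ the contraction in \Cref{thm:centre-manifold} would fail there.

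\Cref{lem:normal-form-analysis} only gives boundedness of $\Bcal_{2,2}$, and the term does occur. For $\theta=0$, the modes $\zerob$ and $\k_2$ form a resonant pair as in \Cref{rem:same-imaginary-part}: the intersection points are $\k_1$ and $\k_1+\k_2=-\k_3$, with common less-central eigenvalue $i$. Moreover $\Pcal_{lc}^\eps(\zerob)=I$. The $\zerob$-mode of $\hat{\Ncal}_2(\shortunderline{\hat{W}}_{mc})$ comes from the interactions of $\pm\k_j$, and \Cref{ass:smallness} does not touch it. Hence the claim in your first obstacle item fails, and your list does not yield the stated $\kappa_{lc}$.

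The paper's proof does the bookkeeping differently. In effect it treats the cross term as cancelled in the new variable. The leading new contribution is then the cross term evaluated on the correction, $2\eps^{\beta}\Pcal_{lc}^\eps\hat{\Ncal}_2(\shortunderline{\hat{W}}_{mc},\eps^{\tilde{\kappa}}\Bcal)$ with $\tilde{\kappa}=\min(2\beta-\gamma,\beta-\tfrac12)$, which together with $\eps^{\gamma}$ gives $\kappa_{lc}$.

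That bookkeeping is exact if you let $\Bcal_2$ act on $V_{lc}$. That is, define $V_{lc}$ implicitly by
\[
V_{lc}=\shortunderline{\hat{W}}_{lc}+\eps^{2\beta-\gamma}\Bcal_1(\shortunderline{\hat{W}}_{mc},\shortunderline{\hat{W}}_{mc})+\eps^{\beta}\Bcal_2(\shortunderline{\hat{W}}_{mc},V_{lc})+\eps^{\beta}\Bcal_3(\shortunderline{\hat{W}}_{mc},\shortunderline{\hat{W}}_{h}),
\]
which is solvable by a Neumann series since $\eps^{\beta-1/2}\to 0$. Then $\partial_\xi$ of the $\Bcal_2$-piece sees only the new, already reduced nonlinearity. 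After the homological cancellation the leading remainder is $2\eps^{\beta}\Pcal_{lc}^\eps\hat{\Ncal}_2(\shortunderline{\hat{W}}_{mc},\shortunderline{\hat{W}}_{lc}-V_{lc})=\Ocal(\eps^{\min(3\beta-\gamma,2\beta-\frac12)})$, plus cubic, $\eps^{\gamma}$ and higher-order terms. Alternatively, keep your explicit $\Tcal^\eps$ and add a cubic correction removing the term above; it is non-resonant for the same reason as $\Bcal_1$.

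A separate point, which the paper shares with you: your justification of the extra $\eps$ in $\eps^{\beta}\Pcal_{mc}^\eps\hat{\Ncal}_2(\shortunderline{\hat{W}}_{mc})$ is only a Fourier-support argument. \Cref{ass:smallness} controls $N_2$ on plane waves, i.e.\ on the eigenvector direction $(1,i\d\cdot\k_j,(i\d\cdot\k_j)^2,(i\d\cdot\k_j)^3)$. But $\Zcal_{mc}^\eps$ also contains $\phib_-^\eps(\k_j)$, whose first component vanishes. For example, $N_2(u)=u^2-2|\nabla u|^2$ satisfies \Cref{ass:smallness} exactly. Yet the quadratic interaction of $\phib_-^\eps(\k_j)e^{i\k_j\cdot\p}$ with the plane wave at $\k_\ell$, $|\k_j+\k_\ell|=1$, is a nonzero multiple of $\d\cdot\k_\ell$. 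So the uniformity of that gain on $B_r(0)$ also needs an argument.
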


\begin{proof}
    We observe that $\Ncal$ is still a polynomial nonlinearity in $(V,(\d^{\perp}\cdot\nabla_p) V,(\d^{\perp}\cdot\nabla_p)^2V,(\d^{\perp}\cdot\nabla_p)^3_{\p}V)$ since we assumed the nonlinearity $N$ in \eqref{eq:Swift-Hohenberg} to be polynomial and the normal-form transformation \eqref{eq:normal-form-lc} only increases the degree. So, $\Ncal$ is locally Lipschitz, and it suffices to compute the lowest-order scaling contributions in $\eps$.

    For this, we first note that the normal-form transform only acts on the less-central modes and there, we have $\hat{V}_{lc} = \hat{W}_{lc} + \eps^{\tilde{\kappa}} \Bcal$ with $\tilde{\kappa} = \min(2\beta - \gamma, \beta, \beta - \tfrac{1}{2})$. Therefore, for the more-central and hyperbolic part of the nonlinearity, the conjectured scaling follows from the $\eps$-scaling of the leading-order terms in the rescaled nonlinearities \eqref{eq:spatial-dynamics-split}. The scaling for $\Ncal_{mc}$ follows by \Cref{ass:smallness}, which gives an additional order of $\eps$ for the term $\Pcal_{mc}^\eps(\gammab) \hat{\Ncal}_2(\shortunderline{\hat{W}}_{mc};\gammab,\vartheta)$. After applying the normal-form transform, the first two terms are replaced by a higher-order polynomial with leading term given by $2 \eps^{\beta} \Pcal_{lc}^\eps(\gammab)\hat{\Ncal}_2(\shortunderline{\hat{W}}_{mc}, \eps^{\tilde{\kappa}}\Bcal;\gammab,\vartheta)$. The last relevant term is given by the third term $\eps^{\gamma} \Pcal_{lc}^\eps(\gammab) \hat{\Ncal}_2(\shortunderline{\hat{W}}_{lc} + \shortunderline{\hat{W}}_{h};\gammab,\vartheta)$. Using that $\tilde{\kappa} = \min(\beta-\tfrac{1}{2},2\beta-\gamma)$, gives $\kappa_{lc} = \min(3\beta - \gamma, 2 \beta - \tfrac{1}{2}, \gamma)$. This completes the proof.
\end{proof}

\subsection{Fixed-point argument and centre manifold theorem}\label{sec:proof-centre-manifold-theorem}

We now prove the existence of a centre manifold for the transformed system \eqref{eq:spatial-dynamics-after-normal-form} via a standard fixed-point argument in exponentially weighted spaces using the integral formulation of \eqref{eq:spatial-dynamics-after-normal-form}, see e.g.~\cite{haragus2011book}. To formulate this, we introduce the projected semigroups
\begin{equation}\label{eq:projected-semigroups}
\begin{split}
    S_{mc}^\eps(t;\gammab) &:= \dfrac{1}{2\pi} \int_{J_{mc}^\eps(\gammab)} (\lambda I - \hat{\Lcal}^{\eps}(\gammab))^{-1} e^{\lambda t} \dd\lambda, \\
    S_{lc,s}^\eps(t;\gammab) &:= \dfrac{1}{2\pi} \int_{J_{lc,s}^\eps(\gammab)} (\lambda I - \hat{\Lcal}^{\eps}(\gammab))^{-1} e^{\lambda t} \dd\lambda, \\
    S_{lc,u}^\eps(t;\gammab) &:= \dfrac{1}{2\pi} \int_{J_{lc,u}^\eps(\gammab)} (\lambda I - \hat{\Lcal}^{\eps}(\gammab))^{-1} e^{\lambda t} \dd\lambda, \\
    S_{h,s}^\eps(t;\gammab) &:= \dfrac{1}{2\pi} \int_{J_{h,s}^\eps(\gammab)} (\lambda I - \hat{\Lcal}^{\eps}(\gammab))^{-1} e^{\lambda t} \dd\lambda, \\
    S_{h,u}^\eps(t;\gammab) &:= \dfrac{1}{2\pi} \int_{J_{h,u}^\eps(\gammab)} (\lambda I - \hat{\Lcal}^{\eps}(\gammab))^{-1} e^{\lambda t} \dd\lambda,
\end{split}
\end{equation}
where $J_{mc}$ is the positively oriented Jordan curve introduced before \eqref{eq:def-projections} and $J_{j,s}(\gammab)$ and $J_{j,u}(\gammab)$ are positively oriented Jordan curves around the stable and unstable eigenvalues of $\Pcal_{j}^\eps\hat{\Lcal}^{\eps}(\gammab)$ for $j = lc,h$. We then obtain the corresponding semigroups for $\hat{\Lcal}^{\eps}(\gammab) - i\d \cdot \gammab$ via $\tilde{S}_j^\eps(t,\gammab) = S_j^\eps(t,\gammab) e^{-i \d \cdot \gammab t}$ for $j = mc, lc, h$. Note that we obtain the full semigroups $S^{\eps}_{mc}$, $S^{\eps}_{lc,s}$, $S^{\eps}_{lc,u}$, $S^{\eps}_{h,s}$ and $S^{\eps}_{h,u}$ by taking the direct sum over $\gammab \in \Gamma$. Equivalently, one could also construct the full semigroups as in \eqref{eq:projected-semigroups} by replacing $\hat{\Lcal}^{\eps}(\gammab)$ by $\hat{\Lcal}^{\eps}$ and integrating over Jordan curves $J_{mc}$, $J_{lc,s}^\eps$, $J_{lc,u}^\eps$, $J_{h,s}^\eps$, $J_{h,u}^\eps$ which enclose the more-central, stable less-central, unstable less-central, stable hyperbolic, and unstable hyperbolic part of the spectrum of $\hat{\Lcal}^{\eps}$. Furthermore, as the full semigroups operate diagonally on the Fourier modes, we can restrict them to any subset of Fourier modes. Specifically, $\|\tilde{S}^\eps(t)\|_{H^\ell_{S_{[j,j+1)}} \to H^{\ell+3}_{S_{[j,j+1)}}}$ denotes the operator norm of the semigroup restricted to Fourier modes in $S_{[j,j+1)}$.

We introduce a smooth cut-off function $\chi_r : \R \mapsto [0,1]$ for $r > 0$, which is given by $\chi_r(x) =1$ if $|x| \leq r$, $\chi_r(x) = 0$ if $|x| \geq 2r$ and $\chi_r(x) \in (0,1)$ for $r < |x| < 2r$. We then define the cut-off nonlinearities $\shortunderline{\Ncal}(\,\cdot\,) = \Ncal(\,\cdot\,\chi_r(\|\cdot\|_{\Zcal}))$, which are globally Lipschitz continuous, and consider the system
\begin{equation}\label{eq:spatial-dynamics-after-normal-form-and-cut-off}
    \begin{split}
        \partial_{\xi} \hat{V}_{mc}(\cdot, \gammab)  & = \tilde{\Lcal}_{mc}^\eps(\gammab) \hat{V}_{mc}(\cdot, \gammab) + \shortunderline{\hat{\Ncal}}_{mc} (\hat{V}_{mc},\hat{V}_{lc} + \hat{V}_{h};\gammab,\vartheta), \\
        \partial_{\xi} \hat{V}_{lc,s}(\cdot, \gammab)  & = \tilde{\Lcal}_{lc,s}^\eps(\gammab) \hat{V}_{lc,s}(\cdot,\gammab) + \shortunderline{\hat{\Ncal}}_{lc,s} (\hat{V}_{mc},\hat{V}_{lc} + \hat{V}_{h};\gammab,\vartheta), \\
        \partial_{\xi} \hat{V}_{lc,u}(\cdot, \gammab)  & = \tilde{\Lcal}_{lc,u}^\eps(\gammab) \hat{V}_{lc,u}(\cdot,\gammab) + \shortunderline{\hat{\Ncal}}_{lc,u} (\hat{V}_{mc},\hat{V}_{lc} + \hat{V}_{h};\gammab,\vartheta), \\
        \partial_{\xi} \hat{V}_{h,s}(\cdot, \gammab)  & = \tilde{\Lcal}_{h,s}^\eps(\gammab) \hat{V}_{h,s}(\cdot,\gammab) + \shortunderline{\hat{\Ncal}}_{h,s} (\hat{V}_{mc}, \hat{V}_{lc} + \hat{V}_{h};\gammab,\vartheta), \\
        \partial_{\xi} \hat{V}_{h,u}(\cdot, \gammab)  & = \tilde{\Lcal}_{h,u}^\eps(\gammab) \hat{V}_{h,u}(\cdot,\gammab) + \shortunderline{\hat{\Ncal}}_{h,u} (\hat{V}_{mc}, \hat{V}_{lc} + \hat{V}_{h};\gammab,\vartheta).
    \end{split}
\end{equation}
Here, $\Pcal_j^\eps = \Pcal_{j,s}^\eps + \Pcal_{j,u}$ is the decomposition into the projections onto the stable and unstable eigenvalues, and we decompose $\tilde{\Lcal}_{j}^\eps$ into $\tilde{\Lcal}_{j,s}^\eps$ and $\tilde{\Lcal}_{j,u}^\eps$ accordingly.
Following the standard construction of a centre manifold, we then show that the variation-of-constants formula
\begin{equation}\label{eq:spatial-dynamics-after-normal-form-and-cut-off-variation-of-constants}
    \begin{split}
        \hat{V}_{mc}(t;\gammab) & = \tilde{S}_{mc}^\eps(t;\gammab) V_{mc}(0;\gammab) + \int_0^t \tilde{S}_{mc}^\eps(t - \tau;\gammab) \shortunderline{\hat{\Ncal}}_{mc}(\hat{V}_{mc}(\tau;\gammab),\hat{V}_{lc}(\tau;\gammab) + \hat{V}_{h}(\tau;\gammab);\gammab,\vartheta) \dd \tau,\\
        \hat{V}_{lc,s}(t;\gammab) & = \int_{-\infty}^t \tilde{S}_{lc,s}^\eps(t - \tau;\gammab) \shortunderline{\hat{\Ncal}}_{lc,s}(\hat{V}_{mc}(\tau;\gammab),\hat{V}_{lc}(\tau;\gammab) + \hat{V}_{h}(\tau;\gammab);\gammab,\vartheta) \dd \tau, \\
        \hat{V}_{lc,u}(t;\gammab) & = - \int_t^{\infty} \tilde{S}_{lc,u}^\eps(t - \tau;\gammab) \shortunderline{\hat{\Ncal}}_{lc,u}(\hat{V}_{mc}(\tau;\gammab),\hat{V}_{lc}(\tau;\gammab) + \hat{V}_{h}(\tau;\gammab);\gammab,\vartheta) \dd \tau, \\
        \hat{V}_{h,s}(t;\gammab) & = \int_{-\infty}^t \tilde{S}_{h,s}^\eps(t - \tau;\gammab) \shortunderline{\hat{\Ncal}}_{h,s}(\hat{V}_{mc}(\tau;\gammab),\hat{V}_{lc}(\tau;\gammab) + \hat{V}_{h}(\tau;\gammab);\gammab,\vartheta) \dd \tau, \\
        \hat{V}_{h,u}(t;\gammab) & = - \int_t^\infty \tilde{S}_{h,u}^\eps(t - \tau;\gammab) \shortunderline{\hat{\Ncal}}_{h,u}(\hat{V}_{mc}(\tau;\gammab),\hat{V}_{lc}(\tau;\gammab) + \hat{V}_{h}(\tau;\gammab);\gammab,\vartheta) \dd \tau
    \end{split}
\end{equation}
has a fixed point in the weighted space
\begin{equation}\label{eq:weighted_space}
    \Zcal_\eta := \{V \in C^0(\R,\Zcal) \,:\, \|V\|_{\Zcal_\eta} := \sup_{t\in \R} e^{-\eta|t|} \|V(t)\|_{\Zcal} < \infty\}
\end{equation}
with a weight $\eta > 0$ chosen in the proof of \Cref{thm:centre-manifold}, which separates the more-central modes on the one hand from the less-central and hyperbolic modes on the other hand, so that the only remaining time scales relevant for the dynamics originate from the more-central modes. Similarly, we define the weighted space $\Ycal_\eta$.

We now establish bounds for the semigroups defined in \eqref{eq:projected-semigroups}. For this, we first prove estimates for the semigroups on each strip $S_{[j,j+1)}$ in \Cref{lem:semigroup-bounds} and then extend these bounds to the full semigroups in \Cref{lem:semigroup-bounds-full}.

\begin{figure}[H]
    \centering
    \begin{subfigure}[b]{0.45\textwidth}
        \includegraphics[width = \linewidth]{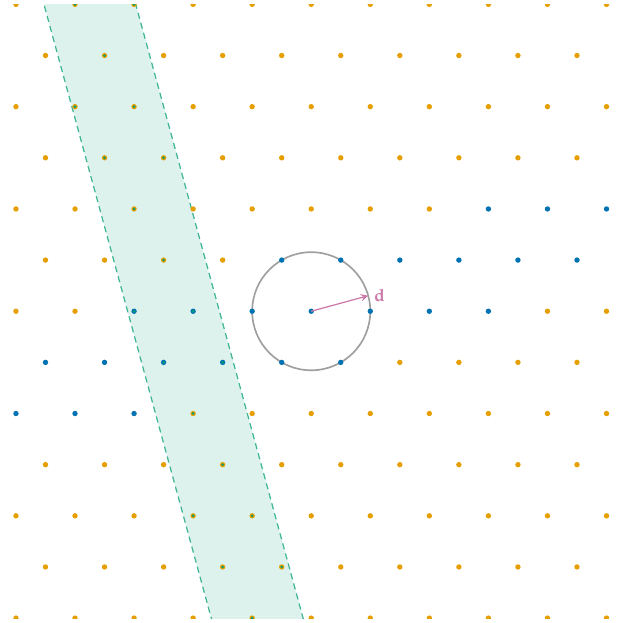}
        \subcaption{}
        \label{subfig:guido-1}
    \end{subfigure}
    \hfill
    \begin{subfigure}[b]{0.45\textwidth}
        \includegraphics[width = \linewidth]{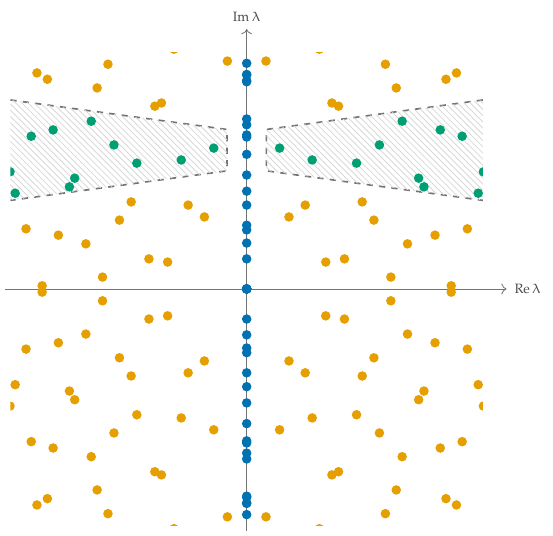}
        \subcaption{}
        \label{subfig:guido-2}
    \end{subfigure}    
    \caption{Panel \sref{subfig:guido-1} depicts a strip $S_{[j,j+1)}$ in Fourier space and panel \sref{subfig:guido-2} shows the corresponding hyperbolic eigenvalues of $\tilde{\Lcal}^0$, which are located in a bisectorial region of the complex plane.}
    \label{fig:spectral-slice}
\end{figure}

\begin{lemma}\label{lem:semigroup-bounds}
    Let $\ell > 0$ and $c_1 > 0$ be sufficiently small as in \Cref{lem:polynom-roots-1}. Then, for $\theta \neq \tfrac{\pi}{6}$ with $\cot(\theta) \in \sqrt{3}\Q$ exists a $\eps_0 > 0$ and constants $C_1, C_2, C_3, C_4 > 0$ such that the following estimates hold for all $\eps\in (0,\eps_0)$ and $j\in \Z$.
    \begin{enumerate}[label=(\alph*), ref=\thelemma(\alph*)]
        \item\label[lemma]{it:semigroup-bounds-1} The semigroup corresponding to the more-central eigenvalues satisfies
    \begin{equation*}
        \begin{split}
            \|\tilde{S}^\eps_{mc}(t)\|_{H^\ell_{S_{[j,j+1)}} \to H^{\ell+3}_{S_{[j,j+1)}}} & \leq C_1 \max(1,|t|) e^{C_2 \eps |t|}, \quad \text{for all } t \in \R, \\
        \end{split}
    \end{equation*}
        \item\label[lemma]{it:semigroup-bounds-2} The semigroups corresponding to the less-central eigenvalues that are, for given $\eps>0$, still $\sqrt{\eps}$-close to the imaginary axis satisfy
    \begin{equation*}
        \begin{split}
            \|\tilde{S}^\eps_{lc,s}(t)\|_{H^\ell_{S_{[j,j+1)}} \to H^{\ell+3}_{S_{[j,j+1)}}} & \leq \dfrac{C_1}{\sqrt{\eps}} e^{-C_3 \sqrt{\eps} t}, \quad \text{for all } t > 0 \text{ and } \eps |j| \leq c_1,  \\
            \|\tilde{S}^\eps_{lc,u}(t)\|_{H^\ell_{S_{[j,j+1)}} \to H^{\ell+3}_{S_{[j,j+1)}}} & \leq \dfrac{C_1}{\sqrt{\eps}} e^{C_3 \sqrt{\eps} t}, \quad \text{for all } t < 0 \text{ and } \eps |j| \leq c_1, 
        \end{split}
    \end{equation*}
    \item\label[lemma]{it:semigroup-bounds-3} The semigroups corresponding to the remaining less-central eigenvalues $\eqref{eq:projected-semigroups}_{2,3}$ and to the hyperbolic eigenvalues satisfy
    \begin{equation*}
        \begin{split}
            \|\tilde{S}^\eps_{lc,s}(t)\|_{H^\ell_{S_{[j,j+1)}} \to H^{\ell+3}_{S_{[j,j+1)}}}  & \leq C_1 e^{-C_4 t}, \quad \text{for all } t > 0 \text{ and } \eps |j| > c_1,  \\
            \|\tilde{S}^\eps_{lc,u}(t)\|_{H^\ell_{S_{[j,j+1)}} \to H^{\ell+3}_{S_{[j,j+1)}}} & \leq C_1 e^{C_4 t}, \quad \text{for all } t < 0 \text{ and } \eps |j| > c_1, \\
            \|\tilde{S}^\eps_{h,s}(t)\|_{H^\ell_{S_{[j,j+1)}} \to H^{\ell+3}_{S_{[j,j+1)}}} & \leq C_1 e^{-C_4 t}, \quad \text{for all } t > 0,  \\
            \|\tilde{S}^\eps_{h,u}(t)\|_{H^\ell_{S_{[j,j+1)}} \to H^{\ell+3}_{S_{[j,j+1)}}} & \leq C_1 e^{C_4 t}, \quad \text{for all } t < 0, 
        \end{split}
    \end{equation*}
    \end{enumerate}
\end{lemma}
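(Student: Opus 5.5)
The plan is to argue mode by mode. Since all semigroups act diagonally in Fourier space, the norm on $H^\ell_{S_{[j,j+1)}}\to H^{\ell+3}_{S_{[j,j+1)}}$ equals
\[
\sup_{\gammab\in S_{[j,j+1)}} (1+|\d^\perp\cdot\gammab|^2)^{3/2}\,\|\tilde S^\eps(t;\gammab)\|_{\C^4\to\C^4}.
\]
It therefore suffices to bound the $4\times4$ matrix exponentials with the weight $(1+|\d^\perp\cdot\gammab|^2)^{3/2}$, uniformly in $\gammab$ in the strip. The multiplication by $e^{-i\d\cdot\gammab t}$ is unitary, so I work with $\hat\Lcal^\eps(\gammab)-i\d\cdot\gammab$ directly. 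For each spectral part I represent the semigroup by a Dunford integral over a contour, or via the explicit finite-dimensional structure. From \Cref{lem:eigenvalue-correspondence} the eigenvectors are $(1,\mu,\mu^2,\mu^3)^T$ with $\mu=\lambda+i\d\cdot\gammab$. The resolvent is explicitly adj$/\det$ with $\det=-p(\lambda;\gammab,\eps)$ from \eqref{eq:polynomial-spectrum}.

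\textbf{More-central part.} For (a), only $\gammab\in\Gamma_0$ contribute, so the derivative weight is harmless. The two eigenvalues $\lambda^\eps_\pm(\k_j)=\eps\nu_\pm+\Ocal(\eps^2)$ from \Cref{prop:spectrum-eps-positive} perturb a Jordan block of size two. I write $\tilde S_{mc}^\eps(t)=\Pcal_{mc}e^{\tilde\Lcal_{mc}t}$ in a basis adapted to the $\eps=0$ Jordan block, so that $\tilde\Lcal_{mc}^\eps=N+\eps M$ with $N$ nilpotent of order two and $M$ bounded. Then $e^{(N+\eps M)t}$ is bounded by $C(1+|t|)e^{C\eps|t|}$ by a Gronwall or Duhamel argument around $e^{Nt}=I+Nt$. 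Combined with the uniform projection bound of \Cref{lem:estimate-projections}, this gives (a).

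\textbf{Hyperbolic and far less-central parts.} For (c), the spectrum lies at distance at least $\min(r_1,r_2)$ from $i\R$ by \Cref{lem:polynom-roots-1,lem:polynom-roots-2}. I choose contours along $\Re\lambda=\mp C_4$ closed by sectorial rays; \Cref{fig:spectral-slice} shows that the eigenvalues in a strip lie in a bisector. The smoothing from $H^\ell$ to $H^{\ell+3}$ in the $\d^\perp$ direction then comes from the fourth-order structure. For $|\d^\perp\cdot\gammab|\to\infty$ the roots satisfy $|\Re\lambda|\sim|\d^\perp\cdot\gammab|$ by \eqref{eq:formula-lambda-pm}, so $|e^{\lambda t}|\le e^{-c|\d^\perp\cdot\gammab||t|}$. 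After normalizing the eigenvector components by appropriate powers of $\mu$, the entries of the spectral projection grow at most like $|\d^\perp\cdot\gammab|^3$. The factor $|\d^\perp\cdot\gammab|^3e^{-c|\d^\perp\cdot\gammab||t|}$ is then only integrable against $|t|^{-3}$, so honestly the gain of three derivatives should be understood in the sense of the phase space $\Xcal\to\Ycal/\Zcal$ weights used later. I would prove the bound in the form $C_1e^{-C_4|t|}$ for $|t|\ge1$ and track short-time singularities separately.

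\textbf{Near less-central part (main obstacle).} For (b), with $\eps|j|\le c_1$ and $|\d^\perp\cdot\gammab|\le1$, only finitely many $\gammab$ lie in each strip and the weight is bounded. However, the eigenvalues come from Jordan blocks of size two at $i\lambda_0$, split by $\pm\sqrt{\eps\lambda_0}\,\tilde\nu$ as in \eqref{eq:expansion-imaginary-less-central}, with size-four blocks when $|\d^\perp\cdot\gammab|=1$. My approach is to compute the spectral projection onto a single split eigenvalue. Its norm is governed by the residue $1/\prod_{k\neq1}(\lambda_1-\lambda_k)$, which is of order $(\eps\lambda_0)^{-1/2}\lesssim\eps^{-1/2}$ because the partner eigenvalue lies at distance $\sim\sqrt{\eps\lambda_0}$ and the others at distance $\Ocal(1)$. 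Together with $\Re\lambda\le-C_3\sqrt\eps$, uniformly in $\gammab$ by the implicit-function expansion, this gives $C_1\eps^{-1/2}e^{-C_3\sqrt\eps t}$.

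The delicate points are twofold. First, uniformity in $\lambda_0$ up to $\eps|\d\cdot\gammab|\sim c_1$: for small $\lambda_0$ the splitting is larger than $\sqrt\eps$, which helps. Second, the size-four blocks: there the splitting is $(\eps\lambda_0)^{1/4}$ and the residue is $\sim(\eps\lambda_0)^{-3/4}$. I would handle these not by separate projections but by the combined stable-part projection, whose contour can be taken at distance $\sim(\eps\lambda_0)^{1/4}$. The Dunford integral then gives the bound $\sup_t |t|^3(\eps\lambda_0)^{\cdot}e^{-c(\eps\lambda_0)^{1/4}t}$, which I expect to be controlled by $\eps^{-1/2}$ after using $\Re$ of order at least $\sqrt\eps$. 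Verifying this last estimate is where I anticipate the most work.
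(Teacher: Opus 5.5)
You follow the paper's architecture: single Fourier modes, $e^{-i\d\cdot\gammab t}$ treated as a unimodular factor (simpler than the paper's perturbation argument), Dunford/residue formulas, and uniform gaps from \Cref{lem:polynom-roots-1,lem:polynom-roots-2}. Your residue bound $(\eps\lambda_0)^{-1/2}$ for the size-two less-central blocks is exactly the paper's proof of (b). However, three steps do not work.

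\textbf{Part (a).} Duhamel/Gronwall around $e^{Nt}=I+Nt$ with a general bounded $M$ only gives growth $e^{C\sqrt{\eps}|t|}$, and this is sharp: $N+\eps M=\bigl(\begin{smallmatrix}0&1\\ \eps&0\end{smallmatrix}\bigr)$ has eigenvalues $\pm\sqrt{\eps}$. You must use that both eigenvalues of $\tilde\Lcal^\eps_{mc}(\k_j)$ are $\Ocal(\eps)$ (\Cref{prop:spectrum-eps-positive}); equivalently, the lower-left entry in the Jordan-adapted basis is $\Ocal(\eps^2)$. Then apply the exact $2\times2$ formula $e^{At}=e^{\lambda_2 t}I+\frac{e^{\lambda_1 t}-e^{\lambda_2 t}}{\lambda_1-\lambda_2}(A-\lambda_2 I)$ with $\frac{e^{\lambda_1 t}-e^{\lambda_2 t}}{\lambda_1-\lambda_2}=t\int_0^1 e^{(s\lambda_1+(1-s)\lambda_2)t}\,\mathrm{d}s$. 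This is what the paper's residue computation amounts to, and it also covers $c_0=c_{\crit}(\k_j)$.

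\textbf{Part (b), size-four blocks.} The step you expect to be ``the most work'' cannot be completed, because the target bound fails there. Such blocks occur for admissible $\theta\neq\tfrac{\pi}{6}$: for $\sin\theta=\tfrac17$ (so $\cot\theta=4\sqrt3$), the mode $\gammab=-7\k_1$ has $\d^\perp\cdot\gammab=1$ and $\d\cdot\gammab=-4\sqrt3$. There $\hat\Lcal^\eps(\gammab)$ is the companion matrix of $\mu^4-\eps c_0\mu-\eps^2\mu_0+i\eps c_0\,\d\cdot\gammab$. With $\delta=(\eps c_0|\d\cdot\gammab|)^{1/4}$ and $D=\mathrm{diag}(1,\delta,\delta^2,\delta^3)$, one has $\hat\Lcal^\eps(\gammab)=\delta D\tilde A D^{-1}$, where $\tilde A=\Ocal(1)$ is again a companion matrix with characteristic polynomial $q(\omega)\approx\omega^4\pm i$. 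The stable projection is $D\tilde P_s D^{-1}$. Expanding $e_4=\sum_k(1,\omega_k,\omega_k^2,\omega_k^3)^T/q'(\omega_k)$ shows that its $(1,4)$-entry is $\delta^{-3}\sum_{k\,\mathrm{stable}}1/q'(\omega_k)\approx\delta^{-3}(\omega_1+\omega_2)/(4\omega_1^4)\neq0$, because the two stable roots are adjacent, not opposite, fourth roots. Since $\tilde S^\eps_{lc,s}(t;\gammab)$ tends to this projection as $t\downarrow0$, its norm is $\gtrsim\eps^{-3/4}\gg\eps^{-1/2}$ for small $t$, whatever contour you choose. What does hold is $\|\tilde S^\eps_{lc,s}(t;\gammab)\|\lesssim\delta^{-3}e^{-c\delta t}$, whose time integral is $\lesssim\delta^{-4}\lesssim\eps^{-1}$; that is all the convolution estimates in the proof of \Cref{thm:centre-manifold} use. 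The paper's proof of (b) only performs the size-two computation, so you have found a real weak point. The estimate must be weakened there, not proved.

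\textbf{Part (c).} You only prove a version for $|t|\geq1$, not the stated bound. Your diagnosis is correct. $\tilde S^\eps_{h,s}(0)=\Pcal^\eps_{h,s}$ is a non-zero projection on every hyperbolic mode, and every non-empty strip contains hyperbolic modes with $|\d^\perp\cdot\gammab|\to\infty$ (for $\cot\theta\in\sqrt3\Q$, $\d^\perp$ is also a lattice direction). Hence the $H^\ell_{S_{[j,j+1)}}\to H^{\ell+3}_{S_{[j,j+1)}}$ norm blows up as $t\downarrow0$. The standard analytic-semigroup bounds the paper invokes also carry a singular factor at $t=0$. But ``tracking the short-time singularity separately'' does not rescue the lemma's purpose. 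With $k=|\d^\perp\cdot\gammab|$, the norm is bounded below by $\sup_k k^3e^{-ckt}\sim t^{-3}$, which is not integrable, so no pointwise bound of this type yields the convolution estimates of \Cref{thm:centre-manifold}. The natural remedy is a graded set-up: weight the four components of $\hat W(\gammab)$ by different powers of $1+|\d^\perp\cdot\gammab|$, as in $H^{s+3}\times H^{s+2}\times H^{s+1}\times H^s$. No sharper semigroup estimate in the present spaces can help.
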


\begin{proof}
    We first argue that there is $\eps_0>0$ so that $\hat{\Lcal}^{\eps}$ is a bisectorial operator for all $\eps \in (0,\eps_0)$ and the sector does not depend on $\eps$. Observe that $\hat{\Lcal}^0$ is bisectorial since its eigenvalues are given by $\pm \sqrt{|\d^\perp \cdot \gammab|^2 - 1}$ and therefore lie in a strip of width two around the real axis. Next, we recall the spectrum of $\hat{\Lcal}^{\eps}$ consists of the solutions to 
    \begin{equation*}
        g(\tilde{\lambda}) = -(\tilde{\lambda} + \sqrt{|\d^\perp \cdot \gammab|^2 - 1})^2(\tilde{\lambda} - \sqrt{|\d^\perp \cdot \gammab|^2 - 1})^2 + \eps^2 \mu_0 + \eps c_0 \tilde{\lambda} - i \eps c_0 (\d \cdot \gammab) = 0.
    \end{equation*}
    In particular, the $\eps$-perturbation does not depend on $\d^\perp \cdot \gammab$. Therefore, it is sufficient to consider the asymptotic behaviour of the eigenvalues for $|\d \cdot \gammab|$ large. Using the implicit function theorem, we find that
    \begin{equation}\label{eq:eigenval-expansion-at-infinity}
        \tilde{\lambda}_{1,2} = \pm \eps^{\tfrac{1}{4}} (\d\cdot \gammab)^{\tfrac{1}{4}} i^{\tfrac{3}{4}} + \Ocal(|\d\cdot \gammab|^\alpha), \quad \tilde{\lambda}_{3,4} = \pm \eps^{\tfrac{1}{4}} (\d\cdot \gammab)^{\tfrac{1}{4}} i^{\tfrac{7}{4}} + \Ocal(|\d\cdot \gammab|^\alpha),
    \end{equation}
    and hence, we find uniform sectors. In particular, $S^{\eps}_{mc}$, $S^{\eps}_{lc,s}$, $S^{\eps}_{lc,u}$, $S^{\eps}_{h,s}$ and $S^{\eps}_{h,u}$ are analytic semigroups. Note that the function spaces $H^\ell_{S_{[j,j+1)}}$ are invariant under the semigroups since they are invariant under the generator.

    Using \Cref{lem:polynom-roots-1,lem:polynom-roots-2}, we find that the eigenvalues of $\hat{\Lcal}^{\eps}_{lc}|_{S_{[j,j+1)}}$ for $\eps|j| > c_1$ and the hyperbolic eigenvalues are uniformly bounded away from the imaginary axis. By standard estimates for analytic semigroups, see e.g.~\cite{lunardi1995book}, this shows the estimates in \cref{it:semigroup-bounds-3} for $S^\eps$, the semigroup generated by $\hat{\Lcal}^{\eps}$. 

    We now prove \Cref{it:semigroup-bounds-1} and \Cref{it:semigroup-bounds-2} for $S^\eps$. To see this, note that on each strip $S_{[j,j+1)}$ there are only finitely many Fourier modes corresponding to more-central and less-central eigenvalues. Hence, it is sufficient to prove the estimates for a single Fourier mode.

    For \Cref{it:semigroup-bounds-1}, we note that, by \Cref{lem:estimate-projections}, the curves $J^\eps_{mc}(\gammab)$ can be chosen independently of $\eps$. Next, we recall that the more-central eigenvalues for $\eps = 0$ lie at $\lambda = 0$ and form a family of Jordan blocks of size two provided $\theta \neq \tfrac{\pi}{6}$.
    \begin{equation*}
    \begin{split}
        \|S^\eps_{mc}(t;\gammab)\| & = \left\| \int_{J_{mc}(\gammab)} \frac{\operatorname{adj}(\lambda- \hat{\Lcal}_{mc}^{\eps}(\gammab))}{\det(\lambda- \hat{\Lcal}_{mc}^{\eps}(\gammab))} e^{\lambda t} \de \lambda  \right\|_{H^\ell_\Gamma \to H^{\ell + 3}_\Gamma} \\
        & = \left\| \int_{J_{mc}(\gammab)} \frac{\operatorname{adj}(\lambda- \hat{\Lcal}_{mc}^{\eps}(\gammab))}{(\lambda - \lambda_1(\eps)) (\lambda-\lambda_2(\eps))} e^{\lambda t} \de \lambda  \right\|_{H^\ell_\Gamma \to H^{\ell + 3}_\Gamma} \\
        & = \left| \operatorname{Res}_{\lambda = \lambda_1(\eps)} \frac{\operatorname{adj}(\lambda- \hat{\Lcal}_{mc}^{\eps}(\gammab))}{(\lambda - \lambda_1(\eps)) (\lambda-\lambda_2(\eps))} e^{\lambda t}  + \operatorname{Res}_{\lambda = \lambda_2(\eps)} \frac{\operatorname{adj}(\lambda- \hat{\Lcal}_{mc}^{\eps}(\gammab))}{(\lambda - \lambda_1(\eps)) (\lambda-\lambda_2(\eps))} e^{\lambda t}  \right| \\
        & \leq C \left| \dfrac{e^{\lambda_1(\eps)t} - e^{\lambda_2(\eps)t}}{\lambda_1(\eps) - \lambda_2(\eps)} \right| + \left|\frac{\operatorname{adj}(\lambda_1(\eps)- \hat{\Lcal}_{mc}^{\eps}(\gammab)) - \operatorname{adj}(\lambda_2(\eps)- \hat{\Lcal}_{mc}^{\eps}(\gammab))}{\lambda_1(\eps) - \lambda_2(\eps)} e^{\lambda_1(\eps)t}\right|  \\
        & \leq C \left| \dfrac{e^{\eps \nu_+(\gammab) t} - e^{\eps \nu_-(\gammab)t}}{\eps (\nu_+(\gammab) - \nu_-(\gammab))} \right| + C e^{C_2 \eps|t|} \\
        & \leq C_1 \max(1,|t|) e^{C_2 \eps |t|},
    \end{split}
    \end{equation*}
    where we use that $\lambda_j(\eps) - \hat{\Lcal}_{mc}^{\eps}(\gammab)$ is bounded for $\eps\in (0,\eps_0)$, $\operatorname{adj}(\cdot)$ is locally Lipschitz. Additionally, we use the estimate
    \begin{equation*}
        \left| \dfrac{e^{\eps \nu_+(\gammab) t} - e^{\eps \nu_-(\gammab)t}}{\eps (\nu_+(\gammab) - \nu_-(\gammab))} \right| \leq C_1 |t| e^{C_2\eps |t|},
    \end{equation*}
    which we obtain by recalling \Cref{prop:spectrum-eps-positive}.

    For \Cref{it:semigroup-bounds-2}, we follow the same calculations as in the first part. The only difference is that since $J^\eps_{lc,s}(\gammab)$ and $J^\eps_{lc,u}(\gammab)$ are restricted to the stable and unstable eigenvalues, respectively, the sum of residues only contains singularities at stable or unstable eigenvalues, respectively. Therefore, we obtain
    \begin{equation*}
    \begin{split}
        \|S^\eps_{lc,s}(t;\gammab)\| & = \left\| \int_{J_{lc,s}(\gammab)} \frac{\operatorname{adj}(\lambda- \hat{\Lcal}_{lc,s}^{\eps}(\gammab))}{\det(\lambda- \hat{\Lcal}_{lc,s}^{\eps}(\gammab))} e^{\lambda t} \de \lambda  \right\| \\
        &\leq \left| \operatorname{Res}_{\lambda = \lambda_1(\eps)} \frac{\operatorname{adj}(\lambda- \hat{\Lcal}_{mc}^{\eps}(\gammab))}{(\lambda - \lambda_1(\eps)) (\lambda-\lambda_2(\eps))} e^{\lambda t}\right| \leq \dfrac{C_1}{\sqrt{\eps}} e^{-C_3 \sqrt{\eps}t},
    \end{split}
    \end{equation*}
    where we recall from \eqref{eq:expansion-imaginary-less-central} that the real parts of $\lambda_1(\eps)$ and $\lambda_2(\eps)$ have different signs.

    It remains to prove that the same estimates also hold for $\tilde{S}^\eps$. For this, we use perturbation theory for sectorial operators. Note that on each strip $S_{[j,j+1)}$, the operator generated by the Fourier multiplier $i \d \cdot \gammab$ is a bounded operator, which operates diagonally on Fourier modes. Therefore, on each strip $\tilde{\Lcal}^{\eps}$ is still bisectorial, see \Cref{fig:spectral-slice}. \cite[Prop.~2.4.1]{lunardi1995book} and therefore, $\tilde{S}^{\eps}_{mc}$, $\tilde{S}^{\eps}_{lc,s}$, $\tilde{S}^{\eps}_{lc,u}$, $\tilde{S}^{\eps}_{h,s}$ and $\tilde{S}^{\eps}_{h,u}$ are also analytic semigroups. In addition, the constants in the estimates do not change since the perturbation is purely imaginary. In particular, the constants do not depend on the strip. This concludes the proof.
\end{proof}

\begin{lemma}\label{lem:semigroup-bounds-full}
    Let $\ell_1, \ell_2 > 0$. Then, for $\theta \neq \tfrac{\pi}{6}$ with $\cot(\theta) \in \sqrt{3}\Q$ there exists an $\eps_0 > 0$ and constants $C_1, C_2, C_3, C_4 > 0$ such that the following estimates hold for all $\eps\in (0,\eps_0)$.
    \begin{enumerate}[label=(\alph*), ref=\thelemma(\alph*)]
        \item\label[lemma]{it:semigroup-bounds-1-full} The semigroup corresponding to the more-central eigenvalues satisfies
    \begin{equation*}
        \begin{split}
            \|\tilde{S}^\eps_{mc}(t)\|_{\Ycal \to \Zcal} & \leq C_1 \max(1,|t|) e^{C_2 \eps |t|}, \quad \text{for all } t \in \R, \\
        \end{split}
    \end{equation*}
        \item\label[lemma]{it:semigroup-bounds-2-full} The semigroups corresponding to the less-central eigenvalues satisfy
    \begin{equation*}
        \begin{split}
            \|\tilde{S}^\eps_{lc,s}(t)\|_{\Ycal \to \Zcal} & \leq \dfrac{C_1}{\sqrt{\eps}} e^{-C_3 \sqrt{\eps} t}, \quad \text{for all } t > 0,  \\
            \|\tilde{S}^\eps_{lc,u}(t)\|_{\Ycal \to \Zcal} & \leq \dfrac{C_1}{\sqrt{\eps}} e^{C_3 \sqrt{\eps} t}, \quad \text{for all } t < 0, 
        \end{split}
    \end{equation*}
    \item\label[lemma]{it:semigroup-bounds-3-full} The semigroups corresponding to the hyperbolic eigenvalues satisfy
    \begin{equation*}
        \begin{split}
            \|\tilde{S}^\eps_{h,s}(t)\|_{\Ycal \to \Zcal} & \leq C_1 e^{-C_4 t}, \quad \text{for all } t > 0,  \\
            \|\tilde{S}^\eps_{h,u}(t)\|_{\Ycal \to \Zcal} & \leq C_1 e^{C_4 t}, \quad \text{for all } t < 0, 
        \end{split}
    \end{equation*}
    \end{enumerate}
\end{lemma}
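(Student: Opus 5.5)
The estimates of \Cref{lem:semigroup-bounds-full} will be obtained by gluing together the strip-wise estimates of \Cref{lem:semigroup-bounds}, using that all semigroups act diagonally on the Fourier modes. The plan is to exploit the orthogonal decomposition
\begin{equation*}
    \|W\|_{H^{\ell_1}_{\d}H^{\ell_2}_{\d^\perp}}^2 = \sum_{j\in\Z}(1+|j|^2)^{\ell_1}\|\hat{W}|_{S_{[j,j+1)}}\|^2_{H^{\ell_2}_{S_{[j,j+1)}}}.
\end{equation*}
Since each semigroup $\tilde{S}^\eps_\ast(t)$ leaves every strip $S_{[j,j+1)}$ invariant, the weight $(1+|j|^2)^{\ell_1}$ commutes with it. Hence, for any $\ell_1$-level,
\begin{equation*}
    \|\tilde{S}^\eps_\ast(t)W\|^2_{H^{\ell_1}_{\d}H^{\ell_2+3}_{\d^\perp}} \leq \sup_{j\in\Z}\|\tilde{S}^\eps_\ast(t)\|^2_{H^{\ell_2}_{S_{[j,j+1)}}\to H^{\ell_2+3}_{S_{[j,j+1)}}}\,\|W\|^2_{H^{\ell_1}_{\d}H^{\ell_2}_{\d^\perp}} .
\end{equation*}
Applying this with $\ell_1+1$ in place of $\ell_1$ and $\ell_2+1$ in place of $\ell_2$ gives exactly the $\Ycal\to\Zcal$ operator norm: $\Ycal$ has $\d^\perp$-regularity $\ell_2+1$, $\Zcal$ has $\ell_2+4$, and the $\d$-regularity $\ell_1+1$ is unchanged. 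The essential point is that the constants in \Cref{lem:semigroup-bounds} are uniform in $j$. This is precisely why the paper insists on gaining regularity only in the $\d^\perp$-direction.

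Part (a) then follows immediately from \Cref{it:semigroup-bounds-1}, taking the supremum over $j$. Only finitely many strips actually contain more-central modes, namely those containing $\pm\k_j$; the others contribute zero.

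Part (c) follows directly from the hyperbolic estimates in \Cref{it:semigroup-bounds-3}.

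For part (b), I split $j$ into two ranges.
\begin{itemize}
    \item For $\eps|j|\le c_1$, \Cref{it:semigroup-bounds-2} gives the bound $\tfrac{C_1}{\sqrt\eps}e^{-C_3\sqrt\eps t}$ directly.
    \item For $\eps|j|>c_1$, \Cref{it:semigroup-bounds-3} gives $C_1e^{-C_4t}$. This is also controlled by $\tfrac{C_1}{\sqrt\eps}e^{-C_3\sqrt\eps t}$ for $t>0$, since $\sqrt\eps\le1$ and, after shrinking $\eps_0$, $C_3\sqrt\eps\le C_4$.
\end{itemize}
The unstable part is handled symmetrically for $t<0$. Taking the maximum of the constants yields uniform constants.

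The main obstacle is checking that the strip-wise constants in \Cref{lem:semigroup-bounds} are genuinely independent of $j$, including the smoothing $H^\ell\to H^{\ell+3}$ in the $\d^\perp$-direction. Two issues need care here.
\begin{itemize}
    \item A single strip can contain infinitely many hyperbolic modes with $|\d^\perp\cdot\gammab|\to\infty$. For these I would appeal to the uniform bisectoriality of $\tilde{\Lcal}^\eps$: the sector is independent of $\eps$ and of $j$, since $i\d\cdot\gammab$ is a purely imaginary diagonal shift. Standard analytic-semigroup smoothing then gives the $|\d^\perp\cdot\gammab|^3$ gain, because the eigenvalues grow like $|\d^\perp\cdot\gammab|$ and the linear part is of fourth order.
    \item The number of less-central modes per strip must be bounded uniformly in $j$. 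This holds because they satisfy $|\d^\perp\cdot\gammab|\le1$ and the lattice is discrete.
\end{itemize}
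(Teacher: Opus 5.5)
Your argument is correct and matches the paper's proof: the semigroups act diagonally on Fourier modes, so the weights $(1+j^2)^{\ell_1+1}$ factor through and the $\Ycal\to\Zcal$ norm is bounded by the supremum over strips of the strip-wise bounds in \Cref{lem:semigroup-bounds}. Your explicit split $\eps|j|\le c_1$ versus $\eps|j|>c_1$ in part (b), absorbing $C_1e^{-C_4t}$ into $\tfrac{C_1}{\sqrt\eps}e^{-C_3\sqrt\eps t}$, fills in correctly what the paper leaves as ``the same pattern''; and the $j$-uniformity you flag as the main obstacle is already part of the statement of \Cref{lem:semigroup-bounds}, so you can simply cite it rather than re-derive it.
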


\begin{proof}
    The estimates follow directly from the previous \Cref{lem:semigroup-bounds} with the following calculation
    \begin{equation*}
    \begin{split}
        \|\tilde{S}_{mc}^\eps(t)\hat{V}\|_{\Zcal} & = \left(\sum_{j \in \Z} (1+j^2)^{\ell_1} \|\tilde{S}^\eps_{mc}(t)\hat{V}\|_{H^{\ell_2+3}_{S_{[j,j+1)}}}\right)^{\tfrac{1}{2}} \\
        & \leq \sup_{j \in \Z} \|\tilde{S}^\eps_{mc}(t)\|_{H^{\ell_2}_{S_{[j,j+1)}} \to H^{\ell_2+3}_{S_{[j,j+1)}}}  \left(\sum_{j \in \Z} (1+j^2)^{\ell_1} \|\hat{V}\|_{H^{\ell_2}_{S_{[j,j+1)}}}\right)^{\tfrac{1}{2}} \\
        &\leq C_1 \max(1,|t|) e^{C_2 \eps |t|} \|\hat{V}\|_{\Ycal}.
    \end{split}
    \end{equation*}
    The remaining estimates follow the same pattern.
\end{proof}

We now state the main theorem on the existence of a centre manifold.

\begin{theorem}\label{thm:centre-manifold}
    Let $\ell_1, \ell_2 \in \N$ and $\theta \neq \tfrac{\pi}{6}$ with $\cot(\theta) \in \sqrt{3}\Q$. For any $\delta \in (0,\tfrac{1}{4})$ there exists an $\eps_0 > 0$ such that for every $\eps \in (0,\eps_0)$ there exists a neighbourhood $O_{mc}^{\eps} \subset \Zcal_{mc}^\eps$ of the origin and a map $\Psi^{\eps} : O_{mc}^{\eps} \to (I - \Pcal_{mc}^{\eps})\Zcal$, which is at least quadratic, such that the following holds.
    \begin{enumerate}[label=(\alph*), ref=\thetheorem(\alph*)]
        \item The neighbourhood $O_{mc}^{\eps}$ is of size $\Ocal(\eps^{3/4 + \delta})$.
        \item The centre manifold
        \begin{equation*}
            \Mcal_{mc}^{\eps} = \{W = W_{mc} + \Psi^{\eps}(W_{mc}) \, : \, W_{mc} \in O_{mc}^{\eps} \}
        \end{equation*}
        contains all small bounded solutions to \eqref{eq:spat-dyn-Fourier}.
        \item Solutions to the reduced equation
        \begin{equation}\label{eq:reduced-equation-CM-theorem}
            \partial_{\xi} W_{mc} = \Lcal^{\eps} W_{mc}+ \Pcal_{mc}^{\eps} \Ncal(W_{mc} + \Psi^{\eps}(W_{mc});\vartheta)
        \end{equation}
        give rise to solutions $W$ to the full system \eqref{eq:spat-dyn-Fourier} via $W=W_{mc} + \Psi^{\eps}(W_{mc})$. 
        \item The symmetries of the system \eqref{eq:Swift-Hohenberg} are preserved by the reduction function $\Psi$.
    \end{enumerate}
\end{theorem}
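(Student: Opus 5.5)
The plan is the standard fixed-point construction in exponentially weighted spaces, carried out for the rescaled, normal-form-transformed and cut-off system \eqref{eq:spatial-dynamics-after-normal-form-and-cut-off}, with every $\eps$-dependence tracked explicitly. I first fix the cut-off radius $r>0$ (of order one in the rescaled variables) and a weight $\eta$ with $C_2\eps \ll \eta \ll C_3\sqrt{\eps}$, for instance $\eta = \eps^{3/4}$. This choice ensures that $e^{-\eta|t|}$ dominates the polynomially-times-$e^{C_2\eps|t|}$ growth of $\tilde{S}_{mc}^\eps$ from \Cref{lem:semigroup-bounds-full}, while the less-central and hyperbolic semigroups still decay relative to the weight. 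For given $V_{mc}(0)\in\Zcal_{mc}^\eps$, I then define the map $\Phi$ on $\Zcal_\eta$ by the right-hand side of \eqref{eq:spatial-dynamics-after-normal-form-and-cut-off-variation-of-constants}. The image lies in $\Zcal_\eta$ because the nonlinearity maps $\Zcal\to\Ycal$ (\Cref{lem:Lipschitz}) and the semigroups map $\Ycal\to\Zcal$.

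The core step is the contraction estimate. I split $\Phi$ into its components and bound each by the product of the semigroup constant integrated against the weight and the Lipschitz constant from \eqref{eq:Lipschitz-bounds}. For the more-central part, integrating $\max(1,|t|)e^{C_2\eps|t|}e^{\eta|\tau|}$ costs a factor $\eta^{-2}$, so its contribution is of order $\eps^{\kappa_{mc}}\eta^{-2}$. For the less-central part, the factor $\eps^{-1/2}$ from \Cref{lem:semigroup-bounds-full}(b) and the time integral $(C_3\sqrt{\eps}-\eta)^{-1}\sim\eps^{-1/2}$ combine to give order $\eps^{\kappa_{lc}-1}$. For the hyperbolic part the semigroup constants are uniform, giving order $\eps^{\kappa_h}$. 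The theorem therefore reduces to choosing $\beta,\gamma$ such that $\kappa_{mc}>2\cdot\tfrac34$ (or, with $\eta$ optimised, $\kappa_{mc}>2\,\mathrm{power}(\eta)$), $\kappa_{lc}>1$ and $\kappa_h>0$, with $\tfrac12<\beta<1<\gamma<2$. Since $\kappa_{lc}=\min(3\beta-\gamma,2\beta-\tfrac12,\gamma)$, the condition $2\beta-\tfrac12>1$ forces $\beta>\tfrac34$, so I take $\beta=\tfrac34+\delta$. Then $3\beta-\gamma>1$ requires $\gamma<\tfrac54+3\delta$, and I choose $\gamma$ slightly above $1$, say $\gamma=1+\delta$. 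Also $\kappa_h=\min(2\beta-\gamma,\beta,\gamma)=\tfrac12+\delta>0$, and $\kappa_{mc}=\min(\tfrac74+\delta,1+\delta,\tfrac32+2\delta)=1+\delta$. With $\eta=\eps^{1/2-\delta'}$ for a small $\delta'\in(0,\delta/2)$, which is still $\ll\sqrt{\eps}$ up to constants, the requirement $\kappa_{mc}>2(1/2-\delta')$ holds. The weight has to be chosen carefully here: it needs $\eta\gg\eps$ but strictly below $C_3\sqrt{\eps}$. For instance, $\eta=\tfrac12 C_3\sqrt{\eps}$ gives a more-central contribution $\eps^{1+\delta-1}=\eps^\delta$ and a less-central contribution $\eps^{\kappa_{lc}-1}$. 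For sufficiently small $\eps$ all contributions are then $\le\tfrac12$, so $\Phi$ is a contraction, uniformly in $V_{mc}(0)$. This bookkeeping, in particular the degenerate factor $\eps^{-1/2}$ of $\Bcal_2$ that enters $\kappa_{lc}$, is the main obstacle; if the naive weight fails, I would use separate weights in the $mc$ and $lc$ estimates as in \cite{haragus-courcelle1999-01ZangewMathPhys}.

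The fixed point $V=V(V_{mc}(0))$ defines $\tilde{\Psi}^\eps(V_{mc}(0)):=(I-\Pcal_{mc}^\eps)V(0)$. The standard arguments of \cite{haragus2011book} then give the remaining properties. Smoothness follows from fibre contraction, which requires a scale of weights $\eta$. The map is at least quadratic because the nonlinearity is. Every bounded solution whose rescaled norm is below $r$ is a fixed point of $\Phi$, which gives (b). Reduced solutions lift to full solutions via invariance, which gives (c). Symmetry preservation follows from uniqueness of the fixed point together with the fact that the $\mathrm{O}(2)$-lattice symmetries commute with $\tilde{\Lcal}^\eps$, with the projections $\Pcal_j^\eps$ and with the normal form $\Tcal^\eps$, which gives (d). Finally, I undo the normal form (a near-identity map, since $\beta>\tfrac12$ by \Cref{lem:normal-form-analysis}) and the rescaling \eqref{eq:rescaling}. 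An $\Ocal(1)$ ball in $V_{mc}$ corresponds to $W_{mc}=\eps^\beta V_{mc}$, which yields a neighbourhood $O_{mc}^\eps$ of size $\eps^{3/4+\delta}$ and defines $\Psi^\eps$. This proves (a).
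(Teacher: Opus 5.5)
Your proposal follows the paper's proof: contraction for the rescaled, normal-form-transformed, cut-off system in $\Zcal_\eta$ with $\eta\sim\sqrt{\eps}$, leading to the condition $\min(\kappa_{mc}-1,\kappa_{lc}-1,\kappa_h)>0$, which $\beta=\tfrac34+\delta$, $\gamma=1+\delta$ satisfy, and then undoing the near-identity normal form and the rescaling to get size $\eps^{3/4+\delta}$. Two corrections: only your final weight $\eta=\tfrac12 C_3\sqrt{\eps}$ (the paper's choice) is valid, since $\eta=\eps^{3/4}$ leaves the more-central factor $\eps^{\kappa_{mc}}\eta^{-2}=\eps^{-1/2+\delta}$ and $\eta=\eps^{1/2-\delta'}$ exceeds $C_3\sqrt{\eps}$ (it is not below it), so the less-central estimates break down; and in (d) rotations do not commute with $\tilde{\Lcal}^\eps$, so only the symmetries compatible with the fixed $\d$ and the lattice $\Gamma$ (e.g.\ translations in $\p$) can be used in the uniqueness argument.
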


\begin{proof}
    The proof is organised as follows. First, we argue that the system \eqref{eq:spatial-dynamics-after-normal-form-and-cut-off} has a centre manifold of size $\Ocal(1)$ as $\eps \to 0$. For this, we solve the fixed-point problem \eqref{eq:spatial-dynamics-after-normal-form-and-cut-off-variation-of-constants} in the weighted space $\Zcal_{\eta}$ defined in \eqref{eq:weighted_space} by the standard contraction mapping theorem. By reverting the rescaling \eqref{eq:rescaling} and the normal-form transform \eqref{eq:normal-form-lc}, we then obtain a centre manifold for \eqref{eq:spat-dyn-Fourier} with the claimed $\eps$-scaling.

    We first choose $\eta = \tfrac{\tilde{C}}{2} \sqrt{\eps}$, where $\tilde{C}$ is the minimum of $C_2, C_3, C_4$ in \Cref{lem:semigroup-bounds-full}. Then, for the self-mapping property and the Lipschitz estimate, we observe that for any nonlinear function $F \colon \Zcal \to \Ycal$ the following estimates hold:
    \begin{equation*}
    \begin{split}
        \left\| \int_0^t \tilde{S}_{mc}^\eps(t - \tau) F(V(\tau)) \de \tau \right\|_{\Zcal_{\eta}}  & \leq \sup_{t\in \R} e^{-\eta|t|} \int_0^t \|\tilde{S}_{mc}^\eps(t - \tau)\|_{\Ycal\to \Zcal} \|F(V(\tau))\|_{\Ycal} \de \tau \\
        & \leq \sup_{t\in \R} e^{-\eta|t|} \int_0^t C_1 \max(1,|t-\tau|) e^{C_2 \eps|t-\tau|} e^{\eta \tau} \de \tau  \|F(V)\|_{\Ycal_{\eta}} \\
        & \leq C \eps^{-1} \|F(V)\|_{\Ycal_{\eta}}, \\
        \left\| \int_0^t \tilde{S}_{lc,s}^\eps(t - \tau) F(V(\tau)) \de \tau \right\|_{\Zcal_{\eta}}  &  \leq \sup_{t\in \R} e^{-\eta|t|} \int_0^t \frac{C_1}{\sqrt{\eps}} e^{C_2 \sqrt{\eps}|t-\tau|} e^{\eta \tau} \de \tau  \|F(V)\|_{\Ycal_{\eta}} \\
        & \leq C \eps^{-1} \|F(V)\|_{\Ycal_{\eta}}, \\
        \left\| \int_{-\infty}^{t} \tilde{S}_{lc,u}^\eps(t - \tau) F(V(\tau)) \de \tau \right\|_{\Zcal_{\eta}}  &  \leq \sup_{t\in \R} e^{-\eta|t|} \int_{-\infty}^{t} \frac{C_1}{\sqrt{\eps}} e^{C_2 \sqrt{\eps}|t-\tau|} e^{\eta \tau} \de \tau  \|F(V)\|_{\Ycal_{\eta}} \\
        & \leq C \eps^{-1} \|F(V)\|_{\Ycal_{\eta}}, \\
        \left\| \int_0^t \tilde{S}_{h,s}^\eps(t - \tau) F(V(\tau)) \de \tau \right\|_{\Zcal_{\eta}}  &  \leq C \|F(V)\|_{\Ycal_{\eta}},\\
        \left\| \int_{-\infty}^{t} \tilde{S}_{h,u}^\eps(t - \tau) F(V(\tau)) \de \tau \right\|_{\Zcal_{\eta}}  & \leq C \|F(V)\|_{\Ycal_{\eta}}.
    \end{split}
    \end{equation*}
    Here, we use the semigroup estimates in \Cref{lem:semigroup-bounds-full}, $\int_0^\infty t e^{-\sqrt{\eps} t} \de t = \Ocal(\eps^{-1})$, and $0<\eta < \min(C_2,C_3,C_4) \sqrt{\eps}$. With this choice of $\eta$, we also get that $\tilde{S}_{mc}^\eps(t) V_{mc}(0) \in \Zcal_{\eta}$.
    
    Next, we note that the cut-off nonlinearities $\shortunderline{\Ncal}$ map from $\Zcal$ into $\Ycal$ and are globally Lipschitz with the same Lipschitz constant as in \Cref{lem:Lipschitz}. Therefore, we find that \eqref{eq:spatial-dynamics-after-normal-form-and-cut-off-variation-of-constants} is a self-map on $\Zcal_{\eta}$ and is a contraction if
    \begin{equation*}
        C \eps^{\min(\kappa_{mc} - 1, \kappa_{lc} - 1, \kappa_h)} < 1.
    \end{equation*}
    This is satisfied for $\eps$ sufficiently small if $\min(\kappa_{mc} - 1, \kappa_{lc} - 1, \kappa_h) > 0$, which is satisfied for the choice
    \begin{equation}\label{eq:beta-gamma-final}
        \beta = \dfrac{3}{4} + \delta, \quad \gamma = 1 + \delta
    \end{equation}
    for $\delta \in (0,\tfrac{1}{4})$. Since this does not impose a restriction on the cut-off radius $r$, we obtain a centre manifold of size $\Ocal(1)$ for the system \eqref{eq:spatial-dynamics-after-normal-form-and-cut-off} by following the standard proof for a centre manifold theorem \cite{haragus2011book}. In particular, the reduction map $\Psi_V$ is at least quadratic in its arguments.

    Next, we reverse the normal-form transform \eqref{eq:normal-form-lc} to obtain an $\Ocal(1)$ centre manifold for \eqref{eq:spatial-dynamics-split}. Note that since $\beta > \tfrac{1}{2}$, the normal-form transformation \eqref{eq:normal-form-W-to-V} is a near-identity transformation and thus invertible. In particular, we point out that the normal-form transformation acts only on the less-central part, leaving the more-central part invariant. Finally, we revert the rescaling \eqref{eq:rescaling} to obtain a centre manifold of size $\Ocal(\eps^\beta) = \Ocal(\eps^{3/4 + \delta})$ for the system \eqref{eq:spat-dyn-Fourier} with reduction map $\Psi$. This concludes the proof.
\end{proof}

\begin{remark}
    The restriction $\delta < \tfrac{1}{4}$ in the centre manifold theorem \ref{thm:centre-manifold} in particular guarantees that solutions of size $\eps$ are contained in the centre manifold $\Mcal_{mc}$.
\end{remark}

\subsection{The reduced equations}\label{sec:reduced-equations} 

We now derive the reduced equations on the centre manifold. We will postpone the discussion of the dynamics of the reduced equations to \Cref{sec:interfaces} after we have discussed the existence of a centre manifold and the derivation of the reduced equation for the special case $\theta = \frac{\pi}{6}$ in \Cref{sec:special-case}. Similarly, we assume that $c_0 \neq c_{\crit}(\k_j)$, which guarantees that $\lambda_{+}^{\eps}(\k_j) - \lambda_-^\eps(\k_j) = \Ocal(\eps)$, see \Cref{prop:spectrum-eps-positive}. The case where $c_0 = c_{\crit}(\k_j)$ for some $j = 1,2,3$ is then discussed in \Cref{sec:reduced-equations-critical}. The derivation follows the strategy in \cite{hilder2025-08JNonlinearSci}.

To derive a reduced equation on the centre manifold, we introduce the slow spatial scale $\Xi = \eps \xi$ and make the ansatz
\begin{equation}\label{eq:ansatz-reduced-equations}
    W = W_{mc} + \Psi(W_{mc}) =  \eps \sum_{j = 1}^3 (\tilde{A}_{j,+}(\Xi) \hat{\phib}_+^{\eps}(\k_j) + \tilde{A}_{j,-}(\Xi) \hat{\phib}_-^{\eps}(\k_j))e^{i \k_j \cdot \p} + c.c. + \eps^2 \tilde{\Psi}(\tilde{A}_{1,\pm},\tilde{A}_{2,\pm},\tilde{A}_{3,\pm};\eps),
\end{equation}
where $\tilde{A}_j(\Xi) \in \C$ and $\hat{\phib}_{\pm}^{\eps}(\k_j) \in \C^4$ are the eigenvectors of $\Lcal^{\eps}(\k_j)$ that belong to the more-central eigenvalues $\lambda_{\pm}^{\eps}(\k_j)$ and are normalised in the sense that their first component is equal to one. Since the reduction map $\Psi$ in \Cref{thm:centre-manifold} is at least quadratic, we find that $\tilde{\Psi}(A_{j,\pm};\eps) = \Ocal(1)$. 

To determine the reduced equations, it is necessary to obtain the leading-order contributions of $\tilde{\Psi}$. Following \cite[Corollary 2.12]{haragus2011book} and using that $\xi$-derivatives gain one power of $\eps$, we find that 
\begin{equation*}
    (I-\Pcal_{mc}^\eps(\gammab))(\tilde{\Lcal}^{\eps}(\gammab)\eps^2 \tilde{\Psi}_{\gammab} + \hat{\Ncal}(W_{mc} + \Psi; \gammab, \vartheta)) = \Ocal(\eps^3)
\end{equation*}
for all $\gammab \in \Gamma$. To solve this equation, we first point out that $\hat{\Ncal}$ is of the form $(0,0,0,\ast)$. Then, for $\gammab \in \Gamma \setminus \Gamma_0$, we recall that $\Pcal_{mc}^\eps(\gammab) = 0$ and that
\begin{equation*}
    (\hat{\Lcal}^{\eps}(\gammab) - i\d \cdot \gammab) \tilde{\Psi}_{\gammab} = \boldsymbol{f} \quad \text{holds if and only if} \quad \hat{L}^{\eps}(\gammab) (\tilde{\Psi}_{\gammab})_0 = f_3
\end{equation*}
whenever $f = (0,0,0,f_3)$ by explicit computation using \eqref{eq:spat-dyn-Fourier} and using that $\hat{\Lcal}^\eps(\gammab)$ is given by 
\begin{equation}\label{eq:spat-dyn-linear}
    \hat{\Lcal}^\eps(\gammab) = \begin{pmatrix}
        0 & 1 & 0 & 0 \\
        0 & 0 & 1 & 0 \\
        0 & 0 & 0 & 1 \\
        a_0 & a_1 & a_2 & a_3
    \end{pmatrix},
\end{equation}
where the matrix-coefficients $a_j$ for $j = 0,\dots,3$ are given explicitly by
\begin{equation*}
    \begin{split}
        a_0 & = -(1-|\d^{\perp}\cdot\gammab|^2)^2 + \eps^2\mu_0 - i \eps c_0 \d\cdot\gammab, &\qquad  a_1 & =  \eps c_0,\\
        a_2 & = - 2(1-|\d^{\perp}\cdot\gammab|^2), & \qquad a_3 & = 0.
    \end{split}
\end{equation*}
In particular, the other entries of $\Psi_{\gammab}$ are given by $(\Psi_{\gammab})_j = (i \d \cdot \gammab)^j (\Psi_{\gammab})_0$ for $j = 0,1,2,3$. Next, we note that nonlinear terms of order $\eps^2$ can arise only as quadratic combinations of $W_{mc}$ and therefore these terms only appear at Fourier modes with $|\gammab| \leq 2$. For these terms, we also use that for the new variable $A_j = \tilde{A}_{j,+} + \tilde{A}_{j,-}$ it holds
\begin{equation}\label{eq:N2-spatial-to-physical}
\begin{split}
    \hat{\Ncal}_2(W_{mc};\gammab,\vartheta)_3 & = \eps^2\hat{N}_2(\sum_{j = 1}^3 A_{j} e^{i\k_j \cdot \p} + c.c.;\gammab,\vartheta) + \Ocal(\eps^3) \\
    &= \eps^2 \sum_{\substack{j,\ell=-3 \\ j,\ell \neq 0}}^{3} \delta_{\k_j+\k_\ell = \gammab}   A_{j} A_{\ell}\hat{N}_2(e^{i\k_j \cdot \p},e^{i\k_\ell \cdot \p};\gammab,\vartheta) + \Ocal(\eps^3),
\end{split}
\end{equation}
which we obtain from the observation that
\begin{equation*}
\begin{split}
    \hat{\phib}_\pm^\eps(\k_j) & = \bigl((i \d \cdot \k_j + \lambda_\pm^{\eps}(\k_j))^0,(i \d \cdot \k_j + \lambda_\pm^{\eps}(\k_j))^1,(i \d \cdot \k_j + \lambda_\pm^{\eps}(\k_j))^2,(i \d \cdot \k_j + \lambda_\pm^{\eps}(\k_j))^3\bigr)^T \\
    & =  \bigl(1, i \d \cdot \k_j, (i \d \cdot \k_j)^2, (i \d \cdot \k_j)^3\bigr)^T + \Ocal(\eps),
\end{split}
\end{equation*}
see \cite{eckmann1991-02CommunMathPhys}. Therefore, for $\gammab \in \Gamma \setminus \Gamma_0$ with $d(\gammab,0) \leq 2$, we find that the leading-order contribution of $(\tilde{\Psi}_{\gammab})_0$ is given as the solution to
\begin{equation*}
    \hat{L}^0(\gammab) (\tilde{\Psi}_{\gammab})_0 = - \sum_{\substack{j,\ell=-3 \\ j,\ell \neq 0}}^{3} \delta_{\k_j+\k_\ell = \gammab}   A_{j} A_{\ell}\hat{N}_2(e^{i\k_j \cdot \p},e^{i\k_\ell \cdot \p};\gammab,\vartheta).
\end{equation*}
Here, we note that $\hat{L}^0(\gammab) \neq 0$ for $\gammab \in \Gamma \setminus \Gamma_0$. For later use, we thus introduce the notation
\begin{equation*}
\begin{split}
     (\tilde{\Psi}_\zerob)_0 & = - 2 \hat{L}^0(\zerob)^{-1} \sum_{j = 1}^3 |A_j|^2 \hat{N}_2(e^{i \k_j \cdot \p}, e^{-i \k_j \cdot \p}; \zerob, \vartheta) =: 3\nu_0 \sum_{j = 1}^3 |A_j|^2, \\
     (\tilde{\Psi}_{\k_j + \k_\ell})_0 & = - (2- \delta_{j\ell}) \hat{L}^0(\k_j + \k_\ell)^{-1}  \hat{N}_2(e^{i \k_j \cdot \p}, e^{i \k_\ell \cdot \p}; \k_j+\k_{\ell}, \vartheta) =: \nu_{\k_j + \k_\ell} A_j A_\ell \ \text{for } d(\k_j+\k_{\ell},0) = 2.
\end{split}
\end{equation*}

We now consider Fourier modes $\gammab \in \Gamma_0$, where it holds that
\begin{equation*}
    \begin{split}
        (I-\Pcal_{mc}^\eps(\k_j))((\hat{\Lcal}^{\eps}(\k_j) - i \d \cdot \k_j)\eps^2 \tilde{\Psi}_{\k_j} + \hat{\Ncal}_2(W_{mc} + \Psi; \k_j, \vartheta)) & = \Ocal(\eps^3)
    \end{split}
\end{equation*}
for the less-central contributions and
\begin{equation}\label{eq:more-central-equation}
\begin{split}
    &\eps^2\partial_\Xi (\tilde{A}_{j,+}(\Xi)\hat{\phib}_+^{\eps}(\k_j) + \tilde{A}_{j,-}(\Xi) \hat{\phib}_-^{\eps}(\k_j)) \\
    &\qquad = \eps(\hat{\Lcal}^{\eps}(\k_j) - i \d \cdot \k_j) (\tilde{A}_{j,+}(\Xi) \hat{\phib}_+^{\eps}(\k_j) + \tilde{A}_{j,-}(\Xi) \hat{\phib}_-^{\eps}(\k_j)) \\
    & \qquad \quad  + \Pcal_{mc}^\eps(\k_j) \hat{\Ncal}_2(W_{mc} + \Psi; \k_j, \vartheta) + \Pcal_{mc}^\eps(\k_j) \hat{\Ncal}_3(W_{mc} + \Psi; \k_j, \vartheta) +  \Ocal(\eps^4),
\end{split}
\end{equation}
We start by discussing the first equation. For this, we note that due to the resonance of the hexagonal lattice, there are quadratic interactions of the more-central modes that lie again at a Fourier mode in $\Gamma_0$. However, we assume that these nonlinear terms gain an additional order of $\eps$, see \Cref{ass:smallness}. Therefore, we find that $(I - \Pcal_{mc}^\eps) \tilde{\Psi}_{k_j} = \Ocal(\eps)$, and hence, it is irrelevant for the leading-order equation on the centre manifold.

Finally, we derive an equation for the more-central modes. To systematically exploit the Jordan-block structure at $\eps = 0$, we introduce the following basis for the span of $\Pcal_{mc}^\eps$ given by
\begin{equation}\label{eq:clever-basis}
    \phib_+^{\eps}(\k_j) = \dfrac{1}{2}\bigl(\hat{\phib}_+^{\eps}(\k_j) + \hat{\phib}_-^{\eps}(\k_j)\bigr) \quad \text{and} \quad \phib_-^{\eps}(\k_j) =  \dfrac{1}{2\eps}\bigl(\hat{\phib}_+^{\eps}(\k_j) - \hat{\phib}_-^{\eps}(\k_j)\bigr).
\end{equation}
In particular, we point out that $(\phib_+^\eps(\k_j), \phib_-^\eps(\k_j))$ converges as $\eps \to 0$ to $(\phib_+^0(\k_j), \phib_-^0(\k_j))$ where $\phib_+^0(\k_j)$ is the eigenvector corresponding to the zero eigenvalue of $\hat{\Lcal}^0(\k_j)-i\d\cdot\k_j$ with first component normalised to one and $\phib_-^0(\k_j)$ is a generalised eigenvector to the zero eigenvalue with first component equal to zero. We then write
\begin{equation}\label{eq:tildeA-to-other-tilde-A}
    \tilde{A}_{j,+}\hat{\phib}_+^{\eps}(\k_j) + \tilde{A}_{j,-} \hat{\phib}_-^{\eps}(\k_j) = \tilde{A}_j \phib_+^\eps(\k_j) + \eps \tilde{B}_j \phib_-^\eps(\k_j)
\end{equation}
and note that the transformation $(\tilde{A}_{j,+}, \tilde{A}_{j,-}) \mapsto (\tilde{A}_j, \tilde{B}_j)$ is uniformly bounded for all $\eps > 0$. Moreover, we also recover $\tilde{A}_{j,+} + \tilde{A}_{j,-} = \tilde{A}_j$. We now derive evolution equations for $\tilde{A}_j$ and $\tilde{B}_j$ by projecting \eqref{eq:more-central-equation} onto $\phib_+^\eps(\k_j)$ and $\phib_-^\eps(\k_j)$, respectively. For this, we first note that
\begin{equation*}
    \begin{split}
        &(\hat{\Lcal}^{\eps}(\k_j) - i \d \cdot \k_j) (\tilde{A}_j \phib_+^{\eps}(\k_j) + \eps \tilde{B}_j \phib_-^{\eps}(\k_j)) \\
        &\quad= \dfrac{1}{2}(\lambda_+^\eps(\k_j) \hat{\phib}_+^\eps(\k_j) + \lambda_-^\eps(\k_j) \hat{\phib}_-^\eps(\k_j)) \tilde{A}_j + \eps \dfrac{1}{2\eps}(\lambda_+^\eps(\k_j) \hat{\phib}_+^\eps(\k_j) -\lambda_-^\eps(\k_j) \hat{\phib}_-^\eps(\k_j)) \tilde{B}_j \\
        &\quad = \Big(\dfrac{1}{2} (\lambda_+^\eps(\k_j) + \lambda_-^\eps(\k_j)) \phib_+^\eps(\k_j) + \dfrac{\eps}{2} (\lambda_+^\eps(\k_j) - \lambda_-^\eps(\k_j)) \phib_-^\eps(\k_j)\Big) \tilde{A}_j \\
        &\qquad + \Big(\dfrac{1}{2} (\lambda_+^\eps(\k_j) - \lambda_-^\eps(\k_j)) \phib_+^\eps(\k_j) + \dfrac{\eps}{2} (\lambda_+^\eps(\k_j) + \lambda_-^\eps(\k_j)) \phib_-^\eps(\k_j)\Big) \tilde{B}_j \\
        &\quad = \eps \Big( - \dfrac{c_0}{8 (\d \cdot \k_j)^2} \tilde{A}_j + \dfrac{\sqrt{c_0^2 - 16 (\d \cdot \k_j)^2 \mu_0}}{8 (\d \cdot \k_j)^2} \tilde{B}_j\Big) \phib_+^\eps(\k_j) \\
        & \qquad+ \eps^2 \Big( \dfrac{\sqrt{c_0^2 - 16 (\d \cdot \k_j)^2 \mu_0}}{8 (\d \cdot \k_j)^2} \tilde{A}_j - \dfrac{c_0}{8 (\d \cdot \k_j)^2} \tilde{B}_j\Big) \phib_-^\eps(\k_j),
    \end{split}
\end{equation*}
where we use \eqref{eq:clever-basis} and that the more-central eigenvalues are given in \Cref{prop:spectrum-eps-positive}. For the nonlinearity, we note as before that $\Pcal_{mc}^\eps \hat{\Ncal}_2 = \Ocal(\eps^3)$ using \Cref{ass:smallness} and $\Pcal_{mc}^\eps \hat{\Ncal}_3 = \Ocal(\eps^3)$. Hence, projecting on $\phi_+^{\eps}(\k_j)$ and $\phi_-^{\eps}(\k_j)$ we obtain that the dynamics on the centre manifold are described to leading order by the system
\begin{equation*}
    \begin{split}
        \partial_\Xi \tilde{A}_j &=  - \dfrac{c_0}{8 (\d \cdot \k_j)^2} \tilde{A}_j + \dfrac{\sqrt{c_0^2 - 16 (\d \cdot \k_j)^2 \mu_0}}{8 (\d \cdot \k_j)^2} \tilde{B}_j, \\
        \partial_\Xi \tilde{B}_j &=  \dfrac{\sqrt{c_0^2 - 16 (\d \cdot \k_j)^2 \mu_0}}{8 (\d \cdot \k_j)^2} \tilde{A}_j - \dfrac{c_0}{8 (\d \cdot \k_j)^2} \tilde{B}_j + \Ncal_{\Mcal_{mc},j}(\tilde{A}_1,\tilde{A}_2,\tilde{A}_3,\bar{\tilde{A}}_1,\bar{\tilde{A}}_2,\bar{\tilde{A}}_3).
    \end{split}
\end{equation*}
Here, $\Ncal_{\Mcal_{mc},j}$ denotes the $\Ocal(\eps^3)$-contributions of the nonlinearity. Note that these only depend on $A_j$, see \eqref{eq:N2-spatial-to-physical}. Before determining the nonlinearities explicitly, we make the final coordinate change
\begin{equation*}
    A_j = \tilde{A}_j \quad B_j = - \dfrac{c_0}{8 (\d \cdot \k_j)^2} \tilde{A}_j + \dfrac{\sqrt{c_0^2 - 16 (\d \cdot \k_j)^2 \mu_0}}{8 (\d \cdot \k_j)^2} \tilde{B}_j,
\end{equation*}
cf.~\cite{eckmann1991-02CommunMathPhys} which transforms this system to
\begin{equation}\label{eq:reduced-equation-with-nonlinear-placeholder}
    \begin{split}
        \partial_\Xi A_j &=  B_j, \\
        %\frac{8(\d\cdot\k_j)^2}{\sqrt{c_0^2 - 16(\d\cdot\k_j)^2 \mu_0}}\partial_\Xi B_j + \frac{c_0}{\sqrt{c_0^2 - 16(\d\cdot\k_j)^2 \mu_0}}B_j &=  \dfrac{\sqrt{c_0^2 - 16 (\d \cdot \k_j)^2 \mu_0}}{8 (\d \cdot \k_j)^2} A_j - \dfrac{c_0}{8 (\d \cdot \k_j)^2} \Bigg(\frac{c_0}{\sqrt{c_0^2 - 16(\d\cdot\k_j)^2 \mu_0}}A_j + \frac{8(\d\cdot\k_j)^2}{\sqrt{c_0^2 - 16(\d\cdot\k_j)^2 \mu_0}} B_j\Bigg) + \Ncal_{\Mcal_{mc},j}(\tilde{A}_1,\tilde{A}_2,\tilde{A}_3,\bar{\tilde{A}}_1,\bar{\tilde{A}}_2,\bar{\tilde{A}}_3). \\
        \partial_\Xi B_j &= -\dfrac{\mu_0}{4 (\d \cdot \k_j)^2} A_j - \dfrac{c_0}{4(\d \cdot \k_j)^2}B_j + \dfrac{\sqrt{c_0^2 - 16 (\d \cdot \k_j)^2 \mu_0}}{8(\d \cdot \k_j)^2} \Ncal_{\Mcal_{mc},j}(A_1,A_2,A_3,\bar{A}_1,\bar{A}_2,\bar{A}_3).
    \end{split}
\end{equation}
We point out that this second transformation is effectively 'normalising' the eigenvector $\phib_-^{\eps}(\k_j)$ so that $\hat{\Lcal}^{\eps}(\k_j) \phib_-^\eps(\k_j) = \phib^\eps_+(\k_j) + \Ocal(\eps)$.

It remains to derive an expression for the nonlinear terms. Since we have already determined the $\Ocal(\eps^2)$-terms of $\tilde{\Psi}_{\gammab}$, we obtain the nonlinear terms by balancing at $\eps^3 e^{i\k_j \cdot \p}$. For this, the relevant contributions are quadratic combinations $A_{\ell_1}, A_{\ell_2}$ such that $\k_{\ell_1} + \k_{\ell_2} = \k_j$, quadratic combinations of $A_\ell$ and $\tilde{\Psi}_{\gammab}$ with $\k_\ell + \gammab = \k_j$, and cubic combinations of $A_{\ell_1}, A_{\ell_2}, A_{\ell_3}$ such that $\k_{\ell_1} + \k_{\ell_2} + \k_{\ell_3} = \k_j$. In addition, to determine the normalisation of the projection $\Pcal_{mc}^\eps(\k_j)$, we recall that the projection is explicitly given by
\begin{equation}
    \Pcal_{mc}^\eps(\k_j) W = \dfrac{\hat{\phib}^{\eps, \ast}_+(\k_j) \cdot W}{\hat{\phib}^{\eps,\ast}_+(\k_j)\cdot \hat{\phib}^{\eps}_+(\k_j)} \hat{\phib}^{\eps}_+(\k_j) + \dfrac{\hat{\phib}^{\eps, \ast}_-(\k_j) \cdot W}{\hat{\phib}^{\eps,\ast}_-(\k_j)\cdot \hat{\phib}^{\eps}_-(\k_j)} \hat{\phib}^{\eps}_-(\k_j),
\end{equation}
where $\hat{\phib}^{\eps,\ast}_{\pm}(\k_j)$ are the eigenvectors of the adjoint matrix $\hat{\Lcal}^{\eps}(\k_j)^\ast$ corresponding to the eigenvalue $\lambda_\pm^{\eps}(\k_j) + i\d \cdot \k_j$, which are normalised such that the last component is equal to one. A direct computation yields that
\begin{equation*}
    \hat{\phib}^{\eps,\ast}_\pm(\k_j)_i = \sum_{j = i+1}^4 \bar{a}_j (\bar{\lambda}_{\pm}^{\eps}(\k_j)- i \d\cdot \k_j)^{j-i-1},
\end{equation*}
where we have set $a_4 = -1$, and so
\begin{equation}\label{eq:scalar-product-projection}
\begin{split}
    \hat{\phib}^{\eps,\ast}_\pm(\k_j)\cdot \hat{\phib}^{\eps}_\pm(\k_j) & = -\sum_{i = 0}^{3} (\lambda_\pm^{\eps}(\k_j) + i\d \cdot \k_j)^i \sum_{j = i+1}^4 a_j (\lambda_\pm^{\eps}(\k_j) + i\d \cdot \k_j)^{j-i-1} \\
    & = - \sum_{k=0}^{4} k a_k (\lambda_\pm^{\eps}(\k_j) + i\d \cdot \k_j)^{k-1} = \mp \eps \sqrt{c_0^2 - 16(\d\cdot\k_j)^2 \mu_0} + \Ocal(\eps^2),
\end{split}
\end{equation}
see \cite{eckmann1991-02CommunMathPhys}. Next, we write the projection $\Pcal_{mc}^\eps(\k_j)$ in the new basis $(\phib_+^\eps(\k_j), \phib_-^\eps(\k_j))$, which yields
\begin{equation*}
\begin{split}
        \Pcal_{mc}^\eps(\k_j) \hat{\Ncal}(W;\k_j, \vartheta) & = \left(\dfrac{\hat{\phib}^{\eps, \ast}_+(\k_j) \cdot \hat{\Ncal}(W;\k_j, \vartheta) }{\hat{\phib}^{\eps,\ast}_+(\k_j)\cdot \hat{\phib}^{\eps}_+(\k_j)}  + \dfrac{\hat{\phib}^{\eps, \ast}_-(\k_j) \cdot \hat{\Ncal}(W;\k_j, \vartheta)}{\hat{\phib}^{\eps,\ast}_-(\k_j)\cdot \hat{\phib}^{\eps}_-(\k_j)}\right) \phib_+^\eps(\k_j) \\ 
        & \quad + \eps \left(\dfrac{\hat{\phib}^{\eps, \ast}_+(\k_j) \cdot \hat{\Ncal}(W;\k_j, \vartheta)}{\hat{\phib}^{\eps,\ast}_+(\k_j)\cdot \hat{\phib}^{\eps}_+(\k_j)} - \dfrac{\hat{\phib}^{\eps, \ast}_-(\k_j) \cdot \hat{\Ncal}(W;\k_j, \vartheta)}{\hat{\phib}^{\eps,\ast}_-(\k_j)\cdot \hat{\phib}^{\eps}_-(\k_j)}\right) \phib_-^\eps(\k_j) \\
        &= \alpha_+^{\eps}(\k_j)\hat{N}(W;\k_j,\vartheta) \phib_+^\eps(\k_j) - \dfrac{2 + \Ocal(\eps)}{\sqrt{c_0^2 - 16(\d\cdot\k_j)^2 \mu_0}}\hat{N}(W;\k_j,\vartheta) \phib_-^\eps(\k_j),
\end{split}
\end{equation*}
where $\alpha_+^{\eps}(\k_j) = \Ocal(1)$ and we used that $\hat{\Ncal} = (0,0,0,\hat{N})$ and the last component of $\hat{\phib}_\pm^{\eps,\ast}(\k_j)$ is normalised to one.

Again, we notice that $\hat{N}(W_{mc} + \Psi(W_{mc});\k_j,\vartheta) = \Ocal(\eps^3)$ and therefore, the nonlinearity only adds higher-order terms to the $A_j$-equation. For the $B_j$-equation, we find with that the usual combinatorics, see e.g.~\cite{hilder2025-08JNonlinearSci} for details, yields the nonlinear terms of order $\Ocal(\eps^3)$
\begin{equation}\label{eq:reduced-nonlinearity}
\begin{split}
    & \dfrac{\sqrt{c_0^2 - 16 (\d \cdot \k_1)^2 \mu_0}}{8(\d \cdot \k_1)^2} \Ncal_{\Mcal_{mc},1}(A_1,A_2,A_3,\bar{A}_1,\bar{A}_2,\bar{A}_3) \\
    &= -\dfrac{\eps^{-3}}{4(\d \cdot \k_1)^2} \hat{N}(W_{mc} + \Psi(W_{mc});\k_1, \vartheta) = -\dfrac{1}{4(\d \cdot \k_1)^2}(\beta_2 \bar{A}_2 \bar{A}_3 + K_0 |A_1|^2 A_1 + K_2 (|A_2|^2 + |A_3|^2) A_1).
\end{split}
\end{equation}
Here, the quadratic coefficient is given as
\begin{equation*}
    \begin{split}
        \beta_2 := 2 \frac{N_2(e^{-i\k_2\cdot \p},e^{-i\k_3\cdot \p};\k_1,\vartheta)}{\eps},
    \end{split}
\end{equation*}
which is of order one for $\vartheta\in \Theta_0$ due to \Cref{ass:smallness}. The self-interaction and the cross-interaction coefficients are given as
\begin{equation}\label{eq:K-0-K-2}
    \begin{split}
        K_0 & := 2 \bigl[\hat{N}_2(e^{i \zerob \cdot \p},e^{i\k_1\cdot \p};\k_1, \vartheta)\nu_{0} + \hat{N}_2(e^{2i\k_1\cdot\p},e^{-i\k_1\cdot \p};\k_1, \vartheta) \nu_{2\k_1} \bigr] + 3 \hat{N}_3(e^{i\k_1\cdot \p},e^{i\k_1\cdot \p},e^{-i\k_1\cdot \p};\k_1,\vartheta),  \\
        K_2 & := 2 \bigl[\hat{N}_2(e^{i \zerob \cdot \p},e^{i\k_1\cdot \p};\k_1, \vartheta)\nu_{0} + \hat{N}_2(e^{i(\k_1-\k_2)\cdot\p},e^{i\k_2\cdot \p};\k_1, \vartheta) \nu_{\k_1-\k_2}\bigr] + 6 \hat{N}_3(e^{i\k_1\cdot \p},e^{i\k_2\cdot \p},e^{-i\k_2\cdot \p};\k_1,\vartheta).
    \end{split}
\end{equation}
Here, we recall that $\tilde{\Psi}_{\k_j} = \Ocal(\eps)$ and therefore, there are no quadratic combinations of $\tilde{\Psi}_{\k_j}$ and $A_\ell$ with $\k_j + \k_\ell = \k_1$ at leading order. While we have only derived the leading-order term at $\k_1$, we may obtain the leading-order terms at $\k_2$ and $\k_3$ by cyclic permutation of the $\k_i$. We point out that the coefficients are the same in all equations due to the rotational invariance of the nonlinearity and are all real-valued due to the reflectional invariance of the nonlinearity, cf.~\Cref{ass:rotation-invariant}.

Combining equations \eqref{eq:reduced-equation-with-nonlinear-placeholder} and \eqref{eq:reduced-nonlinearity}, we obtain the reduced equations on the centre manifold 
\begin{equation}\label{eq:reduced-equations-explicit}
    \begin{split}
        \partial_\Xi A_1 &=  B_1, \\
        \partial_\Xi B_1 &= -\dfrac{\mu_0}{4 (\d \cdot \k_1)^2} A_1 - \dfrac{c_0}{4(\d \cdot \k_1)^2}B_1 - \dfrac{1}{4(\d \cdot \k_1)^2} (\beta_2\bar{A}_2\bar{A}_3 + (K_0|A_1|^2 + K_2(|A_2|^2 + |A_3|^2)) A_1) + \Ocal(\eps),\\
        \partial_\Xi A_2 &=  B_2, \\
        \partial_\Xi B_2 &= -\dfrac{\mu_0}{4 (\d \cdot \k_2)^2} A_2 - \dfrac{c_0}{4(\d \cdot \k_2)^2}B_2 - \dfrac{1}{4(\d \cdot \k_2)^2} (\beta_2\bar{A}_1\bar{A}_3 + (K_0|A_2|^2 + K_2(|A_1|^2 + |A_3|^2)) A_2) + \Ocal(\eps),\\
        \partial_\Xi A_3 &=  B_3, \\
        \partial_\Xi B_3 &= -\dfrac{\mu_0}{4 (\d \cdot \k_3)^2} A_3 - \dfrac{c_0}{4(\d \cdot \k_3)^2}B_3 - \dfrac{1}{4(\d \cdot \k_3)^2} (\beta_2\bar{A}_1\bar{A}_2 + (K_0|A_3|^2 + K_2(|A_1|^2 + |A_2|^2)) A_3) + \Ocal(\eps),\\
    \end{split}
\end{equation}
which can also be written as the system of second-order equations
\begin{equation}\label{eq:reduced-equations-second-order}
\begin{split}
    4(\d\cdot\k_1)^2 \partial_{\Xi}^2 A_1 + c_0 \partial_{\Xi} A_1 + \mu_0 A_1 + \beta_2\bar{A}_2\bar{A}_3 + (K_0|A_1|^2 + K_2(|A_2|^2 + |A_3|^2)) A_1 + \Ocal(\eps) & = 0, \\
    4(\d\cdot\k_2)^2 \partial_{\Xi}^2 A_2 + c_0 \partial_{\Xi} A_2 + \mu_0 A_2 + \beta_2\bar{A}_1\bar{A}_3 + (K_0|A_2|^2 + K_2(|A_1|^2 + |A_3|^2)) A_2 + \Ocal(\eps) & = 0, \\
    4(\d\cdot\k_3)^2 \partial_{\Xi}^2 A_3 + c_0 \partial_{\Xi} A_3 + \mu_0 A_3 + \beta_2\bar{A}_1\bar{A}_2 + (K_0|A_3|^2 + K_2(|A_1|^2 + |A_2|^2)) A_3 + \Ocal(\eps) & = 0. \\
\end{split}
\end{equation}

\begin{remark}
    We note that the quadratic, self-interaction and cross-interaction coefficients are the same as in \cite{hilder2025-08JNonlinearSci}. In addition, they are also consistent with \cite{doelman2003-02EuropeanJournalofAppliedMathematics} by noting that in this paper, the quadratic nonlinearity is assumed to be of higher order, while in this paper, we only assume that the quadratic nonlinearity is small at the critical Fourier modes, cf.~\Cref{ass:smallness}. 
\end{remark}

\begin{remark}\label{rem:higher-order-terms}
    As in \cite[Rem.~4.5]{hilder2025-08JNonlinearSci} we highlight that the higher-order terms in the $A_1$-equation $\eqref{eq:reduced-equations-second-order}_1$ are of the form 
    \begin{equation*}
        A_1p_1(|A_1|^2,|A_2|^2,|A_3|^2,q) + \bar{A}_2\bar{A}_3 p_2(|A_1|^2,|A_2|^2,|A_3|^2,q)
    \end{equation*}
    with $q = A_1 A_2 A_3 + \bar{A}_1 \bar{A}_2 \bar{A}_3$ for polynomials $p_{j}(|A_1|^2,|A_2|^2,|A_3|^2,q) = \Ocal((|A_1|^2+|A_2|^2+|A_3|^2+q)^2)$, $j=1,2$. This follows from the rotational invariance of the nonlinearity, cf.~\Cref{ass:rotation-invariant}, from translation invariance and the fact that $\k_1 + \k_2 + \k_3 = 0$, cf.~\cite[Prop.~3.1]{buzano1983PhilosTransAMathPhysEngSci}.
\end{remark}

\begin{remark}
    We point out that the reduced equations on the centre manifold \eqref{eq:reduced-equations-explicit} are the same as the ones obtained in equation \eqref{eq:amplitude-system-hex} via the asymptotic expansion in physical variables \eqref{eq:amplitude-ansatz} in the Swift–Hohenberg equation \eqref{eq:Swift-Hohenberg}. This is, of course, not a coincidence. In fact, one can show that the leading-order systems are equivalent in the following sense. If $U$ is a solution to the physical system \eqref{eq:Swift-Hohenberg} of the form
    \begin{equation*}
        U(\xi,\p) = \eps \sum_{j = 1}^3 A_j(\eps \xi) e^{i \k_j \cdot \p} + c.c. + \Ocal(\eps^2),
    \end{equation*}
    one obtains a solution to the reduced equation on the centre manifold via $W = (U, (\partial_\xi+\d\cdot \nabla_p)U,(\partial_\xi+\d\cdot \nabla_p)^2U,(\partial_\xi+\d\cdot \nabla_p)^3U)$. Indeed, one observes that $W = \eps W_{mc} + \Ocal(\eps^2)$ with $W_{mc} \in \Zcal_{mc}^\eps$ since $\xi$-derivatives yield additional powers of $\eps$ and
    \begin{equation*}
        (1, (\d\cdot \nabla_p),(\d\cdot \nabla_p)^2,(\d\cdot \nabla_p)^3) e^{i\k_j \cdot \p} = (1,(i\d \cdot \k_j), (i\d \cdot \k_j)^2,(i\d \cdot \k_j)^3) e^{i\k_j\cdot \p} = \phib_{\pm}^{\eps}(\k_j) e^{i\k_j\cdot \p} + \Ocal(\eps).
    \end{equation*}
    On the other hand, solutions $(A_1,A_2,A_3,B_1,B_2,B_3)$ to the reduced equations \eqref{eq:reduced-equations-explicit} yield solutions $W$ to the spatial-dynamics system via
    \begin{equation*}
        W(\xi,\p) = \eps \sum_{j = 1}^3 (\tilde{A}_{j,+}(\eps \xi) \hat{\phib}_+^\eps(\k_j) + \tilde{A}_{j,-}(\eps \xi) \hat{\phib}_-^\eps(\k_j)) e^{i \k_j \cdot \p} + c.c. + \Ocal(\eps^2),
    \end{equation*}
    see \Cref{thm:centre-manifold}, where $\tilde{A}_{j,\pm}$ is obtained through \eqref{eq:tildeA-to-other-tilde-A}. Finally, we obtain a solution to the physical system by restricting the first component and note that $\hat{\phib}_\pm^\eps(\k_j)_1 = 1 + \Ocal(\eps)$ which yields $A_j = \tilde{A}_{j,+} + \tilde{A}_{j,-}$, cf.~\cite{eckmann1991-02CommunMathPhys,hilder2020-08JournalofDifferentialEquations}.
\end{remark}

\subsection{The centre manifold reduction and reduced equations for $\theta = \tfrac{\pi}{6}$}\label{sec:special-case}

We now consider the case that $\theta = \tfrac{\pi}{6}$, where $\hat{\Lcal}^0(\pm \k_2)$ has a Jordan block of size four instead of size two, cf.~\Cref{prop:imaginary-spectrum-discrete}. For $\eps > 0$ this Jordan block then splits into three less-central eigenvalues with real part of order $\eps^{1/3}$ and one more-central eigenvalue of the form $\lambda_{\perp}^{\eps}(\pm\k_2) = - \eps\tfrac{\mu_0}{c_0} + \Ocal(\eps^2)$, cf.~\Cref{prop:spectrum-eps-positive}. This makes it more complicated to separate the more-central and less-central eigenvalues. Indeed, \Cref{lem:estimate-projections} and \Cref{rem:projection-is-optimal} demonstrate that the relevant projection $\Pcal_{mc}^\eps(\k_2)$ behaves like $\eps^{-1}$. Therefore, we need to modify the centre manifold construction above. For this, we introduce the maps
\begin{equation}\label{eq:not-projections}
    \begin{split}
        \tilde{\Pcal}_{mc}^{\eps}(\k_2) W &= (\hat{\phib}_{\perp}^{\eps,*}(\k_2) \cdot W) \hat{\phib}^{\eps}_{\perp}(\k_2), \\
        \tilde{\Pcal}_{lc,j}^{\eps}(\k_2) W &= (\hat{\phib}_{lc,j}^{\eps,*}(\k_2) \cdot W) \hat{\phib}_{lc,j}^{\eps}(\k_2),
    \end{split}
\end{equation}
where $\phib_{\perp}^{\eps}(\k_2)$ is the eigenvector of $\hat{\Lcal}^{\eps}(\k_2)$ corresponding to the more-central eigenvalue $\lambda_{\perp}^{\eps}(\k_2)$, $\phib_{lc,j}^{\eps}(\k_2)$ are the eigenvectors of $\hat{\Lcal}^{\eps}(\k_2)$ corresponding to the less-central eigenvalues $\lambda_{lc,j}^{\eps}(\k_2)$, and $\phib_{\perp}^{\eps,\ast}(\k_2)$ and $\phib_{lc,j}^{\eps,\ast}(\k_2)$ are the eigenvectors of the corresponding adjoint eigenvalue problems. As above, $\phib_{\perp}^{\eps}(\k_2)$ and $\phib_{lc,j}^{\eps}(\k_2)$ are normalised such that their first component equals one, and $\phib_{\perp}^{\eps,\ast}(\k_2)$ and $\phib_{lc,j}^{\eps,\ast}(\k_2)$ are normalised such that their last component equals one. We point out that $\tilde{\Pcal}_{mc}^{\eps}(\k_2) $ and $\tilde{\Pcal}_{lc,j}^{\eps}(\k_2)$ are not projections. In particular, the more-central map satisfies
\begin{equation*}
    \begin{split}
        \tilde{\Pcal}_{mc}^{\eps}(\k_2)\tilde{\Pcal}_{mc}^{\eps}(\k_2)W = (\hat{\phib}_{\perp}^{\eps,*}(\k_2) \cdot \hat{\phib}^{\eps}_{\perp}(\k_2))  \tilde{\Pcal}_{mc}^{\eps}(\k_2)W = - \eps c_0 \tilde{\Pcal}_{mc}^{\eps}(\k_2)W,
    \end{split}
\end{equation*}
where we used \eqref{eq:scalar-product-projection} and $\d \cdot \k_2 = 0$. We note that $\tilde{\Pcal}_{mc}^{\eps}(\k_2) $ and $\tilde{\Pcal}_{lc,j}^{\eps}(\k_2)$ map onto the more-central and less-central eigenspaces, respectively, and, in particular, are uniformly bounded for $\eps > 0$.

Using these maps, we define, similarly to \eqref{eq:projected-variables},
\begin{equation*}
    \hat{W}_{mc}(\k_2) := \tilde{\Pcal}_{mc}^{\eps}(\k_2) \hat{W}(\k_2), \quad \hat{W}_{lc,j}(\k_2) := \tilde{\Pcal}_{lc,j}^{\eps}(\k_2) \hat{W}(\k_2)
\end{equation*}
and introduce the rescaled variables
\begin{equation*}
    \shortunderline{\hat{W}}_{mc}(\k_2) = \eps^{-\beta} \hat{W}_{mc}(\k_2), \quad \shortunderline{\hat{W}}_{lc,j}(\k_2) = \eps^{-\gamma} \hat{W}_{lc,j}(\k_2)
\end{equation*}
for $j = 1,2,3$ and $0 < \beta < 1 < \gamma < 2$ as in \eqref{eq:rescaling} with explicit values given in \eqref{eq:beta-gamma-final}. We then obtain the spatial-dynamics system
\begin{equation}\label{eq:spatial-dynamics-k2}
    \begin{split}
        \partial_\xi \shortunderline{\hat{W}}_{mc}(\pm \k_2) &= \lambda_{\perp}^{\eps}(\pm \k_2)\shortunderline{\hat{W}}_{mc}(\pm \k_2) + \shortunderline{\hat{\Ncal}}_{mc} (\shortunderline{\hat{W}}_{mc},\shortunderline{\hat{W}}_{lc} + \shortunderline{\hat{W}}_{h};\pm \k_2,\vartheta), \\
        \partial_\xi \shortunderline{\hat{W}}_{lc,j}(\pm \k_2) &= \lambda_{lc,j}^{\eps}(\pm \k_2)\shortunderline{\hat{W}}_{lc,j}(\pm \k_2) + \shortunderline{\hat{\Ncal}}_{lc,j} (\shortunderline{\hat{W}}_{mc},\shortunderline{\hat{W}}_{lc} + \shortunderline{\hat{W}}_{h};\pm \k_2,\vartheta),
    \end{split}
\end{equation}
with $j = 1,2,3$, at $\gammab=\k_2$ combined with system \eqref{eq:spatial-dynamics-split} for all other modes $\gammab\neq\k_2$. Here, $\shortunderline{\hat{\Ncal}}_{mc}$ and $\shortunderline{\hat{\Ncal}}_{lc,j}$ are defined like the rescaled nonlinearities after system \eqref{eq:spatial-dynamics-split} replacing the projections by $\tilde{P}^{\eps}_{mc}(\pm\k_2)$ and $\tilde{P}^{\eps}_{lc,j}(\pm\k_2)$, respectively. Then, repeating the normal-form analysis of \Cref{sec:rescaling-normal-form} and the proof of the centre manifold theorem in \Cref{sec:proof-centre-manifold-theorem} line by line, using that $\tilde{\Pcal}_{mc}^{\eps}(\k_2) $ and $\tilde{\Pcal}_{lc,j}^{\eps}(\k_2)$ are uniformly bounded for $\eps > 0$ and that the semigroup bounds for \eqref{eq:spatial-dynamics-k2} are better than for the rest of the more-central and less-central spectrum (cf.~\Cref{lem:semigroup-bounds}), we obtain the following result.

\begin{theorem}\label{thm:centre-manifold-pi-over-6}
    Let $\ell_1, \ell_2 \in \N$ and $\theta = \tfrac{\pi}{6}$. For any $\delta \in (0,\tfrac{1}{4})$ there exists an $\eps_0 > 0$ such that for every $\eps \in (0,\eps_0)$ there exists a neighbourhood $O_{mc}^{\eps} \subset \Zcal_{mc}^\eps$ of the origin and a map $\Psi^{\eps} : O_{mc}^{\eps} \to (I - \Pcal_{mc}^{\eps})\Zcal$, which is at least quadratic, such that the following holds.
    \begin{enumerate}[label=(\alph*), ref=\thetheorem(\alph*)]
        \item The neighbourhood $O_{mc}^{\eps}$ is of size $\Ocal(\eps^{3/4 + \delta})$.
        \item The centre manifold
        \begin{equation*}
            \Mcal_{mc}^{\eps} = \{W = W_{mc} + \Psi^{\eps}(W_{mc}) \, : \, W_{mc} \in O_{mc}^{\eps} \}
        \end{equation*}
        contains all small bounded solutions to \eqref{eq:spat-dyn-Fourier}.
        \item Solutions to the reduced equation
        \begin{equation}\label{eq:reduced-equation-CM-theorem-pi-over-6}
            \partial_{\xi} W_{mc} = \Lcal^{\eps} W_{mc}+ \Pcal_{mc}^{\eps} \Ncal(W_{mc} + \Psi^{\eps}(W_{mc});\vartheta)
        \end{equation}
        give rise to solutions $W$ to the full system \eqref{eq:spat-dyn-Fourier} via $W=W_{mc} + \Psi^{\eps}(W_{mc})$.
        \item The symmetries of the system \eqref{eq:Swift-Hohenberg} are preserved by the reduction function $\Psi$.
    \end{enumerate}
\end{theorem}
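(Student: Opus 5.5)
The plan is to repeat the proof of \Cref{thm:centre-manifold} almost line by line. The only modes that behave differently are $\pm\k_2$, whose eigenvalues at the origin form a single Jordan block of size four. For all $\gammab\neq\pm\k_2$, \Cref{lem:estimate-projections} still gives $\eps$-independent Jordan curves and uniformly bounded projections. On these modes the splitting \eqref{eq:spatial-dynamics-split}, the rescaling \eqref{eq:rescaling} with $\beta = \tfrac34+\delta$, $\gamma = 1+\delta$, and the semigroup bounds of \Cref{lem:semigroup-bounds,lem:semigroup-bounds-full} carry over unchanged. This uses \Cref{prop:spectrum-eps-positive}(b),(c), which shows that the remaining more-central eigenvalues at $\pm\k_1,\pm\k_3$ are still of order $\eps$ and that all other eigenvalues have real part bounded below by $C_0\sqrt\eps$.

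At $\pm\k_2$ the true projections blow up like $\eps^{-1}$, so I would instead work in the eigenbasis $\hat\phib^\eps_\perp(\k_2)$, $\hat\phib^\eps_{lc,j}(\k_2)$, using the maps $\tilde\Pcal^\eps_{mc}(\k_2)$, $\tilde\Pcal^\eps_{lc,j}(\k_2)$ from \eqref{eq:not-projections}. The first step is to check the algebra: by \eqref{eq:scalar-product-projection} with $\d\cdot\k_2 = 0$, the pairings satisfy $\hat\phib^{\eps,*}_\perp\cdot\hat\phib^\eps_\perp = -\eps c_0(1+\Ocal(\eps))$ and $\hat\phib^{\eps,*}_{lc,j}\cdot\hat\phib^\eps_{lc,j}\sim\eps^{2/3}$. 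The true coordinate of $\hat W(\k_2)$ along $\hat\phib^\eps_\perp$ is therefore $\tilde\Pcal^\eps_{mc}$ divided by a known scalar. Absorbing these scalars into the coordinates diagonalises the linear part as in \eqref{eq:spatial-dynamics-k2}, with the rescaled nonlinearities multiplied by the uniformly bounded $\tilde\Pcal$'s instead of the singular projections. In the diagonal system the linear flow is scalar: $e^{\lambda^\eps_\perp t}$ with $\lambda^\eps_\perp = -\eps\mu_0/c_0+\Ocal(\eps^2)$ in the more-central direction, and $e^{\lambda^\eps_{lc,j}t}$ with $|\Re\lambda^\eps_{lc,j}|\gtrsim\eps^{1/3}$ in the less-central directions. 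There is no Jordan-block factor $\max(1,|t|)$ and no $\eps^{-1/2}$ prefactor, so these semigroup bounds are better than those in \Cref{lem:semigroup-bounds}.

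The second step is the reconstruction of $W$. The change of basis back to the original coordinates involves inverse pairings of size up to $\eps^{-1}$, and this has to be absorbed without spoiling the scalings. The plan is to define the rescaled more-central and less-central variables directly as coordinates in the (uniformly bounded) eigenvector basis, which are the quantities the fixed point acts on. The singular normalisation then sits only in the nonlinearity. Because $\Ncal = (0,0,0,\hat N)$ and the adjoint vectors have last component equal to one, the nonlinearity contributes $\hat N$ times the bounded vector $\hat\phib^\eps$, with a factor $(\hat\phib^{\eps,*}\cdot\hat\phib^\eps)^{-1}$. For the more-central mode this factor is $\eps^{-1}$. It is compensated because the more-central eigenvalue is $\Ocal(\eps)$ and the integral $\int e^{\lambda_\perp(t-\tau)}$ is of size $\eps^{-1}$ in the weighted norm, exactly as in the generic case. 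I would verify this carefully: the claim is that the product of semigroup bound and Lipschitz constant has the same $\eps$-power as the corresponding term in the proof of \Cref{thm:centre-manifold}.

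Third, I would redo the normal-form step of \Cref{lem:normal-form-analysis} for resonances involving $\pm\k_2$. The less-central eigenvalues at $\pm\k_2$ split at order $\eps^{1/3}\gg\sqrt\eps$, so the divisors $\lambda_i+\lambda_j-\lambda_{lc}$ are no smaller than in the generic case and $\Bcal_1,\Bcal_2,\Bcal_3$ obey the same bounds. With the Lipschitz constants of \Cref{lem:Lipschitz}, the contraction condition $\min(\kappa_{mc}-1,\kappa_{lc}-1,\kappa_h)>0$ then holds with the same $\beta,\gamma$. This yields an $\Ocal(1)$ centre manifold in rescaled variables and, after undoing the normal form and the rescaling, a manifold of size $\Ocal(\eps^{3/4+\delta})$. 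Properties (b)--(d) follow from the standard construction \cite{haragus2011book} and the equivariance of the fixed-point map.

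I expect the main obstacle to be the bookkeeping of the $\eps^{-1}$ and $\eps^{-2/3}$ normalisation factors at $\pm\k_2$, both in the variation-of-constants formula and in the Lipschitz estimates. The goal is to show that every such factor is exactly compensated by a correspondingly faster (less-central) or slower (more-central) scalar semigroup, so that no net loss in $\eps$ remains.
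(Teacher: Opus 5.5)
Your route is the paper's. You use the non-normalised maps \eqref{eq:not-projections} at $\pm\k_2$ with scalar semigroups there, then rerun the normal form and the fixed point with $\beta=\tfrac34+\delta$, $\gamma=1+\delta$. The gap is exactly the bookkeeping you flag as the main obstacle: once the pairings are put back, they are not compensated.

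\textbf{The pairings.} At $\pm\k_2$ one has $a_3=a_2=0$ and $a_1=\eps c_0$, so \eqref{eq:scalar-product-projection} gives $\hat\phib^{\eps,*}\cdot\hat\phib^{\eps}=4\lambda^3-\eps c_0$. This is $-\eps c_0(1+\Ocal(\eps))$ at $\lambda^\eps_\perp$, as you say. At the three less-central eigenvalues, however, $\lambda^3=\eps c_0(1+o(1))$, so the pairing is $3\eps c_0(1+o(1))$, not of order $\eps^{2/3}$.

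\textbf{More-central coordinate at $\pm\k_2$.} The weight in $\Zcal_\eta$ must stay below the less-central gap, $\eta<C_3\sqrt\eps$. Since $|\lambda^\eps_\perp|\sim\eps\ll\eta$, the weighted integral of $e^{\lambda^\eps_\perp(t-\tau)}$ is of size $\eta^{-1}\gtrsim\eps^{-1/2}$, not $\eps^{-1}$. Even an $\eps^{-1}$ integral would multiply the $(\eps c_0)^{-1}$ from the pairing rather than cancel it. The net factor is therefore $\eps^{-3/2}$, compared with $\eps^{-1}\sim\eta^{-2}$ for the generic more-central modes, where it comes from the factor $\max(1,|t|)$. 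By \Cref{lem:Lipschitz}, the more-central Lipschitz constant is $\eps^{\kappa_{mc}}$ with $\kappa_{mc}=\gamma=1+\delta$. It comes from the mixed term $2\eps^{\gamma}\hat{\Ncal}_2(W_{mc},W_{lc}+W_h)$, which does reach $\pm\k_2$: for instance via $\k_2=\k_2+\zerob$, or via $\k_2=-\k_1-\k_3$ using the less-central part of $-\k_3$. So the contraction constant there is of order $\eps^{\delta-1/2}$, which diverges for every $\delta\in(0,\tfrac14)$.

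\textbf{Less-central coordinates at $\pm\k_2$.} These are no better. Their factor is $\eps^{-1}\cdot\eps^{-1/3}=\eps^{-4/3}$ instead of $\eps^{-1}$, so the term $\eps^{\gamma}\hat{\Ncal}_2(W_{lc}+W_h)$ alone contributes $\eps^{\gamma-4/3}$. The mixed term $\eps^{\beta}\hat{\Ncal}_2(W_{mc},W_{lc})$ landing there has $\Ocal(1)$ divisors, but the normal-form coefficient inherits the $\eps^{-1}$ pairing. The required transformation therefore has size $\eps^{\beta-1}\gg 1$ and is not near-identity. Your divisor comparison is also off. Generically the divisors for $\Bcal_1$ are $\Ocal(1)$, because the less-central spectrum stays away from the origin. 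At $\pm\k_2$ the less-central eigenvalues are themselves $\Ocal(\eps^{1/3})$, so the divisors are only of that order.

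So the claim that the $\pm\k_2$ semigroup bounds are better than in \Cref{thm:centre-manifold} holds only for the non-normalised diagonal flow. With the normalisation tracked, the fixed point does not close for $\beta=\tfrac34+\delta$, $\gamma=1+\delta$. You would need a genuinely new estimate on the $\pm\k_2$ block, for instance one that avoids splitting the $4\times 4$ block into its badly conditioned eigendirections. It is not clear that these exponents survive.

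You also cannot simply fall back on the paper here. Its paragraph avoids the pairings by taking $\tilde\Pcal^\eps\hat W(\pm\k_2)$ as unknowns, so that the nonlinearity is multiplied only by bounded maps. But $\tilde\Pcal^\eps_{mc}(\k_2)=-\eps c_0(1+\Ocal(\eps))\Pcal^\eps_{mc}(\k_2)$, and each $\tilde\Pcal^\eps_{lc,j}(\k_2)$ is $3\eps c_0(1+o(1))$ times the corresponding eigenprojection. These unknowns therefore do not add up to $\hat W(\pm\k_2)$, and recovering the argument of $\hat{\Ncal}$ from them reintroduces the same factor $\eps^{-1}$.
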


Finally, we derive the reduced equation on the centre manifold. For this, we make the modified ansatz
\begin{equation*}
\begin{split}
    W = W_{mc} + \Psi(W_{mc}) & =  \eps \sum_{j \in \{1,3\}} (\tilde{A}_{j,+}(\Xi) \hat{\phib}_+^{\eps}(\k_j) + \tilde{A}_{j,-}(\Xi) \hat{\phib}_-^{\eps}(\k_j))e^{i \k_j \cdot \p} + \eps A_2(\Xi) \phib_{\perp}^\eps(\k_j) e^{i \k_2 \cdot \p} + c.c. \\
    &\quad + \eps^2 \tilde{\Psi}(\tilde{A}_{1,\pm},A_2,\tilde{A}_{3,\pm};\eps),
\end{split}
\end{equation*}
with $\Xi = \eps \xi$. We then obtain an equation for $A_2$ by inserting the ansatz into \eqref{eq:reduced-equation-CM-theorem-pi-over-6} and applying the map $\tilde{\Pcal}^\eps_{mc}(\k_2)$. This yields
\begin{equation*}
    -\eps^3 c_0 \partial_\Xi A_2 = -\eps^2 c_0 \lambda_{\perp}(\k_2) A_2 + \Ncal_{\Mcal_{mc},2}(A_1,A_2,A_3,\bar{A}_1,\bar{A}_2,\bar{A}_3)
\end{equation*}
with $A_1 = A_{1,+} + A_{1,-}$ and $A_3 = A_{3,+} + A_{3,-}$ as in \Cref{sec:reduced-equations}. To determine the nonlinearity, we note that
\begin{equation*}
    \tilde{\Pcal}_{mc}^\eps \hat{\Ncal}(W; \k_2, \vartheta) = \hat{N}(W; \k_2, \vartheta) \phib_{\perp}^\eps(\k_2),
\end{equation*}
where we use again that $\hat{\Ncal} = (0,0,0,\hat{N})$ and that the last component of $\hat{\phib}_{\perp}^{\eps,*}(\k_2)$ is normalised to one. Recalling that $\lambda_{\perp}^\eps(\k_2) = - \eps \tfrac{\mu_0}{c_0} + \Ocal(\eps^2)$, see \Cref{it:spectrum-pi-over-six}, and repeating the derivation of the equations for $A_1$ and $A_3$ as in \Cref{sec:reduced-equations}, we find
\begin{equation}\label{eq:reduced-equations-pi-over-six-second-order}
\begin{split}
    3 \partial_{\Xi}^2 A_1 + c_0 \partial_{\Xi} A_1 + \mu_0 A_1 + \beta_2\bar{A}_2\bar{A}_3 + (K_0|A_1|^2 + K_2(|A_2|^2 + |A_3|^2)) A_1 + \Ocal(\eps) & = 0, \\
    c_0 \partial_{\Xi} A_2 + \mu_0 A_2 + \beta_2\bar{A}_1\bar{A}_3 + (K_0|A_2|^2 + K_2(|A_1|^2 + |A_3|^2)) A_2 + \Ocal(\eps) & = 0, \\
    3 \partial_{\Xi}^2 A_3 + c_0 \partial_{\Xi} A_3 + \mu_0 A_3 + \beta_2\bar{A}_1\bar{A}_2 + (K_0|A_3|^2 + K_2(|A_1|^2 + |A_2|^2)) A_3 + \Ocal(\eps) & = 0,
\end{split}
\end{equation}
since $\d\cdot \k_1 = \tfrac{\sqrt{3}}{2}$ and $\d\cdot \k_3 = -\tfrac{\sqrt{3}}{2}$. Note that \eqref{eq:reduced-equations-pi-over-six-second-order} is consistent with \eqref{eq:reduced-equations-second-order}. Indeed, we make this connection rigorous through a fast-slow analysis, see \Cref{sec:fast-slow-results}.

\subsection{The reduced equations for $c_0 = c_{\crit}(\k_j)$}\label{sec:reduced-equations-critical}

In \Cref{sec:reduced-equations}, we assumed that $c_0 \neq c_{\crit}(\k_j)$ for all $j = 1,2,3$, which yields that the more-central eigenvalues split at order $\eps$. We now discuss the case that $c_0 = c_{\crit}(\k_j)$ for one $j \in \{1,2,3\}$. Since the eigenvalues $\lambda^\eps_\pm(\k_j)$ are now equal at leading order, we have to modify the derivation of the reduced equation for $A_j$ and $B_j$. For this, we recall that $\hat{\phib}^\eps_\pm(\k_j)$ are the eigenvectors of $\hat{\Lcal}^{\eps}(\k_j)$ corresponding to $\lambda_\pm^\eps(\k_j)$ and we introduce the new basis
\begin{equation*}
    \phib_+^{\eps}(\k_j) = \dfrac{1}{2}\bigl(\hat{\phib}_+^{\eps}(\k_j) + \hat{\phib}_-^{\eps}(\k_j)\bigr) \quad \text{and} \quad \phib_-^{\eps}(\k_j) =  \dfrac{1}{2\eps^{3/2}}\bigl(\hat{\phib}_+^{\eps}(\k_j) - \hat{\phib}_-^{\eps}(\k_j)\bigr)
\end{equation*}
similar to \eqref{eq:clever-basis}. Using the expansion of the eigenvalues provided in \Cref{prop:spectrum-eps-positive}, we again find that $(\phib_+^\eps(\k_j), \phib_-^\eps(\k_j))$ converges to $(\phib_+^0(\k_j), \phib_-^0(\k_j))$, where $\phi_+^0(\k_j)$ is the eigenvector corresponding to the zero eigenvalue of $\hat{\Lcal}^0(\k_j)$ with first component normalised to one and $\phib_-^0(\k_j)$ is a generalised eigenvalue with first component equal to zero. As in the non-critical case, we then write
\begin{equation*}
    \tilde{A}_{j,+}\hat{\phib}_+^{\eps}(\k_j) + \tilde{A}_{j,-} \hat{\phib}_-^{\eps}(\k_j) = \tilde{A}_j \phib_+^\eps(\k_j) + \eps^{3/2} \tilde{B}_j \phib_-^\eps(\k_j),
\end{equation*}
which defines a bounded transformation $(\tilde{A}_{j,+}, \tilde{A}_{j,-}) \to (\tilde{A}_j,\tilde{B}_j)$. Then the linear terms are given by
\begin{equation*}
    \begin{split}
        &(\hat{\Lcal}^{\eps}(\k_j) - i \d \cdot \k_j) (\tilde{A}_j \phib_+^{\eps}(\k_j) + \eps^{3/2} \tilde{B}_j \phib_-^{\eps}(\k_j)) \\
        &\quad  = \eps \Big( - \dfrac{c_0}{8 (\d \cdot \k_j)^2} \tilde{A}_j + \sqrt{\eps} \frac{i^{3/2}\mu_0^{3/4}}{2\sqrt{2}(\d\cdot\k_j)^2} \tilde{B}_j\Big) \phib_+^\eps(\k_j) + \eps^{\tfrac{5}{2}} \Big( \sqrt{\eps}\frac{i^{3/2}\mu_0^{3/4}}{2\sqrt{2}(\d\cdot\k_j)^2} \tilde{A}_j - \dfrac{c_0}{8 (\d \cdot \k_j)^2} \tilde{B}_j\Big) \phib_-^\eps(\k_j),
    \end{split}
\end{equation*}
where we used that $\lambda_+^\eps(\k_j) + \lambda_-^\eps(\k_j) = - \eps \tfrac{c_0}{8(\d \cdot \k_j)}$ and $\lambda_+^\eps(\k_j) - \lambda_-^\eps(\k_j) = \eps^{3/2} \tfrac{i^{3/2}\mu_0^{3/4}}{2\sqrt{2}(\d\cdot\k_j)^2}$. Combining the linear and nonlinear terms, we therefore find that
\begin{equation*}
    \begin{split}
        \eps^2 \partial_\Xi \tilde{A}_j &= \eps^2 \Big( - \dfrac{c_0}{8 (\d \cdot \k_j)^2} \tilde{A}_j + \sqrt{\eps} \frac{i^{3/2}\mu_0^{3/4}}{2\sqrt{2}(\d\cdot\k_j)^2} \tilde{B}_j\Big) + \Ocal(\eps^3) \\
        \eps^{\tfrac{7}{2}} \partial_\Xi \tilde{B}_j &= \eps^{\tfrac{7}{2}} \Big( \sqrt{\eps}\frac{i^{3/2}\mu_0^{3/4}}{2\sqrt{2}(\d\cdot\k_j)^2} \tilde{A}_j - \dfrac{c_0}{8 (\d \cdot \k_j)^2} \tilde{B}_j\Big) + \eps^3 \Ncal_{\Mcal_{mc},j}(\tilde{A}_1,\tilde{A}_2,\tilde{A}_3,\bar{\tilde{A}}_1,\bar{\tilde{A}}_2,\bar{\tilde{A}}_3).
    \end{split}
\end{equation*}
In particular, we find that the $\phib_-^\eps(\k_j)$-contribution of the projected nonlinearity is of order $\eps^3$ since $\hat{N}(W;k_j,\vartheta) = \Ocal(\eps^3)$. This is due to the computations
\begin{equation*}
    \hat{\phib}_\pm^{\eps,\ast}(\k_j) \cdot \hat{\phib}_\pm^{\eps}(\k_j) = \mp \eps^{\tfrac{3}{2}} \dfrac{i^{3/2}\mu_0^{3/4}}{2\sqrt{2}(\d\cdot\k_j)^2} 8(\d \cdot \k_j)^2 + \Ocal(\eps^2)
\end{equation*}
and therefore
\begin{equation*}
\begin{split}
        \Pcal_{mc}^\eps(\k_j) \hat{\Ncal}(W;\k_j, \vartheta) = \alpha_+^{\eps}(\k_j)\hat{N}(W;\k_j,\vartheta) \phib_+^\eps(\k_j) - \dfrac{1+\Ocal(\eps)}{4(\d\cdot\k_j)^2} \left(\dfrac{i^{3/2}\mu_0^{3/4}}{2\sqrt{2}(\d\cdot\k_j)^2}\right)^{-1} \hat{N}(W; \k_j,\vartheta) \phib_-^\eps(\k_j),
\end{split}
\end{equation*}
with $\alpha_+^{\eps}(\k_j) = \Ocal(1)$. We now proceed as in \Cref{sec:reduced-equations} and introduce the transformation
\begin{equation*}
    A_j = \tilde{A}_j, \quad B_j = - \dfrac{c_0}{8 (\d \cdot \k_j)^2} \tilde{A}_j + \sqrt{\eps} \frac{i^{3/2}\mu_0^{3/4}}{2\sqrt{2}(\d\cdot\k_j)^2} \tilde{B}_j.
\end{equation*}
Notably, $B_j$ is of the same order as $\partial_\Xi A_j$ since the $\sqrt{\eps}$ in the $\tilde{B}_j$-coefficient compensates for the different $\eps$-behaviour in the definition of $\tilde{B}_j$. Using this transformation, we then find
\begin{equation*}
    \begin{split}
        \partial_\Xi A_j &= B_j + \Ocal(\eps), \\
        \partial_\Xi B_j &= \eps \Bigl(\frac{i^{3/2}\mu_0^{3/4}}{2\sqrt{2}(\d\cdot\k_j)^2}\Bigr)^2 A_j - \Bigl(\dfrac{c_0}{8 (\d \cdot \k_j)^2}\Bigr)^2 A_j - \dfrac{c_0}{4 (\d \cdot \k_j)^2} B_j + \left(\dfrac{i^{3/2}\mu_0^{3/4}}{2\sqrt{2}(\d\cdot\k_j)^2}\right) \Ncal_{\Mcal_{mc},j}(\tilde{A}_1,\tilde{A}_2,\tilde{A}_3,\bar{\tilde{A}}_1,\bar{\tilde{A}}_2,\bar{\tilde{A}}_3).
    \end{split}
\end{equation*}
As before, $\Ncal_{\Mcal_{mc},j}$ denotes the $\eps^3$-contributions of the (projected) nonlinearity. Since the combinatorics for the nonlinear terms are not affected by the value of $c_0$, and using that $\bigl(\tfrac{c_0}{8 (\d \cdot \k_j)^2}\bigr)^2 = \tfrac{\mu_0}{4(\d\cdot\k_j)^2}$ for $c_0 = c_{\crit}(\k_j)$, the reduced equations are given by \eqref{eq:reduced-equations-explicit} or, equivalently, \eqref{eq:reduced-equations-second-order}.

\begin{remark}
    The calculation above shows that, although the algebraic manipulations are different, the reduced equations on the centre manifold are, to leading order, the same for $c_0 = c_{\crit}(\k_j)$ and $c_0 \neq c_{\crit}(\k_j)$. Indeed, this is not surprising since the reduced equations can also be obtained from the amplitude equations \eqref{eq:amplitude-system-hex} by inserting a travelling-wave ansatz. Since the amplitude equations are derived in the stationary frame, they are independent of the chosen speed for the travelling wave. However, it turns out that the critical speed is still hidden in the dynamics of the reduced equations, which change when $c_0$ passes through $c_{\crit}(\k_j)$, see \Cref{sec:interfaces} and \Cref{fig:different-front-behaviour}. Indeed, we observe that fronts at supercritical speeds have a monotone profile, while at subcritical speeds, fronts have an oscillatory leading edge. This is consistent with monostable fronts in reaction-diffusion systems.
\end{remark}

\begin{remark}
    We only need to consider the case $c_0 = c_{\crit}(\k_j)$ for $\d \cdot \k_j \neq 0$. Otherwise, we find that $c_{\crit}(\k_j) = 0$, however, we have assumed that $c_0 > 0$. 
\end{remark}

\section{Analysis of reduced equations and pattern interfaces}\label{sec:interfaces}

We now discuss the dynamics of the reduced equations. Specifically, we consider the existence of equilibria, their stability, and the existence of heteroclinic orbits since these correspond to planar patterns and pattern interfaces in the full system \eqref{eq:Swift-Hohenberg}, respectively. The general strategy is to first analyse the leading-order dynamics given by the truncated system for $\theta \in [0,\tfrac{\pi}{6})$, which reads as
\begin{equation}\label{eq:leading-order-reduced-equations}
        \begin{split}
        \partial_\Xi A_1 &=  B_1, \\
        \partial_\Xi B_1 &= -\dfrac{\mu_0}{4 (\d \cdot \k_1)^2} A_1 - \dfrac{c_0}{4(\d \cdot \k_1)^2}B_1 - \dfrac{1}{4(\d \cdot \k_1)^2} (\beta_2\bar{A}_2\bar{A}_3 + (K_0|A_1|^2 + K_2(|A_2|^2 + |A_3|^2)) A_1),\\
        \partial_\Xi A_2 &=  B_2, \\
        \partial_\Xi B_2 &= -\dfrac{\mu_0}{4 (\d \cdot \k_2)^2} A_2 - \dfrac{c_0}{4(\d \cdot \k_2)^2}B_2 - \dfrac{1}{4(\d \cdot \k_2)^2} (\beta_2\bar{A}_1\bar{A}_3 + (K_0|A_2|^2 + K_2(|A_1|^2 + |A_3|^2)) A_2),\\
        \partial_\Xi A_3 &=  B_3, \\
        \partial_\Xi B_3 &= -\dfrac{\mu_0}{4 (\d \cdot \k_3)^2} A_3 - \dfrac{c_0}{4(\d \cdot \k_3)^2}B_3 - \dfrac{1}{4(\d \cdot \k_3)^2} (\beta_2\bar{A}_1\bar{A}_2 + (K_0|A_3|^2 + K_2(|A_1|^2 + |A_2|^2)) A_3),\\
    \end{split}
\end{equation}
and is obtained by setting $\eps = 0$ in \eqref{eq:reduced-equations-explicit}. Notably, this system is the travelling-wave ODE corresponding to the Ginzburg–Landau system obtained as a formal amplitude system close to a Turing instability, see \eqref{eq:amplitude-system-hex}. In the analysis, we restrict to real-valued solutions since these are sufficient to describe the dynamics typically observed in experiments. We then show the existence of equilibria in \eqref{eq:leading-order-reduced-equations} corresponding to roll waves, hexagons and mixed modes in \eqref{eq:Swift-Hohenberg}, cf.~\Cref{cor:stationary-patterns}. We note that, as already noticed in equivariant bifurcation theory, these are all possible real-valued equilibria.

To analyse the stability of the equilibria, we show that the stability structure of the linearisation in the spatial system \eqref{eq:leading-order-reduced-equations} can be characterised through the temporal stability of the corresponding equilibria in a system of Landau equations which governs the dynamics of spatially homogeneous solutions to the Ginzburg–Landau system \eqref{eq:amplitude-system-hex}. Specifically, if an equilibrium is temporally stable, the corresponding equilibrium in the spatial system is a saddle with even splitting between stable and unstable directions. Moreover, a temporally-unstable equilibrium corresponds to a stable equilibrium in the spatial system. Therefore, understanding a three-dimensional eigenvalue problem is sufficient to determine the stability structure in the six-dimensional spatial system.

Subsequently, we discuss the existence of heteroclinic orbits between equilibria analytically. Outside of the invariant subspace of roll waves $\{A_1, B_1 \in \R, A_2 = B_2 = A_3 = B_3 = 0\}$, where the leading-order system \eqref{eq:leading-order-reduced-equations} reduces to a damped Duffing oscillator, this requires understanding the dynamics in the full six-dimensional phase space. Exploiting that the leading-order system \eqref{eq:leading-order-reduced-equations} has a strictly decreasing Lyapunov function, for $\mu_0>0$, we can rigorously show the existence of heteroclinic orbits connecting the non-trivial equilibrium with the lowest energy (either down-hexagons or roll waves) to the trivial equilibrium, which persist in the reduced equations \eqref{eq:reduced-equations-explicit} for $\eps>0$. In the pattern-forming system \eqref{eq:Swift-Hohenberg}, these correspond to pattern interfaces, where the spatially homogeneous state is invaded by a planar pattern. Using fast-slow analysis, we then show that these orbits also exist in the reduced system for $\theta = \tfrac{\pi}{6}$, which can be interpreted as the slow subsystem of \eqref{eq:reduced-equations-explicit} as $\theta \to \tfrac{\pi}{6}$.

\subsection{Equilibria and their stability}

\subsubsection{Equilibria}

First, we consider the equilibria of the reduced equations \eqref{eq:leading-order-reduced-equations} and their stability. We point out that the existence of equilibria does not depend on the direction of the front since equilibria of the reduced equations correspond to steady patterns in the Swift–Hohenberg equation \eqref{eq:Swift-Hohenberg}, which is rotationally invariant. In particular, the results also apply to the leading-order system of the reduced equations \eqref{eq:reduced-equations-pi-over-six-second-order}, when $\theta = \tfrac{\pi}{6}$. Therefore, equilibria are given as solutions to 
\begin{equation}\label{eq:reduced-equations-stationary}
    \begin{split}
        \mu_0 A_1 + \beta_2\bar{A}_2\bar{A}_3 + (K_0|A_1|^2 + K_2(|A_2|^2 + |A_3|^2)) A_1  & = 0, \\
        \mu_0 A_2 + \beta_2\bar{A}_1\bar{A}_3 + (K_0|A_2|^2 + K_2(|A_1|^2 + |A_3|^2)) A_2  & = 0, \\
        \mu_0 A_3 + \beta_2\bar{A}_1\bar{A}_2 + (K_0|A_3|^2 + K_2(|A_1|^2 + |A_2|^2)) A_3  & = 0,
    \end{split}
\end{equation}
with $A_1, A_2, A_3 \in \C$. The system of equations \eqref{eq:reduced-equations-stationary} has been well-studied in the literature on equivariant bifurcation theory, and we refer to \cite{buzano1983PhilosTransAMathPhysEngSci,golubitsky1984-03PhysicaDNonlinearPhenomena,hoyle2007book} for a full characterisation of solutions to reduced equations obtained from a hexagonal symmetry with translation invariance. In the context of this paper, we note that \eqref{eq:reduced-equations-stationary} is cubic to leading order, with small higher-order terms. Therefore, we can follow the discussion in \cite[Sec.~5.4]{hoyle2007book} on cubic truncations, and we find that, to leading order, the system \eqref{eq:reduced-equations-stationary} has only
\begin{enumerate}[label=(\alph*)]
    \item the trivial solution $A_1 = A_2 = A_3 = 0$;
    \item roll waves with $|A_1| > 0$ and $A_2 = A_3 = 0$;
    \item hexagons with $A_1 = A_2 = A_3 \neq 0$;
    \item mixed modes (or rectangles) with $A_2 = A_3$ and $A_2 \neq A_1 \neq 0$
\end{enumerate}
as well as solutions of the same form obtained from $ D_6$ symmetry. A simple calculation shows that no solution satisfies $|A_2| < |A_1| < |A_3|$. These solutions can be calculated explicitly, see e.g.~\cite{hilder2025-08JNonlinearSci} and we summarise the results in the following corollary.

\begin{corollary}\label{cor:stationary-patterns}
    The equation \eqref{eq:reduced-equations-stationary} has the following solutions:
    \begin{enumerate}[label=(\alph*)]
        \item The trivial solution
        \begin{equation*}
            \Ab_T := (A_{1,T},A_{2,T},A_{3,T}) = (0,0,0).
        \end{equation*}
        \item Roll waves
        \begin{equation}\label{eq:roll-waves}
            \Ab_R = (A_{1,R},A_{2,R},A_{3,R}) = \Big(\pm \sqrt{-\frac{\mu_0}{K_0}},0,0\Big)
        \end{equation}
        if $\mu_0 K_0 < 0$.
        \item Hexagons
        \begin{equation}\label{eq:hexagons}
            \begin{split}
                \Ab_{H_\pm} & := (A_{1,H_{\pm}},A_{2,H_{\pm}},A_{3,H_{\pm}}) \\
                & = \Big( \frac{-\beta_2 \mp \sqrt{\beta_2^2 - 4\mu_0 (K_0 + 2 K_2)}}{2 (K_0 + 2K_2)}, \frac{-\beta_2 \mp \sqrt{\beta_2^2 - 4\mu_0 (K_0 + 2 K_2)}}{2 (K_0 + 2K_2)}, \frac{-\beta_2 \mp \sqrt{\beta_2^2 - 4\mu_0 (K_0 + 2 K_2)}}{2 (K_0 + 2K_2)}\Big)
            \end{split}
        \end{equation}
        if $K_0 + 2 K_2 \neq 0$ and $\beta_2^2 - 4\mu_0 (K_0 + 2 K_2) > 0$. If $A_{1,H_{\pm}} > 0$ the solutions are called \emph{up-hexagons} and if $A_{1,H_{\pm}} <  0$, they are called \emph{down-hexagons}.
        \item Mixed modes and false hexagons
        \begin{equation*}
            \begin{split}
               \Ab_{\MM} & := (A_{1,\MM}, A_{2,\MM}, A_{3,\MM})\\
               & = \Big(\frac{\beta_2}{K_0 - K_2}, \sqrt{-\frac{K_0 \beta_2^2 + (K_0 - K_2)^2 \mu_0}{(K_0 + K_2)(K_0-K_2)^2}}, \sqrt{-\frac{K_0 \beta_2^2 + (K_0 - K_2)^2 \mu_0}{(K_0 + K_2)(K_0-K_2)^2}}\Big)
            \end{split}
        \end{equation*}
        if $\tfrac{K_0 \beta_2^2 + (K_0 - K_2)^2 \mu_0}{K_0 + K_2} < 0$. In addition, if $- \tfrac{K_0 \beta_2^2}{(K_0 - K_2)^2} < \mu_0 < - \tfrac{\beta_2^2 (2K_0 + K_2)}{(K_0 - K_2)^2}$ these solutions are called \emph{mixed modes}. For larger $\mu_0$ they are called \emph{false hexagons}.
    \end{enumerate}
\end{corollary}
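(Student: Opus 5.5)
The corollary follows by direct substitution into \eqref{eq:reduced-equations-stationary}, one symmetry-reduced ansatz at a time. By the classification recalled from \cite[Sec.~5.4]{hoyle2007book}, every real solution of the cubic truncation is, up to the $D_6$ action, trivial, a roll, a hexagon, or a mixed mode with $A_2 = A_3$. It therefore suffices to insert each ansatz and solve the resulting scalar polynomial equations. Since we restrict to real-valued amplitudes, $\bar A_j = A_j$ and $|A_j|^2 = A_j^2$ throughout.

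\textbf{Trivial solution and rolls.} The trivial solution is immediate. For rolls, set $A_2 = A_3 = 0$. The second and third equations then hold identically, because every term in them contains a factor $A_2$, $A_3$, or the product $\bar A_1\bar A_3$ or $\bar A_1 \bar A_2$. The first equation reduces to $A_1(\mu_0 + K_0 A_1^2) = 0$, which has a nonzero real root exactly when $\mu_0 K_0 < 0$. This gives \eqref{eq:roll-waves}.

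\textbf{Hexagons.} Set $A_1 = A_2 = A_3 = A$. All three equations collapse to $A(\mu_0 + \beta_2 A + (K_0 + 2K_2)A^2) = 0$. For $A \neq 0$ this is a quadratic, and the quadratic formula gives \eqref{eq:hexagons}. Real roots require $K_0 + 2K_2 \neq 0$ and a positive discriminant. The sign of $A$ decides whether the solution is called up- or down-hexagons.

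\textbf{Mixed modes.} This is the only case needing real work. Set $A_2 = A_3 = b \neq 0$ and $A_1 = a$. The second and third equations coincide, and after division by $b$ they become
\[
\mu_0 + \beta_2 a + K_0 b^2 + K_2(a^2 + b^2) = 0.
\]
The first equation is
\[
\mu_0 a + \beta_2 b^2 + (K_0 a^2 + 2K_2 b^2)a = 0.
\]
Multiply the first of these by $a$ and subtract the second. This eliminates $\mu_0$ and the cubic $a^3$ terms, leaving
\[
b^2\bigl((K_0 - K_2)a - \beta_2\bigr) = 0.
\]
Since $b \neq 0$, this forces $a = \beta_2/(K_0 - K_2)$. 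Substituting back gives
\[
b^2 = -\frac{K_0\beta_2^2 + (K_0-K_2)^2\mu_0}{(K_0+K_2)(K_0-K_2)^2}.
\]
This is real and nonzero precisely under the stated sign condition.

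The naming into mixed modes versus false hexagons does not affect existence. It only records where the branch lies relative to the rolls and hexagons. At the lower endpoint $\mu_0 = -K_0\beta_2^2/(K_0-K_2)^2$ we get $b = 0$, so the branch bifurcates from the rolls. At the upper endpoint $\mu_0 = -\beta_2^2(2K_0+K_2)/(K_0-K_2)^2$ we get $b^2 = a^2$, so the branch meets the hexagons. Both are checked by direct substitution.

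Finally, the full reduced system differs from the truncation only by terms that are $\Ocal(\eps)$, and by Remark~\ref{rem:higher-order-terms} it has the same equivariant structure. Hence every equilibrium above persists to leading order by the implicit function theorem, provided its Jacobian within the symmetry-fixed subspace is nondegenerate.
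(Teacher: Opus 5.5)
Your route is the paper's own: the corollary is justified only by saying the solutions ``can be calculated explicitly'', with completeness of the list taken from \cite[Sec.~5.4]{hoyle2007book}. Your trivial, roll and hexagon computations are correct, as are your final mixed-mode formulas and both endpoint checks. However, the elimination step in the mixed-mode case is wrong.

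Take $A_1=a$ and $A_2=A_3=b$, multiply the reduced second equation by $a$, and subtract the first. The cubic terms do \emph{not} cancel. The second equation contributes $\beta_2a^2$ and $K_2a^3$, while the first contributes $\beta_2b^2$ and $K_0a^3$. The correct identity is
\[
(a^2-b^2)\bigl((K_0-K_2)a-\beta_2\bigr)=0,
\]
not $b^2\bigl((K_0-K_2)a-\beta_2\bigr)=0$. Your version cannot be a consequence of the two equations, because it fails on the hexagons $a=b=A\neq0$, which are genuine solutions with $A_2=A_3\neq0$. (Take $\beta_2=0$: it would force $A=0$.) So your deduction ``since $b\neq0$, this forces $a=\beta_2/(K_0-K_2)$'' is false. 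The factor $a^2=b^2$ returns the hexagons $(A,A,A)$ and their images $(A,-A,-A)$.

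The fix is short. You must explicitly exclude $a^2=b^2$, which is exactly what separates mixed modes from hexagons (the paper's condition $A_2\neq A_1$, up to the sign symmetry). Only then does the corrected identity give $a=\beta_2/(K_0-K_2)$, which also needs $K_0\neq K_2$. Conversely, for this value of $a$ the corrected identity shows that the first equation follows from the reduced second one. So substituting into the second equation alone is legitimate and yields your formula for $b^2$. The corrected factorisation also explains your upper endpoint: $b^2=a^2$ is where both factors vanish, i.e.\ where the mixed-mode branch meets the hexagon branch.
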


\begin{remark}
    Note that the equilibria of the reduced equations \eqref{eq:reduced-equation-CM-theorem} and \eqref{eq:reduced-equation-CM-theorem-pi-over-6} correspond to steady patterns of the Swift–Hohenberg equation \eqref{eq:Swift-Hohenberg} which are, to leading order, given by
    \begin{equation*}
        u(t,x) = \eps \sum_{j = 1}^3 A_j e^{i\k_j\cdot \x} + c.c. + \Ocal(\eps^2) = \eps \sum_{j = 1}^3 2 A_j \cos(\k_j \cdot \x) + \Ocal(\eps^2),
    \end{equation*}
    see Figure \ref{fig:stationary-patterns}.
\end{remark}

\begin{figure}[H]
    \centering
    \begin{subfigure}[p]{0.3\textwidth}
    \includegraphics[width=\linewidth]{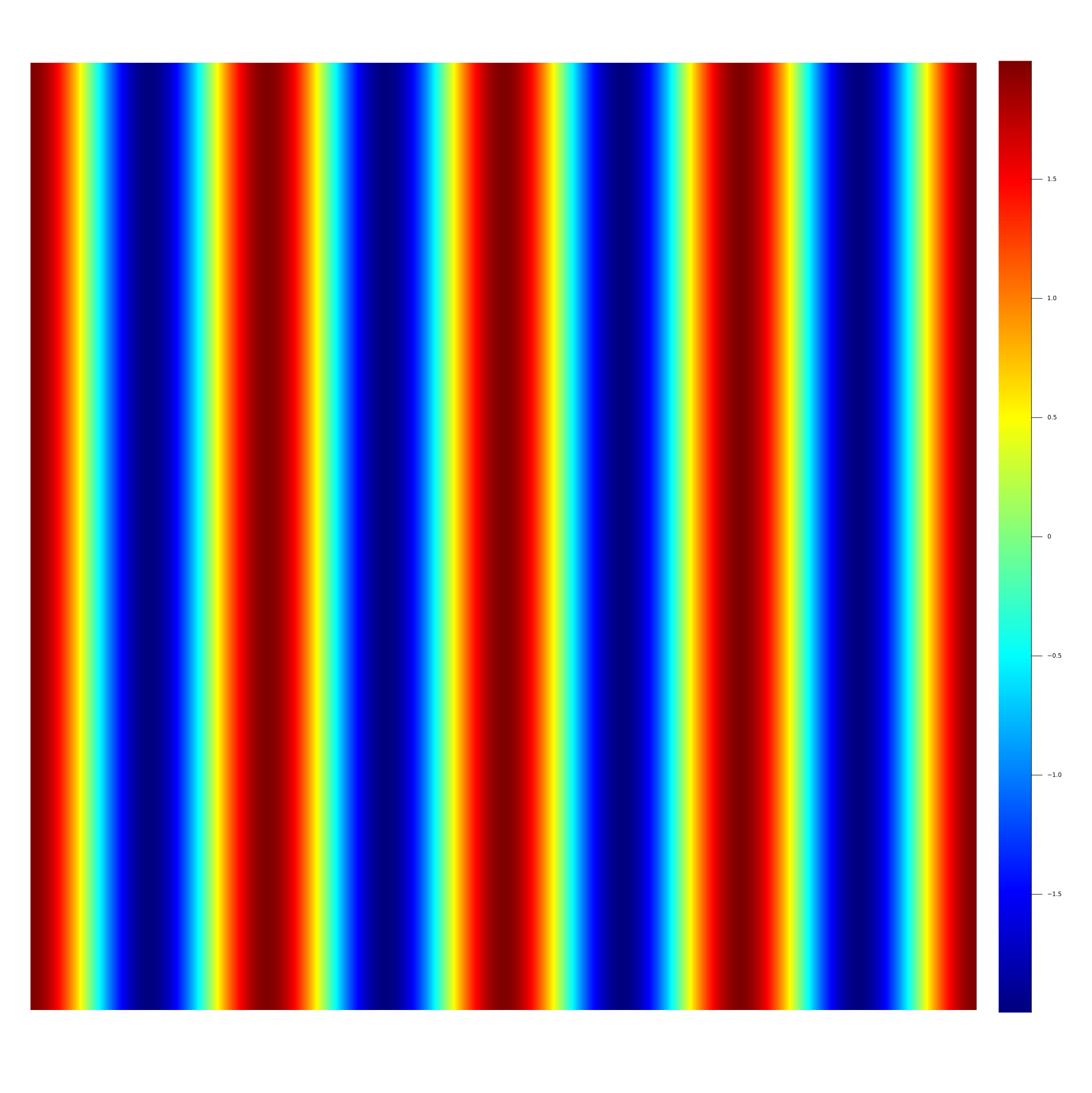}
    \subcaption{roll waves}
    \end{subfigure}
    \hfill
    \begin{subfigure}[p]{0.3\textwidth}
    \includegraphics[width=\linewidth]{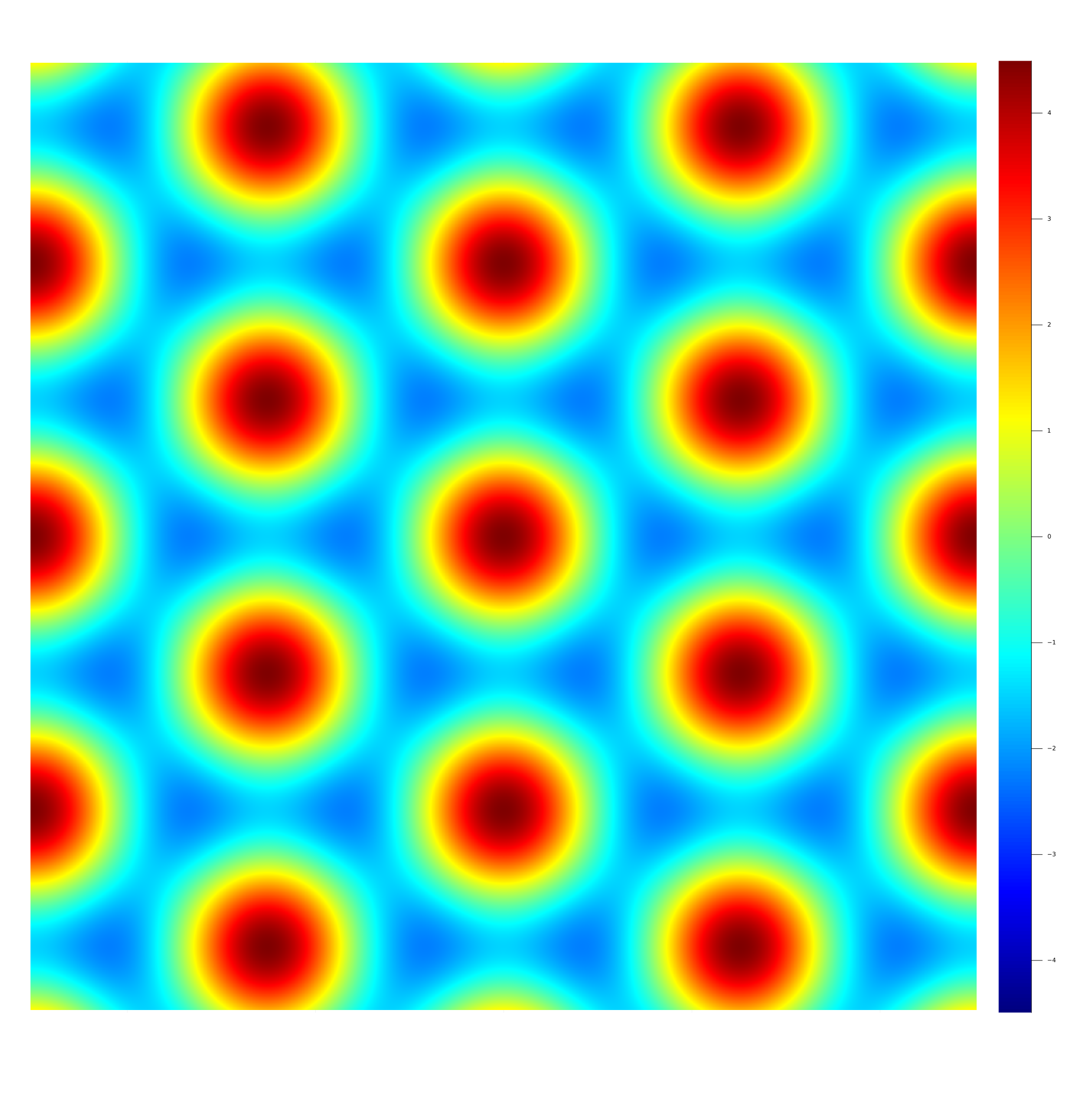}
    \subcaption{up-hexagons}
    \end{subfigure}
    \hfill
    \begin{subfigure}[p]{0.3\textwidth}
    \includegraphics[width=\linewidth]{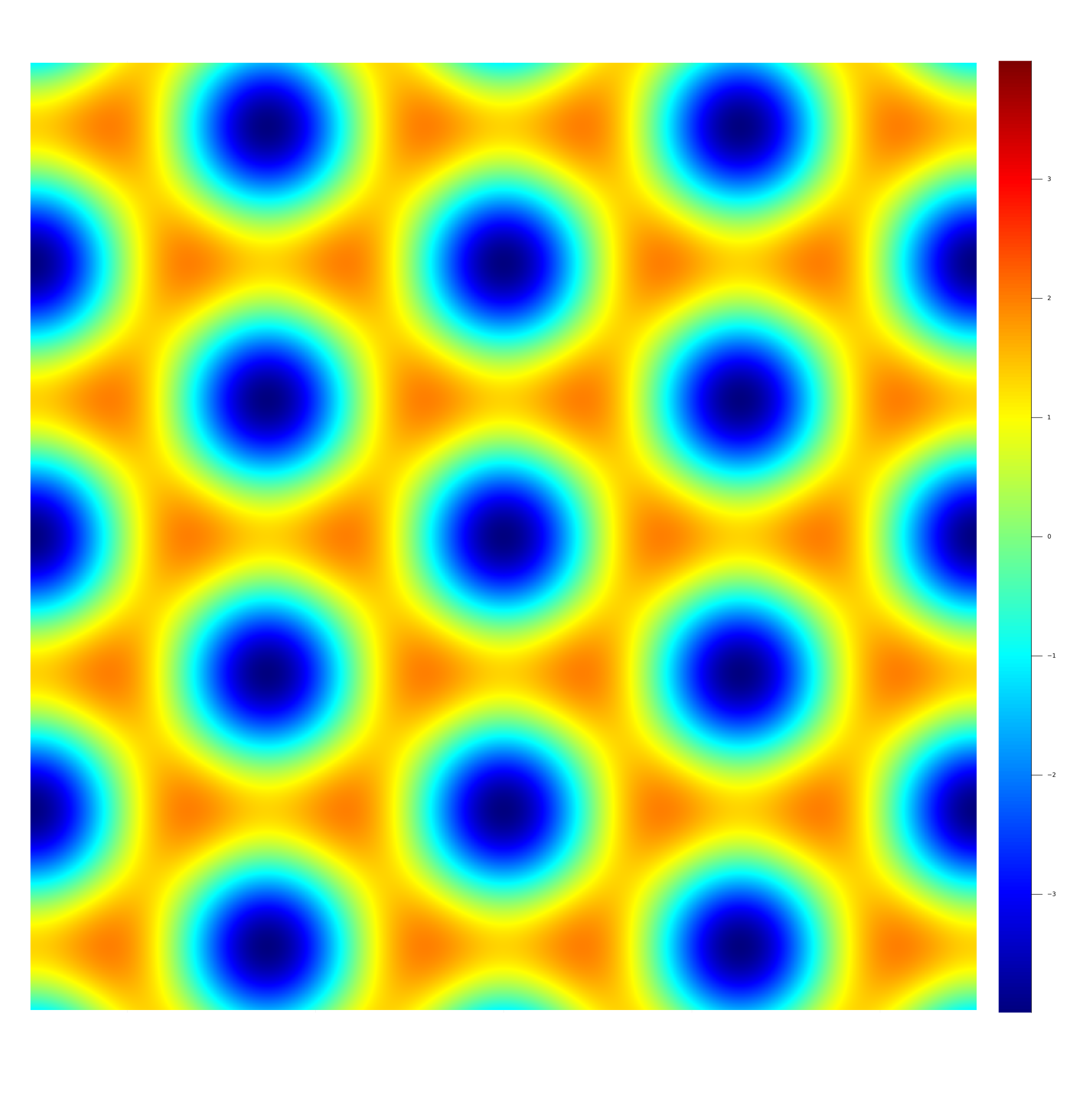}
    \subcaption{down-hexagons}
    \end{subfigure}

    \vspace{0.2cm}

    \begin{subfigure}[p]{0.3\textwidth}
    \includegraphics[width=\linewidth]{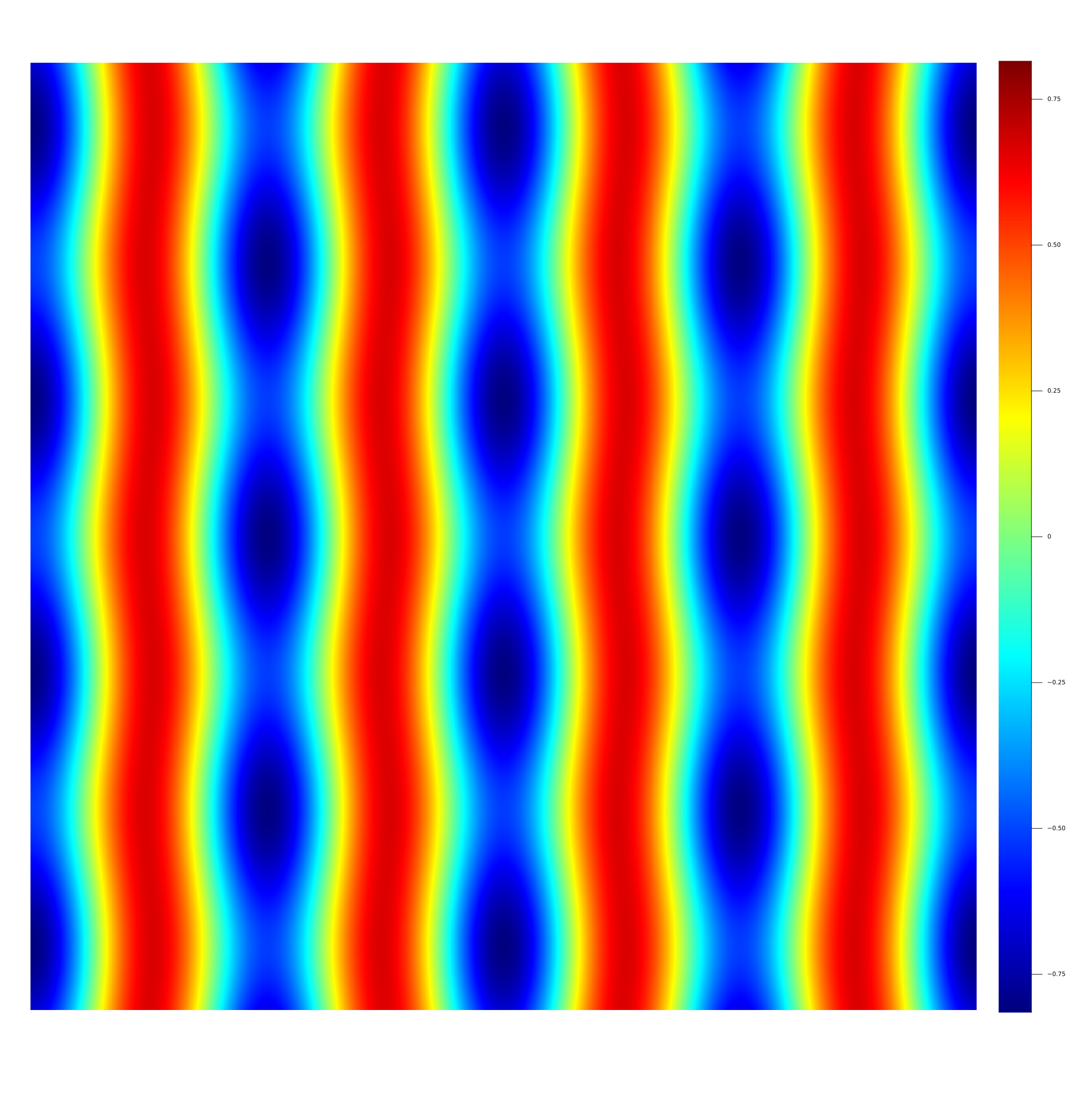}
    \subcaption{mixed modes}
    \label{subfig:mm-1}
    \end{subfigure}
    \hfill
    \begin{subfigure}[p]{0.3\textwidth}
    \includegraphics[width=\linewidth]{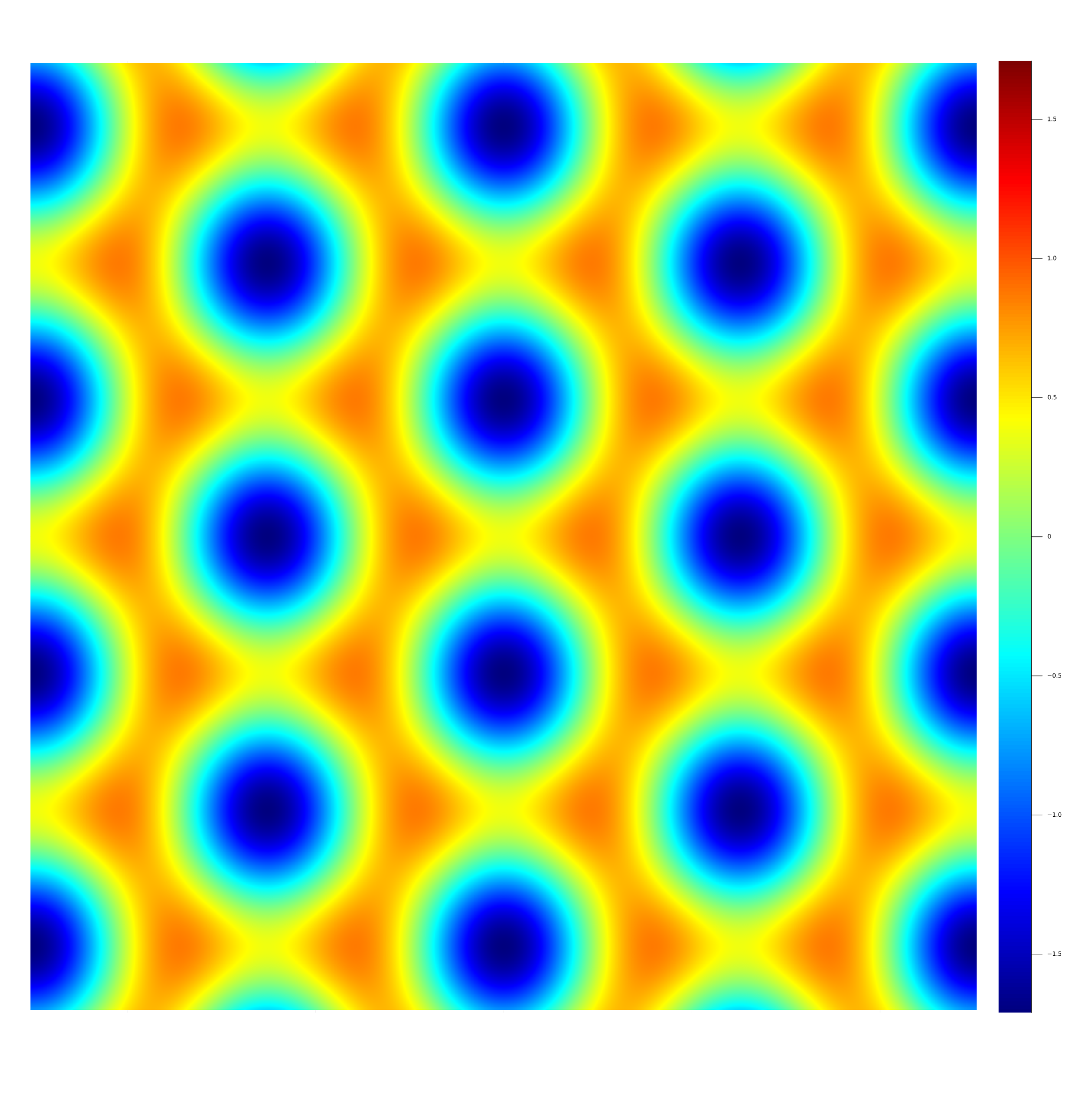}
    \subcaption{mixed modes}
    \label{subfig:mm-2}
    \end{subfigure}
    \hfill
    \begin{subfigure}[p]{0.3\textwidth}
    \includegraphics[width=\linewidth]{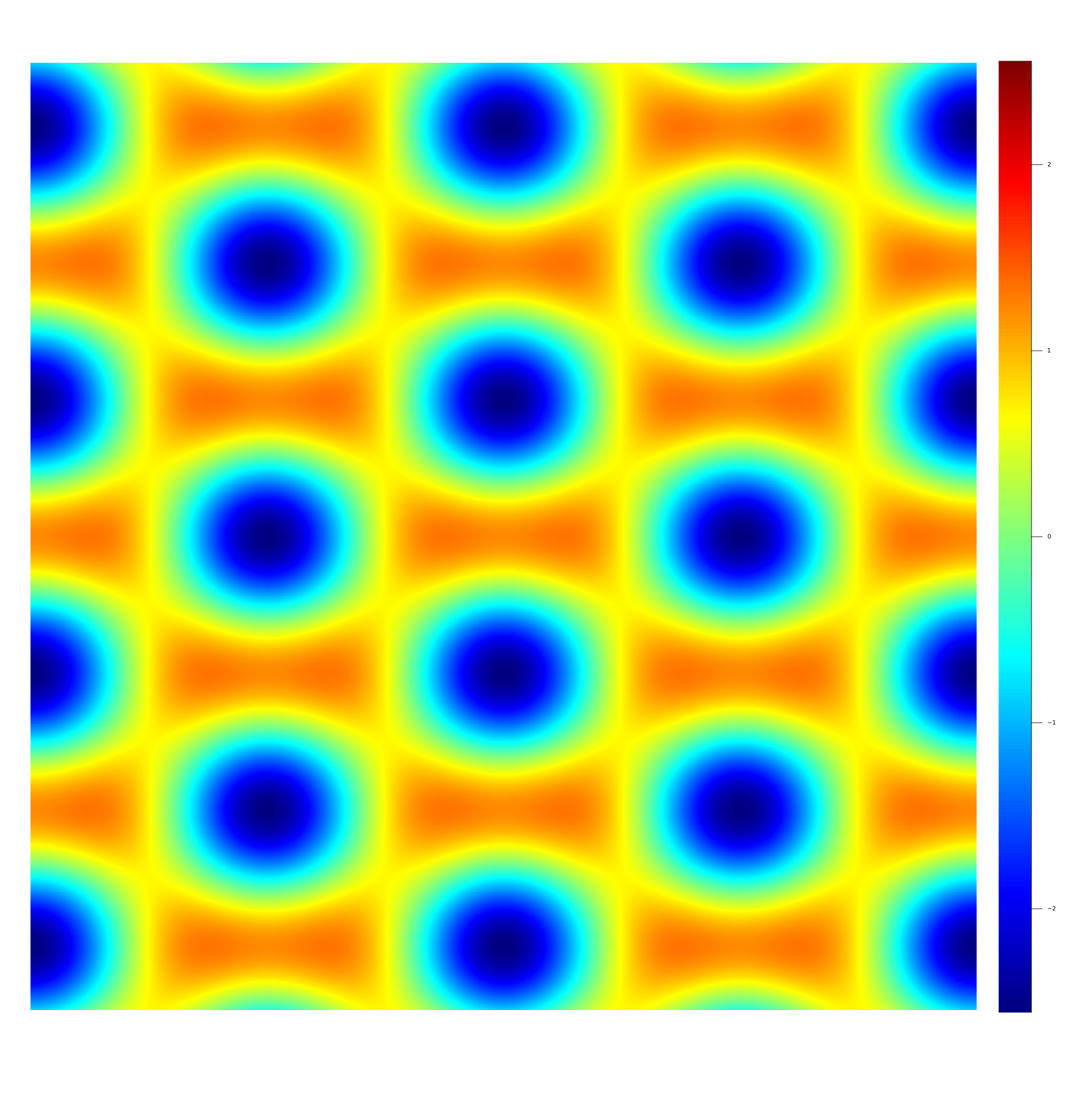}
    \subcaption{false hexagons}
    \label{subfig:mm-3}
    \end{subfigure}
   
    \caption{Depiction of all stationary patterns obtained in \Cref{cor:stationary-patterns}. Panels \sref{subfig:mm-1}, \sref{subfig:mm-2} and \sref{subfig:mm-3} show different variations of patterns on the mixed modes and false hexagons branch for increasing values of $\mu_0$.}
    \label{fig:stationary-patterns}
\end{figure}

\begin{remark}
    We point out that, due to translational and reflectional invariance, for any solution $(A_1,A_2,A_3)$ to \eqref{eq:reduced-equations-stationary}, we find a corresponding solution given by $(A_1,-A_2,-A_3)$. This invariance is obtained by shifting by a full hexagon in $x_2$-direction, which maps $\k_3 \mapsto \k_2$ and $-\k_2 \mapsto -\k_3$, and then applying reflection symmetry along the $x_2$-axis.
\end{remark}

\subsubsection{Temporal vs.~spatial stability}

We now discuss the stability of the equilibria listed in \Cref{cor:stationary-patterns}. For this, we introduce some notation. Let $\Ab = (A_1,A_2,A_3)$ be a solution to \eqref{eq:reduced-equations-stationary}. Then, we denote the linearisation of the corresponding equilibrium $(A_1,0,A_2,0,A_3,0)$ of the first-order spatial-dynamics system \eqref{eq:reduced-equations-explicit} by $\LinSD(\Ab)$. The eigenvalue structure of $\LinSD(\Ab)$ is characterised by the spectral stability of the equilibrium as a solution to the system of Landau equations 
\begin{equation}\label{eq:Landau-equations}
    \begin{split}
        \partial_T A_1 &= \mu_0 A_1 + \beta_2\bar{A}_2\bar{A}_3 + (K_0|A_1|^2 + K_2(|A_2|^2 + |A_3|^2)) A_1, \\
        \partial_T A_2 &= \mu_0 A_2 + \beta_2\bar{A}_1\bar{A}_3 + (K_0|A_2|^2 + K_2(|A_1|^2 + |A_3|^2)) A_2, \\
        \partial_T A_3 &= \mu_0 A_3 + \beta_2\bar{A}_1\bar{A}_2 + (K_0|A_3|^2 + K_2(|A_1|^2 + |A_2|^2)) A_3,
    \end{split}
\end{equation}
which is the system of amplitude equations \eqref{eq:amplitude-system-hex} without the spatial derivatives. Note that since the (degenerate) diffusion operator in \eqref{eq:amplitude-system-hex} is diagonal, spectral stability in \eqref{eq:Landau-equations} implies spectral stability in \eqref{eq:amplitude-system-hex}. Therefore, we refer to this as \emph{PDE stability}. The following lemma then relates the PDE stability of an equilibrium to its stability in the leading-order system \eqref{eq:leading-order-reduced-equations} on the centre manifold. This relation was already noted in \cite{doelman2003-02EuropeanJournalofAppliedMathematics}, and we provide a proof for completeness.

\begin{lemma}\label{lem:characterisation-spat-dyn-stable-vs-pde-stable}
    Let $\theta \in (0,\tfrac{\pi}{6})$ and $\Ab = (A_1,A_2,A_3) \in \R^3$ be an equilibrium of the spatial-dynamics system \eqref{eq:leading-order-reduced-equations}. If the linearisation of the Landau equations \eqref{eq:Landau-equations} has $n$ unstable eigenvalues with $0 \leq n \leq 3$ and $3 - n$ stable eigenvalues, the linearisation of the spatial-dynamics system $\LinSD(\Ab)$ has $3+ n$ stable eigenvalues and $3-n$ unstable eigenvalues. In particular, if $\Ab$ is PDE-stable, then $\LinSD(\Ab)$ has three eigenvalues with negative real parts and three with positive real parts.
\end{lemma}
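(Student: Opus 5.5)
The idea is to reduce the spectral problem for $\LinSD(\Ab)$ to a quadratic matrix pencil whose coefficients are the Landau Jacobian and the diagonal diffusion matrix, and then to count eigenvalues on each side of the imaginary axis by a homotopy that never lets an eigenvalue cross it. For $\theta \in (0,\tfrac{\pi}{6})$ all $\d\cdot\k_j \neq 0$, so $D := \operatorname{diag}(4(\d\cdot\k_1)^2, 4(\d\cdot\k_2)^2, 4(\d\cdot\k_3)^2)$ is positive definite.

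\textbf{Step 1: the pencil and the symmetry of $J$.} Linearising the second-order form \eqref{eq:reduced-equations-second-order} (at $\eps=0$, restricted to real solutions) about $(A_1,0,A_2,0,A_3,0)$ gives
\begin{equation*}
    D \Ab'' + c_0 \Ab' + J \Ab = 0,
\end{equation*}
where $J$ is the real $3\times 3$ Jacobian of the right-hand side of \eqref{eq:Landau-equations} at $\Ab$. A direct check shows that $J$ is symmetric. The quadratic term $\beta_2 A_2A_3$ in the first equation has derivative $\beta_2 A_3$ with respect to $A_2$, matching the derivative of $\beta_2 A_1A_3$ in the second equation with respect to $A_1$. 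The cubic cross terms give $2K_2A_1A_2$ in both off-diagonal positions. Equivalently, the real Landau system is a gradient flow. Consequently the eigenvalues of $\LinSD(\Ab)$ are the roots $\lambda$ of $\det(\lambda^2 D + c_0\lambda + J) = 0$, and the Landau eigenvalues are the real eigenvalues of $J$. I assume, as is implicit in the count $n + (3-n) = 3$, that $J$ has no zero eigenvalue.

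\textbf{Step 2: no eigenvalues on the imaginary axis along a homotopy.} Let $\lambda$ be a root with eigenvector $v \in \C^3$. Taking the inner product with $v$ gives
\begin{equation*}
    \lambda^2 \langle Dv,v\rangle + c_0 \lambda |v|^2 + \langle Jv,v\rangle = 0,
\end{equation*}
and all three coefficients are real. If $\lambda = i\omega$, the imaginary part of this identity is $c_0\omega|v|^2 = 0$, so $\omega = 0$ because $c_0 > 0$. Then $Jv = 0$, which contradicts the invertibility of $J$.

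The same argument applies verbatim to the pencil $\lambda^2 D_s + c_0\lambda + J$ with $D_s := (1-s)D + sI$, $s \in [0,1]$, since each $D_s$ is positive definite. The leading coefficient $\det D_s$ never vanishes, so the six roots depend continuously on $s$ and none of them ever lies on $i\R$. Hence the numbers of stable and unstable roots are the same at $s=0$ and at $s=1$.

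\textbf{Step 3: the decoupled case $s=1$.} Diagonalise $J = Q\operatorname{diag}(m_1,m_2,m_3)Q^T$ with $Q$ orthogonal. The pencil then splits into the scalar equations $\lambda^2 + c_0\lambda + m_i = 0$, whose roots have sum $-c_0 < 0$ and product $m_i$.
\begin{itemize}
    \item If $m_i > 0$, which is an unstable Landau direction, both roots have negative real part.
    \item If $m_i < 0$, which is a stable Landau direction, there is one positive and one negative real root.
\end{itemize}
With $n$ unstable Landau eigenvalues this gives $2n + (3-n) = 3+n$ stable and $3-n$ unstable eigenvalues. Transporting this count back to $s=0$ via Step 2 proves the lemma; the case $n=0$ is the final claim.

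\textbf{Main obstacle.} The damping term $c_0 I$ and the diffusion matrix $D$ are not simultaneously diagonalisable with $J$. The naive rescaling $\Ab = D^{-1/2}Y$ therefore produces a non-scalar damping $c_0 D^{-1}$ and does not decouple the system. Step 2 is the key point that circumvents this: the symmetry of $J$ makes the quadratic-form coefficients real, which is what rules out crossings of the imaginary axis. This is also where the argument would fail for non-symmetric linearisations, as noted in \Cref{rem:spectral-characterisation-for-RD}.
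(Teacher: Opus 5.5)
Your proof is correct, but it uses a different homotopy from the paper's. The paper keeps the diffusion matrix fixed and deforms the zeroth-order coefficient to $J+\kappa I$. It treats the definite cases $n=0$ and $n=3$ via the Rayleigh-quotient equation \eqref{eq:qep-polynomial}. It then argues that, as $\kappa$ increases, spatial eigenvalues can only pass through $\lambda=0$, which happens exactly when $J+\kappa I$ is singular, and that each Landau eigenvalue crossing zero moves one spatial eigenvalue from the unstable to the stable half-plane. You instead keep $J$ fixed and invertible and deform $D$ to the identity, so no eigenvalue ever touches $i\R$ and the count is read off from the decoupled quadratics $\lambda^2+c_0\lambda+m_i=0$.

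Your route needs no crossing analysis. The paper's argument tacitly requires computing the direction of each crossing (perturbing along the kernel vector gives $\lambda'(\kappa_0)=-1/c_0$). It also has to handle repeated zero eigenvalues of $J+\kappa I$, which do occur, for instance the double eigenvalue $a-b$ at hexagons. Moreover, for $n=0$ the Rayleigh quotient alone only shows that every spatial eigenvalue is real and nonzero. The three-three split then needs an extra input, such as the primary/secondary eigenvalue structure of hyperbolic quadratic eigenvalue problems, which your decoupled endpoint supplies directly.

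The paper's route, in turn, shows which spatial eigenvalue moves as a Landau eigenvalue changes sign. Because it never deforms the leading coefficient, it also carries over to $\theta=\tfrac{\pi}{6}$ in \Cref{lem:characterisation-spat-dyn-stable-vs-pde-stable-pi-over-six}, where $D$ is singular. There your homotopy would change the degree of $\det(\lambda^2 D_s+c_0\lambda+J)$ at $s=0$.

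One step is worth writing out: $\det(\lambda I-\LinSD(\Ab))=\det(D)^{-1}\det(\lambda^2D+c_0\lambda+J)$, so that your counts hold with algebraic multiplicity.
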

\begin{proof}
    We first note that $\lambda \in \C$ is an eigenvalue of $\LinSD(\Ab)$ with eigenvector $\hat{\psib} \in \C^6$ if and only if the quadratic matrix equation
    \begin{equation}\label{eq:quadratic-eigenvalue-problem}
        (\Dcal \lambda^2 + c_0 I \lambda + \LinGL(\Ab))\psib = 0,
    \end{equation}
    where the matrix $\Dcal \in \R^{3\times 3}$ encodes the diffusion coefficients, i.e.,
    \begin{equation*}
        \Dcal = \begin{pmatrix}
            4 (\d \cdot \k_1)^2 & 0 & 0 \\
            0 & 4 (\d \cdot \k_2)^2 & 0 \\
            0 & 0 & 4 (\d \cdot \k_3)^2
        \end{pmatrix},
    \end{equation*}
    and the matrix $\LinGL(\Ab) \in \R^{3 \times 3}$ is the linearisation of the Landau system \eqref{eq:Landau-equations} about the equilibrium $\Ab$ and given by
    \begin{equation}\label{eq:linearisation-landau-system}
        \LinGL(\Ab) = \begin{pmatrix}
            \mu_0 + 3 K_0 A_1^2 + K_2 (A_2^2 + A_3^2) & \beta_2 A_3 + 2 K_2 A_1 A_2 & \beta_2 A_2 + 2 K_2 A_1 A_3 \\
            \beta_2 A_3 + 2 K_2 A_1 A_2 & \mu_0 + 3 K_0 A_2^2 + K_2 (A_1^2 + A_3^2) & \beta_2 A_1 + 2 K_2 A_2 A_3 \\
            \beta_2 A_2 + 2 K_2 A_1 A_3 & \beta_2 A_1 + 2 K_2 A_2 A_3 & \mu_0 + 3 K_0 A_3^2 + K_2 (A_1^2 + A_2^2)
        \end{pmatrix}.
    \end{equation}
    Additionally, $\psib \in \C^3$ is related to $\hat{\psib}$ via
    \begin{equation*}
            \hat{\psib} = \begin{pmatrix}
                                1 & 0 & 0 \\
                                \lambda & 0 & 0 \\
                                0 & 1 & 0 \\
                                0 & \lambda & 0 \\
                                0 & 0 & 1 \\
                                0 & 0 & \lambda
                            \end{pmatrix}\psib.
    \end{equation*}
    The equation \eqref{eq:quadratic-eigenvalue-problem} is known as a quadratic eigenvalue problem, see e.g.~\cite{tisseur2001-01SIAMRev}, and the equivalence to the spatial-dynamics eigenvalue problem follows with the same algebraic manipulations as in \cite[Lem.~5.3]{hilder2025-08JNonlinearSci}.

    We first note that for $0<\theta < \tfrac{\pi}{6}$, $\Dcal$ is positive-definite. Next, we note some properties of the quadratic eigenvalue problem \eqref{eq:quadratic-eigenvalue-problem}, which follow from results in \cite{tisseur2001-01SIAMRev}. Since the matrix $\Dcal$ is positive-definite, \eqref{eq:quadratic-eigenvalue-problem} is non-singular. In particular, this yields that \eqref{eq:quadratic-eigenvalue-problem} has six finite eigenvalues, as expected. Additionally, $\LinGL(\Ab)$ is symmetric. Therefore, \eqref{eq:quadratic-eigenvalue-problem} is self-adjoint, and the eigenvalues $\lambda$ are either real-valued or appear in complex-conjugate pairs. Finally, multiplying \eqref{eq:quadratic-eigenvalue-problem} from the left by $\psib^*$ and normalising the eigenvector $\psib$ such that $\psib^* \psib = 1$, we obtain that $\lambda$ satisfies the quadratic polynomial
    \begin{equation}\label{eq:qep-polynomial}
        \psib^* \Dcal \psib \lambda^2 + c_0 \lambda + \psib^* \LinGL(\Ab) \psib = 0,
    \end{equation}
    where $c_0 > 0$ by assumption and $\psib^* \Dcal \psib > 0$ since $\Dcal$ is positive-definite.

    We now prove both extreme cases of the lemma, i.e. if $\LinGL(\Ab)$ has $n = 0$ or $n = 3$ unstable eigenvalues. In the former case ($n = 0$), we find that $\LinGL(\Ab)$ is negative-definite and therefore $\psib^* \LinGL(\Ab) \psib < 0$. Therefore, solutions to the polynomial \eqref{eq:qep-polynomial} come in pairs with positive and negative real parts. Using the equivalence of \eqref{eq:quadratic-eigenvalue-problem} to the linear eigenvalue problem for $\LinSD$, we obtain that $\LinSD$ has three stable and three unstable eigenvalues. This proves the case $n = 0$. The case $n = 3$ follows with the same arguments by noting that $\LinGL(\Ab)$ is now positive-definite and thus $\psib^* \LinGL(\Ab) \psib > 0$, which yields that all solutions to \eqref{eq:qep-polynomial} have negative real parts. Therefore, $\LinSD$ has six stable and no unstable eigenvalues.

    It thus remains to prove the intermediate cases $n = 1,2$ where $\LinGL(\Ab)$ has both stable and unstable eigenvalues and is thus indefinite. For this, we define the shifted quadratic eigenvalue problem 
    \begin{equation}\label{eq:shifted-quadratic-eigenvalue-problem}
        (\Dcal \lambda^2 + c_0 \lambda I  + \LinGL(\Ab) + \kappa I)\psib = 0,
    \end{equation}
    with $\kappa \in \R$. Since $\kappa I$ shifts the eigenvalues of $\LinGL(\Ab)$ by $\kappa$, we find that for $\kappa \ll 0$, all eigenvalues of $\LinGL(\Ab)$ have negative real part ($n = 0$) while for $\kappa \gg 0$ all eigenvalues of $\LinGL(\Ab)$ have positive real part ($n = 3$). Therefore, \eqref{eq:shifted-quadratic-eigenvalue-problem} interpolates between the two extreme cases. Again, the solutions $\lambda$ to \eqref{eq:shifted-quadratic-eigenvalue-problem} are also solutions to
    \begin{equation}\label{eq:shifted-qep-polynomial}
        \psib^*\Dcal \psib \lambda^2 + c_0 \lambda I + \psib^*\LinGL(\Ab)\psib + \kappa = 0.
    \end{equation}
    Note that there are at most three values of $\kappa$ where \eqref{eq:shifted-qep-polynomial} has solutions on the imaginary axis, i.e. where at least one eigenvalue crosses from the right to the left complex half-plane as $\kappa$ increases through the critical value. Since $c_0 \in \R \setminus \{0\}$, the explicit solution formula for \eqref{eq:shifted-qep-polynomial} shows that \eqref{eq:shifted-qep-polynomial} cannot have solutions in $i \R \setminus \{0\}$. Therefore, all eigenvalues have to cross through $\lambda = 0$.

    Next, we note that \eqref{eq:shifted-quadratic-eigenvalue-problem} has an eigenvalue $\lambda = 0$ if and only if $\LinGL + \kappa I$ has an eigenvalue $\lambda = 0$. In this case, $\psib$ is also an eigenvector of $\LinGL + \kappa I$ corresponding to the eigenvalue $\lambda = 0$. Therefore, as $\kappa$ increases, there is a one-to-one correspondence between eigenvalues of $\LinGL + \kappa I$ crossing from the right to the left complex half-plane and solutions to \eqref{eq:shifted-quadratic-eigenvalue-problem} crossing from the right to the left complex half-plane. Therefore, increasing $\kappa$ from $-\infty$ to $\kappa = 0$ and counting the eigenvalues in $\LinGL + \kappa I$ crossing the imaginary axis shows that if there are $n$ unstable eigenvalues of $\LinGL$, there are $3+n$ stable eigenvalues of the quadratic eigenvalue problem \eqref{eq:quadratic-eigenvalue-problem}, and, by equivalence, $3+ n$ stable eigenvalues of $\LinSD(\Ab)$. This completes the proof.
\end{proof}

\begin{remark}
    The proof of \Cref{lem:characterisation-spat-dyn-stable-vs-pde-stable} shows that the assumption $c_0 > 0$ plays a crucial role for the eigenvalue-structure of $\LinSD(\Ab)$. In particular, in the case $c_0 < 0$ we find that the eigenvalues of $\LinSD(\Ab)$ cross from stable to unstable (instead from unstable to stable) as $\Ab$ becomes PDE-unstable. In particular, if $\Ab$ is fully PDE-unstable, i.e.~$\LinGL(\Ab)$ has only unstable eigenvalues, then $(A_1,0,A_2,0,A_3)$ is an unstable equilibrium in the leading-order system \eqref{eq:leading-order-reduced-equations}.
\end{remark}

\begin{remark}\label{rem:spectral-characterisation-for-RD}
    Although \cref{lem:characterisation-spat-dyn-stable-vs-pde-stable} is formulated for the leading-order problem \eqref{eq:leading-order-reduced-equations}, we note that the result applies more generally to travelling-wave solutions in reaction-diffusion systems of the form
    \begin{equation*}
        \partial_t u = \Dcal \partial_x^2 u + f(u)
    \end{equation*}
    with $u(t,x) \in \R^{m}$, $\Dcal \in \R^{m \times m}$ and a smooth function $f : \R^{m} \to \R^{m}$, for $m \in \N$. Assume that $u^\ast \in \R^m$ is an equilibrium. The key ingredients to extend the proof of \cref{lem:characterisation-spat-dyn-stable-vs-pde-stable} are that the diffusion matrix $\Dcal$ is positive-definite and that the linearisation about $u^\ast$ given by $\DD f(u^\ast)$ is symmetric. While the first condition is typically satisfied since it is equivalent to parabolicity of the system, the second condition is restrictive. Moreover, the result can fail without this symmetry condition, as the following example shows. Let
    \begin{equation*}
        \Dcal = \begin{pmatrix}
            1 & 0 \\
            0 & 2
        \end{pmatrix}, \quad \text{and} \quad \DD f(u^\ast) = \dfrac{1}{2} \begin{pmatrix}
            1 & -1 \\ 3 & 1
        \end{pmatrix}.
    \end{equation*}
    The eigenvalues of $Df(u^\ast)$ are complex conjugate with real part $\tfrac{1}{2}$. Therefore, $u^\ast$ is PDE-unstable, and we would expect that the linearisation of the corresponding spatial-dynamics system for travelling waves with speed $c = 1$ has only stable eigenvalues. However, an explicit computation shows that the corresponding quadratic eigenvalue problem $(\Dcal \lambda^2 + I \lambda + \DD f(u^\ast))\psib$ has two complex-conjugate eigenvalues with negative real part and two complex-conjugate eigenvalues with positive real part. We point out that the different diffusion coefficients in this counter-example are necessary. Indeed, if $\Dcal = \alpha I$ for $\alpha > 0$, then eigenvectors of the quadratic eigenvalues problem $(\Dcal \lambda^2 + I \lambda + \DD f(u^\ast))\psib$ and eigenvectors of $\DD f(u^\ast)$ coincide and the proof of \Cref{lem:characterisation-spat-dyn-stable-vs-pde-stable} applies.
\end{remark}

\begin{remark}
    Note that the mechanism obtained in \Cref{lem:characterisation-spat-dyn-stable-vs-pde-stable} resembles the behaviour of spatial eigenvalues in \Cref{prop:spectrum-eps-positive}, where Fourier modes which are 'PDE-stable' induce an even splitting of the spatial eigenvalues into stable and unstable eigenvalues for $\eps > 0$, whereas the more-central eigenvalues connected to 'PDE-unstable' Fourier modes all have negative real part for $\eps > 0$. The mechanism for the more-central modes is indeed very similar, in fact, the quadratic polynomial for the $\Ocal(\eps)$-terms of the more-central eigenvalues is the same as the quadratic eigenvalue problem obtained for the trivial equilibrium from \Cref{lem:characterisation-spat-dyn-stable-vs-pde-stable}, cf.~\Cref{rem:analog-qep-trivial-more-central}. However, the mechanism for the even splitting of the less-central modes is different and relies on the fact that the less-central eigenvalues emerge from a non-trivial point on the imaginary axis, cf.~\Cref{app:general-spectral-analysis}.
\end{remark}

We now handle the case $\theta = \tfrac{\pi}{6}$. Here, we recall that the reduced equation on the centre manifold is given by \eqref{eq:reduced-equations-pi-over-six-second-order} and consider its leading-order system. This can be rewritten as a five-dimensional first-order system for the unknowns $(A_1, B_1, A_2, A_3, B_3) \in \R^5$. For an equilibrium $\Ab = (A_1,A_2,A_3)$ given in \Cref{cor:stationary-patterns} we then denote, by a slight abuse of notation, the linearisation about the corresponding equilibrium $(A_1,0,A_2,A_3,0)$ in the first-order spatial-dynamics system by $\LinSD(\Ab) \in \R^{5 \times 5}$.

\begin{lemma}\label{lem:characterisation-spat-dyn-stable-vs-pde-stable-pi-over-six}
    Let $\theta = \tfrac{\pi}{6}$ and $\Ab = (A_1,A_2,A_3) \in \R^3$ be an equilibrium of the leading-order system to \eqref{eq:reduced-equations-pi-over-six-second-order}. If the linearisation of the Landau equations \eqref{eq:Landau-equations} has $n$ unstable eigenvalues with $0 \leq n \leq 3$ and $3 - n$ stable eigenvalues, the linearisation of the spatial-dynamics system $\LinSD(\Ab)$ has $2 + n$ stable eigenvalues and $3-n$ unstable eigenvalues. In particular, if $\Ab$ is PDE-stable, then $\LinSD(\Ab)$ has three eigenvalues with negative and three eigenvalues with positive real part.
\end{lemma}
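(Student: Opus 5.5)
Our plan is to reproduce the homotopy-in-$\kappa$ argument from the proof of \Cref{lem:characterisation-spat-dyn-stable-vs-pde-stable}, replacing the positive-definite diffusion matrix with the singular one $\Dcal = \operatorname{diag}(3,0,3)$. This choice uses $4(\d\cdot\k_1)^2 = 4(\d\cdot\k_3)^2 = 3$ and $\d\cdot\k_2 = 0$. As before, we first show that $\lambda$ is an eigenvalue of the five-dimensional matrix $\LinSD(\Ab)$ if and only if the quadratic eigenvalue problem $(\Dcal\lambda^2 + c_0\lambda I + \LinGL(\Ab))\psib = 0$ has a nontrivial solution $\psib\in\C^3$. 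The eigenvector is related to $\psib$ via $\hat{\psib} = (\psi_1,\lambda\psi_1,\psi_2,\psi_3,\lambda\psi_3)^T$, and this equivalence is verified by the same algebra as in \cite[Lem.~5.3]{hilder2025-08JNonlinearSci}. Since $\Dcal$ has rank two, the polynomial $\det(\Dcal\lambda^2 + c_0\lambda I + \LinGL(\Ab))$ has degree $2\cdot 2 + 1 = 5$, with leading coefficient $9c_0 \neq 0$. Hence it has exactly five finite roots, matching the dimension of $\LinSD(\Ab)$, and none escape to infinity as parameters vary.

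Next we record the scalar relation. Normalising $\psib^*\psib = 1$ gives $a\lambda^2 + c_0\lambda + b = 0$ with $a = \psib^*\Dcal\psib \ge 0$ and $b = \psib^*\LinGL(\Ab)\psib \in \R$, using the symmetry of $\LinGL(\Ab)$. Such an equation has no roots in $i\R\setminus\{0\}$: if $\lambda = i\omega$ with $\omega\neq 0$, the imaginary part $c_0\omega$ cannot vanish. This holds whether $a > 0$ or $a = 0$, since $c_0 > 0$.

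For the extreme cases we argue directly. If $\LinGL(\Ab)$ is negative definite ($n = 0$), then $b < 0$. For $a > 0$ the two roots then have opposite signs. For $a = 0$ the single root $-b/c_0$ is positive. To count the roots correctly, we would rather not analyse $n = 0$ directly. Instead we start the homotopy in the limit $\kappa \to -\infty$ of $\LinGL + \kappa I$. With $\kappa = -K$ and $K$ large, rescaling $\lambda = \sqrt{K}\,\nu$ shows that four roots behave like $\pm\sqrt{K/3}$, two positive and two negative. The remaining root, coming from the $A_2$ component, is approximately $K/c_0 > 0$. This gives two stable and three unstable eigenvalues, as the lemma claims for $n = 0$.

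The general case follows by continuity. As $\kappa$ increases from $-\infty$ to $0$, roots cross the imaginary axis only through $\lambda = 0$. A crossing occurs exactly when $\LinGL + \kappa I$ has a zero eigenvalue, and then $\psib$ is the corresponding eigenvector. The main obstacle is the crossing direction: we must show that each simple zero eigenvalue of $\LinGL + \kappa I$ moves exactly one root from the right to the left half-plane. Differentiating the relation $a\lambda^2 + c_0\lambda + b + \kappa = 0$ at $\lambda = 0$, treating it as a perturbation of a simple root, gives $c_0\,\partial_\kappa\lambda = -1$. Thus $\partial_\kappa\lambda = -1/c_0 < 0$, so the root passes from right to left regardless of whether $a$ vanishes. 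Multiple eigenvalues of $\LinGL$ can be handled by a small generic perturbation. Each of the $n$ unstable eigenvalues of $\LinGL$ corresponds to one crossing, so we arrive at $2 + n$ stable and $3 - n$ unstable eigenvalues. In particular, PDE-stability ($n = 0$) gives two stable and three unstable eigenvalues of $\LinSD(\Ab)$. We would therefore flag the final sentence of the statement ("three and three") as inconsistent with the five-dimensional count and correct it to this split.
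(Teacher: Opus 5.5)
Your proposal is correct and follows the paper's route: you reduce to the quadratic eigenvalue problem with $\Dcal=\operatorname{diag}(3,0,3)$, rule out roots in $i\R\setminus\{0\}$ via the scalar relation, and count crossings through $\lambda=0$ along the shift $\LinGL+\kappa I$. You differ only in obtaining the base count from $\kappa\to-\infty$ asymptotics rather than the paper's diagonalisation, homotopy to $-I$ and projection onto $\ker\tilde{\Dcal}$, and in explicitly computing the crossing direction $\partial_\kappa\lambda=-1/c_0$ and handling repeated eigenvalues, both of which the paper only asserts. Your correction of the final sentence is also right: for $n=0$ the paper's own proof concludes two stable and three unstable eigenvalues, so the ``three and three'' in the statement is a slip.
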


\begin{proof}
    By the same arguments as in \Cref{lem:characterisation-spat-dyn-stable-vs-pde-stable} $\lambda\in \C$ is an eigenvalue of $\LinSD(\Ab)$ with eigenvector $\hat{\psib} \in \C^{5}$ if and only if $(\lambda,\psib)$ is a solution to the quadratic eigenvalue problem \eqref{eq:qep-polynomial} with 
    \begin{equation*}
        \Dcal = \begin{pmatrix}
                3 & 0 & 0 \\
                0 & 0 & 0 \\
                0 & 0 & 3
            \end{pmatrix},
    \end{equation*}
    and $\psib \in \C^3$ is related to $\hat{\psib}$ via
    \begin{equation*}
            \hat{\psib} = \begin{pmatrix}
                                1 & 0 & 0 \\
                                \lambda & 0 & 0 \\
                                0 & 1 & 0 \\
                                0 & 0 & 1 \\
                                0 & 0 & \lambda
                            \end{pmatrix}\psib.
    \end{equation*}
    Note that since $\Dcal$ is singular, we cannot guarantee that $\psib^\ast \Dcal \psib > 0$ for an eigenvector $\psib$. Thus, while the structure of the proof is similar to \Cref{lem:characterisation-spat-dyn-stable-vs-pde-stable}, we need to modify the specific arguments.
    
    Computing $\deg \det (\Dcal \lambda^2 + c_0 I \lambda + \LinGL(\Ab)) = 5$ and following \cite{tisseur2001-01SIAMRev}, there are five finite eigenvalues and one infinite eigenvalue. In the case $n=3$ in which $\LinGL(\Ab)$ is positive-definite, we obtain again that solutions to \eqref{eq:qep-polynomial} need to have negative real parts and there are five stable eigenvalues.

    If $n=0$ and $\LinGL(\Ab)$ is negative-definite, we first transform the problem: there is an orthogonal matrix $S$ such that $S^T\LinGL(\Ab)S  = \Lambda$ where $\Lambda$ is diagonal and negative-definite. Applying this to \eqref{eq:qep-polynomial}, we obtain the quadratic matrix equation
    \begin{equation}\label{eq:qep-polynomial-after-change-of-basis}
        (\tilde{\Dcal} \lambda^2 + c_0 I \lambda + \Lambda) \tilde{\psib}= 0.
    \end{equation}
    where $\tilde{\psib} = S^T \psib$ and $\tilde{\Dcal}=S^T\Dcal S$ has rank two and is positive semidefinite and symmetric. With the same argument as in the proof of \Cref{lem:characterisation-spat-dyn-stable-vs-pde-stable}, for any negative-definite matrix $\Lambda$, \eqref{eq:qep-polynomial-after-change-of-basis} cannot have solutions on the imaginary axis. We now define the interpolation $\Lambda_t = - t I + (1-t) \Lambda$, which is strictly contained in the set of negative-definite matrices. Hence, the number of eigenvalues with positive and negative real parts to \eqref{eq:qep-polynomial-after-change-of-basis} with $\Lambda$ replaced by $\Lambda_t$  is independent of $t$, and so, we can also study
    \begin{equation}\label{eq:qep-polynomial-after-change-of-basis-and-homotopy}
        (\tilde{\Dcal} \lambda^2 + c_0 I \lambda -I) \check{\psib}= 0.
    \end{equation}
    Now, let $P_{\ker \tilde{\Dcal}}$ be the orthogonal projection onto the kernel of $\tilde{\Dcal}$ and assume $(\lambda,\check{\psib})$ is a solution to \eqref{eq:qep-polynomial-after-change-of-basis-and-homotopy}. If $P_{\ker \tilde{\Dcal}} \tilde{\psib} \neq 0$, then applying $P_{\ker \tilde{\Dcal}}$ to \eqref{eq:qep-polynomial-after-change-of-basis-and-homotopy} one obtains
    \begin{equation*}
        0=\lambda^2 P_{\ker \tilde{\Dcal}} \tilde{\Dcal}\tilde{\psib} + (c_0 \lambda - I)  P_{\ker \tilde{\Dcal}}\tilde{\psib} = (c_0 \lambda - I)  P_{\ker \tilde{\Dcal}}\tilde{\psib},
    \end{equation*}
    since the $\operatorname{image}(\tilde{\Dcal}) \perp \ker \tilde{\Dcal}$, using that $\tilde{\Dcal}$ is symmetric. But this gives $\lambda = \tfrac{1}{c_0} > 0$ and $\tilde{\Dcal} \tilde{\psib} = 0$ follows, so that $\tilde{\psib}\in \ker \tilde{\Dcal}$, which is one-dimensional, so that there is precisely one solution of this type. If $P_{\ker \tilde{\Dcal}} \tilde{\psib} = 0$, we can argue as before that there are two eigenvalues for each eigenvector $\tilde{\psib}$, one with positive and one with negative real part. So there are three unstable and two stable eigenvalues. The general case $0<n<3$ now again follows by the same argument as in the proof of \Cref{lem:characterisation-spat-dyn-stable-vs-pde-stable}.
\end{proof}

\subsubsection{Stability of equilibria}\label{sec:stability-of-equilibria}

As motivated above, \Cref{lem:characterisation-spat-dyn-stable-vs-pde-stable,lem:characterisation-spat-dyn-stable-vs-pde-stable-pi-over-six} allow for a simpler characterisation of the eigenvalue structure of the linearisation in the leading-order system \eqref{eq:leading-order-reduced-equations}. From this, we make the immediate observation that the eigenvalue structure does not depend on the direction $\d$. To obtain more detailed information about the stability structure of the equilibria in \Cref{cor:stationary-patterns}, we now consider their linearisation in the Landau system \eqref{eq:Landau-equations}.

We recall that the linearisation in the Landau system \eqref{eq:Landau-equations} about an equilibrium $\Ab$, denoted as above by $\LinGL(\Ab)$, is given by \eqref{eq:linearisation-landau-system}. First, we note that $\LinGL(\Ab_T) = \mu_0 I$ is diagonal. Therefore, for $\mu_0 < 0$, the corresponding equilibrium in the spatial-dynamics system is a saddle with 3 stable and 3 unstable eigenvalues, whereas for $\mu_0 > 0$ it is a stable equilibrium. More specifically, the quadratic eigenvalue problem decouples into three independent equations, given by
\begin{equation}\label{eq:eigenvalues-trivial-eq-reduced-equations}
    0 = 4 (\d \cdot \k_j)^2 \lambda^2 + c_0 \lambda + \mu_0, \quad j=1,2,3.
\end{equation}
This implies that the eigenvalues of $\LinSD(\Ab_T)$ are real for all $c_0 > c_{\crit}(\k_j)$. As $c_0$ decreases through $c_{\crit}(\k_j)$, two eigenvalues collide and then split into a pair of complex-conjugate eigenvalues. In particular, this changes the qualitative dynamics of orbits close to the trivial equilibrium. For $c_0 > c_{\crit}(\k_j)$ for all $j=1,2,3$, orbits decay into the trivial state monotonically, whereas for $c_0 < c_{\crit}(\k_j)$, the trivial equilibrium is a stable centre and oscillations occur, see \Cref{fig:different-front-behaviour}. This is well-known in the one-dimensional case, see e.g.~\cite{hilder2020-08JournalofDifferentialEquations}, and similar behaviour occurs for other monostable fronts, for example, in the Fisher–KPP equation \cite[Sec.~7.1]{schneider2017book}.

\begin{figure}[H]
    \centering
    \begin{subfigure}[b]{0.33\textwidth}
        \includegraphics[width=\textwidth]{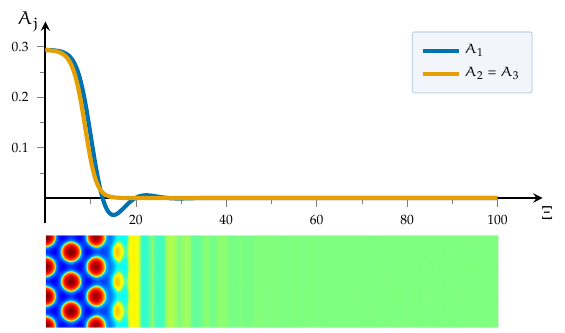}
        \subcaption{Subcritical front speed}
        \label{subfig:subcrit}
    \end{subfigure}
    \hfill
    \begin{subfigure}[b]{0.33\textwidth}
        \includegraphics[width=\textwidth]{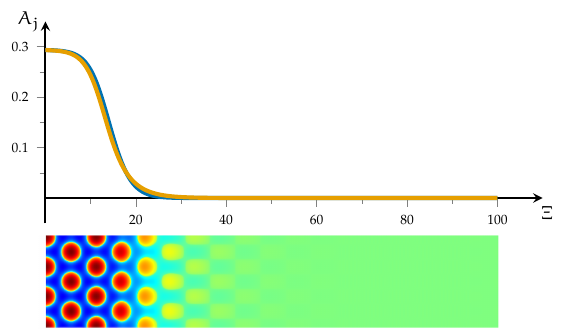}
        \subcaption{Critical front speed}
        \label{subfig:crit}
    \end{subfigure}
    \hfill
    \begin{subfigure}[b]{0.33\textwidth}
        \includegraphics[width=\textwidth]{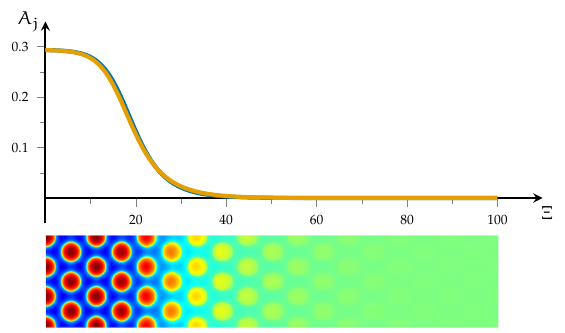}
        \subcaption{Supercritical front speed}
        \label{subfig:supercrit}
    \end{subfigure}
    \caption{Numerical simulations of heteroclinic orbits between $(\Ab_{H_+},\zerob)$ and $(\Ab_{T},\zerob)$ and the corresponding pattern interfaces connecting up-hexagons to the trivial state for increasing values of $c_0$. \sref{subfig:subcrit} depicts the subcritical regime $c_0 < \max_j(c_{\crit}(\k_j))$ and the orbit oscillates close to $(\Ab_{T},\zerob)$ which is a stable centre in this regime. \sref{subfig:crit} depicts the critical speed $c_0 = \max_j(c_{\crit}(\k_j))$, where the front decays monotonically. Increasing $c$ into the supercritical regime leads to a less steep monotone front profile. This is consistent with monostable fronts in reaction-diffusion systems, where the critical front is the steepest, see \cite{avery2025-12preprint}. Observe that, for illustration purposes, we have amplified the pattern interfaces close to the trivial state.}
    \label{fig:different-front-behaviour}
\end{figure}

\begin{remark}\label{rem:analog-qep-trivial-more-central}
    We highlight that \eqref{eq:eigenvalues-trivial-eq-reduced-equations} recovers the leading-order equation for the $\Ocal(\eps)$-contribution of the more-central eigenvalues in \Cref{prop:spectrum-eps-positive}, cf.~\eqref{eq:equation-for-correction-more-central-eigenvalues}.
\end{remark}

For the other equilibria, we note that these can be grouped into two cases: $\Ab = (A,A,A)$ with $A \in \R \setminus \{0\}$ and $\Ab = (A,B,B)$ with $A \in \R \setminus \{0\}$ and $B \in \R$. The former group consists of hexagons, and the latter encompasses roll waves, mixed modes, and false hexagons. Thus, the corresponding linearisations have the general structure
\begin{equation*}
    \LinGL((A,A,A)) = \begin{pmatrix}
        a & b & b \\ b & a & b \\ b & b & a
    \end{pmatrix}
    \quad \text{and} \quad \LinGL((A,B,B)) = \begin{pmatrix}
        a & e & e \\ e & b & f \\ e & f & b
    \end{pmatrix},
\end{equation*}
with $a,b,e,f \in \R$, respectively. From these general forms, we can directly obtain that the eigenvectors of $\LinGL((A,A,A))$ are given by
\begin{equation*}
    \begin{pmatrix}
        1 \\ 1 \\ 1
    \end{pmatrix}, \begin{pmatrix}
        0 \\ 1 \\ -1
    \end{pmatrix}, \text{ and } \begin{pmatrix}
        2 \\ -1 \\ -1
    \end{pmatrix} \quad \text{with eigenvalues } a+2b, a-b, a-b
\end{equation*}
and the eigenvectors of $\LinGL((A,B,B))$ are given by
\begin{equation*}
    \begin{pmatrix}
        1 \\ -\frac{2 e}{\sqrt{(-a+b+f)^2+8
   e^2}-a+b+f} \\ -\frac{2 e}{\sqrt{(-a+b+f)^2+8
   e^2}-a+b+f}
    \end{pmatrix}, \text{ and } \begin{pmatrix}
        \frac{\sqrt{(-a+b+f)^2+8 e^2}+a-b-f}{2 e} \\ 1 \\ 1
    \end{pmatrix}, \begin{pmatrix}
        0 \\ 1 \\ -1
    \end{pmatrix}
\end{equation*}
with eigenvalues $\tfrac{1}{2} \left(-\sqrt{(-a+b+f)^2+8 e^2}+a+b+f\right)$, $\tfrac{1}{2} \left(\sqrt{(-a+b+f)^2+8 e^2}+a+b+f\right)$, and $b-f$, which for roll waves reduces to the eigenvalues $a$, $b-f$, and $b+f$. Recalling that we can choose $\beta_2 > 0$ without loss of generality and computing these eigenvalues for the different parameter ranges, we obtain the bifurcation diagrams shown in \Cref{fig:bifurcation-diagrams-hex}.
\begin{figure}[H]
    \centering
    \begin{subfigure}[p]{0.99\textwidth}
        \includegraphics[width=\linewidth]{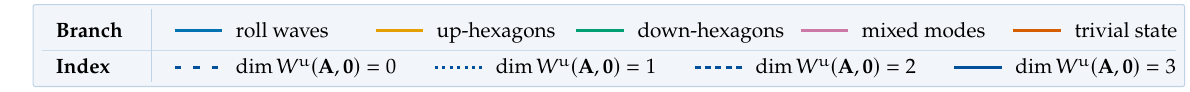}
    \end{subfigure}

    \vspace{0.4cm}

    \begin{subfigure}[p]{0.45\textwidth}
        \includegraphics[width=\linewidth]{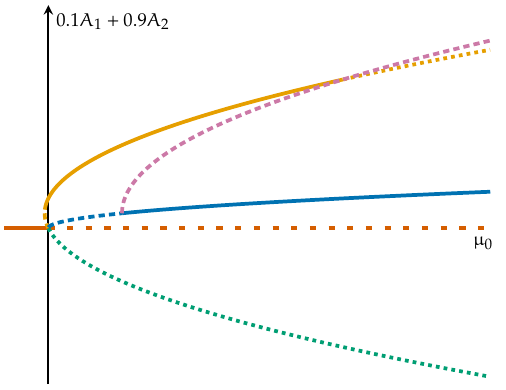}
        \subcaption{$K_2 < K_0 <0$}
    \end{subfigure}
    \hfill
    \begin{subfigure}[p]{0.45\textwidth}
        \includegraphics[width=\linewidth]{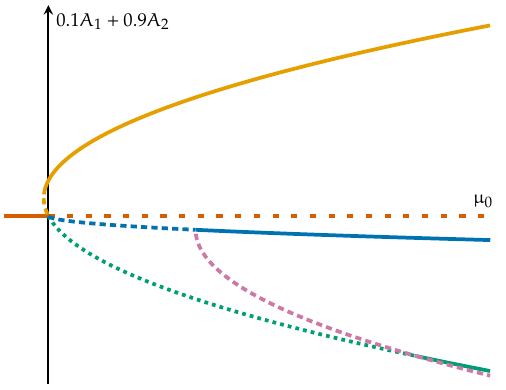}
        \subcaption{$K_0 < K_2 <0$}
    \end{subfigure}
    \caption{Two examples of bifurcation diagrams for stationary patterns on the hexagonal lattice including the dimension of the unstable manifold, which is calculated using the spectrum of the corresponding linearisation $\LinGL$ of the Landau equations \eqref{eq:Landau-equations}.}
    \label{fig:bifurcation-diagrams-hex}
\end{figure}

\subsection{Analytic construction of some heteroclinic orbits}\label{sec:heteroclinic}

We recall from the beginning of this section that our goal is to construct heteroclinic orbits in the leading-order system \eqref{eq:leading-order-reduced-equations}, which correspond to planar pattern interfaces in the full problem \eqref{eq:Swift-Hohenberg}. We note that since \eqref{eq:leading-order-reduced-equations} has a six-dimensional phase space, understanding its global dynamics analytically is a non-trivial problem. Therefore, we present here selected orbits that can be constructed using well-understood analytical methods. While it might be possible to obtain a deeper understanding using topological methods such as Conley index theory, see e.g.~\cite{mischaikow2002HandbookofDynamicalSystems}, this is beyond the scope of this paper.

We restrict the following analysis to the parameter set $\beta_2 \geq 0$, $K_0 < 0$ and $K_2 < 0$, for which we obtain a bifurcation diagram as in Figure \ref{fig:bifurcation-diagrams-hex} and which is also found in \cite{doelman2003-02EuropeanJournalofAppliedMathematics} where $\beta_2 > 0$, $K_0 = -3$ and $K_2 = -6$. Additionally, we restrict to the case $\theta \in (0, \tfrac{\pi}{6})$ and note that the results can be extended to the case $\theta = \tfrac{\pi}{6}$ using the fast-slow analysis presented in \Cref{sec:fast-slow-results}. 

We first note that $\{A_1, B_1 \in \R, A_2 = B_2 = A_3 = B_3 = 0\}$ is an invariant subspace of the leading-order reduced equations \eqref{eq:leading-order-reduced-equations}, which contains roll waves. In fact, it is an invariant subspace including the higher-order terms in \eqref{eq:reduced-equations-explicit} due to \Cref{rem:higher-order-terms}. In this space, the reduced equations on the centre manifold simplify to
\begin{equation*}
        \begin{split}
        \partial_\Xi A_1 &=  B_1, \\
        \partial_\Xi B_1 &= -\dfrac{\mu_0}{4 (\d \cdot \k_1)^2} A_1 - \dfrac{c_0}{4(\d \cdot \k_1)^2}B_1 - \dfrac{K_0 A_1^3}{4(\d \cdot \k_1)^2},
    \end{split}
\end{equation*}
which is the first-order formulation of a homogeneous, damped Duffing oscillator. For this system, when $\mu_0 > 0$, heteroclinic orbits connecting the non-trivial equilibrium to the trivial equilibrium can be constructed via a straightforward phase-plane analysis, see \Cref{fig:phase-plane-roll-waves}.

\begin{figure}[h!]
    \centering
    \begin{subfigure}[p]{0.4\textwidth}
    \includegraphics[width=\linewidth]{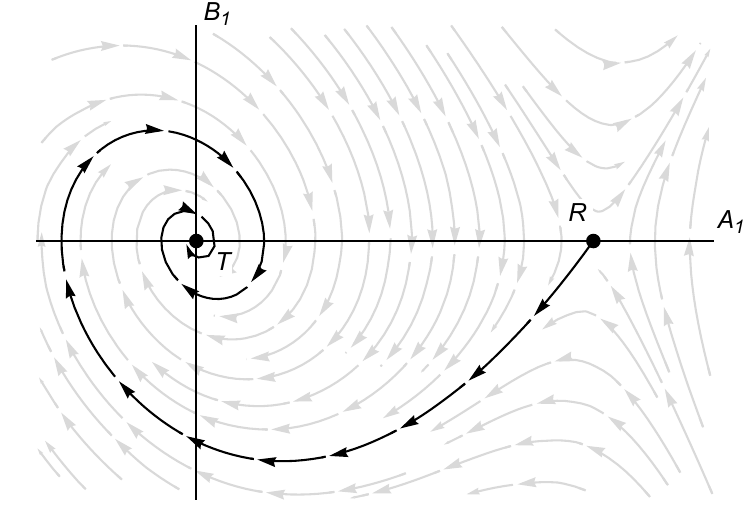}
    \subcaption{}\label{subfig:duffing-subcrit}
    \end{subfigure}
    \hspace{2cm}
    \begin{subfigure}[p]{0.4\textwidth}
    \includegraphics[width=\linewidth]{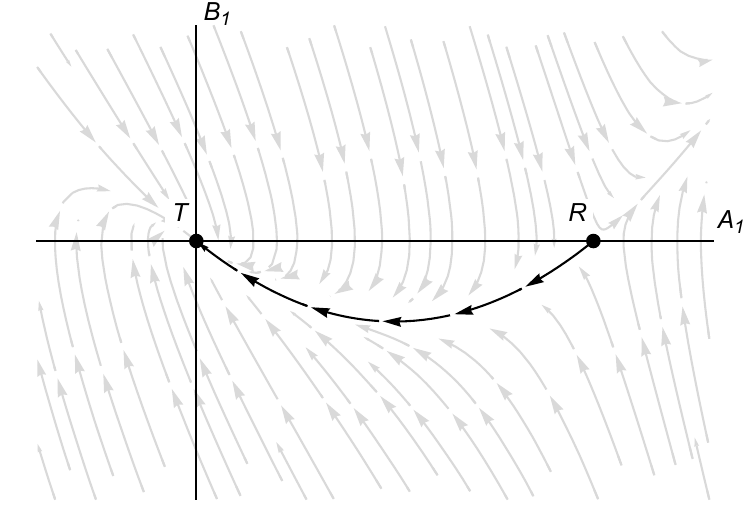}
    \subcaption{}\label{subfig:duffing-supercrit}
    \end{subfigure}
    \caption{Phase plane of the damped Duffing oscillator for $\mu_0 = 1$, $K_0 = -1$ and $(\d \cdot \k_1)^2 = 0.9$. Here, \sref{subfig:duffing-subcrit} depicts the case of subcritical speed $c_0 = 0.8 < c_{\crit}(\k_1) \approx 3.795$, and \sref{subfig:duffing-supercrit} depicts the supercitical case $c_0 = 4 > c_{\crit}(\k_1)$.}
    \label{fig:phase-plane-roll-waves}
\end{figure}

While in the case $\d = (1,0)$ there is another invariant subspace $\{A_2 = A_3 \text{ and } B_2 = B_3\}$, this is not invariant for any other $\d$ with $\theta \in (0,\tfrac{\pi}{6})$ since $(\d \cdot \k_2)^2 \neq (\d \cdot \k_3)^2$. Even in the case $\d = (1,0)$, the phase space is still four-dimensional. Nevertheless, we are still able to construct selected heteroclinic orbits analytically. Since the construction works the same way, we directly consider the full leading-order system \eqref{eq:leading-order-reduced-equations} for general directions $\d$ and prove that for any $c_0 > 0$ there exists a heteroclinic orbit connecting either down-hexagons $\Ab_{H_-}$ or roll waves $\Ab_{R}$ to the trivial state $\Ab_T$ depending on the parameter values $(\mu_0,K_0,K_2)$. 

The main observation for the analysis is that the leading-order system \eqref{eq:leading-order-reduced-equations} has a Lyapunov function given by
\begin{equation}\label{eq:hamiltonian}
    \Hcal(\Ab, \Bb) = \sum_{j = 1}^3 2 (\d \cdot \k_j)^2 |B_j|^2 + \dfrac{\mu_0}{2} \sum_{j = 1}^3 |A_j|^2 + \beta_2 A_1 A_2 A_3 + \dfrac{K_0}{4} \sum_{j = 1}^3 |A_j|^4 + \dfrac{K_2}{2} (A_1^2 A_2^2 + A_1^2 A_3^2 + A_2^2 A_3^2).
\end{equation}
Indeed, a direct calculation gives that along a solution $(\Ab(\Xi),\Bb(\Xi))$
\begin{equation}\label{eq:derivative-Lyapunov}
    \partial_\Xi \Hcal(\Ab(\Xi), \Bb(\Xi)) = - c_0 |\Bb(\Xi)|^2 \leq 0.
\end{equation}
From this, we note that for $c_0 > 0$, the Lyapunov function $\Hcal$ is strictly decreasing along non-stationary solutions. For $c_0 = 0$, we find that $\Hcal$ is conserved along solutions. In fact, the leading-order system \eqref{eq:leading-order-reduced-equations} is a Hamiltonian system with Hamiltonian $\Hcal$. This observation can be used to show that there exist $\mu_{H_+ \leftrightarrow T} < 0$ and $\mu_{H_+ \leftrightarrow R} > 0$ and corresponding reversible heteroclinic orbits in \eqref{eq:leading-order-reduced-equations} from $\Ab_{H_+}$ to $\Ab_T$ and $\Ab_{H_+}$ to $\Ab_R$, respectively, using variational techniques for Hamiltonian systems, see e.g.~\cite{stefanopoulos2008-12ProcRoySocEdinburghSectA}. However, we note that the centre manifold analysis in \cref{thm:centre-manifold} does not apply for $c_0 = 0$. While a spatial centre manifold approach, see e.g.~\cite{haragus2012-12EuropeanJournalofAppliedMathematics}, can be used to construct a centre manifold in the case $c_0 = 0$, the resulting manifold can be of arbitrary dimension depending on the direction $\d$. Although the leading-order system is expected to have an invariant manifold where the dynamics are given by \eqref{eq:reduced-equations-explicit}, this does not extend to the full centre manifold. In particular, due to the presence of highly oscillatory higher-order terms, the persistence of the orbits is non-trivial. We plan to treat this problem in a future paper.

Using the Lyapunov function, we now construct heteroclinic orbits for $\mu_0>0$ and $c_0 > 0$. More specifically, the orbit connects the non-trivial equilibrium with the lowest energy, which is either down-hexagons $\Ab_{H_-}$ or roll waves $\Ab_R$ as shown below, to the trivial equilibrium. The construction uses the following strategy. Let $\Ab_\ell$ be the non-trivial equilibrium with the lowest energy. Then, we define the trapping region $\Omega$ as the connected subset of the sublevel set $\{(\Ab,\Bb) \in \R^6 \,:\, \Hcal(\Ab,\Bb) < \Hcal(\Ab_\ell,\zerob)\}$ that contains $(\Ab_T,\zerob)$ and show that $\Omega$ is a bounded subset of the stable manifold of the trivial equilibrium. Note here that since $\mu_0 > 0$, the trivial equilibrium is stable due to \Cref{lem:characterisation-spat-dyn-stable-vs-pde-stable}. Thus, the corresponding stable manifold is six-dimensional. Moreover, we show that $(\Ab_{\ell}, \zerob) \in \partial\Omega$. To complete the construction, it remains to show that the unstable eigenspace of the equilibrium $(\Ab_{\ell}, \zerob)$ intersects $\Omega$. This yields the following result.

\begin{theorem}\label{thm:heteroclinic}
    Let $\theta\in (0,\tfrac{\pi}{6})$, $c_0>0$.
    \begin{thmenum}
        \item\thmitemlabel{it:heteroclinic-1} If $K_2 < K_0$, for every $\mu_0>0$ there exists a heteroclinic orbit $(\Ab_{H_-\to T},\Bb_{H_- \to T})$ in \eqref{eq:leading-order-reduced-equations} such that
        \begin{equation*}
            \lim_{\Xi \to -\infty} (\Ab_{H_-\to T},\Bb_{H_- \to T}) = (\Ab_{H_-},\zerob) \quad\text{and}\quad \lim_{\Xi \to +\infty} (\Ab_{H_-\to T},\Bb_{H_- \to T}) = (\Ab_{T},\zerob).
        \end{equation*}
        \item\thmitemlabel{it:heteroclinic-2}  If $K_2 \in (K_0, 0)$ there exists a $\mu_1 > 0$, which is given by
        \begin{equation}\label{eq:mu-one}
            \mu_1 = -\frac{\beta_2^2 K_0}{2 \sqrt{2} \sqrt{K_0 (K_0+K_2)^3} - 2 K_0 (K_0+3 K_2) }
        \end{equation}
        such that for every $\mu_0 \in (0,\mu_1)$ there exists a heteroclinic orbit $(\Ab_{H_-\to T},\Bb_{H_- \to T})$ as in \iref{it:heteroclinic-1}, and for $\mu_0 > \mu_1$ there exists a heteroclinic orbit $(\Ab_{R\to T},\Bb_{R \to T})$ to \eqref{eq:leading-order-reduced-equations} such that
        \begin{equation*}
            \lim_{\Xi \to -\infty} (\Ab_{R\to T},\Bb_{R \to T}) = (\Ab_{R},\zerob) \quad\text{and}\quad \lim_{\Xi \to +\infty} (\Ab_{R\to T},\Bb_{R \to T}) = (\Ab_{T},\zerob).
        \end{equation*}
    \end{thmenum}
\end{theorem}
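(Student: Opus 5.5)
The plan follows the strategy sketched before the statement. We work with the potential part of $\Hcal$,
\[
V(\Ab) = \tfrac{\mu_0}{2}|\Ab|^2 + \beta_2 A_1A_2A_3 + \tfrac{K_0}{4}\sum_j A_j^4 + \tfrac{K_2}{2}(A_1^2A_2^2+A_1^2A_3^2+A_2^2A_3^2).
\]
Since $\theta\in(0,\tfrac{\pi}{6})$, all $(\d\cdot\k_j)^2>0$, so $\Hcal(\Ab,\Bb)=\sum_j 2(\d\cdot\k_j)^2B_j^2+V(\Ab)$. Equilibria of \eqref{eq:leading-order-reduced-equations} are exactly the points $(\Ab,\zerob)$ with $\nabla V(\Ab)=0$; indeed \eqref{eq:reduced-equations-stationary} for real $\Ab$ is $\nabla V=0$, and $\LinGL(\Ab)=\nabla^2V(\Ab)$.

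\textbf{Step 1: locate the lowest non-trivial critical level.} We compare $V$ at the non-trivial critical points of \Cref{cor:stationary-patterns}. For rolls, $V(\Ab_R)=-\mu_0^2/(4K_0)$. For hexagons, $A_1=A_2=A_3=h$ gives $V=\tfrac{3\mu_0}{2}h^2+\beta_2h^3+\tfrac{3(K_0+2K_2)}{4}h^4$; using the equilibrium relation we evaluate this at $h=A_{1,H_\pm}$. We also evaluate the mixed modes. We then show:
\begin{itemize}
\item for $K_2<K_0$, down-hexagons ($h<0$, so $\beta_2h^3<0$) have the smallest positive critical value among all non-trivial critical points;
\item for $K_2\in(K_0,0)$, the comparison $V(\Ab_{H_-})$ versus $V(\Ab_R)$ switches exactly at $\mu_0=\mu_1$ in \eqref{eq:mu-one}, which is obtained by solving $V(\Ab_{H_-})=V(\Ab_R)$ explicitly.
\end{itemize}
Call the minimiser $\Ab_\ell$ and set $E_\ell=V(\Ab_\ell)>0$. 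Every non-trivial critical value is positive, since each such point lies at the "top" of a mountain-pass-type barrier around $0$, which is a strict local minimum with $V(0)=0$.

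\textbf{Step 2: trapping region.} Let $\Omega$ be the connected component of $\{\Hcal<E_\ell\}$ containing $(\zerob,\zerob)$. Boundedness of $\Omega$ is the key issue. Since $K_0<0$ and $K_2<0$, $V\to-\infty$ along unbounded directions, so sublevel sets are not bounded globally. Instead we use a mountain-pass characterisation: $E_\ell$ should equal the minimax level
\[
\inf_{\text{paths }0\to\infty}\ \max V,
\]
so the component $\Omega_A$ of $\{V<E_\ell\}$ containing $0$ is bounded and contains no critical point other than $0$. We would prove this by a deformation argument: if $\Omega_A$ were unbounded, the mountain-pass theorem (with Palais--Smale holding on bounded sets, and unboundedness handled by coercivity of $|\nabla V|$ at infinity) would give a critical point at a level below $E_\ell$ other than $0$. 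That contradicts Step 1. The same argument shows $\Ab_\ell\in\partial\Omega_A$. Then $\Omega\subset\Omega_A\times\R^3$ is bounded, because the $B$-part is controlled by $E_\ell-V$.

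By \eqref{eq:derivative-Lyapunov}, $\Omega$ is forward invariant. By LaSalle, every trajectory in $\Omega$ converges to the largest invariant set in $\{\Bb=0\}\cap\overline\Omega$, which consists of equilibria. The only equilibrium in the closure at level below $E_\ell$ is the trivial one, and a trajectory starting strictly below $E_\ell$ stays strictly below. Hence $\Omega\subset W^s(\Ab_T,\zerob)$; this is consistent with $(\Ab_T,\zerob)$ being a sink for $\mu_0>0$, by \Cref{lem:characterisation-spat-dyn-stable-vs-pde-stable}.

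\textbf{Step 3: an unstable direction of $\Ab_\ell$ enters $\Omega$.} Since $\Ab_\ell$ is a mountain-pass-type point, $\nabla^2V(\Ab_\ell)=\LinGL(\Ab_\ell)$ has $n\ge1$ negative eigenvalues; for $\Ab_{H_-}$ and for $\Ab_R$ in the relevant ranges, this follows from the eigenvalue formulas preceding \Cref{fig:bifurcation-diagrams-hex}. By \Cref{lem:characterisation-spat-dyn-stable-vs-pde-stable}, $\LinSD(\Ab_\ell)$ then has $3-n\ge1$ unstable eigenvalues. For an unstable eigenvalue $\lambda>0$ the eigenvector has the form $(\psib,\lambda\psib)$ with $\psib^*\LinGL\psib<0$ by \eqref{eq:qep-polynomial}.

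The unstable manifold leaves tangent to this eigenvector, and along it $\Hcal$ strictly decreases, since $\partial_\Xi\Hcal=-c_0|\Bb|^2<0$ off equilibria. So one branch enters $\{\Hcal<E_\ell\}$. We must check that it enters the component $\Omega$ rather than another one. For this, we choose the branch pointing towards $0$: locally near $\Ab_\ell$, the set $\{V<E_\ell\}$ has exactly the two components given by the mountain-pass splitting, and $\psib$ pointing into $\Omega_A$ is available by symmetry $\psib\mapsto-\psib$. Once inside $\Omega$, the orbit converges to $(\Ab_T,\zerob)$, which yields the heteroclinic connection.

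\textbf{Main obstacle.} Step 2, especially boundedness of $\Omega_A$ together with $\Ab_\ell\in\partial\Omega_A$, is where I expect the main difficulty. It requires a global understanding of the quartic $V$, and the explicit critical-value comparison giving $\mu_1$ is the delicate algebra. If the minimax argument is awkward, I would instead verify directly, using $K_0,K_2<0$, that on the sphere $|\Ab|=r$ the minimum of $V$ increases up to the critical level $E_\ell$.
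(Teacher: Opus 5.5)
Your architecture is the paper's: energy comparison at the equilibria, the component of $\{\Hcal<E_\ell\}$ containing the origin as trapping region, boundedness by a mountain-pass contradiction, convergence to $(\Ab_T,\zerob)$ via the Lyapunov function, and an unstable direction of $(\Ab_\ell,\zerob)$ entering $\Omega$.

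\textbf{The gap.} The step that does not go through as written is ``the same argument shows $\Ab_\ell\in\partial\Omega_A$'', and your Step~3 depends on it entirely. The mountain-pass contradiction only uses that there is no critical value in $(0,E_\ell)$. It gives that $\Omega_A$ is bounded and free of non-trivial critical points, i.e.\ that the minimax level is $\ge E_\ell$. It says nothing about where the critical point at level $E_\ell$ lies. In general the lowest non-trivial critical point can sit away from $\overline{\Omega_A}$ (for instance a local minimum of $V$, or a saddle between other valleys), with the basin of $0$ bounded by a regular level surface and the minimax level strictly above $E_\ell$. Likewise, ``$E_\ell$ equals the minimax level'' needs the reverse inequality, i.e.\ an explicit path from $0$ to infinity along which $\max V=E_\ell$.

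\textbf{The fix.} The paper obtains this from the ray through $\Ab_\ell$, and you can do the same with your Step~1 formulas. For rolls, $V(s\Ab_R)=\frac{\mu_0^2}{4|K_0|}(2s^2-s^4)$. For $\Ab_{H_-}=(h,h,h)$,
\[
\frac{d}{ds}V(s\Ab_{H_-}) = 3sh^2\bigl(\mu_0+\beta_2 sh+(K_0+2K_2)s^2h^2\bigr),
\]
and the bracket, as a quadratic in $t=sh$, has exactly the roots $h_-<0<h_+$ and is positive between them. Hence $s\mapsto V(s\Ab_\ell)$ increases strictly on $(0,1)$ and decreases to $-\infty$ on $(1,\infty)$. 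This gives $\{s\Ab_\ell:0\le s<1\}\subset\Omega_A$, so $\Ab_\ell\in\partial\Omega_A$. It also identifies the minimax level with $E_\ell$, and tells you which local piece of $\{V<E_\ell\}$ at $\Ab_\ell$ belongs to $\Omega_A$; your choice between $\pm\psib$ needs exactly this.

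\textbf{Step~3 as an alternative route.} With that inserted, your Step~3 is a valid alternative to the paper's. The paper expands $s\mapsto\Hcal(s((\Ab_\ell,\zerob)+\eps\vb))$ to show that, for one sign of $\vb$, the whole segment from the origin to $(\Ab_\ell,\zerob)+\eps\vb$ stays below $E_\ell$. You instead use $\psib^*\LinGL(\Ab_\ell)\psib<0$ and the local Morse picture, together with $\Omega=\{(\Ab,\Bb):\Ab\in\Omega_A,\ \Hcal(\Ab,\Bb)<E_\ell\}$, which holds because the $\Bb$-fibres are balls. Your version is more geometric, but the ``exactly two local components'' claim needs $\Ab_\ell$ to be a nondegenerate saddle of Morse index exactly one. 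Verify this from the eigenvalue formulas rather than from ``mountain-pass type''. It does hold in the stated parameter ranges; if the index were larger, the local sublevel set would be connected and no branch choice would be needed. The paper's segment argument sidesteps the index question.

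\textbf{Minor points.}
\begin{itemize}
\item Your eigenvalue count is garbled. By \Cref{lem:characterisation-spat-dyn-stable-vs-pde-stable}, the number of unstable eigenvalues of $\LinSD(\Ab_\ell)$ equals the number of \emph{negative} eigenvalues of $\LinGL(\Ab_\ell)$, here one, not $3-n$. This gives a one-dimensional unstable manifold with two branches, which is what you actually use.
\item Your justification that all non-trivial critical values are positive is heuristic. It follows from the explicit energy comparison, not from a general mountain-pass principle.
\end{itemize}
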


\begin{figure}[ht]
    \centering
    \begin{subfigure}[b]{0.99\textwidth}
        \includegraphics[width=\linewidth]{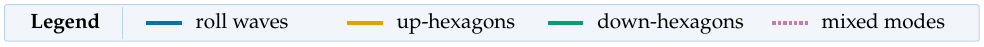}
    \end{subfigure}
    \vspace{0.1cm}
    
    \begin{subfigure}[b]{0.45\textwidth}
        \includegraphics[width=\linewidth]{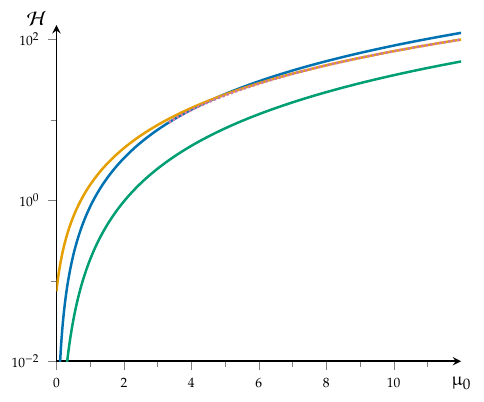}
        \subcaption{}\label{subfig:energy-a}
    \end{subfigure}
    \hfill
    \begin{subfigure}[b]{0.45\textwidth}
        \includegraphics[width=\linewidth]{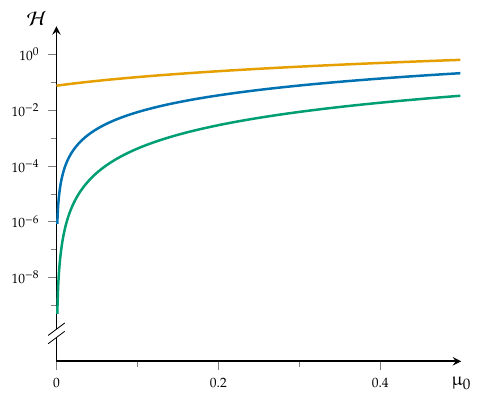}
        \subcaption{}\label{subfig:energy-b}
    \end{subfigure}   
    
    \begin{subfigure}[b]{0.45\textwidth}
        \includegraphics[width=\linewidth]{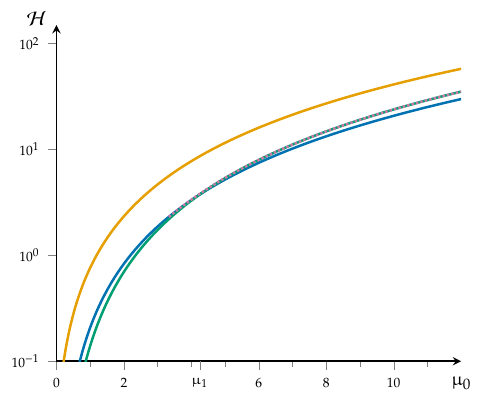}
        \subcaption{}\label{subfig:energy-c}
    \end{subfigure}
    \hfill
    \begin{subfigure}[b]{0.45\textwidth}
        \includegraphics[width=\linewidth]{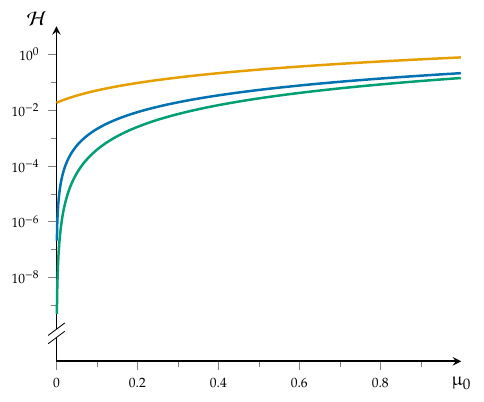}
        \subcaption{}\label{subfig:energy-d}
    \end{subfigure}
    
    \caption{Logarithmic plot of energy levels for different parameter regimes. 
        Panels~\sref{subfig:energy-a} and~\sref{subfig:energy-b} correspond to 
        the case $K_2 < K_0 < 0$, with $K_0 = -0.3$ and $K_2 = -0.6$. 
        Panels~\sref{subfig:energy-c} and~\sref{subfig:energy-d} correspond to 
        the case $K_0 < K_2 < -\tfrac{K_0}{2}$, with $K_0 = -1.2$ and $K_2 = -0.6$.}
    \label{fig:energy-levels}
\end{figure}

\begin{proof}
    For $\beta_2 = 0$, the energies can be computed explicitly as
    \begin{equation*}
        \Hcal(\Ab_R,\zerob) = -\dfrac{\mu_0^2}{4 K_0}, \quad \Hcal(\Ab_{H_\pm},\zerob) = - \dfrac{3 \mu_0^2}{4 (K_0 + 2 K_2)}, \quad \Hcal(\Ab_{\MM},\zerob) = - \dfrac{\mu_0^2}{2 (K_0 + K_2)}
    \end{equation*}
    which implies that down-hexagons have the lowest energy if $K_2 < K_0$, whereas roll waves have the lowest energy if $K_2 \in (K_0,0)$.
    
    We now establish the energy hierarchy for $\beta_2 > 0$, see \Cref{fig:energy-levels}. We first check that for $K_2 < K_0$, the down-hexagons $\Ab_{H_-}$ are the non-trivial equilibrium with the lowest energy. Then, we may compute that
    \begin{equation*}
        \Hcal(\Ab_{H_-},\zerob)\sim \frac{\mu_0^3}{2\beta_2^2} \quad \text{and} \quad  \Hcal(\Ab_{R},\zerob)\sim -\frac{\mu_0^2}{4K_0} \quad \text{as } \mu_0 \searrow 0.
    \end{equation*}
    In addition, we have
    \begin{equation*}
        \partial_{\mu_0}\Hcal(\Ab_{H_-},\zerob) = -\frac{3}{2(K_0+2K_2)} \mu_0 - \frac{3 (-\beta_2^2 + \sqrt{\beta_2^4-4\beta_2^2(K_0+2K_2) \mu_0})}{4 (K_0+2 K_2)^2} \quad \text{and} \quad  \partial_{\mu_0}\Hcal(\Ab_{R},\zerob) = -\frac{\mu_0}{2K_0}.
    \end{equation*}
    We note that $\partial_{\mu_0}\Hcal(\Ab_{H_-},\zerob)$ is strictly convex and $\tfrac{\partial_{\mu_0}\Hcal(\Ab_{H_-},\zerob) }{\partial_{\mu_0}\Hcal(\Ab_{R},\zerob)}$ is asymptotic to $\tfrac{3K_0}{K_0+2K_2}$ as $\mu_0 \to + \infty$. Hence, if $K_2 < K_0$, we have $\partial_{\mu_0}\Hcal(\Ab_{H_-},\zerob) < \partial_{\mu_0}\Hcal(\Ab_{R},\zerob)$ and so the down-hexagon energy never exceeds the roll-wave energy. If $0 > K_2 > K_0$, then $\partial_{\mu_0}\Hcal(\Ab_{H_-},\zerob) - \partial_{\mu_0}\Hcal(\Ab_{R},\zerob)$ has exactly one root and due to the asymptotic behaviour there is exactly one $\mu_1 >0$ such that $\Hcal(\Ab_{H_-},\zerob)|_{\mu_0=\mu_1} = \Hcal(\Ab_{R},\zerob)|_{\mu_0=\mu_1}$. $\mu_1$ can be explicitly calculated and is given by \eqref{eq:mu-one}.

    A direct computation shows that for any $K_2 < 0$, it holds
    \begin{equation*}
        \Hcal(\Ab_{H_+},\zerob) - \Hcal(\Ab_{H_-},\zerob) =-\frac{\beta_2 \left(\beta_2^2-4 \mu_0 (K_0+2
   K_2)\right)^{3/2}}{4 (K_0+2 K_2)^3} > 0.
    \end{equation*}

    For the mixed modes, we may also compute
    \begin{equation*}
        \partial_{\mu_0}\Hcal(\Ab_{\MM},\zerob) = - \frac{1}{K_0+K_2} \mu_0 - \frac{\beta_2^2}{2(K_0-K_2)(K_0+K_2)} ,\quad \mu_0 > -\frac{K_0\beta_2^2}{(K_0-K_2)^2}.
    \end{equation*}
    If $0>K_2 > K_0$, it immediately follows that $\partial_{\mu_0}\Hcal(\Ab_{\MM},\zerob) > \partial_{\mu_0}\Hcal(\Ab_{R},\zerob)$ and hence $\Hcal(\Ab_{\MM},\zerob) > \Hcal(\Ab_{R},\zerob) > \Hcal(\Ab_{H_-},\zerob)$ for $\mu_0 > -\frac{K_0\beta_2^2}{(K_0-K_2)^2}$. If $K_2 < K_0$, we find
    \begin{equation*}
        \partial_{\mu_0}\Hcal(\Ab_{\MM},\zerob) - \partial_{\mu_0}\Hcal(\Ab_{H_-},\zerob) \geq \frac{K_0-K_2}{2(K_0+2K_2)(K_0+K_2)} \mu_0 > 0
    \end{equation*}
    and since $\Hcal(\Ab_{\MM},\zerob)|_{\mu = \mu_{\MM}} = \Hcal(\Ab_{R},\zerob)|_{\mu = \mu_{\MM}} > \Hcal(\Ab_{H_-},\zerob)|_{\mu = \mu_{\MM}} $, we find that $\Hcal(\Ab_{\MM},\zerob)>\Hcal(\Ab_{H_-},\zerob)$.

    We now again denote the non-trivial equilibrium with the smallest energy by $(\Ab_\ell,\zerob)$. Since $\Hcal(\Ab_T,\zerob) = 0$ and $\Hcal(\Ab_\ell,\zerob) > 0$ as shown, $(\Ab_T,\zerob)$ is in the sublevel set $\{(\Ab,\Bb) \in \R^6 \,:\, \Hcal(\Ab,\Bb) < \Hcal(\Ab_\ell,\zerob)$. Therefore, there is a connected component of the sublevel set which contains $(\Ab_T,\zerob)$. As noted above, we denote this component as $\Omega$. Using that $\Hcal$ is strictly decreasing along non-stationary orbits, we find that $\Omega$ is forward invariant under the dynamics of \eqref{eq:leading-order-reduced-equations}. 

    Next, we show that $\Omega$ is bounded. To show this, we argue by contradiction and assume that it is unbounded. Then, there exists a path $(\tilde{\Ab}(s),\tilde{\Bb}(s))_{s \geq 0}$ with $\|(\tilde{\Ab}(s),\tilde{\Bb}(s))\| \to \infty$ as $s \to \infty$ such that $\Hcal(\tilde{\Ab}(s),\tilde{\Bb}(s)) < \Hcal(\Ab_\ell,\zerob)$ for all $s \geq 0$ and $(\tilde{\Ab}(0),\tilde{\Bb}(0)) = (\Ab_T,\zerob)$. Since $\Hcal(\Ab,\Bb) \geq \Hcal(\Ab,\zerob)$, the projected path $(\tilde{\Ab}(s),\zerob)_{s \geq 0}$ also satisfies $\Hcal(\tilde{\Ab}(s),\zerob) < \Hcal(\Ab_\ell,\zerob)$ and $\|\Ab(s)\| \to \infty$ as $s \to \infty$. The latter follows by contradiction. Assume that $\|\tilde{\Ab}(s)\|$ is uniformly bounded for $s \geq 0$, then $\|\tilde{\Bb}(s)\| \to \infty$ and hence, $\Hcal(\tilde{\Ab}(s),\tilde{\Bb}(s)) \to +\infty$ as $s \to \infty$. This contradicts the assumption that $\Hcal(\tilde{\Ab}(s),\tilde{\Bb(s)}) < \Hcal(\Ab_\ell,\zerob)$. We now show that there exists a critical point $(\Ab,\zerob) \neq \zerob$ with $\Hcal(\Ab,\zerob) < \Hcal(\Ab_\ell,\zerob)$, which contradicts the assumption that $(\Ab_\ell,\zerob)$ is the non-trivial equilibrium with the lowest energy. To obtain the existence of this critical point, we note that $\Ab \mapsto \Hcal(\Ab,\zerob)$ satisfies the assumptions of the Mountain Pass theorem, cf.~\cite[Thm.~2.2]{rabinowitz1986-07book}. Specifically, the Palais–Smale condition is satisfied since $\Hcal(\Ab,\zerob) \to -\infty$ as $\|\Ab\| \to \infty$ due to $K_0 < 0$ and $K_2 < 0$. Hence, a critical value of $\Ab \mapsto \Hcal(\Ab,\zerob)$ is given by the infimum of the maximal values along all paths connecting $\Ab = \zerob$ to $\Ab = \tilde{\Ab}(\tilde{s})$ for $\tilde{s}$ sufficiently large with $\Hcal(\tilde{\Ab}(\tilde{s}),\zerob) <0$. This infimum is, in particular, bounded above by $\max_{s\in [0,\tilde{s}]} \Hcal(\tilde{\Ab}(s),\zerob) < \Hcal(\Ab_\ell,\zerob)$, the desired contradiction.
    
    Using this, we now show that $\Omega$ is a subset of the stable manifold of $(\Ab_T,\zerob)$. For this, we need to show that the $\omega$-limit set of every point in $\Omega$ consists of $(\Ab_T,\zerob)$ only. We note that $(\Ab_T,\zerob)$ is the only equilibrium in $\Omega$ as $(\Ab_\ell,\zerob)$ is the non-trivial equilibrium with the lowest energy by assumption. In addition, $(\Ab_T,\zerob)$ is a minimum of $\Hcal$ since $\mu_0 > 0$. Since the Lyapunov function is strictly decreasing along orbits, there are no periodic orbits and thus, any solution with initial value in $\Omega$ converges to $(\Ab_T,\zerob)$ since $\Omega$ is bounded.

    As outlined above, it now remains to show that the unstable manifold of $(\Ab_\ell,\zerob)$ intersects with $\Omega$. For this, we argue that $(\Ab_\ell,\zerob) \in \partial\Omega$, which follows from the fact that $s \mapsto \Hcal(s \Ab_\ell,\zerob)$ is strictly increasing for $s \in (0,1)$ since $\Omega$ contains no equilibria except the trivial one. This yields that the line $\{(s \Ab_\ell,\zerob) \,:\, s \in (0,1)\}$ is fully contained in $\Omega$. It is therefore sufficient to show that the unstable eigenspace of the linearisation $\LinSD(\Ab_\ell)$ of \eqref{eq:leading-order-reduced-equations} about $(\Ab_\ell)$ intersects with $\Omega$.

    Let $\vb \in \R^6$ be an element of the unstable eigenspace. Since the unstable manifold is tangential to the unstable eigenspace and $\Hcal$ is a strictly decreasing Lyapunov function, it holds that $\Hcal((\Ab_\ell,\zerob) + t \vb) < \Hcal(\Ab_\ell,\zerob)$ for all $|t| \ll 1$. Now, we consider the function
    \begin{equation*}
        s \mapsto \omega_{\eps,\vb}(s):=\Hcal(s((\Ab_\ell,\zerob) + \eps \vb)).
    \end{equation*}
    We want to show that there exists $\eps>0$ such that $\max_{s\in[0,1]}\omega_{\eps,\vb}(s) < \Hcal((\Ab_{\ell},\zerob))$ holds true either for $\vb$ or $-\vb$. This ensures that $(\Ab_\ell,\zerob) + \eps \vb$ lies in the correct component of the sublevel set, i.e. $(\Ab_\ell,\zerob) + \eps \vb \in \Omega$. Let $s^*(\eps)\in [0,1]$ be the maximal value, and we may assume that $s^*(\eps)<1$ since otherwise there is nothing to prove. Since $(\Ab_\ell,\zerob) + \eps \vb \to (\Ab_\ell,\zerob)$ for $\eps \to 0$, we obtain that $s^*(\eps) \to 1$. Note that $s^*$ is smooth in a neighbourhood of $\eps=0$ due to the implicit function theorem. We thus make the $\eps$-expansion about $\eps=0$ given by $s^*(\eps) = 1 + \eps s_1 + \Ocal(\eps^2)$ for some $s_1\leq 0$. Inserting this expansion into $\omega_{\eps,\vb}$ and expanding again, we obtain
    \begin{equation*}
        \omega_{\eps,\vb}(s^*(\eps)) = \Hcal((\Ab_{\ell},\zerob)) + \frac{\eps^2}{2}\bigl(s_1^2 \kappa_1 + \kappa_2 + 2s_1(\Ab_{\ell},\zerob) D^2\Hcal((\Ab_{\ell},\zerob))\vb\bigr) + \Ocal(\eps^3), 
    \end{equation*}
    where $\kappa_1 := (\Ab_{\ell},\zerob) D^2\Hcal((\Ab_{\ell},\zerob)) (\Ab_{\ell},\zerob) < 0$ and $\kappa_2 = \vb D^2\Hcal((\Ab_{\ell},\zerob)) \vb < 0$. By potentially switching the sign of $\vb$, we can ensure that $s_1^2 \kappa_1 + \kappa_2 + 2s_1(\Ab_{\ell},\zerob) D^2\Hcal((\Ab_{\ell},\zerob))\vb <0$ and then choose $\eps$ sufficiently small to obtain $\omega_{\eps,\vb}(s^*(\eps)) < \Hcal((\Ab_{\ell},\zerob))$. Therefore, $(\Ab_\ell,\zerob) + \eps \vb \in \Omega$ for $\eps$ sufficiently small. Since the unstable manifold is tangential to the unstable direction $\vb$ this establishes an intersection of the unstable manifold of $(\Ab_\ell,\zerob)$ with $\Omega$, which completes the proof.
\end{proof}

\begin{remark}
    We point out that the proof of \Cref{thm:heteroclinic} in particular shows that $\Omega$ is uniformly bounded in $\d$.
\end{remark}

Before considering the persistence of the orbits established in \Cref{thm:heteroclinic}, we briefly discuss the existence of other heteroclinic orbits. A fully rigorous construction of such orbits is highly non-trivial due to the high-dimensional phase space and the lack of symmetries, which prevents a direct further reduction to smaller invariant subspaces, with the exception of the invariant subspace containing roll waves as the only non-trivial pattern. However, we conjecture, for example, the existence of heteroclinic orbits connecting the other non-trivial equilibria to the trivial one, as well as a heteroclinic orbit connecting up-hexagons to roll waves for $\mu_0 > 0$ before the bifurcation of the mixed modes from the roll waves. In the fast-front case treated in \cite{hilder2025-08JNonlinearSci}, such orbits can be found rigorously using phase plane analysis. Here, the former orbits can be found using a numerical shooting algorithm, see \Cref{fig:planar-front} for an invasion of up-hexagons into the trivial state and \Cref{subfig:mixed-modes-invasion,subfig:false-hexagons-invasion} for an invasion of mixed modes and false hexagons into the trivial state. The construction of the latter orbit is difficult even numerically, and we leave a detailed numerical analysis for future research. However, by performing systematic shooting in directions of the unstable manifold of the equilibrium corresponding to up-hexagons, we find orbits that connect to the trivial equilibrium but come close to the roll-wave equilibrium. The corresponding pattern interface is a two-stage invasion of up-hexagons into the trivial state with a large plateau of roll waves as an intermediate state, see \Cref{subfig:two-stage}. As discussed in \Cref{sec:front-speed}, this is a typical mechanism of how hexagons arise in experiments.

\begin{figure}[h]
    \centering
    \begin{subfigure}[b]{0.99\textwidth}
        \includegraphics[width=\linewidth]{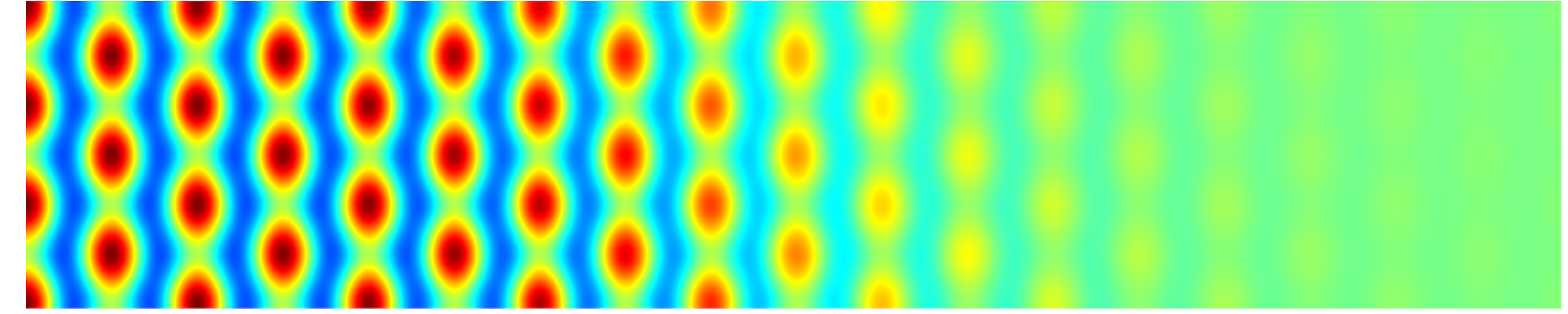}
        \subcaption{Pattern interface connecting mixed modes to the trivial state.}\label{subfig:mixed-modes-invasion}
    \end{subfigure}

    \vspace{0.2cm}

    \begin{subfigure}[b]{0.99\textwidth}
        \includegraphics[width=\linewidth]{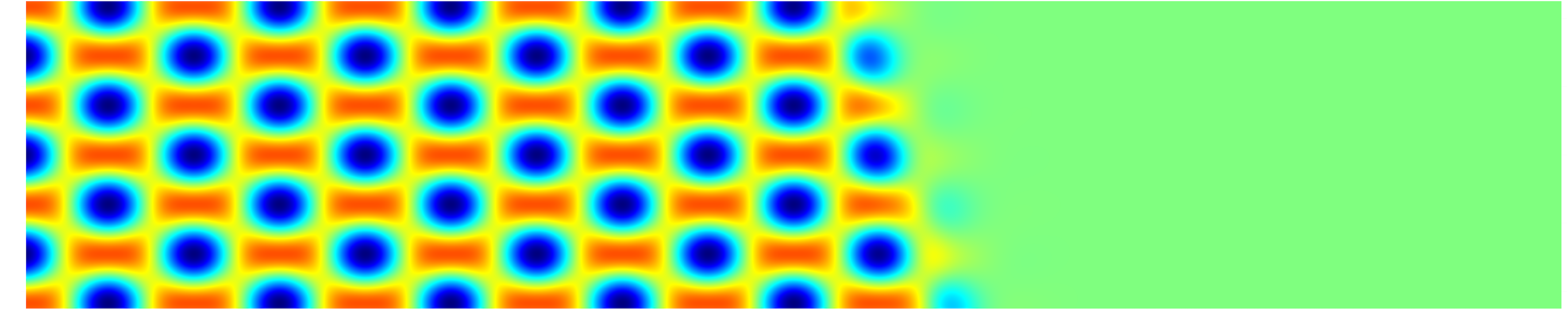}
        \subcaption{Pattern interface connecting false hexagons to the trivial state.}\label{subfig:false-hexagons-invasion}
    \end{subfigure}

    \vspace{0.2cm}
    
    \begin{subfigure}[b]{0.99\textwidth}
        \includegraphics[width=\linewidth]{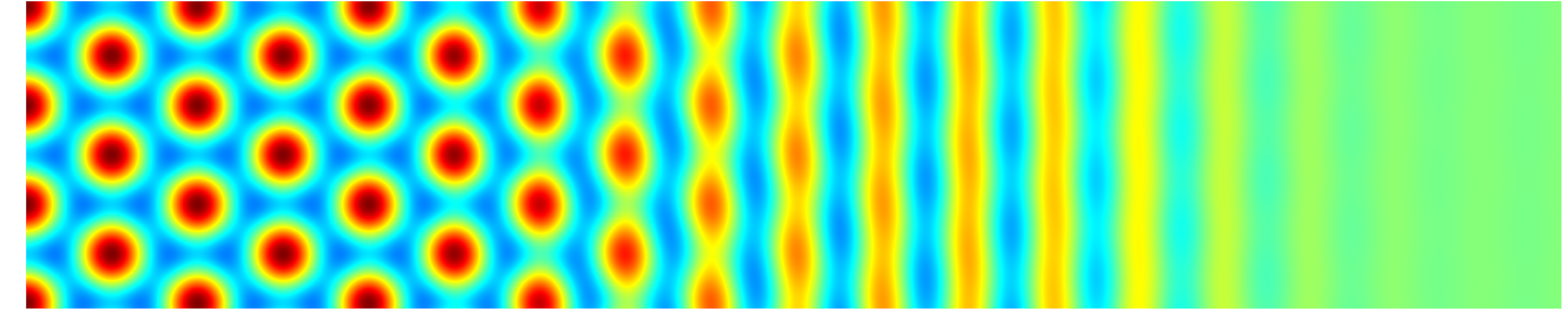}
        \subcaption{Two-stage invasion of up-hexagons into the trivial state via roll waves.}\label{subfig:two-stage}
    \end{subfigure}
    \caption{Example of a two-dimensional planar pattern interface corresponding to numerically obtained heteroclinic orbits.}
    \label{fig:planar-fronts-numerics}
\end{figure}

\subsection{Persistence of heteroclinic orbits}

We now discuss the persistence of heteroclinic orbits in the leading-order system \eqref{eq:leading-order-reduced-equations} as solutions to the full reduced equation on the centre manifold \eqref{eq:reduced-equations-explicit}. For this, we first note that the equilibria obtained in \Cref{cor:stationary-patterns} are hyperbolic outside of bifurcation points and therefore, as discussed in \cite{hilder2025-08JNonlinearSci}, the bifurcation diagram persists. Next, for $\mu_0 > 0$, we consider heteroclinic orbits in the leading-order system \eqref{eq:leading-order-reduced-equations}, which connect a non-trivial equilibrium $(\Ab_\ell,\zerob)$ to the trivial equilibrium $(\Ab_T,\zerob)$. For this, we note the heteroclinic orbit lies in both the stable manifold of $(\Ab_T,\zerob)$ and the unstable manifold of $(\Ab_\ell,\zerob)$ and thus, they intersect. Since the stable manifold of the trivial equilibrium $(\Ab_T,\zerob)$ is a six-dimensional manifold in $\R^6$, this intersection is transversal and persists under perturbation. This yields the following result.

\begin{theorem}\label{thm:persistence}
    Let $\mu_0$ and $\theta\in (0,\tfrac{\pi}{6})$ with $\cot(\theta) \in \sqrt{3}\Q$. Any heteroclinic orbit of the leading-order system \eqref{eq:leading-order-reduced-equations}, which connects to the trivial equilibrium, persists as a solution to the full reduced equation \eqref{eq:reduced-equations-explicit}. In particular, the heteroclinic orbits constructed in \Cref{thm:heteroclinic} persist.
\end{theorem}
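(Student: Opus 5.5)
The plan is to treat the full reduced equation \eqref{eq:reduced-equations-explicit}, restricted to the invariant real subspace, as a smooth $\Ocal(\eps)$-perturbation of the leading-order vector field \eqref{eq:leading-order-reduced-equations} on $\R^6$. The required closeness is uniform in $C^1$ on compact sets. It comes from \Cref{thm:centre-manifold}: the reduction map is smooth and symmetry-preserving, so the real subspace remains invariant. Let $(\Ab_\ell,\zerob)$ denote the source equilibrium of the given heteroclinic orbit $q_0$, where $\Ab_\ell$ is assumed hyperbolic as in the paper's setting. We also use that $\mu_0>0$ is needed for the target to attract, and we will impose this.

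\textbf{Step 1: persistence of both equilibria.} For $\mu_0>0$, $c_0>0$ and $\theta\in(0,\tfrac{\pi}{6})$, the trivial equilibrium $(\Ab_T,\zerob)$ is hyperbolic and asymptotically stable. The reason is that $\LinGL(\Ab_T)=\mu_0 I$, and \eqref{eq:eigenvalues-trivial-eq-reduced-equations} then gives six eigenvalues with negative real part, because $4(\d\cdot\k_j)^2>0$. By \Cref{rem:higher-order-terms}, the higher-order terms vanish at $\Ab=\zerob$, so $(\zerob,\zerob)$ remains an equilibrium for $\eps>0$, and its linearisation is an $\Ocal(\eps)$-perturbation. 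Hence it stays a hyperbolic sink with a six-dimensional stable manifold. For the source, the implicit function theorem gives a unique equilibrium $(\Ab_\ell^\eps,\zerob)+\Ocal(\eps)$. This equilibrium keeps the same unstable dimension, and its local unstable manifold $W^u_{\mathrm{loc},\eps}$ depends $C^1$-continuously on $\eps$.

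\textbf{Step 2: a uniform basin for the trivial sink.} Choose a quadratic Lyapunov function $Q(z)=z^T P z$ for the linearisation at the origin, where $P$ solves the Lyapunov equation. For $r>0$ small, the sublevel set $U_r=\{Q<r\}$ is then forward invariant with every orbit converging to the origin. This holds for \eqref{eq:leading-order-reduced-equations} and, since the perturbation is $C^1$-small, also for \eqref{eq:reduced-equations-explicit} for all $\eps$ small, with $r$ independent of $\eps$. Equivalently, $U_r$ lies in the stable manifold of the origin for each such system.

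\textbf{Step 3: transversality by shadowing.} Since $q_0$ converges to the origin, there is a time $\Xi_1$ with $q_0(\Xi)\in U_{r/2}$ for all $\Xi\ge \Xi_1$. Pick a point $p_0=q_0(\Xi_0)$ on $q_0$ inside $W^u_{\mathrm{loc},0}$. By continuity of $W^u_{\mathrm{loc},\eps}$, there is a point $p_\eps\in W^u_{\mathrm{loc},\eps}$ with $p_\eps\to p_0$. Continuous dependence of solutions on initial data and on the vector field over the compact interval $[\Xi_0,\Xi_1]$ shows that the perturbed orbit through $p_\eps$ reaches $U_r$ at time $\Xi_1$. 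By Step 2 it then converges to the origin. As $\Xi\to-\infty$ it converges to $(\Ab_\ell^\eps,\zerob)$, because it lies on the unstable manifold. This orbit is the persistent heteroclinic.

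This is exactly the transversality noted before the theorem: the stable manifold of the sink is open in $\R^6$, so any intersection with $W^u$ is automatically transverse. The argument applies verbatim to the orbits of \Cref{thm:heteroclinic}. The main obstacle is Step 1's claim that the $\Ocal(\eps)$ remainder is $C^1$-small uniformly on a compact neighbourhood containing $q_0$ and $\Omega$. This requires that the $\Ocal(\eps)$ terms in \eqref{eq:reduced-equations-explicit} are controlled with derivatives. I would obtain this from the smoothness of $\Psi^\eps$ in the rescaled variables given by the centre manifold construction, together with the boundedness of $\Omega$, which is uniform in $\d$ by the remark after \Cref{thm:heteroclinic}.
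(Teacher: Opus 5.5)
You follow the paper's argument: the stable manifold of the trivial sink is open, so any intersection with the unstable manifold of the source is automatically transverse. Your Steps 2--3 (an $\eps$-uniform basin from a quadratic Lyapunov function, then finite-time continuous dependence from a point of $W^u_{\mathrm{loc}}$) are a sound way to make ``persists under perturbation'' precise, and adding $\mu_0>0$ and hyperbolicity of the source is correct.

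\textbf{The gap is your opening reduction.} The real subspace $\{\Ab,\Bb\in\R^3\}$ is \emph{not} invariant for \eqref{eq:reduced-equations-explicit} when $\eps>0$, and no symmetry can give this. The map $\p\mapsto-\p$ at fixed $\xi$ is not a symmetry of the travelling-wave problem, because $(\d\partial_\xi+\nabla_\p)\cdot(\d\partial_\xi+\nabla_\p)$ contains the cross term $2\partial_\xi(\d\cdot\nabla_\p)$. Also, for $\theta\in(0,\tfrac{\pi}{6})$ the reflection fixing $\d$ does not preserve $\Gamma$.

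Concretely, $L^\eps(\eps\d\partial_\Xi+i\k_j)$ contains $-4i\eps^3(\d\cdot\k_j)\partial_\Xi^3$, which gives an imaginary $\Ocal(\eps)$ term in the $A_j$-equation. In coordinate-free form, the reduced linearisation at mode $\k_j$ has eigenvalues $\lambda^\eps_\pm(\k_j)/\eps=\nu_\pm+\eps\nu_{1,\pm}+\Ocal(\eps^2)$ with $(8(\d\cdot\k_j)^2\nu_\pm+c_0)\,\nu_{1,\pm}=4i(\d\cdot\k_j)\nu_\pm^3$. For real $\nu_\pm$, for instance, $\nu_{1,\pm}\in i\R\setminus\{0\}$, so this pair is neither real nor complex-conjugate. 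Hence no totally real subspace of that block is invariant, and in particular $\{A_j,B_j\in\R\}$ is not. So $\R^6$ is invariant only for \eqref{eq:leading-order-reduced-equations}. The paper's one-line $\R^6$ argument tacitly makes the same restriction.

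\textbf{The fix.} The conclusion survives if you run the argument in the full phase space $\C^6\cong\R^{12}$, but Step 1 has to change.
\begin{itemize}
\item Steps 2 and 3 go through verbatim. The origin is still a hyperbolic sink there, since $\Re\lambda^\eps_\pm(\k_j)<0$ for all $j$ when $\mu_0,c_0>0$.
\item The source is no longer hyperbolic. Translation invariance, which the reduction preserves (\Cref{thm:centre-manifold}), places it on a circle (rolls) or $2$-torus (hexagons) of equilibria under $A_j\mapsto e^{i\k_j\cdot\mathbf{s}}A_j$.
\item Persist it as a stationary pattern instead. For the stationary problem $\p\mapsto-\p$ \emph{is} a symmetry, and the implicit function theorem applies in the real fixed-point space because $\LinGL(\Ab_\ell)$ is invertible.
\item Replace $W^u_{\mathrm{loc},\eps}$ by the strong unstable manifold of the perturbed equilibrium, or by the unstable manifold of its normally hyperbolic group orbit. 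This still depends continuously on $\eps$ and contains $q_0$, because $q_0$ converges to $(\Ab_\ell,\zerob)$ exponentially.
\end{itemize}
The persisting heteroclinic is then, in general, genuinely complex-valued rather than real.
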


We point out that the persistence of other heteroclinic orbits, which do not connect to the trivial state, is less obvious. For example, we consider a potential connection between up-hexagons and roll waves for $\mu_0 > 0$ before the bifurcation of mixed modes from the roll waves. Recalling the dimensions of the respective stable and unstable manifolds depicted in \Cref{fig:bifurcation-diagrams-hex}, we find that, in this regime, the unstable manifold of the up-hexagons is three-dimensional and the stable manifold of the roll waves is four-dimensional (since the equilibria are hyperbolic). Therefore, we must show that the intersection of a three-dimensional manifold with a four-dimensional manifold persists in a six-dimensional phase space. Counting dimensions, the intersections between these manifolds will generically be transversal and thus persistent. However, to the best of our knowledge, there is no general result that excludes the occurrence of degenerate situations in which the intersection is not transversal. Therefore, the persistence of such orbits, even if they can be constructed in the leading-order system, remains an open question.

\subsection{The singular limits $\theta \to \tfrac{\pi}{6}$ and $c_0 \to \infty$}\label{sec:fast-slow-results}

We now study the behaviour of the reduced system \eqref{eq:reduced-equations-explicit} when $\theta \to \tfrac{\pi}{6}$. For this, we define the small parameter $\delta := 4 (\d \cdot \k_2)^2$. Then, we can write the reduced equations \eqref{eq:reduced-equations-explicit} for $\theta < \tfrac{\pi}{6}$ as
\begin{equation}\label{eq:reduced-equations-fast-slow}
    \begin{split}
        \partial_\Xi A_1 &=  B_1, \\
        \partial_\Xi B_1 &= -\dfrac{\mu_0}{4 (\d \cdot \k_1)^2} A_1 - \dfrac{c_0}{4(\d \cdot \k_1)^2}B_1 - \dfrac{1}{4(\d \cdot \k_1)^2} (\beta_2\bar{A}_2\bar{A}_3 + (K_0|A_1|^2 + K_2(|A_2|^2 + |A_3|^2)) A_1) + \Ocal(\eps),\\
        \partial_\Xi A_2 &=  B_2, \\
        \delta \partial_\Xi B_2 &= -\mu_0A_2 - c_0 B_2 - \beta_2\bar{A}_1\bar{A}_3 - (K_0|A_2|^2 + K_2(|A_1|^2 + |A_3|^2)) A_2 + \Ocal(\eps),\\
        \partial_\Xi A_3 &=  B_3, \\
        \partial_\Xi B_3 &= -\dfrac{\mu_0}{4 (\d \cdot \k_3)^2} A_3 - \dfrac{c_0}{4(\d \cdot \k_3)^2}B_3 - \dfrac{1}{4(\d \cdot \k_3)^2} (\beta_2\bar{A}_1\bar{A}_2 + (K_0|A_3|^2 + K_2(|A_1|^2 + |A_2|^2)) A_3) + \Ocal(\eps),\\
    \end{split}
\end{equation}
which can be written in the abstract form
\begin{equation}\label{eq:abstract-fast-slow-system}
    \begin{split}
        \delta \partial_\Xi B_2 & = F(B_2,Y;\delta), \\
        \partial_\Xi Y & = G(B_2,Y;\delta),
    \end{split}
\end{equation}
with $Y = (A_1,B_1,A_2,A_3,B_3)$, which is a fast-slow system, see e.g.~\cite{kuehn2015book}. We then define the critical manifold $C_0$, given by
\begin{equation*}
    C_0 := \{(B_2,Y) \in \R^6 \,:\, F(B_2,Y;0) = 0\}
\end{equation*}
which is obtained by setting $\delta = 0$ in \eqref{eq:abstract-fast-slow-system}. The flow on $C_0$ is obtained by solving $F(B_2,Y;0)$ for $B_2$ and substituting this result into $\partial_\Xi A_2 = B_2$. The resulting system then reads as
\begin{equation}\label{eq:reduced-equations-slow-subsystem}
    \begin{split}
        \partial_\Xi A_1 &=  B_1, \\
        \partial_\Xi B_1 &= -\dfrac{\mu_0}{4 (\d \cdot \k_1)^2} A_1 - \dfrac{c_0}{4(\d \cdot \k_1)^2}B_1 - \dfrac{1}{4(\d \cdot \k_1)^2} (\beta_2\bar{A}_2\bar{A}_3 + (K_0|A_1|^2 + K_2(|A_2|^2 + |A_3|^2)) A_1) + \Ocal(\eps),\\
        c_0 \partial_\Xi A_2 &= -\mu_0A_2 - \beta_2\bar{A}_1\bar{A}_3 - (K_0|A_2|^2 + K_2(|A_1|^2 + |A_3|^2)) A_2 + \Ocal(\eps),\\
        \partial_\Xi A_3 &=  B_3, \\
        \partial_\Xi B_3 &= -\dfrac{\mu_0}{4 (\d \cdot \k_3)^2} A_3 - \dfrac{c_0}{4(\d \cdot \k_3)^2}B_3 - \dfrac{1}{4(\d \cdot \k_3)^2} (\beta_2\bar{A}_1\bar{A}_2 + (K_0|A_3|^2 + K_2(|A_1|^2 + |A_2|^2)) A_3) + \Ocal(\eps),\\
    \end{split}
\end{equation}
which is equivalent to the reduced equation \eqref{eq:reduced-equations-pi-over-six-second-order} for $\theta = \tfrac{\pi}{6}$.

Let $(\Ab_{\mathrm{het},\delta}, \Bb_{\mathrm{het},\delta})$ a uniformly bounded sequence of heteroclinic orbits with $\delta$ such that the corresponding angle $\theta$ satisfies $\cot(\theta) \in \sqrt{3} \Q$. Then, our goal is to show that this sequence converges to a heteroclinic orbit $(\Ab_{\mathrm{het},0}, \Bb_{\mathrm{het},0})$ solving \eqref{eq:reduced-equations-pi-over-six-second-order}. Fix any such sequence and let $\Scal_0$ be a compact submanifold of $C_0$ which is sufficiently large to contain $Y_\delta$ for $\delta > 0$. We now check that $\Scal_0$ is normally hyperbolic since $D_{B_2} F(B_2,Y;0) = -c_0 + \Ocal(\eps) \neq 0$ for fixed $c_0 > 0$ and $\eps$ sufficiently small. Therefore, standard Fenichel theory applies, see e.g.~\cite[Thm.~3.1.4]{kuehn2015book}, and we obtain a sequence of slow manifolds $\Scal_\delta$ with the following properties. For every $\delta > 0$, $\Scal_\delta$ is a locally invariant manifold of \eqref{eq:reduced-equations-fast-slow}, and it is diffeomorphic to $\Scal_0$. Moreover, the flow on $\Scal_\delta$ converges to the slow flow generated by \eqref{eq:reduced-equations-slow-subsystem}.

Finally, we note that any solution $\Xi \mapsto (\Ab_\delta(\Xi),\Bb_\delta(\Xi))$ to \eqref{eq:reduced-equations-fast-slow} that is uniformly bounded in $\Xi \in \R$ is also contained in $\Scal_\delta$, after potentially choosing a larger compact submanifold of $C_0$. For this, we first note that slow manifolds are centre manifolds of the fast system, i.e.~\eqref{eq:reduced-equations-fast-slow} after a change of variables to the fast time $\tilde{\Xi} = \delta^{-1} \Xi$, extended with $\partial_{\tilde{\Xi}} \delta = 0$. Then, we can lift the typical observation that solutions that are globally contained in a small neighbourhood lie on the centre manifold, see e.g.~\cite{haragus2011book} to slow manifolds by using a finite covering exploiting that $S_0$ is a compact submanifold. Note that, although centre manifolds (or slow manifolds) are not unique, bounded solutions are contained in all centre manifolds, see e.g.~\cite{sijbrand1985TransAmerMathSoc}, and therefore, we write `the' centre manifold. In particular, this shows that equilibria and heteroclinic orbits of \eqref{eq:reduced-equations-fast-slow} are contained in $\Scal_\delta$.

Since the heteroclinic orbits from \Cref{thm:heteroclinic} are uniformly bounded in $\theta$ and persistent by \Cref{thm:persistence}, we obtain the following result for $\theta = \tfrac{\pi}{6}$.

\begin{theorem}\label{thm:heteroclinic-pi-over-six}
    Let $c_0 > 0$. There exist heteroclinic orbits in \eqref{eq:reduced-equations-slow-subsystem}, or equivalently in \eqref{eq:reduced-equations-pi-over-six-second-order}, with the same properties as in \Cref{thm:heteroclinic}.
\end{theorem}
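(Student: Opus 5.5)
We would prove \Cref{thm:heteroclinic-pi-over-six} by a limiting argument in the fast-slow framework just set up, rather than by a direct Lyapunov construction for the degenerate five-dimensional system. The limit is taken along a sequence of admissible angles.

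\textbf{Choice of sequence and orbits.} Choose $\theta_n \nearrow \tfrac{\pi}{6}$ with $\cot(\theta_n) \in \sqrt{3}\Q$, which is possible since $\sqrt{3}\Q$ is dense. Set $\delta_n = 4(\d_n\cdot\k_2)^2 \to 0$. For each $n$, \Cref{thm:heteroclinic} together with \Cref{thm:persistence} gives a heteroclinic orbit from $(\Ab_\ell,\zerob)$ to $(\Ab_T,\zerob)$ in \eqref{eq:reduced-equations-fast-slow}. Here $\Ab_\ell$ is $\Ab_{H_-}$ or $\Ab_R$, according to the same parameter regimes. The equilibria and the energy hierarchy do not depend on $\d$, so the regime is the same for all $n$. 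By the remark after \Cref{thm:heteroclinic}, the trapping region $\Omega$ is bounded uniformly in $\d$. Hence the orbits lie in a fixed compact set.

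\textbf{Reduction to the slow manifold.} Fix a compact piece $\Scal_0$ of $C_0$ that contains the $Y$-projections of all these orbits. By Fenichel theory, each orbit lies on the slow manifold $\Scal_{\delta_n}$, and on it the dynamics is an $\Ocal(\delta_n)$-perturbation of the slow flow \eqref{eq:reduced-equations-slow-subsystem}. Write the orbits in the slow coordinates $Y_n(\Xi)$ and normalise the phase, for example by fixing a crossing of a small sphere around $(\Ab_T,\zerob)$. Uniform $C^1$ bounds and Arzel\`a–Ascoli then give a subsequence converging locally uniformly to a bounded solution $Y_0$ of the slow subsystem. This $Y_0$ is equivalent to a solution of \eqref{eq:reduced-equations-pi-over-six-second-order}.

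\textbf{Main obstacle: identifying the limit.} The hard step is showing that $Y_0$ is a genuine heteroclinic orbit from $\Ab_\ell$ to $\Ab_T$, and that it neither degenerates to an equilibrium nor breaks into a chain. I would use the Lyapunov function. Its restriction to the slow flow, $\Hcal_0$ (the terms with $B_1$, $B_3$, but no $B_2$), decreases at rate $-c_0|\Bb|^2$ with $B_2 = \partial_\Xi A_2$. Along each $n$ the energy drops from $\Hcal(\Ab_\ell,\zerob)$ to $0$. The phase normalisation at $\Ab_T$ forces $\omega(Y_0)=\{\Ab_T\}$, because $\Ab_T$ is stable for the slow flow: its linearisation is given by \Cref{lem:characterisation-spat-dyn-stable-vs-pde-stable-pi-over-six} with $n=3$. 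The $\alpha$-limit set of $Y_0$ is an equilibrium with energy at most $\Hcal(\Ab_\ell,\zerob)$. It cannot be $\Ab_T$, because then $Y_0$ would be a homoclinic orbit along which $\Hcal_0$ strictly decreases back to the same value.

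If the energy hierarchy makes $\Ab_\ell$ the only nontrivial equilibrium at or below that level, this is enough. Otherwise I would instead redo the trapping-region argument of \Cref{thm:heteroclinic} directly for the slow flow. The sublevel component $\Omega$ (with $B_2$ removed) is bounded, forward invariant and contained in the basin of $\Ab_T$. Moreover, $\Ab_\ell$ lies on its boundary, and the unstable eigenspace of $\Ab_\ell$ for the slow flow enters $\Omega$ by the same second-order expansion as before. This alternative is in fact simpler and independent of the limit. The convergence argument then serves mainly to show that the constructed orbit is the limit of the $\theta_n$-orbits.
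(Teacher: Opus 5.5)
Your main line is the paper's argument: admissible $\theta_n\nearrow\tfrac{\pi}{6}$, orbits from \Cref{thm:heteroclinic,thm:persistence}, Fenichel slow manifolds $\Scal_{\delta_n}$, and passage to the slow flow. Your Lyapunov-based identification of the limit fills in a step the paper leaves implicit: phase normalisation near $(\Ab_T,\zerob)$, an $\alpha$-limit equilibrium at level at most $\Hcal(\Ab_\ell,\zerob)$, and exclusion of a homoclinic loop.

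The compactness step, however, fails as you justify it. $\Hcal$ contains $2(\d\cdot\k_2)^2B_2^2=\tfrac{\delta_n}{2}B_2^2$. So $\Omega$ contains $(\zerob,(0,B_2,0))$ for all $|B_2|<(2\Hcal(\Ab_\ell,\zerob)/\delta_n)^{1/2}$, and is \emph{not} bounded uniformly in $\d$. The remark after \Cref{thm:heteroclinic}, on which the paper's proof also rests, is only true for the $(\Ab,B_1,B_3)$-components. The missing bound has to come from the fast equation. Set $V(\Ab):=\Hcal(\Ab,\zerob)$. At leading order the fast equation reads $\delta_n\partial_\Xi B_2=-c_0B_2-\partial_{A_2}V(\Ab)$, which is affine in $B_2$. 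Hence every solution bounded on $\R$ satisfies
\[
B_2(\Xi)=-\frac{1}{\delta_n}\int_{-\infty}^{\Xi}e^{-c_0(\Xi-s)/\delta_n}\,\partial_{A_2}V(\Ab(s))\,\mathrm{d}s .
\]
This gives $|B_2|\le c_0^{-1}\sup_K|\partial_{A_2}V|$, where $K$ is the $\d$-independent component of $\{V<\Hcal(\Ab_\ell,\zerob)\}$ containing $\zerob$. One integration by parts then gives $B_2=-c_0^{-1}\partial_{A_2}V(\Ab)+\Ocal(\delta_n)$. This estimate is what you actually need for three steps:
\begin{itemize}
\item it places the orbits $\Ocal(\delta_n)$-close to $C_0$, so that ``bounded solutions lie on $\Scal_{\delta_n}$'' applies;
\item it gives the $C^1$ bound on $A_2$ needed for Arzel\`a--Ascoli;
\item it gives $\tfrac{\delta_n}{2}B_{2,n}^2\to0$ in your energy comparison.
\end{itemize}

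Two smaller points on the limit route. First, do the limit at $\eps=0$. The $\Ocal(\eps)$ remainders come from $\theta$-dependent centre manifolds whose validity range $\eps_0(\theta_n)$ is not uniform in $n$: the gap $r_2$ of \Cref{lem:polynom-roots-2} closes as the denominator of $\cot(\theta_n)/\sqrt3$ grows. So \Cref{thm:persistence} cannot be applied at a common $\eps$. At $\eps=0$, admissibility of $\theta_n$ is not even needed. Second, the ``if'' in your identification does hold, since equilibria and energies are $\d$-independent, but only up to symmetric copies of $\Ab_\ell$ (e.g.\ rolls along $\k_2$). Pinning $\alpha(Y_0)$ to $\Ab_\ell$ itself needs a second normalisation at exit from a neighbourhood of $(\Ab_\ell,\zerob)$, together with monotonicity of $\Hcal$ along each orbit.

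Your fallback is a genuinely different route, and it is correct and self-contained, so it proves the statement on its own. In the five-dimensional slow flow, $\Hcal_0=\tfrac32(B_1^2+B_3^2)+V(\Ab)$ satisfies $\partial_\Xi\Hcal_0=-c_0\bigl(B_1^2+(\partial_\Xi A_2)^2+B_3^2\bigr)$. The sublevel component is bounded precisely because the degenerate $B_2$-direction has been eliminated, and $\Ab_\ell\in\partial\Omega$ by the same segment argument. The Hessian of $\Hcal_0$ at $(\Ab_\ell,\zerob)$ has the same index as $D^2V(\Ab_\ell)=\LinGL(\Ab_\ell)$. \Cref{lem:characterisation-spat-dyn-stable-vs-pde-stable-pi-over-six} gives $3-n$ unstable directions, the same count as for $\theta<\tfrac{\pi}{6}$. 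So the second-order expansion from \Cref{thm:heteroclinic} carries over verbatim. Since $(\Ab_T,\zerob)$ is a hyperbolic sink of the slow flow, the connection also persists under the $\Ocal(\eps)$ terms in \eqref{eq:reduced-equations-pi-over-six-second-order}. This route avoids Fenichel theory and every uniformity issue in $\theta$. What the repaired limit argument adds is that the $\theta=\tfrac{\pi}{6}$ front is the limit of fronts at nearby admissible angles, which is how the paper frames the result.
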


To conclude this subsection, we discuss the case of solutions for $c_0$ large. For this, we treat $\tfrac{1}{c_0^2}$ as a small parameter. Then, the corresponding slow subsystem for $c_0 \to \infty$ reads as
\begin{equation}\label{eq:reduced-equations-c-to-infinity}
    \begin{split}
        \partial_{\Xi} A_1 + \mu_0 A_1 + \beta_2\bar{A}_2\bar{A}_3 + (K_0|A_1|^2 + K_2(|A_2|^2 + |A_3|^2)) A_1 + \Ocal(\eps) & = 0, \\
        \partial_{\Xi} A_2 + \mu_0 A_2 + \beta_2\bar{A}_1\bar{A}_3 + (K_0|A_2|^2 + K_2(|A_1|^2 + |A_3|^2)) A_2 + \Ocal(\eps) & = 0, \\
        \partial_{\Xi} A_3 + \mu_0 A_3 + \beta_2\bar{A}_1\bar{A}_2 + (K_0|A_3|^2 + K_2(|A_1|^2 + |A_2|^2)) A_3 + \Ocal(\eps) & = 0.
    \end{split}
\end{equation}
We highlight that the system \eqref{eq:reduced-equations-c-to-infinity} is independent of the direction $\d$, which was already pointed out in \cite[Rem.~5.1(1)]{hilder2025-08JNonlinearSci}, where the system \eqref{eq:reduced-equations-c-to-infinity} was obtained as a reduced equation on a centre manifold in the construction of fast-moving pattern interfaces in an asymptotic model for the three-dimensional Bénard–Marangoni problem. A detailed analysis of heteroclinic orbits occurring in the slow subsystem can be found in \cite[Sec.~5.5]{hilder2025-08JNonlinearSci} by exploiting that the system \eqref{eq:reduced-equations-c-to-infinity} has a two-dimensional invariant subspace $\{A_2 = A_3 \in \R\}$, which contains all equilibria. As shown in \cite{doelman2003-02EuropeanJournalofAppliedMathematics}, these orbits persist in the full system \eqref{eq:reduced-equations-explicit} with $c_0$ large, using similar arguments as above.

\subsection{Pattern interfaces corresponding to heteroclinic orbits}\label{sec:pattern-interfaces}

Before analysing the dynamics of the reduced equations numerically, we analyse the solutions in the full pattern-forming system \eqref{eq:Swift-Hohenberg} that correspond to heteroclinic orbits in the reduced equations \eqref{eq:reduced-equations-explicit} and \eqref{eq:reduced-equations-pi-over-six-second-order}. For this, we find from the ansatz \eqref{eq:ansatz-reduced-equations} in the derivation of the reduced equations and using the centre manifold theorem \ref{thm:centre-manifold} that the spatial-dynamics system \eqref{eq:spat-dyn-Fourier} has a solution $W = W(\xi,p)$ with first component given by
\begin{equation*}
    W_0(\xi,\p) = \eps \sum_{j = 1}^3 A_j(\eps \xi) e^{i\k_j \cdot \p} + c.c. + \Ocal(\eps^2)
\end{equation*}
where we used that the eigenvectors $\hat{\phib}^\eps_\pm$ are normalised such that their first component is equal to one and that $A_j = \tilde{A}_{j,+} + \tilde{A}_{j,-}$. Furthermore, we recall that $W(\xi,\p) = (U, (\partial_\xi + \d \cdot \nabla_\p) U(\xi,\p), (\partial_\xi + \d \cdot \nabla_\p)^2 U(\xi,\p),(\partial_\xi + \d \cdot \nabla_\p)^3 U(\xi,\p))$ and that $u(t,\x) = U(\xi, \p)$ is a solution to \eqref{eq:Swift-Hohenberg}. We therefore obtain the following result.

\begin{theorem}\label{thm:main-theorem}
    Fix $\d = (\cos(\theta), \sin(\theta)) \in S^1$ such that $\cot(\theta)\in \sqrt{3}\Q$ and let $c_0 > 0$. Then, for any bounded orbit $\Xi \mapsto (\Ab, \Bb)(\Xi)$ of \eqref{eq:reduced-equations-explicit} there exists an $\eps_0 > 0$ such that for all $\eps \in (0,\eps_0)$ the pattern-forming equation \eqref{eq:Swift-Hohenberg} has solutions of the form
    \begin{equation*}
        u(t,\x) = 2\eps \sum_{j = 1}^3 A_j(\eps (\d \cdot \x - \eps c_0 t)) \cos(\k_j \cdot \x) + \Ocal(\eps^2).
    \end{equation*}
    Specifically, there exist open classes of suitable polynomial nonlinearities such that \eqref{eq:Swift-Hohenberg} has solutions $u(t,\x) = U(\d \cdot \x + \eps c_0 t, \x)$ with 
    \begin{equation*}
        \lim_{\xi \to - \infty} U(\xi,\p) = - 2 \eps A_\mathrm{hex} \sum_{j = 1}^3 \cos(\k_j \cdot \p) + \Ocal(\eps^2) \quad \text{and} \lim_{\xi \to \infty} U(\xi,\p) = 0
    \end{equation*}
    for some $A_\mathrm{hex} > 0$, cf.~\eqref{eq:hexagons}, or 
    \begin{equation*}
        \lim_{\xi \to - \infty} U(\xi,\p) = 2 \eps A_\mathrm{roll} \cos(x_1) + \Ocal(\eps^2) \quad \text{and} \lim_{\xi \to \infty} U(\xi,\p) = 0
    \end{equation*}
    for some $A_\mathrm{roll} > 0$, cf.~\eqref{eq:roll-waves}.
\end{theorem}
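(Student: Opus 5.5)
The plan is to assemble the main theorem from the centre-manifold reduction, the analysis of the reduced equations and the persistence result. The key steps, in order, are as follows.

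\emph{Step 1: lift bounded reduced orbits to solutions.} Given a bounded orbit $\Xi\mapsto(\Ab,\Bb)(\Xi)$ of \eqref{eq:reduced-equations-explicit}, I first invert the coordinate changes of \Cref{sec:reduced-equations}. These are $(A_j,B_j)\mapsto(\tilde A_j,\tilde B_j)\mapsto(\tilde A_{j,+},\tilde A_{j,-})$ via \eqref{eq:tildeA-to-other-tilde-A}, or the analogous maps from \Cref{sec:special-case} and \Cref{sec:reduced-equations-critical} when $\theta=\tfrac{\pi}{6}$ or $c_0=c_{\crit}(\k_j)$. All of them are uniformly bounded in $\eps$, so they produce a more-central variable
\[
W_{mc}(\xi)=\eps\sum_j(\tilde A_{j,+}\hat\phib_+^\eps(\k_j)+\tilde A_{j,-}\hat\phib_-^\eps(\k_j))e^{i\k_j\cdot\p}+c.c.
\]
of size $\Ocal(\eps)$, uniformly in $\xi$. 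Since $\eps\ll\eps^{3/4+\delta}$ for $\delta<\tfrac14$, this lies in $O_{mc}^\eps$, provided $\eps_0$ is chosen depending on $\sup_\Xi|(\Ab,\Bb)|$; this dependence is why $\eps_0$ depends on the orbit.

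\emph{Step 2: pass to the spatial-dynamics system and to $u$.} By \Cref{thm:centre-manifold} (or \Cref{thm:centre-manifold-pi-over-6}), the map $W=W_{mc}+\Psi^\eps(W_{mc})$ solves \eqref{eq:spat-dyn-Fourier}. Since $\Psi^\eps$ is at least quadratic, it contributes $\Ocal(\eps^2)$ in $\Zcal\hookrightarrow C_\per$. The first component of $W$ is the profile $U(\xi,\p)$, because the eigenvectors are normalised so that their first entry is $1$ and $A_j=\tilde A_{j,+}+\tilde A_{j,-}$. Undoing the ansatz \eqref{eq:modfront-ansatz}, with $\xi=\d\cdot\x-\eps c_0t$ and $\p=\x$, gives a solution of \eqref{eq:Swift-Hohenberg}. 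Combining $e^{i\k_j\cdot\x}+c.c.$ into $2\cos(\k_j\cdot\x)$ for real amplitudes yields the stated leading-order form.

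\emph{Step 3: the front statements.} I restrict to $\mu_0>0$, $K_0<0$, $K_2<0$ and $\beta_2>0$, which is realised by an open class of nonlinearities satisfying \Cref{ass:rotation-invariant,ass:smallness}, for instance small perturbations of $N=\beta\eps|\nabla u|^2-u^3$-type terms. For $\theta\in(0,\tfrac\pi6)$ I invoke \Cref{thm:heteroclinic} to obtain orbits from $\Ab_{H_-}$ (when $K_2<K_0$, or $K_2\in(K_0,0)$ with $\mu_0<\mu_1$) or from $\Ab_R$ (when $\mu_0>\mu_1$) to $\Ab_T$. \Cref{thm:persistence} then makes these orbits persist for \eqref{eq:reduced-equations-explicit}. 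For $\theta=\tfrac\pi6$ I use \Cref{thm:heteroclinic-pi-over-six} instead, and for $\theta=0$ the roll subspace Duffing phase plane suffices.

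Down-hexagons have $A_{1,H_-}=-A_\mathrm{hex}<0$, which gives the first limit. The rolls $(\pm A_\mathrm{roll},0,0)$ give the second. The limits as $\xi\to\pm\infty$ follow because the heteroclinic orbit converges to equilibria, and these equilibria correspond to the $\xi$-independent points on the centre manifold. The fact that the equilibria of the full reduced system differ from those of the truncated system only by $\Ocal(\eps)$ is absorbed into the $\Ocal(\eps^2)$ error term.

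\emph{Main obstacle.} The delicate step is checking uniformity. Specifically, the $\Ocal(\eps^2)$ remainder must be uniform in $\xi\in\R$, and the limits of the profile must be identified with the hyperbolic equilibria of the perturbed reduced system. I would handle this by using that $\Psi^\eps$ is defined globally on $O_{mc}^\eps$ with quadratic bounds, and that the persisted orbit stays in a compact set independent of $\eps$.
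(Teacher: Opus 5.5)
Your proposal is correct and follows the paper's own argument. You lift the bounded orbit through the centre manifold of \Cref{thm:centre-manifold} (or \Cref{thm:centre-manifold-pi-over-6}), read off $U$ as the first component of $W$ via the first-entry normalisation of $\hat{\phib}^{\eps}_\pm(\k_j)$ and $A_j=\tilde{A}_{j,+}+\tilde{A}_{j,-}$, and take the fronts from \Cref{thm:heteroclinic,thm:persistence,thm:heteroclinic-pi-over-six}.

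Two minor points. First, at $c_0=c_{\crit}(\k_j)$ the inverse change of variables is not uniformly bounded: generically $\tilde{B}_j=\Ocal(\eps^{-1/2})$, so $\tilde{A}_{j,\pm}$ are not $\Ocal(1)$. You should instead bound the $\k_j$-component $\eps\tilde{A}_j\phib^{\eps}_+(\k_j)+\eps^{5/2}\tilde{B}_j\phib^{\eps}_-(\k_j)=\Ocal(\eps)$ in the basis $\phib^{\eps}_\pm(\k_j)$. Second, at $\theta=0$ you need not fall back on the roll subspace. There all $\d\cdot\k_j\neq0$, so $\Dcal$ is positive-definite, and the proofs of \Cref{lem:characterisation-spat-dyn-stable-vs-pde-stable,thm:heteroclinic,thm:persistence} go through unchanged, giving hexagon fronts as well.
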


% \subsection{Numerical analysis}\label{sec:numerics}

\section{Pattern interfaces on a square lattice}\label{sec:square}

So far, the construction was restricted to patterns which lie on a hexagonal Fourier lattice generated by $\k_1$, $\k_2$ and $\k_3$ given in \eqref{eq:hex-lattice-generators}. Notably, this lattice does not support square patterns, which are generated by the Fourier wave vectors $\k_1 = (1,0)$ and $\k_2 = (0,1)$. Therefore, we now discuss how to obtain pattern interfaces on this square Fourier lattice $\Gamma_{\square}$. Since the construction largely follows that of the hexagonal lattice and is significantly easier because the square lattice is non-resonant, we only report the differences. In particular, the full analysis can be carried out without using \Cref{ass:smallness}.

\subsection{Spectral analysis}

Consider now a direction $\d = (\cos(\theta),\sin(\theta)) \in S^1$ with $\theta \in [0,\tfrac{\pi}{4})$. We reiterate that it is sufficient to consider only $0 \leq \theta < \tfrac{\pi}{4}$ since all other angles can be obtained by using the $D_4$-symmetry of \eqref{eq:Swift-Hohenberg}. We note that the ansatz \eqref{eq:modfront-ansatz} and the resulting spatial dynamics formulation \eqref{eq:spat-dyn-Fourier} do not depend on the specific choice of Fourier lattice and therefore, do not change. Similarly, the geometric interpretation of the eigenvalue problem given by \Cref{lem:eigenvalue-correspondence,lem:multiplicity} is unchanged, see \Cref{fig:spectral-eps=0-square}. A direct consequence of the geometric interpretation of the spectrum is that a more-central Jordan block of size four now occurs for $\theta = 0$ instead of $\theta = \tfrac{\pi}{6}$ since $\k_2 \perp (1,0)$. Therefore, \Cref{prop:imaginary-spectrum-discrete} holds verbatim with the condition $\theta = \tfrac{\pi}{6}$ replaced by $\theta = 0$. Next, we note that the angle condition for the presence of a spectral gap given in \Cref{prop:real-spectral-gap} changes from $\cot(\theta) \in \sqrt{3} \Q$ to $\cot(\theta) \in \Q$, which follows from the same algebraic computations using that $\d^{\perp}\cdot \gammab = -n_1 \sin(\theta) + n_2 \cos(\theta)$ for $\gammab = n_1 \k_1 + n_2\k_2\in \Gamma_{\square}$. To conclude the spectral analysis, we find that there are eight more-central eigenvalues (for $\theta \neq 0$) or six (for $\theta = 0$) which still have the expansions \eqref{eq:spectrum-eps-pos-perturbation-block-size-two} and \eqref{eq:spectrum-eps-pos-perturbation-block-size-four}, respectively. The remaining statements of \Cref{lem:polynom-roots-1,lem:polynom-roots-2} and \Cref{prop:spectrum-eps-positive} hold unchanged under the angle condition $\cot(\theta) \in \Q$.

\begin{figure}[H]
    \centering
    \begin{subfigure}[b]{0.45\textwidth}
        \includegraphics[width = \linewidth]{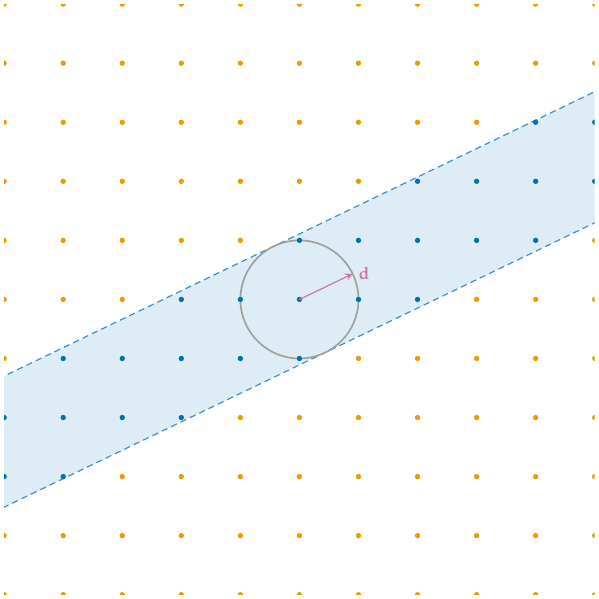}
        \subcaption{}
        \label{subfig:geo-square-1}
    \end{subfigure}
    \hfill
    \begin{subfigure}[b]{0.45\textwidth}
        \includegraphics[width = \linewidth]{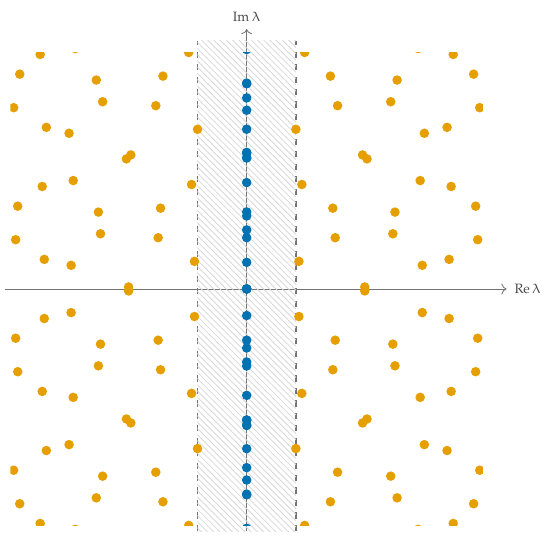}
        \subcaption{}
        \label{subfig:geo-square-2}
    \end{subfigure}    
    \caption{Geometric characterisation of the spectrum of $\tilde{\Lcal}^0$ on a square lattice \sref{subfig:geo-square-1} for an angle $\theta$ satisfying $\cot(\theta) \in \Q$. Therefore, the hyperbolic lattice points have positive distance to the blue shaded strip and $\tilde{\Lcal}^0$ has a spectral gap around the imaginary axis \sref{subfig:geo-square-2}.}
    \label{fig:spectral-eps=0-square}
\end{figure}

\subsection{Centre manifold theory and reduced equations}

In the centre manifold theory, we first remark that the functional-analytic setup is independent of a specific choice of Fourier lattice, and thus holds unchanged. The projections can then be defined in the same way, and the estimates in \Cref{lem:estimate-projections} hold with the natural changes to the angle. We then define the splitting of $W$ into more-central, less-central, and hyperbolic parts as in \eqref{eq:projected-variables} and introduce the same rescaled variables \eqref{eq:rescaling}, as well as the resulting system \eqref{eq:spatial-dynamics-split}. 

The main change compared to the hexagonal case is in the following normal-form analysis. First, since the square lattice is non-resonant, we find $\Pcal_{mc}^\eps(\k_j) \hat{\Ncal}(\eps^\beta \shortunderline{\hat{W}}_{mc};\k_j,\vartheta) = 0$. In particular, this implies that \Cref{ass:smallness} is not necessary in the square case. Second, we find that in $\shortunderline{\hat{\Ncal}}_{lc} (\shortunderline{\hat{W}}_{mc},\shortunderline{\hat{W}}_{lc} + \shortunderline{\hat{W}}_{h};\gammab,\vartheta)$ the same problematic nonlinearities appear. Therefore, we make the near-identity transformation, which is still given by \eqref{eq:normal-form-lc} and results in the equations \eqref{eq:normal-form-bilinear} for $\Bcal_1, \Bcal_2, \Bcal_3$. While we obtain a $\Ocal(1)$-bound for $\Bcal_1$ and $\Bcal_3$ already in the hexagonal case, $\Bcal_2$ did not have a uniform bound due to the resonance of the hexagonal lattice, cf.~\Cref{lem:normal-form-analysis} and \Cref{fig:geometric-argument}. This simplifies in the square case since there are no two lattice points in $\Gamma_{\square}$ that generate less-central eigenvalues with the same imaginary part and have distance one. Therefore, we find that $\Bcal_2$ is also uniformly bounded as $\eps \to 0$.

With this normal-form transformation, \Cref{lem:Lipschitz} holds unchanged, and in particular, the nonlinearities in the transformed equations satisfy the Lipschitz bounds \eqref{eq:Lipschitz-bounds} with the same scaling in $\eps$. Additionally, the semigroup bounds hold unchanged after replacing the angle condition with $\cot(\theta) \in \Q$ and distinguishing the case $\theta = 0$. Therefore, we obtain a centre manifold theorem as in \Cref{thm:centre-manifold} for $\theta \neq 0$ and $\Cref{thm:centre-manifold-pi-over-6}$ for $\theta = 0$. However, the resulting centre manifolds are of size $\Ocal(\eps^{2/3 + \delta})$ for any chosen $\delta \in (0,\tfrac{1}{3})$ as in the one-dimensional case \cite{haragus-courcelle1999-01ZangewMathPhys}.

Next, we consider the reduced equations on the centre manifold. Recalling that there are eight, respectively six, more-central eigenvalues, and repeating the same derivation as in \Cref{sec:reduced-equations}, we obtain the reduced equations in the case $\theta \neq 0$ as
\begin{equation}\label{eq:reduced-equations-explicit-square}
    \begin{split}
        \partial_\Xi A_1 &=  B_1, \\
        \partial_\Xi B_1 &= -\dfrac{\mu_0}{4 (\d \cdot \k_1)^2} A_1 - \dfrac{c_0}{4(\d \cdot \k_1)^2}B_1 - \dfrac{1}{4(\d \cdot \k_1)^2}(K_0|A_1|^2 + K_1|A_2|^2) A_1 + \Ocal(\eps),\\
        \partial_\Xi A_2 &=  B_2, \\
        \partial_\Xi B_2 &= -\dfrac{\mu_0}{4 (\d \cdot \k_2)^2} A_2 - \dfrac{c_0}{4(\d \cdot \k_2)^2}B_2 - \dfrac{1}{4(\d \cdot \k_2)^2} (K_0|A_2|^2 + K_1|A_1|^2) A_2 + \Ocal(\eps),
    \end{split}
\end{equation}
where $K_0$ is given by \eqref{eq:K-0-K-2} and $K_1$ is given by
\begin{equation*}
\begin{split}
    K_1 & := 2 \left[\hat{N}_2(e^{i\zerob\cdot \p}, e^{i\k_1\cdot\p};\k_1)\nu_0
    + \hat{N}_2(e^{i(\k_1+\k_2)\cdot \p}, e^{-i\k_2\cdot \p};\k_1) \nu_{\k_1+\k_2} + \hat{N}_2(e^{i(\k_1-\k_2)\cdot \p},e^{i\k_2\cdot \p};\k_1) \nu_{\k_1-\k_2} \right] \\
    & \quad
    + 6 \hat{N}_3(e^{i\k_1\cdot\p},e^{i\k_2\cdot\p}, e^{-i\k_2\cdot\p};\k_1).
\end{split}
\end{equation*}
Note that \eqref{eq:reduced-equations-explicit-square} can be written as the second-order system
\begin{equation}\label{eq:reduced-equations-second-order-square}
\begin{split}
    4(\d\cdot\k_1)^2 \partial_{\Xi}^2 A_1 + c_0 \partial_{\Xi} A_1 + \mu_0 A_1 + (K_0|A_1|^2 + K_1|A_2|^2) A_1 + \Ocal(\eps) & = 0, \\
    4(\d\cdot\k_2)^2 \partial_{\Xi}^2 A_2 + c_0 \partial_{\Xi} A_2 + \mu_0 A_2 + (K_0|A_2|^2 + K_1|A_1|^2) A_2 + \Ocal(\eps) & = 0, \\
\end{split}
\end{equation}
In the case $\theta = 0$, this reduces to the three-dimensional first-order system
\begin{equation}\label{eq:slow-subsystem-square}
    \begin{split}
        \partial_\Xi A_1 &=  B_1, \\
        \partial_\Xi B_1 &= -\dfrac{\mu_0}{4} A_1 - \dfrac{c_0}{4}B_1 - \dfrac{1}{4}(K_0|A_1|^2 + K_1|A_2|^2) A_1 + \Ocal(\eps),\\
        c_0 \partial_\Xi A_2 &= -\mu_0 A_2 - (K_0|A_2|^2 + K_1|A_1|^2) A_2 + \Ocal(\eps),
    \end{split}
\end{equation}
which can again be recovered from the system \eqref{eq:reduced-equations-explicit-square} in the limit $\theta \to 0$ via a fast-slow analysis as in \Cref{sec:fast-slow-results}. In particular, we note that there is no quadratic term since the square lattice is non-resonant.

\subsection{Analysis of reduced equations and pattern interfaces}

We conclude this section by discussing the dynamics of the reduced equations in the square case, specifically, the existence and stability of equilibria and heteroclinic connections between them. As in the hexagonal case, we restrict to the case $\theta \neq 0$ since the results for $\theta = 0$ can be recovered via a fast-slow analysis. Additionally, we first analyse the leading order equation obtained from \eqref{eq:reduced-equations-explicit-square} for $\eps = 0$. A direct computation shows that the following equilibria exist, see \Cref{fig:stationary-patterns-square} for typical patterns.

\begin{corollary}\label{cor:stationary-patterns-square}
    The system \eqref{eq:reduced-equations-explicit-square} at $\eps = 0$ has the following equilibrium solutions:
    \begin{thmenum}
        \item The trivial solution
        \begin{equation*}
            \Ab_T := (A_{1,T},A_{2,T}) = (0, 0).
        \end{equation*}
        \item Roll waves
        \begin{equation}\label{eq:roll-waves-squares}
            \Ab_R = (A_{1,R},A_{2,R}) = \Big(\pm \sqrt{-\frac{\mu_0}{K_0}},0\Big)
        \end{equation}
        if $\mu_0 K_0 < 0$.
        \item Squares
        \begin{equation}\label{eq:squares}
            \Ab_{S} = (A_{1,S},A_{2,S}) \quad \text{with} \quad |A_{j,S}| = \sqrt{-\frac{\mu_0}{K_0+K_1}}
        \end{equation}
        if $\mu_0 (K_0 + K_1) < 0$.
    \end{thmenum}
\end{corollary}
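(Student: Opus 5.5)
The statement concerns equilibria of the leading-order part of \eqref{eq:reduced-equations-explicit-square}. My plan is to set $B_1 = B_2 = 0$ and solve the algebraic system
\begin{equation*}
    \bigl(\mu_0 + K_0|A_1|^2 + K_1|A_2|^2\bigr) A_1 = 0, \qquad \bigl(\mu_0 + K_0|A_2|^2 + K_1|A_1|^2\bigr) A_2 = 0.
\end{equation*}
I would obtain this by multiplying the $B_j$-equations by $4(\d\cdot\k_j)^2$, which is nonzero since $\theta\neq 0$ and $\theta < \tfrac{\pi}{4}$. Because there is no quadratic term (the square lattice is non-resonant), each equation factors as $A_j$ times a real expression in $|A_1|^2, |A_2|^2$. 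This makes a case split on which amplitudes vanish exhaustive.

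The case analysis would go as follows. If $A_1 = A_2 = 0$, we get the trivial solution $\Ab_T$. If exactly one amplitude vanishes, say $A_2 = 0$ and $A_1 \neq 0$, then $|A_1|^2 = -\mu_0/K_0$. This requires $\mu_0 K_0 < 0$ and gives \eqref{eq:roll-waves-squares}. Restricting to real solutions, as in the hexagonal section, yields the $\pm$ sign. The symmetric case $A_1 = 0$ gives the rotated rolls, which come from the $D_4$-symmetry. If both amplitudes are nonzero, both brackets vanish, so we have the linear system
\begin{equation*}
    K_0 x + K_1 y = -\mu_0, \qquad K_1 x + K_0 y = -\mu_0
\end{equation*}
in $x = |A_1|^2$ and $y = |A_2|^2$. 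Subtracting the two equations gives $(K_0 - K_1)(x - y) = 0$.

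The main obstacle is the degenerate case $K_0 = K_1$. There the solution set is the whole segment $x + y = -\mu_0/K_0$, so squares are not isolated. I would either assume $K_0 \neq K_1$ generically, matching the stated form of \eqref{eq:squares}, or remark that the squares are then one member of a continuum of rectangle-type equilibria. For $K_0 \neq K_1$ we get $x = y$, and adding the equations gives $x = -\mu_0/(K_0 + K_1)$. Positivity of $x$ is exactly the condition $\mu_0(K_0 + K_1) < 0$, which gives \eqref{eq:squares}. Only the moduli are determined, consistent with the statement. This completes the list, since every case has been covered.
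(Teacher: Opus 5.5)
Your proposal is correct and matches the paper, which states these equilibria without a written proof beyond ``a direct computation'': setting $B_1 = B_2 = 0$ and factoring out $A_j$, exactly as you do. Your degenerate case $K_0 = K_1$ only matters for whether the list is complete, which the corollary does not actually assert. The listed trivial, roll and square solutions satisfy the equilibrium equations whether or not $K_0 = K_1$.
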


\begin{figure}[H]
    \centering
    \begin{subfigure}[p]{0.3\textwidth}
        \includegraphics[width=\textwidth]{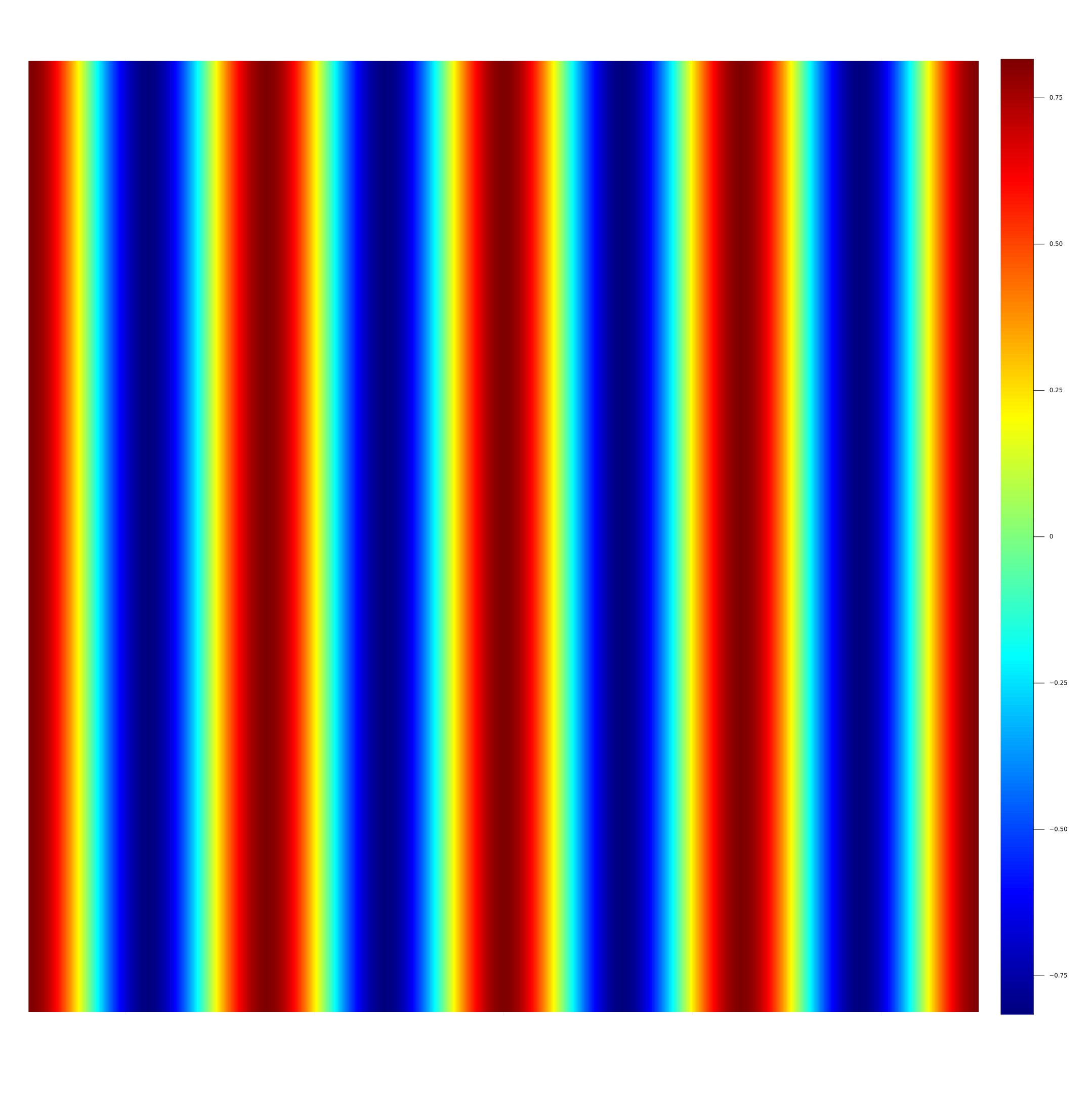}
        \subcaption{Roll waves}
    \end{subfigure}
    \hspace{2cm}
    \begin{subfigure}[p]{0.3\textwidth}
        \includegraphics[width=\textwidth]{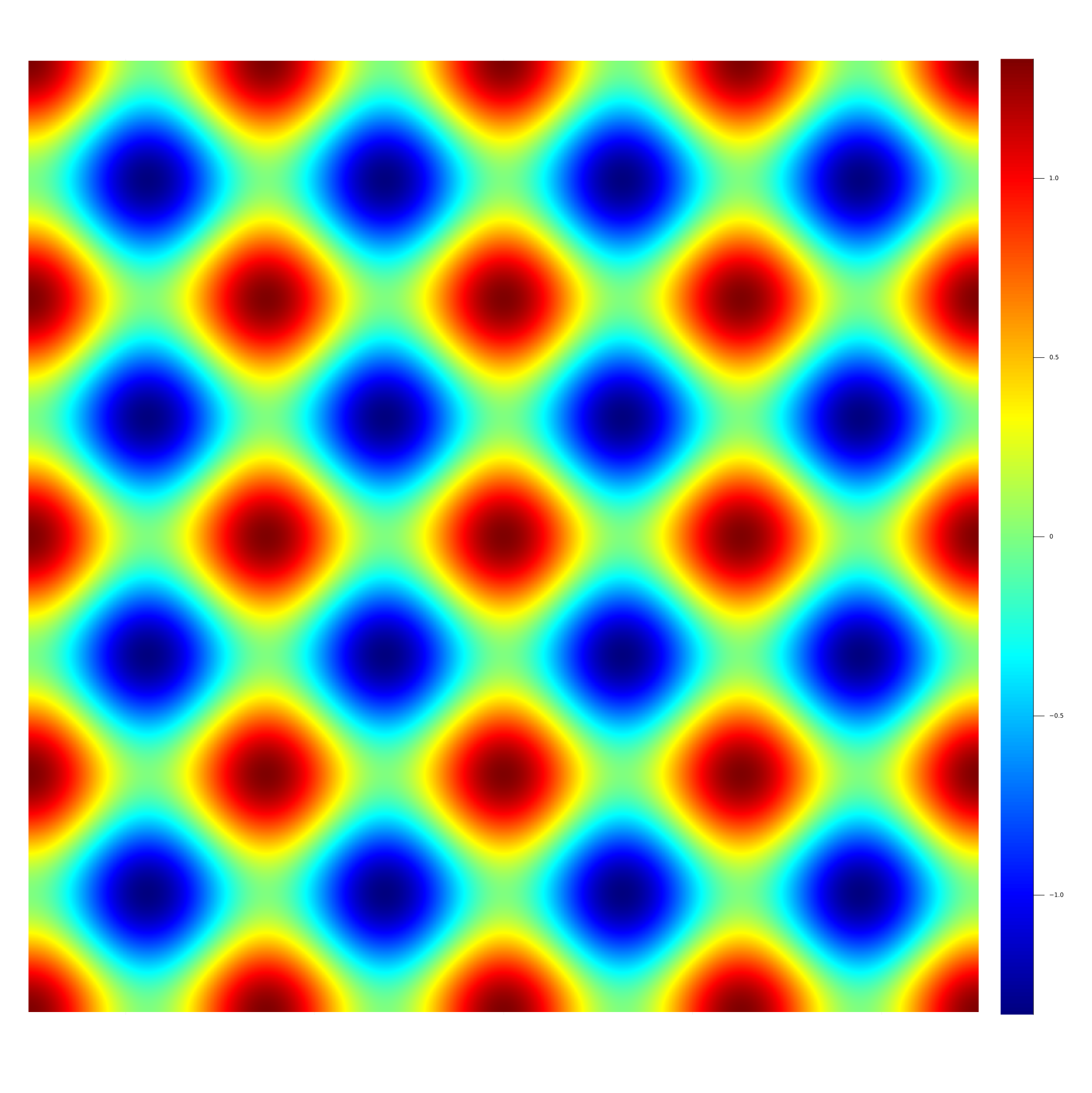}
        \subcaption{Squares}
    \end{subfigure}

    \caption{Stationary pattern on a square lattice obtained in \Cref{cor:stationary-patterns-square}.}
    \label{fig:stationary-patterns-square}
\end{figure}

Their stability properties in the spatial-dynamics system \eqref{eq:reduced-equations-explicit-square} can again be characterised by the linear stability in the Landau equations
\begin{equation*}
    \begin{split}
        \partial_T A_1 &= \mu_0 A_1 + (K_0|A_1|^2 + K_1|A_2|^2) A_1, \\
        \partial_T A_2 &= \mu_0 A_2 + (K_0|A_2|^2 + K_1|A_1|^2) A_2,
    \end{split}
\end{equation*}
see also \Cref{rem:spectral-characterisation-for-RD}. Restricting to the case $K_0 < 0$ and $K_1 < 0$, we find the bifurcation diagram of \Cref{fig:bifurcation-diagram-square}.

\begin{figure}[H]
    \centering
    \centering
    \begin{subfigure}[p]{0.99\textwidth}
        \includegraphics[width=\linewidth]{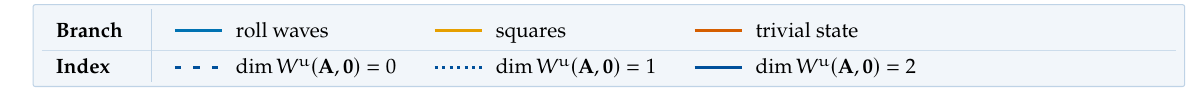}
    \end{subfigure}

    \vspace{0.4cm}

    \begin{subfigure}[p]{0.45\textwidth}
        \includegraphics[width=\linewidth]{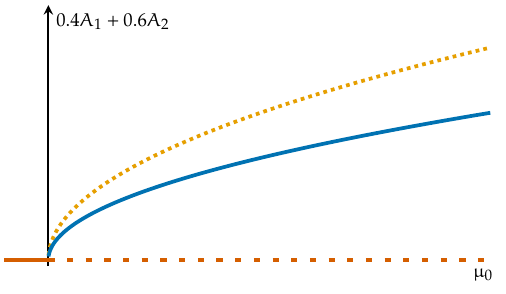}
        \subcaption{$K_2 < K_0 <0$}
    \end{subfigure}
    \hfill
    \begin{subfigure}[p]{0.45\textwidth}
        \includegraphics[width=\linewidth]{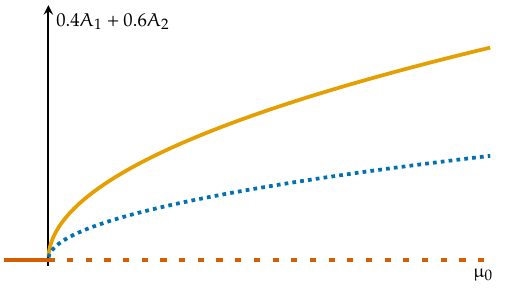}
        \subcaption{$K_0 < K_2 <0$}
    \end{subfigure}
    \caption{Bifurcation diagram of stationary patterns on the square lattice for different parameter regimes.}
    \label{fig:bifurcation-diagram-square}
\end{figure}

For the heteroclinic orbits, we first point out that also the leading-order system given by \eqref{eq:reduced-equations-explicit-square} with $\eps = 0$ has a Lyapunov function given by
\begin{equation*}
    \Hcal(\Ab, \Bb) = (\d \cdot \k_1)^2 |B_1|^2 + (\d \cdot \k_2)^2 |B_2|^2  + \dfrac{\mu_0}{2} (|A_1|^2 + |A_2|^2) + \dfrac{K_0}{4} (|A_1|^4 + |A_2|^4) + \dfrac{K_1}{2} A_1^2 A_2^2,
\end{equation*}
which is strictly decreasing along orbits since it satisfies \eqref{eq:derivative-Lyapunov} along a solution. Calculating the energies at the non-trivial equilibria, we find that
\begin{equation*}
    \Hcal(\Ab_R,\zerob) = - \dfrac{\mu_0^2}{4K_0} \quad \text{and}\quad \Hcal(\Ab_S,\zerob) = -\dfrac{\mu_0^2}{2(K_0+K_1)}.
\end{equation*}
For $\mu_0 > 0$, we therefore can follow the same strategy as in \Cref{thm:heteroclinic} to obtain heteroclinic orbits between roll waves and the trivial state for $K_0 < K_1 < 0$ and between squares and the trivial state for $K_1 < K_0$. These orbits persist for $\eps > 0$ sufficiently small again using the fact that, for $\mu_0 > 0$, the trivial equilibrium is stable and thus the orbits lie in an intersection of the unstable manifold of either roll waves or squares with the four-dimensional stable manifold of the trivial equilibrium. Since the phase space is $\R^4$, these intersections are transversal. Summarising the above discussion, including the fast-slow structure for $\theta \to 0$, we thus obtain the following result.

\begin{theorem}\label{thm:heteroclinic-square}
    Let $\theta\in [0,\tfrac{\pi}{4})$, $c_0>0$ and $K_0<0$.
    \begin{thmenum}
        \item If $K_1 < K_0$, for every $\mu_0>0$ there exist heteroclinic orbits $(\Ab_{S\to T},\Bb_{S\to T})$ in \eqref{eq:reduced-equations-explicit-square} and in \eqref{eq:slow-subsystem-square} such that
        \begin{equation*}
            \lim_{\Xi \to -\infty} (\Ab_{S\to T},\Bb_{S \to T}) = (\Ab_{S},\zerob) \quad\text{and}\quad \lim_{\Xi \to +\infty} (\Ab_{S\to T},\Bb_{S \to T}) = (\Ab_{T},\zerob).
        \end{equation*}
        \item  If $K_0 < K_1 < 0$, for every $\mu_0>0$ there exist heteroclinic orbits $(\Ab_{R\to T},\Bb_{R\to T})$ in \eqref{eq:reduced-equations-explicit-square} and in \eqref{eq:slow-subsystem-square} such that
        \begin{equation*}
            \lim_{\Xi \to -\infty} (\Ab_{R\to T},\Bb_{R \to T}) = (\Ab_{R},\zerob) \quad\text{and}\quad \lim_{\Xi \to +\infty} (\Ab_{R\to T},\Bb_{R \to T}) = (\Ab_{T},\zerob).
        \end{equation*}
    \end{thmenum}
\end{theorem}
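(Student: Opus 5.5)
The plan is to transfer the Lyapunov-function construction of \Cref{thm:heteroclinic} to the square-lattice system \eqref{eq:reduced-equations-explicit-square} at $\eps = 0$ for $\theta \in (0,\tfrac{\pi}{4})$. I then obtain persistence for $\eps>0$ by transversality, and treat $\theta = 0$ through the fast-slow limit of \Cref{sec:fast-slow-results}.

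\textbf{Step 1 (energy ordering).} Since $\mu_0>0$ and $K_0<0$, rolls exist with $\Hcal(\Ab_R,\zerob) = -\mu_0^2/(4K_0) > 0$. If $K_1<K_0$, then $K_0+K_1<0$, so squares exist and
\[
\Hcal(\Ab_S,\zerob) = -\frac{\mu_0^2}{2(K_0+K_1)} < -\frac{\mu_0^2}{4K_0}.
\]
This inequality is equivalent to $2K_0 > K_0+K_1$, i.e. $K_1<K_0$, so squares are the lowest-energy nontrivial equilibrium. If $K_0<K_1<0$, the reverse inequality holds and rolls have the lowest energy. There are no further real equilibria by \Cref{cor:stationary-patterns-square}. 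Call the lowest one $\Ab_\ell$.

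\textbf{Step 2 (trapping region).} Let $\Omega$ be the connected component containing $(\Ab_T,\zerob)$ of the sublevel set
\[
\{\Hcal < \Hcal(\Ab_\ell,\zerob)\}.
\]
Forward invariance follows from \eqref{eq:derivative-Lyapunov}. Boundedness follows from the same mountain-pass contradiction as before: $\Hcal(\Ab,\zerob)\to-\infty$ as $|\Ab|\to\infty$, since the quartic part
\[
\tfrac{K_0}{4}(A_1^4+A_2^4)+\tfrac{K_1}{2}A_1^2A_2^2
\]
is negative definite when $K_0<0$ and $K_1<0$. Hence any unbounded path would force a nontrivial critical point with energy below $\Hcal(\Ab_\ell,\zerob)$, which is impossible. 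Since the only equilibrium in $\Omega$ is the trivial one and there are no periodic orbits, LaSalle gives $\Omega\subset W^s(\Ab_T,\zerob)$. This stable manifold is four-dimensional, because $\LinGL(\Ab_T)=\mu_0 I$ and the analogue of \Cref{lem:characterisation-spat-dyn-stable-vs-pde-stable} holds ($\Dcal$ is positive definite for $\theta\neq 0$, and $\LinGL$ is symmetric). The segment $s\Ab_\ell$, $s\in(0,1)$, lies in $\Omega$, and the second-order expansion argument from \Cref{thm:heteroclinic} shows that the unstable manifold of $(\Ab_\ell,\zerob)$ enters $\Omega$. By \Cref{lem:characterisation-spat-dyn-stable-vs-pde-stable}, that unstable manifold is nontrivial because $\Ab_\ell$ is PDE-stable: PDE-stability gives an even $2{+}2$ splitting, hence $\dim W^u = 2$.

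\textbf{Step 3 (persistence).} The heteroclinic orbit lies in $W^u(\Ab_\ell)\cap W^s(\Ab_T)$, and $W^s(\Ab_T)$ is open in $\R^4$. The intersection is therefore transversal and persists under the $\Ocal(\eps)$ perturbation, exactly as in \Cref{thm:persistence}.

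\textbf{Step 4 ($\theta=0$).} Set $\delta = 4(\d\cdot\k_2)^2$ with $\k_2=(0,1)$; this $\delta\to0$ as $\theta\to0$. The resulting fast-slow system has critical manifold $c_0 B_2 = -\mu_0A_2-\dots$, which is normally hyperbolic since $D_{B_2}F=-c_0\neq0$. Fenichel theory then applies. The orbits from Step 2 are uniformly bounded in $\theta$, because $\Omega$ is defined through $\Hcal(\cdot,\zerob)$, which does not depend on $\d$, plus the kinetic term. So they lie on slow manifolds along a sequence of admissible angles ($\cot\theta\in\Q$, $\theta\to0$), and a limit gives the heteroclinic orbit of \eqref{eq:slow-subsystem-square}.

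\textbf{Main obstacle.} I expect Step 4 to be the hardest, specifically making the limit of the sequence a genuine heteroclinic orbit with the correct endpoints rather than a broken orbit. To handle this, I would check directly in the slow system \eqref{eq:slow-subsystem-square} that the $\Lyapunov$ argument also works there: $\Hcal$ with the $B_2$-term dropped decreases at rate $-c_0(|B_1|^2+|\partial_\Xi A_2|^2)$. This would reprove Steps 1–2 for $\theta=0$ directly and bypass the limiting procedure.
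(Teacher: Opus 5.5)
Your route is the paper's: the trapping-region argument of \Cref{thm:heteroclinic}, persistence because the basin of $(\Ab_T,\zerob)$ is open in $\R^4$, and a fast-slow limit for $\theta=0$. Steps 1 and 3 are fine. Step 2 contains a false claim, though the conclusion you need survives. Step 4 has a hole that your own fallback repairs.

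\textbf{$\Ab_\ell$ is PDE-unstable, and $W^u$ is one-dimensional.} The Landau system is gradient ascent for $V:=\Hcal(\cdot,\zerob)$, so $\LinGL(\Ab_\ell)=D^2V(\Ab_\ell)$. The lowest nontrivial critical point is the mountain-pass point of $V$, which has Morse index one.
\begin{itemize}
\item In case (a), with $s^2=-\mu_0/(K_0+K_1)$, the squares have Landau eigenvalues $2s^2(K_0+K_1)<0$ and $2s^2(K_0-K_1)>0$.
\item In case (b), the rolls have Landau eigenvalues $-2\mu_0$ and $\mu_0(1-K_1/K_0)>0$.
\end{itemize}
So $n=1$, and the square-lattice analogue of \Cref{lem:characterisation-spat-dyn-stable-vs-pde-stable} gives three stable and one unstable spatial eigenvalue, not a $2{+}2$ splitting. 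The PDE-stable pattern is the other, higher-energy equilibrium, which is a local maximum of $V$.

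The nontriviality of $W^u$ must therefore be argued differently. First, $\Ab_\ell$ is hyperbolic because $K_0\neq \pm K_1$ and $c_0\neq 0$. Second, for the unstable eigenvector $\vb=(\psib,\lambda\psib)$ with $\lambda>0$, the quadratic eigenvalue problem gives $D^2\Hcal(\vb,\vb)=\psib^T\LinGL\psib+\lambda^2\psib^T\Dcal\psib=-c_0\lambda|\psib|^2<0$. Hence one of $\pm\vb$ points into $\Omega$.

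\textbf{Step 4.} Your justification of uniform boundedness in $\theta$ only controls the $\Ab$-part of $\Omega$. The kinetic term gives merely $|B_2|^2\lesssim(\d\cdot\k_2)^{-2}$, which degenerates as $\theta\to0$. A uniform bound does follow from $\delta\partial_\Xi B_2=-c_0B_2-\partial_{A_2}V(\Ab)$ by variation of constants, which gives $\sup|B_2|\le\sup|\partial_{A_2}V|/c_0$. Even then, two problems remain in the limiting route:
\begin{itemize}
\item you must exclude a broken limit;
\item the $\Ocal(\eps)$ remainders are not obviously uniform as $\theta\to0$, where the size-four Jordan block forms.
\end{itemize}

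Your fallback sidesteps all of this and is cleaner than the limiting argument the paper invokes. At $\eps=0$, the three-dimensional system \eqref{eq:slow-subsystem-square} has the Lyapunov function $2|B_1|^2+V(\Ab)$, whose derivative is $-c_0(|B_1|^2+|\partial_\Xi A_2|^2)$. Its trivial state is stable, with eigenvalues $-\mu_0/c_0$ and the roots of $4\lambda^2+c_0\lambda+\mu_0$. The argument of \Cref{lem:characterisation-spat-dyn-stable-vs-pde-stable-pi-over-six} with $n=1$ gives $\Ab_\ell$ exactly one unstable direction. The trapping-region proof then goes through verbatim, and persistence for $\eps>0$ follows from openness of the basin in $\R^3$.

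One caution on the Lyapunov function: the kinetic coefficient must be $2(\d\cdot\k_j)^2$, as in \eqref{eq:hamiltonian}, for \eqref{eq:derivative-Lyapunov} to hold. With the coefficient $(\d\cdot\k_j)^2$ displayed for the square lattice, the terms $\mu_0A_jB_j$ do not cancel.
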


Following the discussion in \Cref{sec:pattern-interfaces} we obtain the existence of pattern interfaces on a square lattice in \eqref{eq:Swift-Hohenberg} from heteroclinic orbits in \eqref{eq:reduced-equations-explicit-square}, cf.~\Cref{thm:main-theorem}. Specifically, there exist $\eps_0 > 0$ and open classes of suitable polynomial nonlinearities such that for any $\eps \in (0,\eps_0)$, equation \eqref{eq:Swift-Hohenberg} has solutions $u(t,\x) = U(\d \cdot \x + \eps c_0 t, \x)$ with 
    \begin{equation*}
        \lim_{\xi \to - \infty} U(\xi,\p) = 2 \eps A_\mathrm{sq} \sum_{j = 1}^2 \cos(\k_j \cdot \p) + \Ocal(\eps^2) \quad \text{and} \lim_{\xi \to \infty} U(\xi,\p) = 0
    \end{equation*}
    for some $A_\mathrm{sq} > 0$, cf.~\eqref{eq:squares}, or 
    \begin{equation*}
        \lim_{\xi \to - \infty} U(\xi,\p) = 2 \eps A_\mathrm{roll} \cos(x_1) + \Ocal(\eps^2) \quad \text{and} \lim_{\xi \to \infty} U(\xi,\p) = 0
    \end{equation*}
    for some $A_\mathrm{roll} > 0$, cf.~\eqref{eq:roll-waves-squares}.

\begin{figure}
    \centering
    \begin{subfigure}[p]{0.99\textwidth}
    \includegraphics[width=\textwidth]{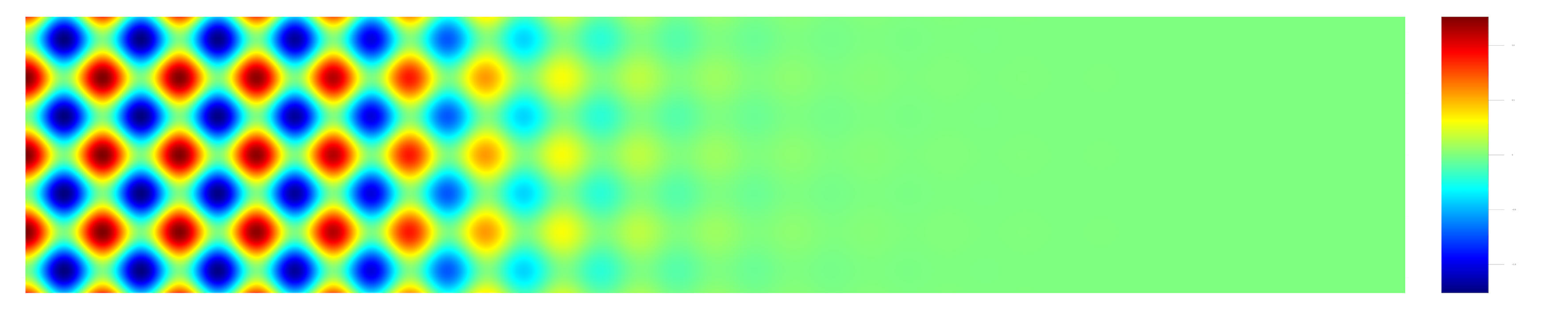}
    \subcaption{Pattern interface for angle $\theta = 0$.}
    \label{subfig:square-front-1}
    \end{subfigure}
     
    \begin{subfigure}[p]{0.99\textwidth}
    \includegraphics[width=\textwidth]{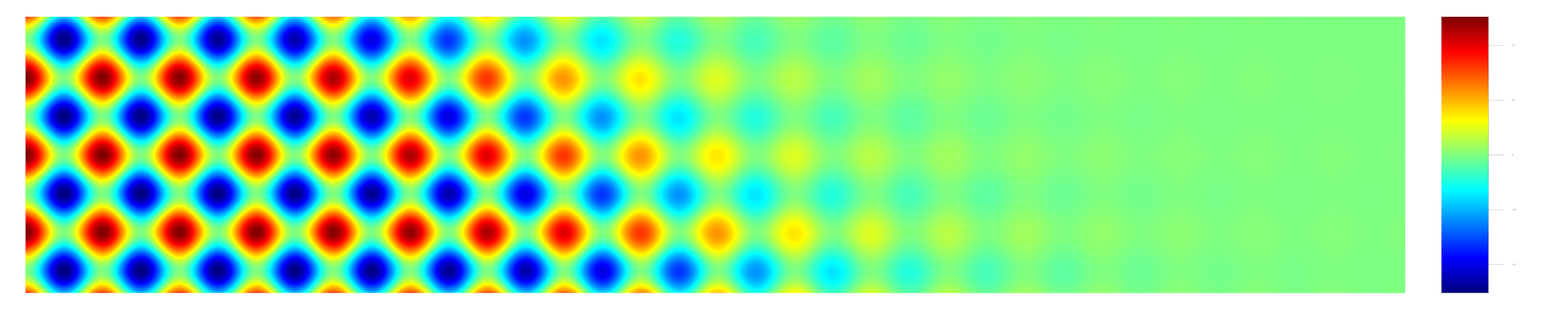}
    \subcaption{Pattern interface for angle $\theta = \tfrac{\pi}{4}$.}
    \label{subfig:square-front-2}
    \end{subfigure}
    
    \caption{Panels \sref{subfig:square-front-1} and \sref{subfig:square-front-2} show pattern interfaces describing the invasion of square patterns into the unstable homogeneous state for different angles.}
    \label{fig:square-fronts}
\end{figure}

\section{Discussion}\label{sec:discussion}

In this paper, we construct slow-moving pattern interfaces with general spreading direction in a two-dimensional Swift–Hohenberg-type equation. These interfaces describe the invasion of a linearly unstable, spatially homogeneous state by either roll waves or planar hexagonal patterns. We also present evidence for the existence of two-stage invasion processes, where the homogeneous state is first invaded by roll waves, which in turn are invaded by hexagonal patterns through a secondary front interface.

We conclude this discussion with a brief overview of related questions and potential pathways for further investigation.

\paragraph{Front selection}

While it is well-understood that planar patterns, specifically hexagonal patterns, arise through planar or radial pattern interfaces, the exact mechanism is not understood rigorously. In the spatially one-dimensional case, some rigorous selection results are available. In particular, for fronts in reaction-diffusion equations such as the Fisher–KPP equation, a rigorous theory for the selected speed has been developed recently, and we refer to \cite{avery2022-07CommAmerMathSoc} and to the recent overview \cite{avery2025-12preprint}. Moreover, rigorous selection results for pattern-forming fronts specifically have only been obtained very recently \cite{avery2026-03preprint}. However, in the two-dimensional case considered here, many open problems remain regarding (a) the selected invasion speed and (b) the selected pattern in the wake when starting from a compactly supported perturbation of the linearly unstable trivial state.

The two-dimensional situation is even more complicated since, in experiments, the selection of planar patterns is facilitated through a cascade of invading fronts, which typically do not travel with the same speed. An example for this is that the hexagonal pattern in thermal convection typically forms as a secondary invasion to roll waves, which are the selected pattern in the leading edge of the front \cite{pismen1994-08EurophysLett,csahok1999-08EurophysLett}. The main challenge in understanding these structures rigorously is the fact that the connecting state is unstable and that the amplitude solution is not a travelling wave. This is an open question even for one-dimensional reaction-diffusion systems, and we refer to \cite{garenaux2025-03preprint} for first linear stability results.

\paragraph{Rigorous heteroclinics}

While we construct selected heteroclinic orbits analytically in \Cref{sec:heteroclinic}, these only cover parts of the orbits that we find numerically, cf.~\Cref{fig:planar-front,fig:pattern-interfaces,fig:planar-fronts-numerics}. Specifically, the rigorous existence of two-stage invasion processes corresponding to heteroclinic orbits from up-hexagons to the trivial state that pass close to the roll waves remains open. The main problem is that the phase space of the reduced equations is still six-dimensional, which makes any global analysis challenging even though the reduced equations have a strictly decreasing Lyapunov function. The partial results of this paper rely on the observation that the energy landscape is topologically very simple close to the trivial equilibrium allowing for the construction of a sufficiently large trapping region that has an intersection with the unstable manifold of the non-trivial equilibrium with the lowest energy. Beyond that trapping region, the energy landscape is topologically more complicated and more involved methods have to be used. One such topological method to construct heteroclinic orbits is Conley index theory \cite{conley1978book,mischaikow2002HandbookofDynamicalSystems}, which has already been mentioned in \cite{doelman2003-02EuropeanJournalofAppliedMathematics} as a potential pathway.

\paragraph{Extending the set of allowed angles}

The construction of planar interfaces in the paper is restricted to directions satisfying the condition $\cot(\theta) \in \sqrt{3} \Q$, which guarantees the existence of a spectral gap around the imaginary axis at $\eps = 0$, see \Cref{prop:real-spectral-gap}. The case $\cot(\theta) \notin \sqrt{3}\Q$ is much more challenging since there exists a sequence of hyperbolic eigenvalues which accumulate at the imaginary axis at infinity. This suggests the presence of a small-divisor problem, which could potentially be solved using Kolmogorov–Arnold–Moser (KAM) theory to obtain pattern interfaces in directions satisfying a Diophantine condition. However, we expect that this is a technical challenge and that the reduced dynamics to leading order are still given by \eqref{eq:travelling-wave-equation} obtained from the amplitude equations. Nevertheless, the proof of this remains an open problem.

\section*{Data availability statement}

The explicit computations for the front speed via a marginal stability analysis were performed using Mathematica \cite{mathematicacomputerProgram}. The numerical simulations of orbits in the leading-order system \eqref{eq:leading-order-reduced-equations} were done using Julia \cite{bezanson2017SIAMReview}. This was also used to plot the pattern interfaces using the leading-order expressions in \Cref{thm:main-theorem} with the numerically obtained amplitudes. Finally, the simulations of the selected front speeds presented in \Cref{app:front-speed-numerics} were done using FEniCSx \cite{baratta2023computerProgram}. The code used to generate the corresponding data is available under \cite{hilder2026-04webpage}.

\section*{Acknowledgments}

B.H.~acknowledges the support by the Deutsche Forschungsgemeinschaft (DFG, German Research Foundation) -- Project-ID 543917644.

\appendix

\crefalias{section}{appsec}

\section{Spectral analysis in generic rotation-symmetric pattern-forming systems close to a Turing instability}\label{app:general-spectral-analysis}

Although we have hinted the genericity of the spectral results obtained in \Cref{prop:imaginary-spectrum-discrete,prop:real-spectral-gap,prop:spectrum-eps-positive} in the above analysis, we now discuss this in more detail. The main observation is that the spectral properties are generic for systems which are (i) rotation-symmetric and (ii) close to a Turing instability. More precisely, we consider a system of the form $\partial_t u = \Lambda_\mu(\nabla) u + N(u)$, where $\Lambda_\mu(\nabla)$ is a linear operator with polynomial symbol depending on a parameter $\mu$ and $N$ is a generic, smooth nonlinearity. 
We assume that the system is rotation-symmetric and in particular, the corresponding Fourier symbol $\hat{\Lambda}_{\mu}(\k) = e^{-i \k \cdot \x} \Lambda_\mu(\nabla) e^{i \k \cdot \x}$ satisfies $\hat{\Lambda}_\mu(\k) = \hat{\Lambda}_\mu(|\k|)$ for all $\k \in \R^2$. At every wave vector $\k$, the Fourier symbol $\hat{\Lambda}_\mu(\k)$ we denote the eigenvalue with the largest real part by $\lambda_{\max}(\k;\mu) = \lambda_{\max}(|\k|;\mu)$. We further assume that the system is close to a Turing instability, which means that there is a critical wave number $\tilde{k} > 0$ and a critical parameter $\tilde{\mu} \in \R$ such that 
\begin{enumerate}[label=(\alph*)]
    \item all eigenvalues of $\hat{\Lambda}_{\tilde{\mu}}(\k)$ have negative real parts for all $\k$ with $|\k| \neq \tilde{k}$;
    \item $\lambda_{\max}(\k;\tilde{\mu}) = 0$ if and only if $|\k| = \tilde{k}$ and zero is a simple eigenvalue;
    \item $\partial_\mu \lambda_{\max}(\k; \tilde{\mu}) = \kappa > 0$ for all $|\k| = \tilde{k}$;
    \item $\lambda_{\max}(\k;\tilde{\mu}) = -\alpha(|\k| - \tilde{k})^2 + \mathcal{O}((|\k|-\tilde{k})^3)$ for $\alpha = -\tfrac{1}{2} \partial_{|\k|}^2 \lambda_{\max}(|\k|; \tilde{\mu})|_{|\k| = \tilde{k}} > 0$ and all $\k \in \R^2$ in a neighbourhood of $|\k| = \tilde{k}$.
\end{enumerate} 
We refer to \Cref{fig:turing-instab} for the typical behaviour of the principal eigenvalue curve $\lambda_{\max}$. Without loss of generality, we can assume that $\tilde{k} = 1$ and $\tilde{\mu} = 0$. In addition, we set the scaling $\mu = \eps^2 \mu_0$ and $c = \eps c_0$ as above.

\begin{figure}[h]
    \centering
    \begin{subfigure}[p]{0.3\textwidth}
        \includegraphics[width=\textwidth]{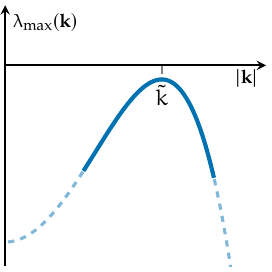}
        \subcaption{$\mu < \tilde{\mu}$}
    \end{subfigure}
    \hfill
    \begin{subfigure}[p]{0.3\textwidth}
        \includegraphics[width=\textwidth]{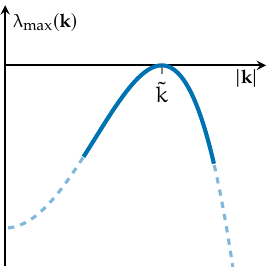}
        \subcaption{$\mu = \tilde{\mu}$}
    \end{subfigure}
    \hfill
    \begin{subfigure}[p]{0.3\textwidth}
        \includegraphics[width=\textwidth]{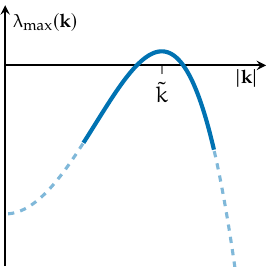}
        \subcaption{$\mu > \tilde{\mu}$}
    \end{subfigure}
    
    \caption{Schematic depiction of the eigenvalue curve with the largest real part close to the Turing instability as $\mu$ transitions through $\tilde{\mu}$.}
    \label{fig:turing-instab}
\end{figure}

As in \Cref{lem:eigenvalue-correspondence}, we find that the eigenvalues $\nu$ of the corresponding spatial-dynamics formulation are given by the roots of the 
\begin{equation*}
    p(\nu; \gammab, \eps) = \det(\Lambda_{\eps^2\mu_0}(i \gammab + \d \nu) + \eps c_0 \nu I) = \det(\hat{\Lambda}_{\eps^2 \mu_0}(\gammab - i \d \nu) + \eps c_0 \nu I),
\end{equation*}    
cf.~\cite[Lem.~5.3]{hilder2025-08JNonlinearSci}. Due to the simplicity of the zero eigenvalue $\lambda_{\max}$ at $\eps = 0$ and $|\k| = 1$, we can write
\begin{equation*}
    p(\nu; \gammab, \eps) = f(\nu,\eps) (-\alpha (|\gammab - i \d \nu|_{\R^2} - 1)^2 + \eps^2 \kappa \mu_0 + \eps c_0 \nu) + \Ocal((|i \gammab + \d \nu|_{\R^2} - 1)^3 + \eps^3)),
\end{equation*}
where $|\omega|_{\R^2} = \sqrt{\omega^T \omega}$ for $\omega \in \C^2$ and $f(\nu,\eps)$ is given by the product of the non-critical eigenvalues of $\hat{\Lambda}_{\eps^2\mu_0}(\gammab - i \d \nu) + \eps c_0 \nu I$. This product is non-zero if $\nu$ is contained in a sufficiently small strip around the imaginary axis, since the non-critical eigenvalues of $\hat{\Lambda}_0(\k)$ have strictly negative real parts for all $\k \in \R^2$.

For $\eps = 0$, it is straightforward to check that the geometric characterisation of the spectrum depicted in \Cref{fig:spectral-eps=0} holds true. Therefore, the spectral properties obtained in \Cref{prop:imaginary-spectrum-discrete,prop:real-spectral-gap} are generic for rotation-symmetric pattern-forming systems close to a Turing instability.

To obtain the spectral properties for $\eps > 0$ we consider how the eigenvalues on the imaginary axis at $\eps = 0$ perturb for $\eps > 0$. For this, we assume that the spatial eigenvalue $\nu(\eps) = i \nu_0 + \nu_1(\eps)$ with $\nu_0 \in \R$. From the geometric interpretation for $\eps = 0$ we know that $\tilde{\gammab} := \gammab + \d \nu_0$ lies on the unit circle, see \Cref{fig:geometric-argument}. We first consider the case of lattice points $\gammab$ such that $\ell_{\gammab} = \{\gammab + \d s \,:\, s \in \R\}$ intersects transversally with the unit circle, that is, $\d \cdot \tilde{\gammab} \neq 0$. Using the expansion
\begin{equation*}
    |\gammab - i \d (i \nu_0 + \nu_1(\eps))|_{\R^2} = |\gammab + \d \nu_0|_{\R^2} - i (\d \cdot \tilde{\gammab}) \nu_1(\eps) + \Ocal(|\nu_1(\eps)|^2), 
\end{equation*}
and using again that $|\gammab + \d \nu_0|_{\R^2} = 1$, we find that the correction $\nu_1(\eps)$ is given by a solution to the equation
\begin{equation*}
    \alpha (\d \cdot \gammab)^2 \nu_1(\eps)^2 + \eps c_0 (i \nu_0 + \nu_1(\eps)) + \eps^2 \kappa \mu_0 + \Ocal(|\nu_1(\eps)|^3 + \eps^3) = 0.
\end{equation*}
Depending on $\nu_0 = 0$ or $\nu_0 \neq 0$, we thus find two different mechanisms for the perturbation of the eigenvalues. 

If $\nu_0 = 0$, the natural scaling for $\nu_1(\eps)$ is $\nu_1(\eps) = \eps \delta + h.o.t.$ and $\delta \in \C$ is given by the roots of the quadratic equation
\begin{equation*}
    \alpha (\d \cdot \gammab)^2 \delta^2 + c_0 \delta + \kappa \mu_0 = 0
\end{equation*}
since $\tilde{\gammab} = \gammab$ for $\nu_0 = 0$. Noting that $\alpha > 0$ and $\kappa \mu_0 > 0$ since we are in the Turing unstable regime, and $c_0 > 0$ by assumption, we thus find that the roots $\delta$ have negative real part, which implies that the eigenvalues perturb into the left half-plane for $\eps > 0$. In fact, this is fully in line with the results for the stability of equilibria in the reduced equation on the centre manifold given in \Cref{lem:characterisation-spat-dyn-stable-vs-pde-stable}, where PDE instability implies that the linearisation of the spatial dynamics system only has stable eigenvalues.

If $\nu_0 \neq 0$, the natural scaling for $\nu_1(\eps)$ is $\nu_1(\eps) = \eps^{1/2} \delta + h.o.t.$ and $\delta \in \C$ is given by the roots of the quadratic equation
\begin{equation*}
    \alpha (\d \cdot \tilde{\gammab})^2 \delta^2 + c_0 i \nu_0 = 0.
\end{equation*}
Since $\alpha, c_0 \in \R$, we find that $\delta^2$ is purely imaginary and thus a spatial eigenvalue with non-zero imaginary part at $\eps = 0$ splits into an eigenvalue with negative real part and an eigenvalue with positive real part for $\eps > 0$. Notably, the leading order correction of a less-central eigenvalue is therefore independent of the distance of the instability encoded by $\mu_0$, and the splitting only relies on the advection term generated by the front speed $\eps c_0$.

In the degenerate case $\d \cdot \tilde{\gammab} = 0$, we find that
\begin{equation*}
    |\gammab - i \d (i \nu_0 + \nu_1(\eps))|_{\R^2} = |\gammab + \d \nu_0|_{\R^2} + \nu_1(\eps)^2 + \Ocal(|\nu_1(\eps)|^3).
\end{equation*}
Therefore, the correction $\nu_1(\eps)$ is given by a solution to the equation
\begin{equation*}
    -\alpha \nu_1(\eps)^4 + \eps c_0 (i \nu_0 + \nu_1(\eps)) + \eps^2 \kappa \mu_0 + \Ocal(|\nu_1(\eps)|^5 + \eps^3) = 0.
\end{equation*}
If $\nu_0 = 0$, this equation has one solution $\nu_1 = -\eps \tfrac{\kappa \mu_0}{c_0}$ and three solutions with $\nu_1 = \Ocal(\eps^{1/3})$. If $\nu_0 \neq 0$, there are four solutions with $\nu_1 = \Ocal(\eps^{1/4})$.

In both cases, the persistence of these solutions follows as in the proof of \Cref{prop:spectrum-eps-positive} by assuming that $|\eps \nu_1|$ is sufficiently small. We therefore fully recover the spectral properties obtained in \Cref{prop:spectrum-eps-positive} for generic rotation-symmetric pattern-forming systems close to a Turing instability. Specifically, in the Swift–Hohenberg equation, we have $\alpha = 4$ and $\kappa = 1$, and therefore, the general calculations recover the expansions \eqref{eq:spectrum-eps-pos-perturbation-block-size-two} and \eqref{eq:spectrum-eps-pos-perturbation-block-size-four}.

\section{Numerical front speed selection}\label{app:front-speed-numerics}

We conclude the paper by showing the results of some numerical experiments to estimate the selected front speed in the full dynamical problem for a Swift–Hohenberg-type equation \eqref{eq:Swift-Hohenberg}. Therefore, we simulate the quadratic-cubic Swift–Hohenberg equation
\begin{equation}\label{eq:Swift-Hohenberg-numerics}
    \partial_t u = -(1+\Delta)^2 u + \mu u - \beta |\nabla u|^2 - u^3,
\end{equation}
proposed in \cite{doelman2003-02EuropeanJournalofAppliedMathematics}, with $\mu = \eps^2\mu_0$, $\beta = \eps\beta_2$, $\mu_0 = \beta_2 = 1$, and $\eps = 0.3$ on the rectangle $\Omega = [-4\pi,36\pi]\times [-3\sqrt{3}\pi,3\sqrt{3}\pi]$ subject to homogeneous Neumann boundary conditions using a finite-element method. 

The equation \eqref{eq:Swift-Hohenberg-numerics} is recast as the second-order system
\begin{equation*}
    \begin{split}
        \partial_t u & = \mu u - \beta |\nabla u|^2 -u^3 -w - \Delta w, \\
        w & = u + \Delta u
    \end{split}
\end{equation*}
in a mixed finite-element formulation, and then discretised with continuous piecewise-linear finite elements on an unstructured triangular mesh with $1884\times 326$ mesh points, which is a spatial resolution of $\Delta x = \tfrac{1}{15}$. The time-discretisation is then done using a Crank-Nicolson scheme with fixed time step 
$\Delta t = \frac{1}{50}$ over $T=100$ time units. The resulting nonlinear system at each step is solved with Newton–Raphson line search with residual tolerance of $10^{-10}$. The implementation uses the FEniCSx software stack (DOLFINx/UFL/Basix) for weak-form assembly and mesh management, with PETSc providing the linear algebra backend and nonlinear solver; see~\cite{baratta2023computerProgram}.

Since we are interested in planar fronts, we choose the initial condition
\begin{align*}
    u_0(\x) = \eps \Bigl(\sum_{j=1}^{3} A_{\mathrm{hex}}\cos(\k_j \cdot \x)\Bigr) \frac{1-\tanh(\tfrac{\x\cdot \d - \phi}{\ell})}{2} 
\end{align*}
with a localisation width $\ell =3$ and amplitude
\begin{align*}
    A_{\mathrm{hex}} = \frac{-\beta_2 - \sqrt{\beta_2^2 - 4\mu_0 (K_0 + 2 K_2)}}{2 (K_0 + 2K_2)},
\end{align*}
which imposes a steep profile in direction $\d=(\cos(\theta),\sin(\theta))$ connecting hexagons to the spatially homogeneous state. For the choice of parameters, we expect hexagons to be stable with respect to all other stationary patterns: indeed, for \eqref{eq:Swift-Hohenberg-numerics} it holds $K_0 = -3$ and $K_2 = -6$, cf.~\cite{doelman2003-02EuropeanJournalofAppliedMathematics}, and with $\mu_0 = 1$ we are in the regime before the bifurcation of the mixed-mode branch in \Cref{fig:bifurcation-diagrams-hex}. We consider two initial profiles: the first for angle $\theta = 0$ and the second for an angle $\theta = \tfrac{\pi}{6}$. Note that we shift the second initial condition to the right by setting $\phi=2\pi$ to make sure no boundary effects from the left-hand boundary affect the dynamics.

\begin{figure}[h]
    \centering
    \begin{subfigure}[p]{0.99\textwidth}
        \includegraphics[width=\linewidth]{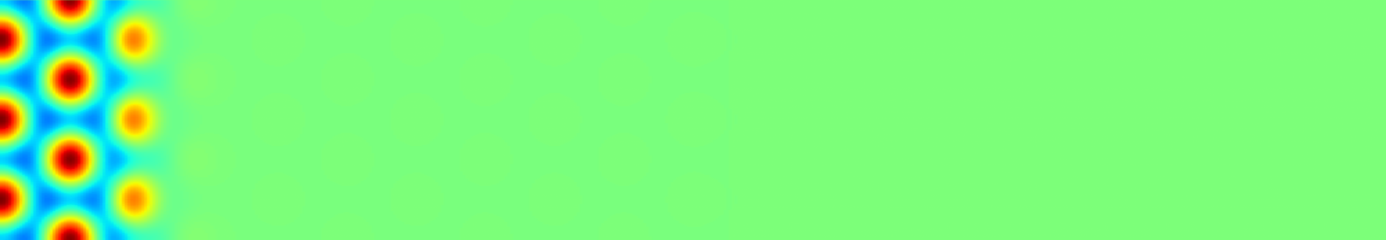}
        \subcaption{$\theta = 0$}
    \end{subfigure}
    
    \vspace{0.2cm}
    
    \begin{subfigure}[p]{0.99\textwidth}
        \includegraphics[width=\linewidth]{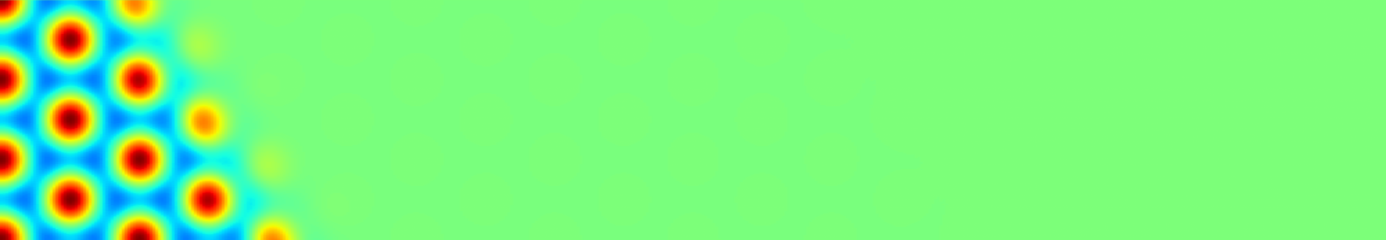}
        \subcaption{$\theta = \tfrac{\pi}{6}$}
    \end{subfigure}
    \caption{The initial conditions for the finite-element simulations. Note that the envelope of the second initial condition has been shifted to the right by $2\pi$.}
    \label{fig:numerics-initial-condition}
\end{figure}

The front position $x_f(t)$ is then determined by tracking the $L^2$-norm of a rolling $2\pi$-wide strip in front direction and defining $x_f(t)$ to be the rightmost position, where this norm exceeds the threshold $\delta = \frac{0.05}{\sqrt{2}}$. From the discussion of the selected front speed in \Cref{sec:front-speed}, cf.~\eqref{eq:results-marginal-stability-analysis}, we obtain the linearly predicted speed
\begin{align*}
    c_{\mathrm{pred}}(\theta) = 4\eps \sqrt{(1-(\d^{\perp}\cdot\k_1)^2) \mu_0},
\end{align*}
where $\d^{\perp} = (-\sin(\theta),\cos(\theta)) \perp \d$. Inserting the parameters gives the predicted front speeds
\begin{align*}
    c_{\mathrm{pred}}(0) = 1.2 \quad \text{and} \quad c_{\mathrm{pred}}(\tfrac{\pi}{6}) \approx 1.039.
\end{align*}

Since we expect a relaxation of the initial condition to the selected front profile and hence a relaxation of the front speed, we fit a linear curve to $x_f(t)$ in the interval $[t_0,t_1]$. Note that we also restrict from above as the plots indicate boundary effects on the right-hand-side boundary to affect the front. For $\theta=0$, the front speed has relaxed at $t_0=20$ and boundary effects are visible after $t_1=80$, while for $\theta = \tfrac{\pi}{6}$ the front settles at $t_0=30$ and boundary effects are visible at $t_1=80$. With this curve, we find the numerical front speeds
\begin{align*}
    c_{\mathrm{num}}(0) \approx 1.1134 \quad \text{and} \quad c_{\mathrm{num}}(\tfrac{\pi}{6}) = 0.9922 
\end{align*}
being a reasonable match for the predicted front speed, with a relative error 
of approximately $7.2\%$ and $4.5\%$ respectively. The remaining discrepancy is consistent with logarithmic corrections of the position not captured by the leading-order marginal stability prediction, which lead to a correction of the front speed of order $\tfrac{1}{t}$. These corrections have been rigorously found in monostable fronts in reaction-diffusion systems, see \cite{avery2025-12preprint}. We also observe that the leading edge of the front exhibits a stripe-like structure before the pattern settles into the hexagonal bulk state, see 
\Cref{fig:numerics-snapshots,fig:numerics-snapshots-theta}. This is consistent with experimental observations, see e.g.~\cite{pismen1994-08EurophysLett,csahok1999-08EurophysLett}.

The data of the front speeds is shown in \Cref{fig:numerics-front-speed}, and snapshots of the solutions at selected times are shown in \Cref{fig:numerics-snapshots}.

\begin{figure}[h]
    \centering
    \begin{subfigure}[p]{0.45\textwidth}
        \includegraphics[width=\linewidth]{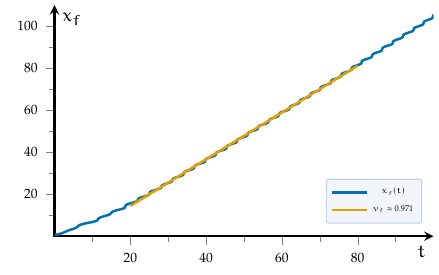}
        \subcaption{Front position for $\theta = 0$}
    \end{subfigure}
    \hfill
    \begin{subfigure}[p]{0.45\textwidth}
        \includegraphics[width=\linewidth]{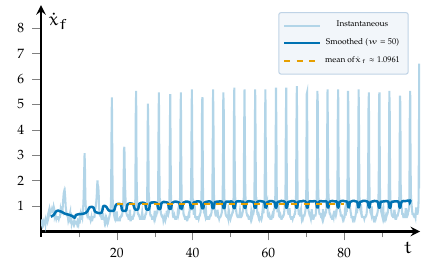}
        \subcaption{Front speed for $\theta = 0$}
    \end{subfigure}

    \begin{subfigure}[p]{0.45\textwidth}
        \includegraphics[width=\linewidth]{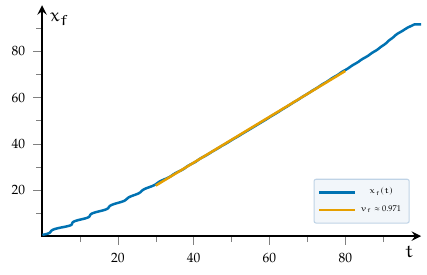}
        \subcaption{Front position for $\theta = \tfrac{\pi}{6}$}
    \end{subfigure}
    \hfill
    \begin{subfigure}[p]{0.45\textwidth}
        \includegraphics[width=\linewidth]{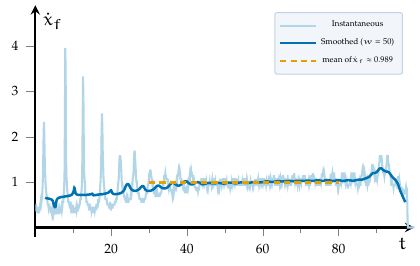}
        \subcaption{Front speed for $\theta = \tfrac{\pi}{6}$}
    \end{subfigure}

    \caption{Front position $x_f(t)$, with linear fit $v_f$, and instantaneous front speed 
    $\dot{x}_f(t)$. Note that the relaxation time is visible in the smoothed version of the front speeds.}
    \label{fig:numerics-front-speed}
\end{figure}

\begin{figure}[H]
    \centering
    \includegraphics[width=0.99\linewidth]{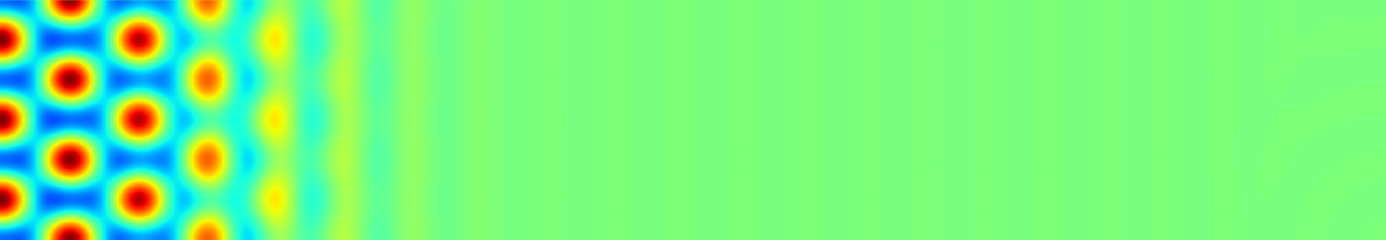}
    
    \vspace{0.2cm}
    
    \includegraphics[width=0.99\linewidth]{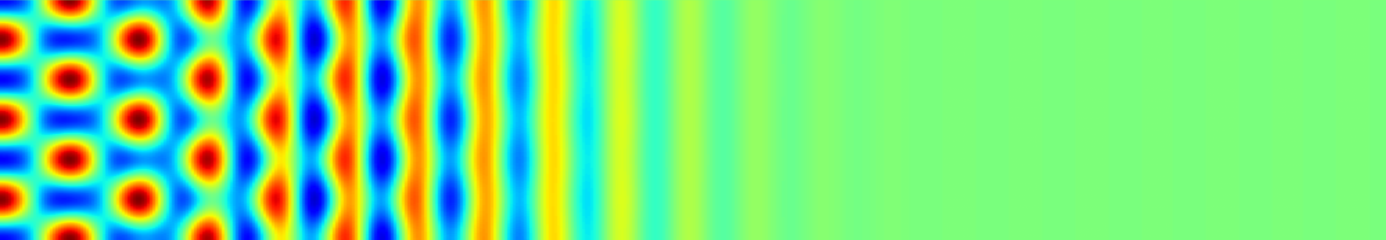}
    
    \vspace{0.2cm}
    
    \includegraphics[width=0.99\linewidth]{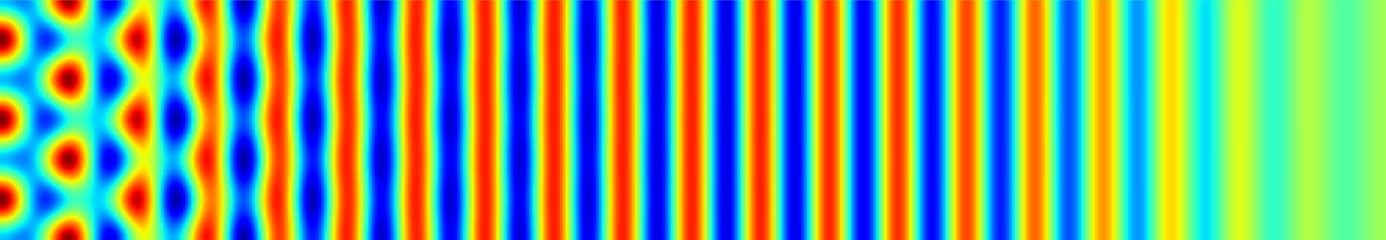}
    
    \caption{Snapshots of the solution $u(t,\cdot)$ for $\theta = 0$ at times 
    $t = 20, 50, 100$. The front propagates to the right, leaving a hexagonal 
    pattern in its wake. Note the stripe-like structure at the leading edge of 
the front. Boundary effects near the right-hand boundary are 
    visible at $t = 100$.}
    \label{fig:numerics-snapshots}
\end{figure}

\begin{figure}[H]
    \centering

    \includegraphics[width=0.99\linewidth]{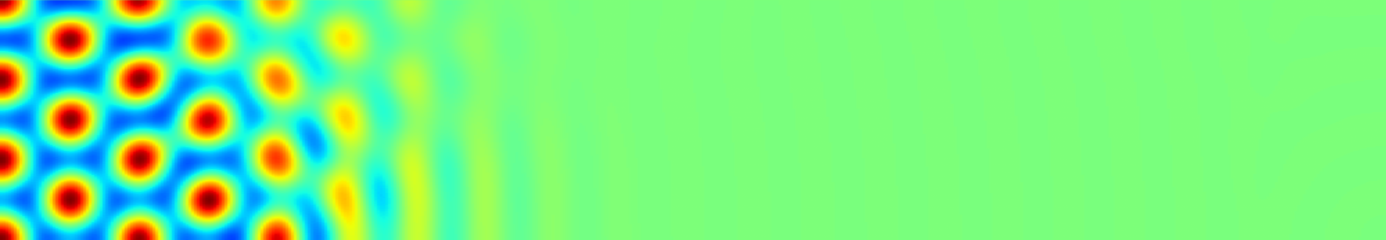}
    
    \vspace{0.2cm}
    
    \includegraphics[width=0.99\linewidth]{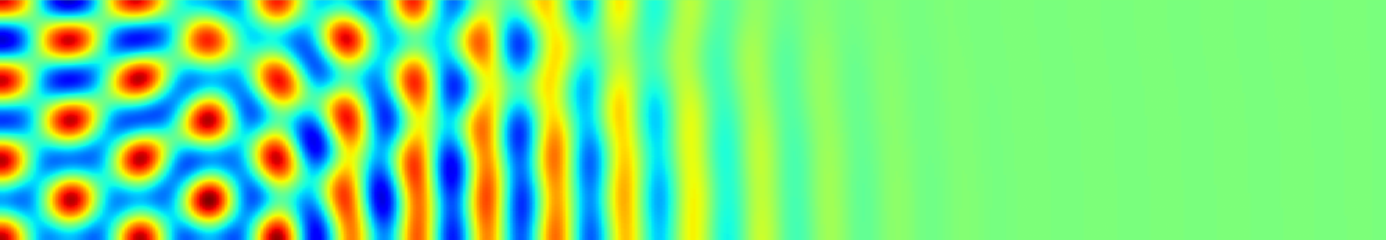}
    
    \vspace{0.2cm}
    
    \includegraphics[width=0.99\linewidth]{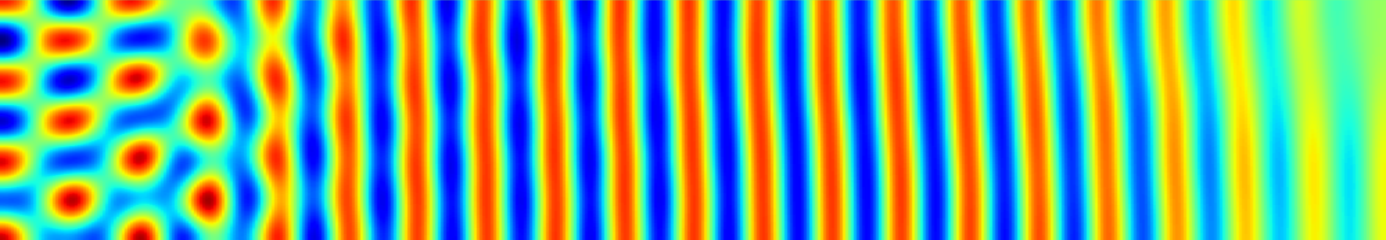}

    \caption{Snapshots of the solution $u(t,\cdot)$ for $\theta = \tfrac{\pi}{6}$ at times 
    $t = 20, 50, 100$. Note the stripe-like structure at the leading edge of the front, which still carries information about the initial angle. Furthermore, the bulk state becomes unordered over time. Whether this is a genuine effect of the dynamics or an artefact of the boundary condition on top and bottom will remain open. Boundary effects near the right-hand boundary are visible at $t = 100$.}
    \label{fig:numerics-snapshots-theta}
\end{figure}

\emergencystretch=2em
\printbibliography

\authordetails

\end{document}